\documentclass[11pt, a4paper,oneside, reqno]{amsart}
\usepackage[english]{babel}
\usepackage{amsmath, amsthm, amsfonts, mathrsfs, amssymb, amscd}
\usepackage{bm}
\usepackage{mathtools}
\usepackage{xcolor}
\definecolor{grey}{rgb}{0.5,0.5,0.5}
\mathtoolsset{centercolon}
\usepackage{accents}
\usepackage{cancel}

\usepackage[toc]{appendix}

\usepackage[shortlabels]{enumitem}
\usepackage[backgroundcolor=yellow, colorinlistoftodos,prependcaption,textsize=small,textwidth=25mm]{todonotes}
\usepackage[colorlinks, citecolor = blue, urlcolor={red}]{hyperref}
\usepackage{geometry}
\allowdisplaybreaks

\newtheorem{theorem}{Theorem}
\newtheorem{corollary}[theorem]{Corollary}
\newtheorem{lemma}[theorem]{Lemma}

\newtheorem{proposition}[theorem]{Proposition}

\theoremstyle{remark} 
\newtheorem{remark}[theorem]{Remark}

\theoremstyle{definition} 
\newtheorem{definition}[theorem]{Definition}

\numberwithin{theorem}{section}
\numberwithin{equation}{section}

\def\N{{\mathbb N}}

\def\R{{\mathbb R}}
\def\C{{\mathbb C}}

\def\F{{\mathbb F}}
\def\G{{\mathbb G}}

\def\U{{\mathbb U}}

\newcommand{\om}{\omega}
\newcommand{\Om}{\Omega}

\renewcommand{\vec}[1]{\bm{#1}}

\newcommand{\Dom}{\mathcal{O}}
\newcommand{\BDom}{\partial\Dom}

\renewcommand{\Re}{\operatorname{Re}}

\newcommand{\ii}{{\rm i}} 

\newcommand{\mc}{\mathcal}

\newcommand{\mrm}{\mathrm}

\newcommand{\Hinf}{H^{\infty}}
\newcommand{\RR}{\mathbb{R}}

\newcommand{\CC}{\mathbb{C}}
\newcommand{\NN}{\mathbb{N}}
\newcommand{\ZZ}{\mathbb{Z}}

\newcommand{\OO}{\mathcal{O}}

\renewcommand{\SS}{\mathcal{S}}
\newcommand{\PP}{\mathcal{P}}
\newcommand{\II}{\mathcal{I}}
\newcommand{\FF}{\mathcal{F}}
\newcommand{\half}{\frac{1}{2}}
\newcommand{\RRdh}{\RR^d_+}
\newcommand{\RRd}{\RR^d}
\newcommand{\Cc}{C_{\mathrm{c}}}
\renewcommand{\d}{\partial}
\newcommand{\del}{\Delta}
\newcommand{\grad}{\nabla}

\newcommand{\eps}{\varepsilon}
\newcommand{\ph}{\varphi}

\newcommand{\gam}{\gamma}

\newcommand{\Tr}{\operatorname{Tr}}
\newcommand{\ext}{\operatorname{ext}}
\newcommand{\loc}{{\rm loc}}
\renewcommand{\tilde}[1]{\widetilde{#1}}
\renewcommand{\hat}[1]{\widehat{#1}}
\newcommand{\dist}{\operatorname{dist}}
\newcommand{\XX}{\mathbb{X}}
\newcommand{\DD}{\mathbb{D}}

\newcommand{\cir}[1]{\accentset{\circ}{#1}}
\renewcommand{\b}{{\rm b}}

\DeclareMathAlphabet{\mathpzc}{OT1}{pzc}{m}{it}

\newcommand\op{\mathop{\circ}\nolimits}

\DeclarePairedDelimiter\abs{\lvert}{\rvert}

\DeclarePairedDelimiter\cbrace\{\}

\DeclarePairedDelimiter{\nrm}\lVert\rVert

\newcommand{\has}[1]{\Bigl(#1\Bigr)}
\newcommand{\cbraces}[1]{\Bigl\{#1\Bigr\}}

\newcommand{\dd}{\hspace{2pt}\mathrm{d}}

\DeclareMathOperator{\UMD}{UMD}
\DeclareMathOperator{\BIP}{BIP}
\DeclareMathOperator{\id}{id}

\author[R. Denk]{Robert Denk}
\address[Robert Denk]{Universit\"at Konstanz, Fachbereich Mathematik und Statistik, 78357 Konstanz, Germany}\email{robert.denk@uni-konstanz.de}

\author[F.B. Roodenburg]{Floris B. Roodenburg}
\address[Floris Roodenburg]{Delft Institute of Applied Mathematics\\
Delft University of Technology \\ P.O. Box 5031\\ 2600 GA Delft\\The
Netherlands} \email{f.b.roodenburg@tudelft.nl}

\begin{document}
\title[]{Higher-order regularity for a structurally damped plate equation on rough domains}

\makeatletter
\@namedef{subjclassname@2020}{
  \textup{2020} Mathematics Subject Classification}
\makeatother

\subjclass[2020]{Primary: 35K35; Secondary: 35J40, 35Q74, 46E35}
\keywords{Structurally damped plate equation, clamped boundary condition, maximal regularity, weighted Sobolev spaces}

\thanks{FBR is supported by the VICI grant VI.C.212.027 of the Dutch Research Council (NWO)}

\begin{abstract}
We prove well-posedness and higher-order regularity for a linear structurally damped plate equation with inhomogeneous Dirichlet--Neumann boundary conditions on the half-space and on bounded domains. To this end, we study maximal regularity properties of the related first-order system on weighted Sobolev spaces of arbitrarily high smoothness. In particular, we consider Sobolev spaces with power weights that measure the distance to the boundary. This allows us to avoid unnatural compatibility conditions for the data and treat the plate equation with rough inhomogeneous boundary conditions on bounded $C^{1,\kappa}$-domains, where $\kappa\in (0,1)$ depends on the exponent of the spatial power weight, but is independent of the smoothness of the data. Our methods can serve as an example to treat more complicated mixed-order systems as well.
\end{abstract}

\maketitle

\setcounter{tocdepth}{1}
\tableofcontents

\section{Introduction}
In this paper, we consider the linear structurally damped plate equation with inhomogeneous Dirichlet--Neumann boundary conditions given by
\begin{equation}\label{eq:plate_eq_intro}
\left\{
  \begin{aligned}
  &\d_t^2 u +\del^2 u-\rho\del \d_t u  = f\qquad &(t,x)&\in (0,T)\times \OO,\\
   &u=g_0\qquad &(t,x)&\in (0,T)\times \d\OO,\\
   &\d_{\vec{n}}u=g_1\qquad &(t,x)&\in (0,T)\times \d\OO,\\
   &u|_{t=0}=(\d_t u)|_{t=0}=0\qquad &x&\in \OO.
\end{aligned}\right.
\end{equation}
Here, $\rho>0$ is a fixed parameter and $\d_{\vec{n}}$ denotes the inner normal derivative. Furthermore, $T\in (0,\infty]$ is a time horizon and we consider the case of the half-space $\OO=\RRdh$ and the case of a bounded domain $\OO\subseteq \RR^{d}$.

The plate equation  occurs as a linear model for vibrating stiff objects where the potential energy involves curvature-like terms which lead to the bi-Laplacian as the main `elastic' operator, see, e.g., \cite[Chapter 12]{Leis86} and \cite{Russell84}. The equation \eqref{eq:plate_eq_intro} relates to the Kirchhoff model of thin plates; for mathematical modelling of thin elastic plates, we also refer to \cite[Chapter 2]{Lagnese89}. 
In the undamped situation (corresponding to $\rho=0$ in \eqref{eq:plate_eq_intro}), there is no energy dissipation,  and the equation has no smoothing effect. Apart from  thermoelastic damping effects (as discussed in a semigroup context in, e.g., 
\cite{Lasiecka-Triggiani98}), a standard damping model is given
by structural damping as above. This model  describes a situation where higher frequencies are more strongly damped than lower frequencies. 
In \eqref{eq:plate_eq_intro}, the damping term has `half of the order'
of the leading elastic term. For mathematical models of internal damping mechanisms leading to structural damping of second-order, 
we also refer to \cite[Section~4.2]{Russell92}. The generation of an analytic semigroup for 
the plate equation with structural damping was investigated, e.g., in \cite{Chen-Triggiani89} and \cite{DS15}.

The Dirichlet--Neumann boundary conditions in \eqref{eq:plate_eq_intro} are also called generalised 
Dirichlet conditions and are known in the context of the plate equation as clamped boundary conditions. 
We remark that these boundary conditions are mathematically more involved than the boundary conditions $u=\Delta u =0$, which can be treated by operator-theoretic methods, see, e.g., \cite{CCD08, CS05, SV10}. \\

In this paper, we study well-posedness and regularity of  \eqref{eq:plate_eq_intro} in a framework with weighted $L^q$-$L^p$-Sobolev spaces. Specifically, we consider spatial power weights of the form $w_{\gam}^{\d\OO}:=\dist(\cdot, \d\OO)^{\gam}$ for some suitable $\gam>-1$. These weights are well suited to control boundary singularities of solutions and their derivatives, see \cite{LLRV24,LLRV25, LV18}. Our main result for \eqref{eq:plate_eq_intro} on bounded domains reads as follows. This is a special case of Theorem \ref{thm:main_inhomgenous}, where for simplicity we omit the temporal weights. We recall that $F_{q,p}^s$ and $B^{s}_{p,p}$ denote the Triebel--Lizorkin and Besov spaces, respectively.
\begin{theorem}\label{thm:intro_plateeq}
    Let $p,q\in (1,\infty)$, $\kappa\in (0,1)$, $\gam\in ((1-\kappa)p-1, p-1)$ and let $k\geq 2$ be an integer. Suppose that $1-\frac{1}{q}\neq \frac{\gam+1}{2p}$ and $\half-\frac{1}{q}\neq \frac{\gam+1}{2p}$. Furthermore, let $\OO$ be a bounded $C^{1,\kappa}$-domain. Then for all
    \begin{align*}
        f&\in L^q(\RR_+; W^{k-2,p}(\OO, w_{\gam+kp}^{\d\OO})), \\
        g_0&\in F^{1-\frac{\gam+1}{2p}}_{q,p}(\RR_+; L^p(\d\OO))\cap L^q(\RR_+, B^{2-\frac{\gam+1}{p}}_{p,p}(\d\OO)) \text{ with }g_0|_{t=0}=0\text{ if }1-\tfrac{1}{q}> \tfrac{\gam+1}{2p},\\
        g_1&\in F^{\half-\frac{\gam+1}{2p}}_{q,p}(\RR_+; L^p(\d\OO))\cap L^q(\RR_+, B^{1-\frac{\gam+1}{p}}_{p,p}(\d\OO))\text{ with }g_1|_{t=0}=0\text{ if }\tfrac{1}{2}-\tfrac{1}{q}> \tfrac{\gam+1}{2p},
    \end{align*}
   there exists a unique solution $$u\in W^{2,q}(\RR_+; W^{k-2,p}(\OO, w_{\gam+kp}^{\d\OO}))\cap L^q(\RR_+; W^{k+2,p}(\OO, w_{\gam+kp}^{\d\OO}))$$ to \eqref{eq:plate_eq_intro}. Moreover, this solution $u$ depends continuously on the data $f$, $g_0$ and $g_1$. 
\end{theorem}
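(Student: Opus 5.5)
We plan to reduce the theorem to a maximal regularity statement for the associated first-order system and then combine a half-space analysis with localisation. Setting $v=\d_t u$, the plate equation becomes a mixed-order system for $U=(u,v)$ with the operator
\begin{equation*}
A=\begin{pmatrix} 0 & -1\\ \del^2 & -\rho\del\end{pmatrix},
\end{equation*}
which we consider on $X^k:=W^{k,p}(\OO,w^{\d\OO}_{\gam+kp})\times W^{k-2,p}(\OO,w^{\d\OO}_{\gam+kp})$, with domain $W^{k+2,p}\times W^{k,p}$ (same weight) and the Dirichlet--Neumann conditions on $u$. The target regularity of $u$ is exactly $u\in W^{1,q}(X^k\text{-first component})$ and $v\in W^{1,q}(\RR_+;W^{k-2,p})\cap L^q(\RR_+;W^{k,p})$. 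The key observation is that in the weight $w_{\gam+kp}$ the boundary traces are still governed by $\gam\in(-1,p-1)$, because for such weights $W^{k,p}(w_{\gam+kp})$ behaves like $W^{0,p}(w_\gam)$ near the boundary at the level of traces (the boundary data spaces in the statement do not depend on $k$). So the plan is: first remove the boundary data, then solve the homogeneous problem via maximal regularity.

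\textbf{Step 1 (half-space, homogeneous boundary data).} On $\RRdh$ we show that $A$ with homogeneous clamped conditions has a bounded $H^\infty$-calculus (or at least is $\mathcal R$-sectorial of angle $<\pi/2$) on $X^k$. For $k=2$, i.e.\ the base space $W^{2,p}(w_{\gam+2p})\times L^p(w_{\gam+2p})$, one uses the known result for $\del^2$ and $\del$ with Dirichlet conditions on weighted spaces (Hardy-type inequalities allow trading $w_{\gam}$-regularity against $w_{\gam+jp}$-lower-order regularity), and the factorisation of the symbol $\lambda^2+\rho\lambda|\xi|^2+|\xi|^4=(\lambda+\mu_1|\xi|^2)(\lambda+\mu_2|\xi|^2)$ into two heat-type factors, which reduces resolvent estimates to those for Dirichlet Laplacians with complex parameters. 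Higher $k$ is obtained by an induction: tangential derivatives commute with $A$, and normal derivatives are recovered from the equation, using that for $\gam+kp$ weights the $k$-th order spaces embed appropriately (Hardy inequality $\|u\|_{W^{j,p}(w_{\gam+jp})}\lesssim\|u\|_{W^{j+1,p}(w_{\gam+(j+1)p})}$ for functions with vanishing traces is not needed since $\gam+jp>p-1$ for $j\ge1$). Weis' theorem then yields maximal $L^q$-regularity.

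\textbf{Step 2 (inhomogeneous boundary data).} We construct an extension operator: given $(g_0,g_1)$ in the stated spaces, produce $\tilde u$ solving $\d_t^2\tilde u+\del^2\tilde u-\rho\del\d_t\tilde u\in L^q(W^{k-2,p}(w_{\gam+kp}))$ with the correct traces, e.g.\ via the Poisson-type solution operator of the parameter-elliptic problem $(\lambda+\mu_1(-\del))(\lambda+\mu_2(-\del))$ on the half-space, using the anisotropic trace theorem for weighted spaces (the Triebel--Lizorkin/Besov characterisation of traces of $W^{1,q}(W^{0,p}(w_\gam))\cap L^q(W^{4,p}(w_\gam))$-type spaces). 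Because weight exponent $\gam+kp$ is large, the extension with values in $W^{k+2,p}(w_{\gam+kp})$ only needs boundary regularity matching $k=2$, so the trace space is independent of $k$; the compatibility conditions $g_j|_{t=0}=0$ arise exactly when the time trace exists, and the excluded values $1-\frac1q,\frac12-\frac1q\neq\frac{\gam+1}{2p}$ are the critical cases. Then $u-\tilde u$ solves the homogeneous-boundary problem handled in Step 1.

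\textbf{Step 3 (bounded $C^{1,\kappa}$ domains).} Localise with a partition of unity and flatten the boundary via a $C^{1,\kappa}$ map; this is the step I expect to be the main obstacle, since the transformed operator has coefficients whose second derivatives blow up like $\dist(\cdot,\d\OO)^{\kappa-1}$ (using a regularised distance flattening). The condition $\gam>(1-\kappa)p-1$ is exactly what makes these rough perturbation terms small/compact in $w_{\gam+kp}$-weighted norms via Hardy inequalities: each loss of $1-\kappa$ derivatives is compensated by a factor $\dist^{\kappa-1}$ absorbed into the weight. After perturbation and lower-order terms are absorbed (for large shifts $\lambda$, then removed on the finite time interval/ with exponential stability on $\RR_+$ from invertibility of $A$ on a bounded domain, obtained via the compact embedding and injectivity of the clamped plate operator), uniqueness and continuous dependence follow from the maximal regularity isomorphism.
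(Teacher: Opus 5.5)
The main gap is in your Step 1, which is where the real work of the paper lies. Your base case asks for $R$-sectoriality of $A$ on $W^{2,p}(\mathbb{R}^d_+,w_{\gamma+2p})\times L^p(\mathbb{R}^d_+,w_{\gamma+2p})$, and the mechanism you describe cannot produce it.

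\emph{The factorisation does not decouple clamped conditions.} Writing $\lambda^2+\rho\lambda|\xi|^2+|\xi|^4=(\alpha_+\lambda+|\xi|^2)(\alpha_-\lambda+|\xi|^2)$ reduces the problem to two Dirichlet Laplacians only for Navier-type conditions $u=\Delta u=0$. Under the clamped conditions $u=\partial_1 u=0$, the intermediate function $(\alpha_-\lambda-\Delta)u$ has no boundary condition, so the two factors stay coupled through the boundary. This is exactly why the paper contrasts clamped with Navier conditions.

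\emph{Known results plus Hardy do not suffice either.} In unweighted and $A_p$-weighted spaces, $A$ is \emph{not} sectorial on $W^{2,p}\times L^p$ without the compatibility conditions $f_1=\partial_1 f_1=0$ (\cite{DD11}, \cite{DS15}). So an estimate in the non-$A_p$ weight $w_{\gamma+2p}$ is genuinely new.

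\emph{What the paper does instead.} Its base case is $k=0$. It proves the weighted version of \cite{DS15} on $W^{2,p}_0(w_\gamma)\times L^p(w_\gamma)$ for $\gamma\in(-1,p-1)$. It then computes the adjoint, whose domain is $L^{p'}(w_{\gamma'})\times W^{2,p'}_0(w_{\gamma'})$. This identifies the extrapolation space $E_{-1}$ with $L^p(w_\gamma)\times W^{-2,p}(w_\gamma)$, a negative-smoothness space in which the boundary conditions are invisible.

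Your induction in $k$ does not close either, because of the weights. Tangential derivatives preserve the weight exponent, whereas $\mathbb{X}^{k,p}_\gamma$ ties regularity $k$ to the weight $w_{\gamma+kp}$. For $f\in\mathbb{X}^{k+1,p}_\gamma$ you only get $\partial_j f\in W^{k,p}(w_{\gamma+(k+1)p})\times W^{k-2,p}(w_{\gamma+(k+1)p})$, which is not contained in $\mathbb{X}^{k,p}_\gamma$. Hardy's inequality only moves you along the off-diagonal spaces $W^{m,p}(w_{\gamma+(m+1)p})$, never back onto the scale, and normal derivatives recovered from the equation suffer the same mismatch.

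The missing idea is to apply $M\partial^\alpha$ with $|\alpha|\le 1$, where $M$ is multiplication by $x_1$:
\begin{enumerate}[(i)]
\item $M$ maps $W^{\ell,p}_0(w_{\gamma+(k+1)p})$ isomorphically onto $W^{\ell,p}_0(w_{\gamma+kp})$;
\item $\sum_{|\alpha|\le 1}\|M\partial^\alpha u\|_{W^{k,p}(w_{\gamma+kp})}$ is an equivalent norm on $W^{k+1,p}(w_{\gamma+(k+1)p})$;
\item $M\partial^\alpha u_1$ still satisfies $u=\partial_1 u=0$;
\item the commutators $[\Delta,M\partial^\alpha]=2\partial_1\partial^\alpha$ and $[\Delta^2,M\partial^\alpha]=4\partial_1\partial^\alpha\Delta$ are controlled by the induction hypothesis.
\end{enumerate}
This is first done for $C^\infty_{\mathrm{c}}$ data, with a separate argument that the solution is then Schwartz, and extended by density.

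Two smaller points concern Step 3. First, removing the shift on $\mathbb{R}_+$ requires $\sigma(A)\subseteq\{\operatorname{Re}\lambda>0\}$, not merely $0\in\rho(A)$ from injectivity of the clamped biharmonic operator. The paper obtains this from an energy identity in $W^{2,2}_0(\mathcal{O})\times L^2(\mathcal{O})$, transferred to the weighted realisation by consistency of resolvents. Second, $\gamma>(1-\kappa)p-1$ is not what makes the localisation perturbations small. The paper bounds those using only $\mathrm{dist}^{-(|\nu|-1)}$ blow-up, Hardy's inequality in $w_{\gamma+kp}$, and smallness of the chart norms $[\mathcal{O}_n]$. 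The condition is needed instead to identify $D(A)$, the interpolation spaces and the boundary trace spaces through the pullback.
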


To prove Theorem \ref{thm:intro_plateeq}, we rewrite \eqref{eq:plate_eq_intro} as a first-order system  for $v= (u,\partial_t u)^\top$, i.e.,
\begin{equation}\label{eq:Cauchy-Intro}
    \d_t v + A v = \binom 0 f,\quad \text{ with }A=\begin{pmatrix}
        0& -\id\\ \del^2& -\rho \del
    \end{pmatrix}.
\end{equation}
We establish results on sectoriality and $R$-sectoriality of the operator matrix $A$. With this operator-theoretic approach, in the first step, we obtain unique solvability and maximal regularity results for \eqref{eq:plate_eq_intro} with homogeneous boundary conditions  $u=\partial_{\vec n}u=0$. In a second step, we include inhomogeneous boundary data, using trace results for weighted spaces obtained in \cite{DR25}. The theory for temporal traces in the weighted setting is delicate and therefore we only consider zero initial values in \eqref{eq:plate_eq_intro}.

The main novelties of Theorem \ref{thm:intro_plateeq} can be summarised as follows.
\begin{enumerate}[(i)]
\item \emph{Fewer compatibility conditions for the data}. Due to the introduction of a spatial power weight, there is no need to include additional conditions of compatibility type on the right-hand side of \eqref{eq:plate_eq_intro}. We discuss this in more detail below.
\item \emph{Higher-order regularity}. We establish maximal regularity results in weighted Sobolev spaces of arbitrarily high smoothness.
\item \emph{Rough domains}. The theory applies to bounded $C^{1,\kappa}$-domains, independent of the regularity order $k$. In this sense, one can speak of a rough domain. 
\item \emph{Rough boundary data}. We obtain higher-order regularity of the solution in weighted spaces, even if the boundary data is in the standard low-regularity trace spaces. In this sense, we can also handle rough boundary data.
\end{enumerate}

It is remarkable that, due to the introduction of a spatial power weight, we can give a fairly complete well-posedness and regularity theory for \eqref{eq:plate_eq_intro} that avoids additional unnatural compatibility conditions as encountered in \cite{DS15}. It was shown in \cite{DS15} that the operator $A$ in \eqref{eq:Cauchy-Intro} is $R$-sectorial in the basic space $W^{2,p}_0(\RRdh)\times L^p(\RRdh)$, but \emph{not} in the space $W^{2,p}(\RRdh)\times L^p(\RRdh)$. Therefore, one has to include the additional boundary conditions $h_1=\partial_{\vec n} h_1 =0$ on $\partial\RRdh$ for the first component of the  right-hand side of the equation $(\lambda-A)v=h$. It was shown in \cite{DD11} that such compatibility conditions are in general necessary for the sectoriality in unweighted spaces. For higher-order regularity results, even more compatibility conditions on the data have to be assumed, see \cite[Theorem~3.6]{DD11}. We can avoid any compatibility conditions on the data, independent of the smoothness order of the solution, by introducing weighted Sobolev spaces. We obtain $R$-sectoriality of $A$ in the basic space \[ \XX^{k,p}_\gam(\RRdh) := W^{k,p}(\RRdh,w_{\gam+kp})\times W^{k-2,p}(\RRdh, w_{\gam+kp}),\]
where $p\in (1,\infty)$, $\gam\in (-1,p-1)$, $k\in\N_0$ and $w_{\gam+kp}=\dist(\cdot, \d\RRdh)^{\gam+kp}$, see Theorem~\ref{thm:Rsect_dom}.  In particular, setting $k=2$, we can replace  the basic space $W^{2,p}_0(\RRdh)\times L^p(\RRdh)$ from \cite{DS15} by the weighted space $W^{2,p}(\RRdh,w_{\gam+2p})\times L^p(\RRdh, w_{\gam+2p})$. The advantage of the weighted space $W^{2,p}(\RRdh,w_{\gam+2p})$ is that the Dirichlet and Neumann trace operators do not exist, in this way omitting the compatibility conditions.

We obtain $R$-sectoriality of $A$ on $\XX^{k,p}_\gam(\RRdh)$ by the following method. We start with $R$-sectoriality of $A$ on the space $W^{2,p}_0(\RRdh, w_\gam)\times L^p(\RRdh, w_\gam)$ for $\gam\in (-1,p-1)$, which can be obtained by extending the results in \cite{DS15} to the weighted setting. Then we lower the smoothness of the spaces by extrapolating the 
operator in the sense of Amann's interpolation-extrapolation scales (see Section~\ref{subsec:int_ext} for details on these spaces). For the extrapolated 
operator on $L^p(\RRdh, w_\gam) \times W^{-2,p}(\RRdh, w_\gam)$, due to the low regularity, the boundary conditions are not seen, so there are no compatibility conditions needed. This proves $R$-sectoriality on $\XX^{0,p}_\gam(\RRdh)$.

Afterwards, with an induction argument on $k$, we raise the smoothness and the weight with a similar technique as developed in \cite{LLRV24} which essentially relies on Hardy's inequality (Lemma \ref{lem:Hardy}). In this way, we can get $R$-sectoriality in weighted spaces $\XX^{k,p}_\gam(\RRdh)$ of arbitrary smoothness $k\in \NN_0$ without artificial compatibility conditions.\\ 

Once the $R$-sectoriality is obtained on $\RRdh$, a localisation argument allows us to treat the case of bounded domains. For the localisation, we make use of a special diffeomorphism known as the \emph{Dahlberg--Kenig--Stein pullback}, which straightens the boundary and preserves the distance to the boundary. The blow-up of higher-order derivatives of this pullback near the boundary are compensated by the weights in the space. Therefore, we can establish the results for domains with $C^{1,\kappa}$-boundary, even for higher-order regularity. Note that even for the standard regularity $k=2$, the classical localisation would require a domain of class $C^4$, as can be seen from \cite[Section~5]{DS15}. The Dahlberg--Kenig--Stein pullback was already used in \cite{LLRV25} to obtain higher-order regularity for the Laplacian on rough domains. \\

The operator $A$ in \eqref{eq:Cauchy-Intro} is an example of a mixed-order system, also called a Douglis--Nirenberg system as considered in \cite{ADN59, ADN64}. In general, semigroup theory for mixed-order boundary value problems is not yet available. For a general mixed-order system in the whole space, the Newton polygon theory developed in, e.g.,  \cite{DK13,Denk-Saal-Seiler08,Gindikin-Volevich92} leads to equivalent conditions for maximal regularity. For mixed-order boundary value problems, the Newton polygon approach is known only in special cases, see, e.g., \cite{Denk-Volevich02} and for some recent results, see \cite{Neuttiens-Sauer25}. One of the difficulties for mixed-order systems lies in the fact that one cannot take a product of $L^p$-spaces as a basic space, due to the Douglis--Nirenberg order structure of the system. Instead, the basic space necessarily is a tuple of Sobolev spaces, where now the question of compatibility conditions arises, similarly to the discussion above. With the present paper, we hope to provide a first step for a maximal regularity theory for mixed-order systems in weighted spaces, omitting compatibility conditions.

\subsection*{Outline} The paper is organised as follows. In Section \ref{sec:prelim}, we introduce notation and recall the necessary background on \(R\)-sectoriality, maximal regularity, and interpolation-extrapolation scales. Section \ref{sec:spaces} is devoted to weighted Sobolev spaces with power weights on the half-space and on rough domains. In Sections \ref{sec:Rsect} and \ref{sec:Rsect_domains}, we establish \(R\)-sectoriality of the elliptic operator associated with the damped plate equation on the half-space and on bounded domains, respectively. Finally, in Section \ref{sec:MR}, we apply these results to prove well-posedness and higher-order maximal regularity for the structurally damped plate equation with inhomogeneous boundary conditions.

\section{Preliminaries}\label{sec:prelim}

\subsection{Notation}
We denote by $\NN_0$ and $\NN_1$ the sets of natural numbers starting at $0$ and $1$, respectively. For $a\in\RR$, we use the notation $(a)_+=a$ if $a\geq 0$ and $(a)_+=0$ otherwise.

For $d\in\NN_1$ the half-space is given by $\RRdh=\RR_+\times\RR^{d-1}$, where $\RR_+=(0,\infty)$ and for $x\in \RRdh$ we write $x=(x_1,\tilde{x})$ with $x_1\in \RR_+$ and $\tilde{x}\in \RR^{d-1}$. For $\gam\in\RR$, $\OO\subseteq \RR^d$ open and $x\in\OO$ we define the power weight $w_{\gam}^{\d\OO}(x):=\mrm{dist}(x,\d\OO)^\gam$. For notational convenience, we write $w_{\gam}:=w_{\gam}^{\smash{\d\RRdh}}$.

For two topological vector spaces $X$ and $Y$, the space of continuous linear operators is denoted by $\mc{L}(X,Y)$ and $\mc{L}(X):=\mc{L}(X,X)$. Unless specified otherwise, $X$ will always denote a Banach space with norm $\|\cdot\|_X$ and the dual space is $X':=\mc{L}(X,\CC)$.

For a linear operator $A:X\supseteq D(A)\to X$ on a Banach space $X$, we denote by $\sigma(A)$ and $\rho(A)$  the spectrum and resolvent set, respectively. For $\lambda\in\rho(A)$, the resolvent operator is given by $R(\lambda,A)=(\lambda-A)^{-1}\in \mc{L}(X)$.

We write $f\lesssim g$ (resp. $f\gtrsim g$) if there exists a constant $C>0$, possibly depending on parameters which will be clear from the context or will be specified in the text, such that $f\leq Cg$ (resp. $f\geq Cg$). Furthermore, $f\eqsim g$ means $f\lesssim g$ and $g\lesssim  f$.\\

For an open and non-empty set $\OO\subseteq \RR^d$ and $\ell\in\NN_0\cup\{\infty\}$, the space $C^\ell(\OO;X)$ denotes the space of $\ell$-times continuously differentiable functions from $\OO$ to some Banach space $X$. As usual, this space is equipped with the compact-open topology. In the case $\ell=0$ we write $C(\OO;X)$ for $C^0(\OO;X)$. Furthermore, we write $C^\ell_\b(\OO;X)$ for the space of all functions $f\in C^\ell(\OO;X)$ such that $\d^{\alpha} f$ is bounded on $\OO$ for all multi-indices $\alpha\in \NN_0^d$ with $|\alpha|\leq \ell$. 

Let $\Cc^{\infty}(\OO;X)$ be the space of compactly supported smooth functions on $\OO$ equipped with its usual inductive limit topology. The space of $X$-valued distributions is given by $\mc{D}'(\OO;X):=\mc{L}(\Cc^{\infty}(\OO);X)$. Moreover, $\Cc^{\infty}(\overline{\OO};X)$ is the space of smooth functions with their support in a compact set contained in $\overline{\OO}$.

We denote the Schwartz space by $\SS(\RRd;X)$, and $\SS'(\RRd;X):=\mc{L}(\SS(\RRd);X)$ is the space of $X$-valued tempered distributions. For $\OO\subseteq \RR^d$ we define $\SS(\OO;X):=\{u|_{\OO}: u\in\SS(\RRd;X)\}$. For $f\in \SS(\RRd;X)$ we define the $d$-dimensional Fourier transform $(\mc{F} f)(\xi):=\hat{f}(\xi):=\int_{\RRd} f(x)e^{-\ii x\cdot\xi}\dd x$ for $\xi\in \RR^d$, which extends to $\SS'(\RRd;X)$ by duality.

Finally, for $\theta\in(0,1)$ and a compatible couple $(X,Y)$ of Banach spaces, the complex interpolation space is denoted by $[X,Y]_{\theta}$.

\subsection{\texorpdfstring{$R$-}{R-}sectoriality and maximal regularity}\label{subsec:prelim-Rsect}
In this section, we recall the tools from maximal regularity theory required for proving our results about the damped plate equation. For a detailed study of $R$-sectoriality and maximal regularity, we refer to \cite{DHP03, HNVW24, KW04}.

\subsubsection{\texorpdfstring{$R$-}{R-}sectoriality}
Throughout this section, $X$ and $Y$ are Banach spaces. Let $(\Omega, \mathfrak{F}, \mathbb{P})$ be a probability space. Then  $\eps : \Omega \to \CC$ is called a \emph{Rademacher variable} if the random variable $\eps$ is uniformly distributed over $\{z\in \CC: |z|=1\}$. A \emph{Rademacher sequence $(\eps_n)_{n\geq 1}$} is a sequence that consists of independent Rademacher variables $\eps_n$. For $N\in\NN_1$ we define the \emph{random sequence space $\eps_N^2(X)$} as the Banach space of finite sequences $(x_n)_{n=1}^N$ in $X$, endowed with the norm
\begin{equation*}
  \big\|(x_n)_{n=1}^N\big\|_{\eps_N^2(X)}:= \Big\|\sum_{n=1}^N \eps_n x_n\Big\|_{L^2(\Omega; X)}.
\end{equation*}
A family of bounded operators $\mc{T}\subseteq \mc{L}(X,Y)$ is called \emph{$R$-bounded} if there exists a constant $C>0$ such that for all $N\in\NN_1$, $(T_n)_{n=1}^N\subseteq \mc{T}$ and $(x_n)_{n=1}^N\subseteq X$, we have
\begin{equation*}
  \Big\|\sum_{n=1}^N\eps_n T_n x_n\Big\|_{L^2(\Om; Y)}\leq C \Big\|\sum_{n=1}^N \eps_n x_n\Big\|_{L^2(\Om; X)}.
\end{equation*}
In particular, we note that $R$-boundedness implies uniform boundedness, while the converse only holds under more geometric restrictions on the Banach spaces $X$ and $Y$ (for instance if $X$ and $Y$ are Hilbert spaces). Moreover, any singleton operator $\{T\}\subseteq \mc{L}(X,Y)$ is $R$-bounded.\\

For $\om\in(0,\pi)$, let $\Sigma_\om=\{z\in \CC\setminus\{0\}: |\arg(z)|< \om\}$ be the sector in the complex plane.
\begin{definition}[($R$-)sectoriality]\hspace{2em}
\begin{enumerate}[(i)]
  \item An injective, closed linear operator $(A, D(A))$ with dense domain and dense range on a Banach space $X$ is called \emph{sectorial} if there exists an $\om\in(0,\pi)$ such that $\sigma(A)\subseteq \overline{\Sigma}_\om$ and 
  \begin{equation*}
    \{zR(z,A): z\in \CC\setminus \overline{\Sigma}_{\om}\}
  \end{equation*}
  is bounded. The angle of sectoriality $\om(A)$ is defined as the infimum over all possible $\om$.
  \item A sectorial operator $(A, D(A))$ is called \emph{$R$-sectorial} if for some $\sigma\in (\om(A),\pi)$ the family
  \begin{equation*}
    \{zR(z,A): z\in \CC\setminus \overline{\Sigma}_{\sigma}\}
  \end{equation*}
  is $R$-bounded. The angle of $R$-sectoriality $\om_R(A)$ is defined as the infimum over all possible $\sigma$. 
\end{enumerate}
\end{definition}

Occasionally, we will also use the notions of \emph{bounded $\Hinf$-calculus} and \emph{bounded imaginary powers} ($\BIP$), see \cite[Chapter 10]{HNVW17} and \cite[Chapter 15]{HNVW24}. In particular, if $X$ is a $\UMD$ Banach space, then a bounded $\Hinf$-calculus implies bounded imaginary powers, which again implies $R$-sectoriality. We recall that $X$ is a $\UMD$ (unconditional martingale differences) Banach space if and only if the Hilbert transform extends to a bounded operator on $L^p(\RR;X)$ for all $p\in(1,\infty)$. The $\UMD$ property is a necessary condition for many results on vector-valued spaces, see, e.g., \cite{HNVW16, HNVW24}. We list the following relevant properties of $\UMD$ spaces, see \cite[Chapter 4 \& 5]{HNVW16}.
\begin{enumerate}[(i)]
  \item Hilbert spaces are $\UMD$ spaces. In particular, $\CC$ is a $\UMD$ Banach space.
  \item For $p\in(1,\infty)$, $(S, \Sigma,\mu)$ a $\sigma$-finite measure space and $X$ a $\UMD$ Banach space, $L^p(S;X)$ is a $\UMD$ Banach space.
      \item $\UMD$ Banach spaces are reflexive.
\end{enumerate}

\subsubsection{Maximal regularity}
Let $T\in(0, \infty] $ and define the time interval $I:=(0,T)$. Let $A$ be a linear operator with domain $D(A)$ on a Banach space $X$. For $f\in L^1_\loc(\overline{I};X)$ consider the abstract Cauchy problem
\begin{equation}\label{eq:Cauchy}
  \begin{aligned}
  \d_t u(t)+A u(t)&=f(t),\qquad  t\in I, \\
    u(0)&=0.
  \end{aligned}
\end{equation}
We call a strongly measurable function $u:I\to X$ a \emph{strong solution to \eqref{eq:Cauchy}} if $u$ takes values in $D(A)$ almost everywhere, $Au\in L^1_\loc(\overline{I};X)$ and $u$ solves
\begin{equation*}
  u(t)+\int_0^t Au(s)\dd s=\int_0^t f(s)\dd s, \quad \text{ for almost all }t\in I.
\end{equation*}
Moreover, for $q\in (1,\infty)$, $v\in A_q(I)$ and $f\in L^q(I,v;X)$, a strong solution $u$ to \eqref{eq:Cauchy} is called an \emph{$L^q(v)$-solution} if $Au \in L^q(I,v;X)$. Here, $v$ is a time weight that belongs to the class of \emph{Muckenhoupt weights $A_q(I)$}, see Section \ref{sec:spaces}.

\begin{definition}[Maximal $L^q(v)$-regularity]
  A linear operator $A$ on a Banach space $X$ has \emph{maximal $L^q(v)$-regularity on $I$} if for all $f\in L^q(I,v;X)$ the Cauchy problem \eqref{eq:Cauchy} has a unique $L^q(v)$-solution $u$ on $I$ and
  \begin{equation*}
    \|Au\|_{L^q(I,v;X)}\leq C\|f\|_{L^q(I,v;X)},
  \end{equation*}
  where the constant $C$ is independent of $f$.
\end{definition}

The following sufficient condition for maximal regularity in terms of $R$-sectoriality holds: if $X$ is a $\UMD$ Banach space and $\lambda + A$ is $R$-sectorial with angle $\om_R(\lambda+ A)< \frac{\pi}{2}$ for some $\lambda$ large enough, then $A$ has maximal $L^q(v)$-regularity on $(0,T)$ for any $T<\infty$, $q\in(1,\infty)$ and $v\in A_q(I)$. If $\lambda=0$, then we obtain maximal $L^q(v)$-regularity on $\RR_+$ as well. This statement follows from combining the results in \cite[Theorems 17.3.1(2), 17.2.26(1) \& 17.2.39]{HNVW24}.

\subsection{Interpolation-extrapolation spaces}\label{subsec:int_ext} We recall the theory of interpolation-extra\-polation scales. For a detailed study, we refer to \cite[Chapter V]{Am95} or \cite[Section 6.3]{Ha06}.

Let $A$ be a sectorial operator on a Banach space $X$ that satisfies $0\in \rho(A)$. Hence, $(X, \|A^{-1}\cdot \|_X)$ is a normed space and we can define the extrapolated space
\begin{equation*}
  E_{-1,A}: = (X, \|A^{-1}\cdot \|_X)^{\sim},
\end{equation*}
where $\sim$ denotes the completion of the space. Since $X\hookrightarrow E_{-1,A}$ and $\overline{D(A)}=X$, we have that $D(A)\ni x\mapsto Ax\in X $ extends to an isometric isomorphism $T_{-1,A}$ from $X$ to $E_{-1,A}$. The operator $A$ induces a closed linear operator $A_{-1}:= T_{-1,A} A T_{-1,A}^{-1}$ which satisfies $A_{-1}|_{D(A)}=A$. Since ($R$-)sectoriality and boundedness of the $\Hinf$-calculus are preserved under similarity transforms, we obtain the following result.
\begin{proposition}\label{prop:int_ext_properties}
  Let $A$ be a sectorial operator on a Banach space $X$ that satisfies $0\in \rho(A)$. Then $A_{-1}$ with $D(A_{-1})=X$ is the closure of $A$ in $E_{-1,A}$. Moreover, the following statements hold:
  \begin{enumerate}[(i)]
    \item $A_{-1}$ on $E_{-1,A}$ is sectorial with angle $\om(A_{-1})=\om(A)$,
    \item if $A$ is $R$-sectorial on $X$, then so is $A_{-1}$ on $E_{-1,A}$ and $\om_{R}(A)=\om_{R}(A_{-1})$,
    \item if $A$ has a bounded $\Hinf$-calculus on $X$, then so does $A_{-1}$ on $E_{-1,A}$ and $\om_{\Hinf}(A)=\om_{\Hinf}(A_{-1})$.
  \end{enumerate}
\end{proposition}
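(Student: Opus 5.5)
The plan is to show that $A_{-1}$ is similar to $A$ via the isometric isomorphism $T:=T_{-1,A}\colon X\to E_{-1,A}$, and then transfer all three properties along this similarity. First I would verify that $T$ is well defined. For $x\in D(A)$ we have $\|Ax\|_{E_{-1,A}}=\|A^{-1}Ax\|_X=\|x\|_X$, so $A\colon (D(A),\|\cdot\|_X)\to (AD(A),\|\cdot\|_{E_{-1,A}})$ is isometric. Since $0\in\rho(A)$, we have $AD(A)=X$, which is dense in $E_{-1,A}$ by construction, and $D(A)$ is dense in $X$. Hence $A$ extends uniquely to an isometric isomorphism $T$ from $X$ onto $E_{-1,A}$.

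Next I would define $A_{-1}:=TAT^{-1}$ with domain $T(D(A))$ and identify this domain with $X$. For $y\in X$, put $x:=A^{-1}y\in D(A)$; then $Tx=Ax=y$. So $T(D(A))=X$ and, for $y\in D(A)$, $A_{-1}y=TAA^{-1}y=Ty=Ay$. Thus $A_{-1}$ extends $A$, and it maps $X$ onto $E_{-1,A}$.

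To see that $A_{-1}$ is the closure of $A$ in $E_{-1,A}$, I would use that the graph norm of $A_{-1}$ on $X$ is $\|y\|_{E_{-1,A}}+\|Ty\|_{E_{-1,A}}\eqsim \|y\|_X$. The graph norm is at least $\|Ty\|_{E_{-1,A}}=\|y\|_X$, and it is at most a constant times $\|y\|_X$ because $\|y\|_{E_{-1,A}}=\|A^{-1}y\|_X\le\|A^{-1}\|\,\|y\|_X$. Since $D(A)$ is dense in $X$ with respect to $\|\cdot\|_X$, the graph of $A$ is dense in the graph of $A_{-1}$. Moreover, $A_{-1}$ is closed, being similar to the closed operator $A$. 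Together these show that $A_{-1}=\overline{A}$.

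Finally, I would transfer the properties. Similarity gives $\sigma(A_{-1})=\sigma(A)$ and $R(z,A_{-1})=TR(z,A)T^{-1}$. Since $T$ is an isometric isomorphism, the family $\{zR(z,A_{-1})\}$ is bounded, respectively $R$-bounded, with the same constants as $\{zR(z,A)\}$; this settles (i) and (ii). For (ii), $R$-bounds are preserved because $\|\sum_n\eps_n Tx_n\|=\|\sum_n\eps_n x_n\|$. Injectivity, dense domain and dense range also transfer under $T$; note that the range of $A_{-1}$ is all of $E_{-1,A}$. For (iii), I would use that $f(A_{-1})=Tf(A)T^{-1}$, which follows from the Dunford integral representation for $f\in H^\infty_0$ and then from the extension to $H^\infty$. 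In every case the angles coincide because the estimates are identical for each sector. I do not expect a real obstacle; the only point requiring care is the density argument showing that $T$ exists and that $A_{-1}$ is precisely the closure of $A$.
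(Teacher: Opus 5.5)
Your proposal is correct and takes the same route as the paper: the paper notes that $A_{-1}=T_{-1,A}AT_{-1,A}^{-1}$ is a similarity transform of $A$ by the isometric isomorphism $T_{-1,A}$, and that sectoriality, $R$-sectoriality and a bounded $\Hinf$-calculus, together with their angles, are invariant under similarity. You also supply the details the paper leaves implicit, namely surjectivity of $T$, the identities $D(A_{-1})=X$ and $A_{-1}=T|_X$, and the closure statement via the equivalence of the graph norm with $\|\cdot\|_X$, and these all check out.
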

Proposition \ref{prop:int_ext_properties} shows that for $\alpha>0$ the fractional powers $(A_{-1})^{\alpha}$ are well-defined closed operators on $E_{-1,A}$ (see, e.g., \cite[Definition~15.2.2]{HNVW24}). We denote the domain of $(A_{-1})^{\alpha}$ as
\begin{equation*}
  E_{-1+\alpha, A}:= \big(D((A_{-1})^\alpha), \|(A_{-1})^{\alpha}\cdot\|_{E_{-1,A}}\big),\qquad \alpha\geq 0,
\end{equation*}
and note that $E_{0,A}=X$ by Proposition \ref{prop:int_ext_properties}. 

Let $\alpha\geq -1$ and let $A_{\alpha}$ be the realisation of $A_{-1}$ on $E_{\alpha,A}$, i.e,
\begin{align*}
  D(A_{\alpha}) & := \{x\in E_{\alpha,A} : A_{-1}x\in E_{\alpha,A}\},\\
  A_{\alpha} x& := A_{-1} x \quad \text{ for }x\in D(A_{\alpha}),
\end{align*}
and note that $A_{0}=A$. The scale $(E_{\alpha,A}, A_{\alpha})_{\alpha\geq-1}$ is called the \emph{interpolation-extrapolation scale of $A$} and forms an interpolation scale with respect to complex interpolation.
\begin{proposition}[{\cite[Theorem 6.6.9]{Ha06}}]\label{prop:Ext_compl_int}
  Let $A$ be a sectorial operator on a Banach space $X$ that satisfies $0\in \rho(A)$. Furthermore, assume that $A$ has $\BIP$. Then 
  \begin{equation*}
    E_{\alpha(1-\theta)+\beta\theta,A} = \big[E_{\alpha,A}, E_{\beta,A}\big]_\theta, \qquad \alpha,\beta\geq -1, \, \theta\in(0,1).
  \end{equation*}
\end{proposition}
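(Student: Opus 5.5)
The plan is to reduce the statement to the standard theory of complex interpolation of domains of fractional powers, by transferring everything to the base space $X=E_{0,A}$ via powers of $A_{-1}$. Since $A$ has $\BIP$, $0\in\rho(A)$ and $A$ is sectorial, the operator $A_{-1}$ on $E_{-1,A}$ is sectorial with $0\in\rho(A_{-1})$ by Proposition~\ref{prop:int_ext_properties}. It is similar to $A$ via the isometric isomorphism $T_{-1,A}$, so $A_{-1}$ also has $\BIP$ with the same bound. Similarity transforms preserve bounded imaginary powers, exactly as stated there for sectoriality and the $\Hinf$-calculus. By definition, $E_{\alpha,A}=D((A_{-1})^{1+\alpha})$ with norm $\|(A_{-1})^{1+\alpha}\cdot\|_{E_{-1,A}}$ for $\alpha\ge -1$.

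The key classical result I would invoke is the following: if $B$ is a sectorial operator with $0\in\rho(B)$ and $\BIP$ on a Banach space $Y$, then for $0\le a<b$ and $\theta\in(0,1)$
\[
[D(B^{a}),D(B^{b})]_\theta = D(B^{a(1-\theta)+b\theta}),
\]
with equivalent norms. This is Triebel's/Yagi's theorem, and the case $a=0$ is the standard one. I would apply it with $B=A_{-1}$, $Y=E_{-1,A}$, $a=1+\min(\alpha,\beta)$ and $b=1+\max(\alpha,\beta)$. Symmetry of complex interpolation, $[X_0,X_1]_\theta=[X_1,X_0]_{1-\theta}$, handles the ordering of $\alpha$ and $\beta$. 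The case $\alpha=\beta$ is trivial. Since $a(1-\theta)+b\theta-1$ equals the required index $\alpha(1-\theta)+\beta\theta$, this gives the claim.

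If only the case $a=0$ is taken as known, I would reduce to it with the isomorphism $(A_{-1})^{a}\colon D((A_{-1})^{a})\to E_{-1,A}$. It maps $D((A_{-1})^{b})$ isomorphically onto $D((A_{-1})^{b-a})$, which follows from the semigroup law for fractional powers of an invertible sectorial operator. Because complex interpolation commutes with isomorphisms of compatible couples, we get
\[
[D(B^a),D(B^b)]_\theta = B^{-a}[Y,D(B^{b-a})]_\theta = B^{-a}D(B^{\theta(b-a)}) = D(B^{a+\theta(b-a)}).
\]

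The main obstacle is the $a=0$ case itself, namely $[Y,D(B^{c})]_\theta=D(B^{\theta c})$. This is where $\BIP$ is essential. For the inclusion $\supseteq$, given $x\in D(B^{\theta c})$, I would use the analytic function $f(z)=e^{(z-\theta)^2}B^{c(\theta-z)}B^{\theta c}x$ on the strip. Its boundary values are controlled by $\sup_{|s|\le 1}\|B^{is}\|$-type growth bounds $\|B^{is}\|\le Me^{\omega|s|}$, which the Gaussian factor absorbs. For $\subseteq$, given $f$ in the interpolation class with $f(\theta)=x$, I would consider $g(z)=e^{(z-\theta)^2}B^{cz}f(z)$, suitably regularised by resolvents to make sense of it. This shows $B^{c\theta}x$ is bounded by the boundary values, again using the imaginary-power bounds. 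These are routine but technical, and I would cite \cite[Theorem 6.6.9]{Ha06} or the corresponding result in \cite{HNVW24} rather than redo them.
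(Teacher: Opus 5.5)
Your reduction is correct and rests on the same ingredient as the paper, which gives no argument beyond citing \cite[Theorem 6.6.9]{Ha06}. Transferring $\BIP$ to $A_{-1}$ on $E_{-1,A}$ by similarity, and using the isometric isomorphism $(A_{-1})^{a}$ of the couple $(E_{a-1,A},E_{b-1,A})$ onto $(E_{-1,A},E_{b-a-1,A})$, is a sound way to reduce to the classical statement $[Y,D(B^c)]_\theta=D(B^{\theta c})$.

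There is one slip in your sketch of that base case. The function should be $f(z)=e^{(z-\theta)^2}B^{-cz}B^{\theta c}x$, so that $f(\theta)=x$ and $B^cf(1+\ii t)=e^{(1+\ii t-\theta)^2}B^{-\ii ct}B^{\theta c}x$. Your choice $B^{c(\theta-z)}B^{\theta c}x$ gives $f(\theta)=B^{\theta c}x$, and it would require $x\in D(B^{2\theta c})$ on the line $\Re z=1$.
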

Furthermore, the following characterisation of the extrapolated spaces holds.
\begin{proposition}[{\cite[Theorem V.1.4.12]{Am95}}]\label{prop:ext_adjoint}
  Let $A$ be a sectorial operator on a reflexive Banach space $X$ that satisfies $0\in \rho(A)$,
  and let $A'$ be the adjoint operator in $X'$. Then, for all $\theta \in (0,1)$ we have
  $E_{-\theta,A} = (E_{\theta, A'})'$
  with respect to the duality induced by $(X, X')$, where
  \begin{equation*}
    E_{\theta, A'} =\big(D((A')^\theta), \|(A')^\theta\cdot\|_{X'}\big).
  \end{equation*}
\end{proposition}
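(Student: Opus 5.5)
The plan is to use the duality between $X$ and $X'$ together with the fact that $A$ and $A'$ have compatible fractional powers, and to identify the norms on both sides. Since $X$ is reflexive and $A$ is sectorial with $0\in\rho(A)$, the adjoint $A'$ is again sectorial on $X'$ with $0\in\rho(A')$; it has dense domain and dense range by reflexivity. Its fractional powers satisfy $((A')^{\theta}) = (A^{\theta})'$ for $\theta\in(0,1)$. To see this, I would use the Balakrishnan representation
\[
A^{-\theta} = \frac{\sin\pi\theta}{\pi}\int_0^\infty t^{-\theta}(t+A)^{-1}\dd t .
\]
Taking adjoints under the integral and using $((t+A)^{-1})'=(t+A')^{-1}$ gives $(A^{-\theta})'=(A')^{-\theta}$, and inverting yields the claim. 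In particular $E_{\theta,A'}=D((A')^\theta)$ with norm $\|(A')^\theta\cdot\|_{X'}$ is a Banach space, and $(A')^{-\theta}\colon X'\to E_{\theta,A'}$ is an isometric isomorphism.

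Next I would identify $E_{-\theta,A}$ concretely. By the construction in Section~\ref{subsec:int_ext} applied with $A^\theta$ in place of $A$, $E_{-\theta,A}$ is the completion of $X$ under $\|A^{-\theta}\cdot\|_X$. Consistency of the scale needs to be checked: that $E_{-\theta,A}=D((A_{-1})^{1-\theta})$ with norm $\|(A_{-1})^{1-\theta}\cdot\|_{E_{-1,A}}$ coincides with this completion. This follows since $(A_{-1})^{1-\theta}$ restricted to $X$ is similar to $A^{-\theta}$ via $T_{-1,A}$, and $X$ is dense in that domain.

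Now define, for $x\in X$, the functional $\Phi_x(y'):=\langle x,y'\rangle$ on $E_{\theta,A'}$. Writing $y'=(A')^{-\theta}z'$ gives
\[
\Phi_x(y')=\langle A^{-\theta}x, z'\rangle ,
\]
so that
\[
\|\Phi_x\|_{(E_{\theta,A'})'}=\sup_{\|z'\|_{X'}\le1}|\langle A^{-\theta}x,z'\rangle|=\|A^{-\theta}x\|_X
\]
by Hahn--Banach. Hence $x\mapsto\Phi_x$ is an isometry from $(X,\|A^{-\theta}\cdot\|_X)$ into $(E_{\theta,A'})'$, and it extends to an isometry $E_{-\theta,A}\to (E_{\theta,A'})'$ compatible with the $(X,X')$ pairing.

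The main obstacle is surjectivity, equivalently density of the image. The space $(E_{\theta,A'})'$ is isometric to $X''=X$ via $\ell\mapsto \ell\circ(A')^{-\theta}$, and $\Phi_x$ corresponds to $A^{-\theta}x$ under this identification. The image is therefore isometric to the range of $A^{-\theta}$, which is $D(A^\theta)$. This range is dense in $X$ because $A^\theta$ is densely defined, or because $D(A)\subseteq D(A^\theta)$ is dense. An isometry with dense range from the completion is onto, so the extension is an isometric isomorphism. This gives $E_{-\theta,A}=(E_{\theta,A'})'$ with respect to the $(X,X')$ duality.
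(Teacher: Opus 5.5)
Your proof is correct. The paper gives no argument of its own here; it only cites Amann's duality theorem for the scale generated by $(X,A)$ and its dual scale generated by $(X',A')$, so your proposal serves as a self-contained alternative.

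Your route has three steps:
\begin{itemize}
\item You derive $(A^{-\theta})'=(A')^{-\theta}$ from the Balakrishnan integral, which converges in operator norm because $0\in\rho(A)$.
\item You get the isometry from $(X,\|A^{-\theta}\cdot\|_X)$ into $(E_{\theta,A'})'$ from $\langle x,(A')^{-\theta}z'\rangle=\langle A^{-\theta}x,z'\rangle$ and Hahn--Banach.
\item You obtain surjectivity because the image corresponds to $J(D(A^{\theta}))$, which is dense in $X''=X$.
\end{itemize}

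What this buys you is transparency. It shows exactly where reflexivity enters: density of $D(A')$, so that $A'$ is sectorial in the paper's sense, and $J(X)=X''$. It also makes explicit a consistency check that the citation leaves implicit: the paper's $E_{-\theta,A}=D((A_{-1})^{1-\theta})$, normed by $\|(A_{-1})^{1-\theta}\cdot\|_{E_{-1,A}}$, is the completion of $X$ under $\|A^{-\theta}\cdot\|_X$. This follows from $(A_{-1})^{1-\theta}|_X=T_{-1,A}A^{-\theta}$ and the density of $D(A^\theta)$ in $X$.

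The cited result, in turn, treats all indices at once. This matters because the paper also uses the case $\theta=1$ in the proof of Theorem~\ref{thm:Rsect_noweight}, which lies outside the stated range $\theta\in(0,1)$. Your argument covers that case verbatim: since $E_{-1,A}$ is by definition the completion of $(X,\|A^{-1}\cdot\|_X)$, replace the Balakrishnan formula by $(A^{-1})'=(A')^{-1}$.
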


\subsection{Domains and localisation}
Let $\kappa\in (0,1]$ and let $\OO\subseteq \RR^{d-1}$ be open. A function $h:\OO\to \RR$ is called \emph{uniformly $\kappa$-H\"older continuous on $\OO$} if 
\begin{equation*}
  [h]_{\kappa, \OO}:= \sup_{\substack{x, y\in \OO\\ x\neq y}}\frac{|h(x)-h(y)|}{|x-y|^\kappa}<\infty.
\end{equation*}
In addition, for $\ell\in \NN_0$ we define the space of $\kappa$-H\"older continuous functions by 
\begin{equation*}
  C_{\b}^{\ell,\kappa}(\OO):=\{h\in C_{\b}^{\ell}(\OO): [\d^\alpha h]_{\kappa, \OO}<\infty\text{ for all }|\alpha|= \ell\}.
\end{equation*}
For $\kappa=0$ we write $C_\b^{\ell,0}(\OO):=C^{\ell}_\b(\OO)$. By $\Cc^{\ell,\kappa}(\OO)$ we denote the subset of functions in $C^{\ell,\kappa}(\OO)$ with compact support in $\OO$. Moreover, on $C_\b^{\ell,\kappa}(\OO)$ we define the norm
\begin{equation*}
  \|h\|_{C^{\ell,\kappa}(\OO)}:= \sum_{|\alpha|\leq \ell}\sup_{x\in\OO}|\d^\alpha h(x)|+ \sum_{|\alpha|=\ell}[\d^\alpha h]_{\kappa, \OO}.
\end{equation*}

\begin{definition}\label{def:domains}
  Let $\Dom \subseteq \R^d$ be a domain, i.e., a connected open set. Let $\ell \in \N_0$ and $\kappa\in [0,1]$.
  \begin{enumerate}[(i)]
    \item We call $\mc{O}$ a \emph{special $\Cc^{\ell,\kappa}$-domain} if, after translation and rotation, it is of the form
        \begin{equation}\label{eq:specialdomainh}
          \mc{O} = \cbrace{(x_1,\tilde{x})\in \R^d: x_1>h(\tilde{x})}
        \end{equation}
        for some $h \in \Cc^{\ell,\kappa}(\R^{d-1};\RR)$.
    \item Given a special $\Cc^{\ell,\kappa}$-domain $\mc{O}$, we define
\begin{equation*}
[\mc{O}]_{C^{\ell,\kappa}}:= \nrm{h}_{C^{\ell,\kappa}(\R^{d-1})},
\end{equation*}
where $h\in \Cc^{\ell,\kappa}(\R^{d-1};\RR)$ is such that, after rotation and translation, \eqref{eq:specialdomainh} holds. Note that $[\mc{O}]_{C^{\ell,\kappa}}$ is uniquely defined due to the compact support of $h$.
    \item We call $\mc{O}$ a \emph{$C^{\ell,\kappa}$-domain} if every boundary point $x \in \BDom$ admits an open neighbourhood $V$ such that
        \begin{equation*}
        \mc{O}\cap V = W \cap V \qquad \text{and}\qquad \BDom \cap V = \partial W \cap V
        \end{equation*}
        for some special $\Cc^{\ell,\kappa}$-domain $W$.
  \end{enumerate}
  If $\kappa=0$, then we write $C^\ell$ for $C^{\ell,0}$ in the definitions above.
\end{definition}

\begin{remark}\label{rem:special_dom}
  Note that for any $\delta>0$ and $C^\ell$-domain $\mc{O}$, the special $\Cc^\ell$-domains $W$ can always be chosen such that $[W]_{C^\ell}<\delta$. If $\kappa\in (0,1]$, $\eps \in (0,\kappa)$ and $\OO$ is a $C^{\ell,\kappa}$-domain, then for every $\delta>0$ the special $\Cc^{\ell,\kappa}$-domains $W$ can always be chosen such that $[W]_{C^{\ell, \kappa-\eps}}<\delta$. Indeed, if $h\in \Cc^{\ell,\kappa}(\RR^{d-1})$ is associated with $W$, then for any $|\alpha|=\ell$, we have
\begin{equation*}
  [\d^\alpha h]_{\kappa-\eps,\RR^{d-1}} = \sup_{\substack{x, y\in \RR^{d-1}\\ x\neq y}}\frac{|\d^\alpha h(x)-\d^\alpha h(y)|}{|x-y|^\kappa}|x-y|^\eps< \delta,
\end{equation*}
whenever $|x-y|^\eps$ is small enough. Note that for $\eps=0$, the quantity $[\d^{\alpha} h]_{\kappa,\RR^{d-1}}$ cannot be made arbitrarily small.
\end{remark}

The following lemma provides a diffeomorphism between special domains and the half-space, which is known as the \emph{Dahlberg--Kenig--Stein pullback}. This diffeomorphism straightens the boundary and preserves the distance to the boundary, while higher-order derivatives may blow up near the boundary. The weights in the spaces compensate this blow-up, see \cite{LLRV25}, where this diffeomorphism is used to obtain boundedness of the $\Hinf$-calculus for the Laplacian on rough domains. The proof of Lemma \ref{lem:localisation_weighted_blow-up} can be found in \cite[Appendix A]{LLRV25} (see also \cite{KimD07}), which is based on \cite{KK04} and \cite{Li85}. 
\begin{lemma}[{\cite[Lemma A.4]{LLRV25}}]\label{lem:localisation_weighted_blow-up}
Let $\OO$ be a special $\Cc^{1}$-domain.
Then there exist continuous functions $h_1\colon\overline{\Dom} \to \R$ and $h_2 \colon \overline{\R^d_+} \to \R$ which satisfy the following properties.
\begin{enumerate}[(i)]
    \item\label{it:lem:localisation_weighted_blow-up;Psi_inverse} The map $\Phi:\overline{\Dom} \to \overline{\R^d_+}$
    given by
    $$
    \Phi(x)= (x_1-h_1(x),\tilde{x}), \qquad x=(x_1,\tilde{x}) \in \overline{\Dom},
    $$
    is a $C^{1}$-diffeomorphism with inverse $\Phi^{-1}:\overline{\R^d_+} \to \overline{\Dom}$ given by
    $$
    \Phi^{-1}(y) = (y_1+h_2(y),\tilde{y}),\qquad y=(y_1,\tilde{y}) \in \overline{\R^d_+}.
    $$
    \item\label{it:lem:localisation_weighted_blow-up;dist_preserving} We have
    \begin{equation*}
    \begin{aligned}
    \mrm{dist}(\Phi(x),\partial\R^d_+) &\eqsim \mrm{dist}(x,\BDom), \qquad&& x \in \Dom,\\
    \mrm{dist}(\Phi^{-1}(y),\BDom)
    &\eqsim \mrm{dist}(y,\partial\R^d_+),\qquad &&y \in \R^d_+,
    \end{aligned}
    \end{equation*}
    where the implicit constants depend on $[\OO]_{C^1}$.
    \item\label{it:lem:localisation_weighted_blow-up;smoothness} We have $h_1 \in C^\infty(\Dom)$ and $h_2 \in C^\infty(\R^d_+)$.
    \end{enumerate} 
 In addition, let $\ell\in\NN_1$, $\kappa\in[0,1]$ and let $\OO$ be a special $\Cc^{\ell,\kappa}$-domain with $[\OO]_{C^{\ell,\kappa}}\leq 1$.
    \begin{enumerate}[resume*]
    \item\label{it:lem:localisation_weighted_blow-up;est} The map $\Phi$ in \ref{it:lem:localisation_weighted_blow-up;Psi_inverse} is a $C^{\ell,\kappa}$-diffeomorphism and for all $\alpha\in \N_0^d$, $\ell_0\in \{0, \dots, \ell\}$ and $\kappa_0\in[0,\kappa]$, we have
    \begin{equation*}
    \begin{aligned}
    |\d^{\alpha} h_1(x)| &\leq  C\cdot[\OO]_{C^{\ell,\kappa}}\cdot\mrm{dist}(x,\BDom)^{-(|\alpha|-\ell_0-\kappa_0)_+},\qquad  &&x \in \Dom,\\
    |\d^{\alpha} h_2(y)| &\leq C\cdot[\OO]_{C^{\ell,\kappa}}\cdot\mrm{dist}(y,\partial\R^d_+)^{-(|\alpha|-\ell_0-\kappa_0)_+},\qquad &&y \in \R^d_+,
    \end{aligned}
    \end{equation*}
    where the constant $C>0$ only depends on $\ell, \kappa, \alpha$ and $d$.
\end{enumerate}
\end{lemma}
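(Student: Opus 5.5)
The lemma is quoted from \cite[Lemma A.4]{LLRV25}, so I would reconstruct the classical Dahlberg--Kenig--Stein construction. Write $\OO=\{x_1>h(\tilde x)\}$ with $h\in \Cc^{\ell,\kappa}(\RR^{d-1})$. Fix a radial mollifier $\phi\in\Cc^\infty(\RR^{d-1})$ with $\int\phi=1$, and set $\phi_t(\tilde y)=t^{-(d-1)}\phi(\tilde y/t)$. On the half-space define the regularised extension
\begin{equation*}
\Theta(y_1,\tilde y):=(\phi_{cy_1}*h)(\tilde y),\qquad y_1>0,
\end{equation*}
and $\Theta(0,\tilde y)=h(\tilde y)$. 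Here $c>0$ is a small constant depending on $[\OO]_{C^1}$. Then I would put $h_2(y):=\Theta(y)$ and $\Phi^{-1}(y)=(y_1+\Theta(y),\tilde y)$.

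\textbf{Step 1 (smoothness and derivative bounds).} $\Theta$ is $C^\infty$ in $\RR^d_+$. Every derivative can be placed on the kernel. A derivative in $\tilde y$ or $y_1$ that falls on $\phi_{cy_1}$ produces a factor $y_1^{-1}$. Because $\int\d^\beta\phi=0$ for $\beta\ne0$, one may subtract a Taylor polynomial of $h$ of order $\ell_0$ and use the $C^{\ell_0,\kappa_0}$ regularity of $h$. This gives
\begin{equation*}
|\d^\alpha\Theta(y)|\lesssim \|h\|_{C^{\ell,\kappa}}\, y_1^{-(|\alpha|-\ell_0-\kappa_0)_+}.
\end{equation*}
For $|\alpha|\le \ell_0$ one gets plain boundedness and continuity up to $y_1=0$. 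This yields (iii) and (iv) for $h_2$, and also that $\Phi^{-1}$ is $C^{\ell,\kappa}$ up to the boundary.

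\textbf{Step 2 (diffeomorphism).} We have $\d_{y_1}(y_1+\Theta)=1+\d_{y_1}\Theta$ and $|\d_{y_1}\Theta|\lesssim c\,\|\nabla h\|_\infty$. Choosing $c$ small (or using $[\OO]_{C^1}$ small, by Remark~\ref{rem:special_dom}) makes this at least $\tfrac12$. So for each fixed $\tilde y$ the map $y_1\mapsto y_1+\Theta(y_1,\tilde y)$ is strictly increasing from $h(\tilde y)$ to $\infty$. Hence $\Phi^{-1}$ is a bijection $\overline{\RR^d_+}\to\overline{\OO}$ with Jacobian bounded below. The inverse function theorem then gives a $C^1$ (respectively $C^{\ell,\kappa}$) inverse $\Phi$, which has the form $(x_1-h_1(x),\tilde x)$ with $h_1(x)=\Theta(\Phi(x))$. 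Smoothness of $h_1$ in the interior and its derivative bounds follow from the chain rule and Faà di Bruno's formula, combined with the bounds from Step 1 and distance preservation.

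\textbf{Step 3 (distance preservation).} Since $\OO$ is Lipschitz, $\dist(x,\d\OO)\eqsim x_1-h(\tilde x)$. I also have
\begin{equation*}
y_1+\Theta(y)-h(\tilde y)=y_1+\big(\Theta(y)-\Theta(0,\tilde y)\big),
\end{equation*}
and the bracket is bounded in absolute value by $\tfrac12 y_1$ by the previous step. Therefore $\dist(\Phi^{-1}(y),\d\OO)\eqsim y_1$, and the other inequality in (ii) follows. The constants depend only on $[\OO]_{C^1}$.

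\textbf{Main obstacle.} I expect the hardest part to be the sharp bounds in (iv) for $h_1$, that is, transferring the bounds for $\Theta$ through the inverse map. Each term in the Faà di Bruno expansion has a blow-up exponent equal to a sum of individual exponents. I would need to check that these sum to at most $(|\alpha|-\ell_0-\kappa_0)_+$, using that all but one factor has order at most $1$ and is bounded. I would first try induction on $|\alpha|$, differentiating the identity $h_1(x)=\Theta(x_1-h_1(x),\tilde x)$.
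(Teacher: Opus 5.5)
Your map is the Dahlberg--Kenig--Stein pullback that the paper imports from \cite[Appendix~A]{LLRV25}, going back to \cite{KK04,Li85}; the paper cites it rather than reproving it. In that language $x_1-h_1(x)$ is a regularised distance, determined by the implicit identity $h_1(x)=\Theta(x_1-h_1(x),\tilde x)$ that you wrote down. Steps~1--3 are sound, up to two small corrections.
\begin{itemize}
\item The condition $\int\partial^\beta\phi=0$ only annihilates constants. What actually lets you subtract a Taylor polynomial $P$ with $\deg P<|\alpha|$, including for $y_1$-derivatives, is that $y\mapsto\int\phi(\tilde w)P(\tilde y-cy_1\tilde w)\,d\tilde w$ is a polynomial in $y$ of degree at most $\deg P$.
\item Remark~\ref{rem:special_dom} cannot be used for a fixed special domain. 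Take instead e.g.\ $c=c_0/(1+\|\nabla h\|_\infty)$. This is a universal constant once $[\OO]_{C^{\ell,\kappa}}\le1$, and that is what makes the constant in (iv) depend only on $\ell,\kappa,\alpha,d$.
\end{itemize}

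The real flaw is the justification in your ``main obstacle'' paragraph, which is false as stated.
\begin{itemize}
\item In the Fa\`a di Bruno expansion of $\partial^\alpha(\Theta\circ\Phi)$ the terms are $(\partial^\beta\Theta)(\Phi)\prod_{j=1}^{|\beta|}\partial^{\gamma_j}\Phi_{i_j}$ with $\sum_j\gamma_j=\alpha$. Several $\gamma_j$ may have order at least $2$, e.g.\ $(\partial_1^2\Theta)(\Phi)\,\partial^{\gamma_1}h_1\,\partial^{\gamma_2}h_1$ with $|\gamma_1|=|\gamma_2|=2$. So it is not true that all but one factor is bounded.
\item The induction still closes, but with different bookkeeping. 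Since $\Phi_2,\dots,\Phi_d$ are linear, the only term containing $\partial^\alpha h_1$ is $-(\partial_1\Theta)(\Phi)\,\partial^\alpha h_1$. Hence $(1+(\partial_1\Theta)\circ\Phi)\,\partial^\alpha h_1=R_\alpha$, where $R_\alpha$ involves only $\partial^\gamma h_1$ with $|\gamma|<|\alpha|$.
\item Apply the induction hypothesis to every factor with the same $s=\ell_0+\kappa_0$. First-order factors are $O(1)$, and only the $\Theta$-factor carries $[\OO]_{C^{\ell,\kappa}}$.
\item For $s\ge1$ one checks $(|\beta|-s)_++\sum_j(|\gamma_j|-s)_+\le(|\alpha|-s)_+$, with equality when $s=1$. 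Because the inequality can be strict for $s>1$, this gives the bound only for $\mathrm{dist}(x,\partial\OO)\lesssim1$. For $\mathrm{dist}(x,\partial\OO)\gtrsim1$ it follows from the exact case $s=1$.
\item For $s<1$ (which forces $\ell_0=0$) the counting fails. There you must combine the $s=1$ bound near the boundary with a separate far-field induction proving $|\partial^\alpha h_1|\lesssim[\OO]_{C^{\ell,\kappa}}\,\mathrm{dist}(x,\partial\OO)^{-|\alpha|}$. In that induction the exponent count $|\beta|+\sum_{|\gamma_j|\ge2}|\gamma_j|\ge|\alpha|$ goes in the favourable direction.
\end{itemize}
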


\section{Weighted Sobolev spaces}\label{sec:spaces}
Let $\Dom$ be a domain in $\R^{d}$ with non-empty boundary $\BDom$. We call a locally integrable function $w:\OO\to (0,\infty)$ a \emph{weight}. In particular, we will deal with the following classes of weights.
\begin{enumerate}[(i)]
\item \emph{Muckenhoupt $A_p$ weights:} let $p\in(1,\infty)$ and let $\OO$ be $\RRd$, $\RRdh$ or an interval $(0,a)$ for $a>0$. A weight $w$ on $\OO$ belongs to the Muckenhoupt class $A_p(\OO)$ if 
\begin{equation*}
    [w]_{A_p(\OO)}:=\sup_{B}\Big(\frac{1}{|B|}\int_Bw(x)\dd x\Big)\Big(\frac{1}{|B|}\int_B w(x)^{-\frac{1}{p-1}}\dd x\Big)^{p-1}<\infty,
\end{equation*}
where the supremum is taken over all balls $B\subseteq \OO$. Moreover, $A_\infty(\OO):=\bigcup_{p>1}A_p(\OO)$. For a detailed study of Muckenhoupt weights and their properties, we refer to \cite[Chapter 7]{Gr14_classical_3rd}.
 \item \emph{Power weights:} for $\gam \in\RR$ define $w^{\BDom}_{\gam}$ on $\Dom$ by
\begin{equation*}
w^{\BDom}_{\gam}(x) := \mrm{dist}(x,\BDom)^{\gam}, \qquad x \in \Dom.
\end{equation*}
Furthermore, we write $w_{\gam}(x) := w_\gam^{\smash{\d\RRdh}}(x)=|x_1|^\gam$ for $x=(x_1, \tilde{x})\in \RRdh$. We note that the power weight $w_\gam(x)=|x_1|^\gam$ is in $A_p(\OO)$ if and only if $\gam\in (-1,p-1)$. \\
\end{enumerate}

For $p \in [1,\infty)$, $w$ a weight and $X$ a Banach space, we define the weighted Lebesgue space $L^p(\Dom,w;X)$ as the Bochner space consisting of all equivalence classes of strongly measurable $f\colon \mc{O}\to X$ such that
\begin{equation*}
\nrm{f}_{L^p(\Dom,w;X)} := \has{\int_{\Dom}\|f(x)\|_X^p\:w(x)\dd x }^{1/p}<\infty.
\end{equation*}
Let $w$ be such that $w^{-\frac{1}{p-1}}\in L^1_{\loc}(\OO)$. The $k$-th order weighted Sobolev space for $k \in \N_0$ is defined as
\begin{equation*}
W^{k,p}(\Dom,w;X) := \left\{ f \in \mc{D}'(\Dom;X) : \forall |\alpha| \leq k, \partial^{\alpha}f \in L^p(\Dom,w;X) \right\}
\end{equation*}
equipped with the canonical norm. If $w\equiv 1$, then we simply write $W^{k,p}(\OO;X)$. Furthermore, if $X=\CC$, then we omit this from the notation as well.
\begin{remark}\label{rem:L1loc}
  The local $L^1$ condition for $w^{-\frac{1}{p-1}}$ ensures that all the derivatives $\d^{\alpha}f$ are locally integrable in $\OO$. If $\OO$ is the half-space $\RRdh$ or a bounded domain, then this condition for power weights $w_{\gam}^{\d\OO}$ holds for all $\gam\in\RR$. For $\OO=\RR^d$ the local $L^1$ condition holds only for weights $w_\gam(x)= |x_1|^\gam$ with $\gam\in(-\infty,p-1)$ since functions might not be locally integrable near $x_1=0$, see also \cite[Example~1.7]{KO1984}. This explains why, for example, we cannot employ classical reflection arguments from $\RRdh$ to $\RR^d$ if $\gam>p-1$. 
\end{remark}

\subsection{Weighted Sobolev spaces with boundary conditions}\label{subsec:Sob_BC}
We continue with defining weighted Sobolev spaces with vanishing boundary conditions.
Let $p\in(1,\infty)$, $k\in\NN_0$, $\gam>-1$ and let $X$ be a Banach space. We define
\begin{equation*}
  \cir{W}_0^{k,p}(\Dom,w^{\BDom}_{\gam};X) := \overline{\Cc^\infty(\Dom;X)}^{W^{k,p}(\Dom,w^{\BDom}_{\gam};X)}.
\end{equation*}
Alternative characterisations of this space in terms of traces are available, see \cite{LLRV24, LLRV25, LV18}. These characterisations are based on the mapping properties of the diffeomorphism in Lemma \ref{lem:localisation_weighted_blow-up}. We distinguish three cases: the half-space, special domains and bounded domains.

\subsubsection{Trace characterisations on the half-space}
For $p\in(1,\infty)$, $k\in\NN_0$, $\gam\in (-1, \infty)\setminus\{jp-1:j\in\NN_1\}$ and $X$ a Banach space, we define
\begin{equation}\label{eq:def:W_0}
  W^{k,p}_{0}(\RR_+^d, w_{\gam};X):=\left\{f\in W^{k,p}(\RR_+^d,w_{\gam};X): \operatorname{Tr}( \d^{\alpha}f)=0 \text{ if }k-|\alpha|>\tfrac{\gam+1}{p}\right\}.
\end{equation}
All the traces in the above definition \eqref{eq:def:W_0} are well defined. Indeed, this follows from the embedding $$W^{1,p}(\RR_+, w;X)\hookrightarrow C([0,\infty);X),$$
for $p\in(1,\infty)$, $w$ such that $w^{-\frac{1}{p-1}}\in L^1_{\loc}(\RR_+)$ and $X$ a Banach space, see \cite[Lemma 3.1]{LV18}. For details, we refer to \cite[Section 3.1]{LLRV24}. 

Let $p\in(1,\infty)$ and $X$ a Banach space. To be able to deal with negative weights, we define for $k\in\NN_0$ and $\gam\leq -1$ the space
\begin{equation*}
  W^{k,p}_0(\RRdh, w_{\gam};X):= W^{k,p}(\RRdh, w_{\gam};X),
\end{equation*} 
which is an appropriate definition in view of \cite[Lemma 3.1(2)]{LV18}. For $k\in \NN_1$ and $\gam\in (-1,\infty)\setminus\{jp-1:j\in\NN_1\}$, weighted Sobolev spaces with negative smoothness are defined as
\begin{equation*}
  W^{-k,p}(\RRdh, w_{\gam};X): = \big(W_0^{k,p'}(\RRdh, w_{\gam'}; X')\big)',
\end{equation*}
where $\gam'=-\gam/(p-1)$ and the duality is with respect to the unweighted pairing.\\

From  \cite[Proposition 3.8]{LV18} (see also \cite[Proposition 3.3]{LLRV25}) the following trace characterisation follows.
\begin{proposition}[Trace characterisation on $\RRdh$]\label{prop:tracechar_RRdh}
    Let $p\in(1,\infty)$, $k\in\NN_0$, $\gam\in \RR\setminus\{jp-1:j\in\NN_1\}$ and let $X$ be a Banach space. Then
    \begin{equation*}
  \cir{W}_0^{k,p}(\RRdh,w_{\gam};X) =  W^{k,p}_{0}(\RR_+^d, w_{\gam};X).
\end{equation*}
\end{proposition}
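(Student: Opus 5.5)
We prove the two inclusions separately, working in the normal variable $x_1$ and treating $\tilde{x}$ as a parameter.

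\textbf{The inclusion $\subseteq$.} For $\gam>-1$, the conditions in \eqref{eq:def:W_0} are closed conditions in $W^{k,p}(\RRdh,w_\gam;X)$. Indeed, if $k-|\alpha|>\frac{\gam+1}{p}$, then $\d^\alpha f\in W^{1,p}(\RRdh, w_{\gam};X)$ locally near the boundary. The embedding $W^{1,p}(\RR_+,w;X)\hookrightarrow C([0,\infty);X)$ of \cite[Lemma 3.1]{LV18} requires $w^{-\frac{1}{p-1}}\in L^1_\loc$. For $w_\gam$ this integrability fails near $0$ when $\gam\geq p-1$. We therefore first lower the weight: on $(0,1)$ we have $w_{\gam-jp}\lesssim$ a weight with locally integrable dual, and Hardy-type bookkeeping shows that $\d^\alpha f\in W^{1,p}((0,1),w_{\gam-(k-|\alpha|-1)p})$. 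Here $\gam-(k-|\alpha|-1)p<p-1$ precisely when $k-|\alpha|>\frac{\gam+1}{p}$. This yields a bounded trace operator $\Tr\circ\d^\alpha$ on $W^{k,p}(\RRdh,w_\gam;X)$. Since $\Tr(\d^\alpha\phi)=0$ for every $\phi\in\Cc^\infty(\RRdh;X)$, the closure $\cir{W}_0^{k,p}$ lies in the kernel. For $\gam\leq -1$ there is nothing to prove.

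\textbf{The inclusion $\supseteq$.} We approximate $f\in W^{k,p}_0(\RRdh,w_\gam;X)$ in three steps.

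1. We truncate at infinity and mollify in $\tilde{x}$. Both operations preserve the space and the trace conditions, because the weight depends only on $x_1$.

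2. We cut off near the boundary with $\chi_n(x_1)=\chi(nx_1)$, where $\chi=0$ on $[0,1]$ and $\chi=1$ on $[2,\infty)$. We must show $\chi_n f\to f$ in $W^{k,p}(\RRdh,w_\gam;X)$. By Leibniz, it suffices that for $1\le j\le |\alpha|\le k$
\[
n^{j}\,\|\d^{\alpha-j e_1} f\|_{L^p(\{1/n<x_1<2/n\},w_\gam;X)}\lesssim \|x_1^{-j}\d^{\alpha-je_1}f\|_{L^p(\{x_1<2/n\},w_\gam;X)}\to 0.
\]
By dominated convergence, this reduces to showing $x_1^{-j}\d^\beta f\in L^p(\RRdh,w_\gam;X)$ whenever $|\beta|+j\le k$.

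3. After the cut-off the function is supported away from $\{x_1=0\}$. A standard mollification then produces elements of $\Cc^\infty(\RRdh;X)$.

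\textbf{Main obstacle: the Hardy estimate.} The key step is the bound $\|x_1^{-j}\d^\beta f\|_{L^p(w_\gam)}\lesssim \|f\|_{W^{k,p}(w_\gam)}$, which we prove by iterating Hardy's inequality (Lemma~\ref{lem:Hardy}, in one variable $x_1$). There are two regimes.

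\begin{itemize}
\item Near the boundary, when $\gam-(m-1)p<p-1$ (i.e. the trace of $\d^\beta f$ is required to vanish), we use the Hardy inequality anchored at $0$:
\[
\int_0^\infty |g/x_1|^p x_1^{\gam-(m-1)p}\,dx_1\lesssim \int_0^\infty |g'|^p x_1^{\gam-(m-1)p+p}\,dx_1 .
\]
This applies to $g=\d^\beta f$ with vanishing trace, and each step lowers the weight power by $p$.
\item In the complementary regime $\gam-(m-1)p>p-1$, no trace condition is needed. Here we use the dual Hardy inequality anchored at $\infty$, which holds after the truncation in step 1, or on $(0,1)$ together with a boundedness argument.
\end{itemize}

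The exclusion $\gam\notin\{jp-1\}$ is exactly what avoids the critical exponent at which Hardy's inequality fails. Carrying out the induction over $j$ with the correct regime at each step is the delicate part. Negative $\gam$ (including $\gam\le -1$) is covered by the same argument with empty trace conditions.
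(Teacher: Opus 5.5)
Your overall scheme is the standard route to this result: continuity of the traces for $\subseteq$, and for $\supseteq$ truncation, a boundary cut-off controlled by iterated Hardy inequalities, and mollification. The paper does not reprove the result; it cites it from \cite[Proposition~3.8]{LV18}. The cut-off/dominated-convergence reduction in your Step~2 is correct. However, several steps fail as written and need repair.

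\textbf{The inclusion $\subseteq$.} You claim that $\partial^\alpha f\in W^{1,p}((0,1),w_{\gamma-(k-|\alpha|-1)p})$ for every $f\in W^{k,p}(\RRdh,w_\gamma;X)$. This is false. Take $k\ge 2$, $\gamma\in(-1,(k-1)p-1)$, $\alpha=0$, and $f\in \Cc^\infty(\mathbb{R}^d)$ with $f\equiv 1$ near the origin. Then $f\in W^{k,p}(\RRdh,w_\gamma)$, but $\int_0^1 x_1^{\gamma-(k-1)p}\,dx_1=\infty$. The problem is that once the weight exponent drops below $p-1$, lowering it further by Hardy requires exactly the vanishing traces you are trying to define.

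The fix is to lower the exponent only while it stays above $p-1$. Do this $j_0:=\lfloor(\gamma+1)/p\rfloor$ times, using the second line of Lemma~\ref{lem:Hardy}. Then $\gamma-j_0p\in(-1,p-1)$, and $k-|\alpha|-j_0\ge 1$ holds exactly when $k-|\alpha|>\frac{\gamma+1}{p}$. So \cite[Lemma~3.1]{LV18} applies with a Muckenhoupt weight and gives the bounded trace. Alternatively, since the paper builds well-definedness of these traces into \eqref{eq:def:W_0}, you may simply use that.

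\textbf{The Hardy step in $\supseteq$.} Your displayed Hardy inequality has an extra factor $x_1^{p}$ on the right-hand side; it cannot hold, as a dilation $g(\lambda\,\cdot)$ shows. The correct form is
\[
\int_0^\infty|g|^p x_1^{a-p}\,dx_1\lesssim\int_0^\infty|g'|^p x_1^{a}\,dx_1\quad\text{for } a<p-1 \text{ and } g(0)=0.
\]
With this correction, the bookkeeping you left open is short. Pass from $w_{\gamma-ip}$ to $w_{\gamma-(i+1)p}$ for $g=\partial^\beta f$ with $|\beta|\le k-i-1$.
\begin{itemize}
\item If $\gamma-ip<p-1$, then $k-|\beta|\ge i+1>\frac{\gamma+1}{p}$, so $\Tr g=0$ is one of the imposed conditions.
\item If $\gamma-ip>p-1$, use Hardy anchored at $\infty$ on the truncated function.
\end{itemize}

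\textbf{The case $\gamma\le -1$.} Here the trace conditions are not ``empty'' in the sense your argument needs. Hardy at $0$ still requires $g(0)=0$. This holds automatically, but you must prove it. Since $\partial_1 g\in L^1(0,1)$, $g$ has a limit at $0$. Since $\int_0^1|g|^p x_1^{\gamma-ip}\,dx_1<\infty$ with $\gamma-ip\le -1$, that limit must be $0$ (cf.\ \cite[Lemma~3.1(2)]{LV18}). Note also that for $\gamma\in(-1,0)$ the conditions cover all $|\alpha|\le k-1$, so they are not empty there either.
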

Note that Proposition \ref{prop:tracechar_RRdh} means that $\Cc^{\infty}(\RRdh;X)$ is dense in $W^{k,p}_{0}(\RR_+^d, w_{\gam};X)$ for the given ranges of parameters. In particular, we also have
\begin{equation}\label{eq:densityCcW}
  \Cc^{\infty}(\RRdh;X)\stackrel{\text{dense}}{\subseteq} W^{k,p}(\RRdh, w_{\gam};X)\quad \text{ for }\, \gam>kp-1.
\end{equation}

\subsubsection{Trace characterisations on special domains}
The diffeomorphism $\Phi$ in Lemma \ref{lem:localisation_weighted_blow-up} will allow us to define weighted Sobolev spaces on special domains with vanishing traces. We first note that $\Phi$ gives rise to an isomorphism between weighted Sobolev spaces on special domains and the half-space.
\begin{proposition}\label{prop:isom}
  Let $p\in(1,\infty)$, $\ell\in \NN_1$, $\kappa\in [0,1]$, $k\in\NN_0$, $\gam\in (-1,\infty)\setminus\{jp-1:j\in\NN_1\}$ and let $X$ be a Banach space. Moreover, let $\OO$ be a special $\Cc^{\ell,\kappa}$-domain with $[\OO]_{C^{\ell,\kappa}}\leq 1$. Let $\Phi:\OO\to \RRdh$ be as in Lemma \ref{lem:localisation_weighted_blow-up} and consider the change of coordinates mappings
  \begin{align*}
  \Phi_*&:W^{k,p}(\OO, w_{\gam}^{\d\OO};X)\to W^{k,p}(\RRdh, w_{\gam};X)\qquad \text{ for } \gam>(k-(\ell+\kappa))_+p-1,\\
    \Phi_*&:\cir{W}_0^{k,p}(\OO, w_{\gam}^{\d\OO};X)\to \cir{W}_0^{k,p}(\RRdh, w_{\gam};X),
  \end{align*}
  defined by $\Phi_* f := f\circ \Phi^{-1}$. Then $\Phi_*$ is an isomorphism of Banach spaces for which $(\Phi^{-1})_*$ acts as inverse.
\end{proposition}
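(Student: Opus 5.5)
Both isomorphism statements rest on the same two ingredients: the chain rule combined with the derivative bounds of Lemma \ref{lem:localisation_weighted_blow-up}\ref{it:lem:localisation_weighted_blow-up;est}, and a weighted Hardy inequality. The distance equivalence of Lemma \ref{lem:localisation_weighted_blow-up}\ref{it:lem:localisation_weighted_blow-up;dist_preserving} lets us replace $w_\gam(\Phi(x))$ by $w^{\d\OO}_\gam(x)$. The Jacobian of $\Phi$ is bounded above and below, since $\Phi$ is a $C^1$-diffeomorphism with $[\OO]_{C^1}\le 1$. Hence the case $k=0$ is immediate: $\|f\circ\Phi^{-1}\|_{L^p(\RRdh,w_\gam;X)}\eqsim\|f\|_{L^p(\OO,w^{\d\OO}_\gam;X)}$.

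\textbf{Derivative bounds.} Fix $|\alpha|\le k$ and write $\d^\alpha(f\circ\Phi^{-1})$ by the Faà di Bruno formula. It is a sum of terms $(\d^\beta f)\circ\Phi^{-1}$ with $1\le|\beta|\le|\alpha|$, each multiplied by products of derivatives of $\Phi^{-1}$. In each such product the total excess order is $\sum_i(|\alpha_i|-1)=|\alpha|-|\beta|$. First-order derivatives of $\Phi^{-1}$ are bounded. Derivatives of order $m\ge 2$ are derivatives of $h_2$, and we apply the estimate in \ref{it:lem:localisation_weighted_blow-up;est} with the choice $\ell_0+\kappa_0=\ell+\kappa$ (up to $\kappa_0=\kappa$). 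This gives a blow-up $\dist(y,\d\RRdh)^{-(m-\ell-\kappa)_+}$, and since $m-\ell-\kappa\le m-1$, it is also at most $\dist^{-(m-1)}$ near the boundary.

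\textbf{Hardy step.} Each term is therefore bounded by $|(\d^\beta f)\circ\Phi^{-1}|\,y_1^{-s}$ with $0\le s\le\min\{|\alpha|-|\beta|,(k-\ell-\kappa)_+\}$; a finer count of the exponents is needed here. To control such a term in $L^p(w_\gam)$ we must bound $\|y_1^{-s}g\|_{L^p(w_\gam)}$, where $g=(\d^\beta f)\circ\Phi^{-1}$, by $\|g\|_{W^{s,p}(w_\gam)}$-type quantities, using $|\beta|+s\le k$. This is exactly Hardy's inequality $\|y_1^{-1}g\|_{L^p(w_{\gam})}\lesssim\|\d_1 g\|_{L^p(w_\gam)}$. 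It is valid without boundary conditions when $\gam>p-1$ and $g$ decays at infinity, applied iteratively at the weights $\gam-jp$. The iteration is possible precisely when $\gam-(s-1)p>p-1$, i.e.\ $\gam>sp-1$, which the hypothesis $\gam>(k-(\ell+\kappa))_+p-1$ guarantees.

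\textbf{Low weights and the zero-trace space.} When $\gam$ is small, no blow-up occurs and $s=0$. Where the Hardy step is needed at lower weights, we work instead on $\Cc^\infty(\OO;X)$: there Hardy's inequality holds for all $\gam\notin\{jp-1\}$ thanks to the vanishing near the boundary. This gives the bound on $\cir W_0^{k,p}$ by density, with no restriction on $\gam$, and the case of large blow-up is again absorbed by repeated Hardy. We also note that $\Phi_*$ maps $\Cc^\infty(\OO;X)$ into $C^\ell_{\rm c}$-type functions with compact support in $\RRdh$. These lie in $\cir W_0^{k,p}$ after mollification, since $h_2$ is smooth in the interior by \ref{it:lem:localisation_weighted_blow-up;smoothness}.

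\textbf{Inverse.} The symmetric argument with $h_1$ bounds $(\Phi^{-1})_*$, and $\Phi_*\circ(\Phi^{-1})_*=\id$ is trivial. The main obstacle is the exponent bookkeeping in the Hardy step, i.e.\ matching the blow-up from the Faà di Bruno terms against the available weight so that every Hardy application stays in an admissible range $\gam-jp>p-1$ (or uses zero traces).
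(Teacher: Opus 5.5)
Your overall route is the natural one: Fa\`a di Bruno, the blow-up bounds of Lemma~\ref{lem:localisation_weighted_blow-up}\ref{it:lem:localisation_weighted_blow-up;est}, iterated Hardy, and density of $\Cc^\infty(\OO;X)$ for the $\cir{W}^{k,p}_0$-statement. The paper itself only cites \cite[Proposition~3.7 \& Remark~3.8]{LLRV25}. Your argument is complete when $\kappa\in\{0,1\}$, since then $s$ is an integer. It is also complete for the $\cir{W}^{k,p}_0$-statement, where you may round $s$ up to $|\alpha|-|\beta|$ and use the zero-trace Hardy inequality at all weights $\gam-jp\neq p-1$.

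\textbf{The gap.} The first statement fails to be covered when $\kappa\in(0,1)$. Then $s=\sum_i(|\alpha_i|-\ell-\kappa)_+$ is in general not an integer. Iterating the integer Hardy inequality at the weights $\gam-jp$ only works under $\gam>\lceil s\rceil p-1$, which is strictly stronger than the hypothesis $\gam>(k-(\ell+\kappa))_+p-1$.

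Your low-weight fallback does not close this. The claim that no blow-up occurs for small $\gam$ is false. Take $\ell=1$, $k=2$, $\gam\in((1-\kappa)p-1,p-1)$: the term $\d_1^2h_2\,(\d_1 f)\circ\Phi^{-1}$ carries the factor $y_1^{-(1-\kappa)}$. Since $\gam<p-1$, Hardy without boundary conditions is unavailable. Moreover $\d_1 f$ has a non-trivial trace, so $\Cc^\infty(\OO;X)$ is not dense in $W^{2,p}(\OO,w_\gam^{\d\OO};X)$. Neither of your two mechanisms applies. This fractional regime is exactly the one the paper needs: $W^{k+2,p}$ with weight $\gam+kp\in((k+1-\kappa)p-1,(k+1)p-1)$ on $C^{1,\kappa}$-domains.

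\textbf{The fix.} What is missing is a fractional Hardy step obtained by shifting the weight. Let $\theta\in(0,1)$ and $\mu>\theta p-1$. Let $\chi=\chi(y_1)$ be a cut-off with $\chi=1$ on $\{y_1<1\}$ and $\operatorname{supp}\chi\subseteq\{y_1<2\}$. With $\nu:=\mu+(1-\theta)p>p-1$, Hardy's inequality without boundary conditions gives
\[
\|y_1^{-\theta}\chi u\|_{L^p(\RRdh,w_\mu)}=\|y_1^{-1}\chi u\|_{L^p(\RRdh,w_\nu)}\lesssim\|\d_1(\chi u)\|_{L^p(\RRdh,w_\nu)}\lesssim\|u\|_{W^{1,p}(\RRdh,w_\mu)}.
\]
The last step uses $y_1^{(1-\theta)p}\leq 2^{(1-\theta)p}$ on $\operatorname{supp}\chi$; away from the boundary $y_1^{-s}\leq 1$ anyway.

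For $s=n+\theta$, write $\|y_1^{-s}\chi g\|_{L^p(\RRdh,w_\gam)}=\|y_1^{-(n+1)}\chi g\|_{L^p(\RRdh,w_{\gam+(1-\theta)p})}$. Then perform $n+1$ Hardy steps at the weights $\gam+(1-\theta)p-jp$, $j=0,\dots,n$. All of these exceed $p-1$ exactly when $\gam>sp-1$. Only $|\beta|+\lceil s\rceil\leq|\alpha|$ derivatives are used, because $|\alpha|-|\beta|$ is an integer $\geq s$.

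\textbf{A smaller point.} Bounding $\|y_1^{-s}g\|$ by $\|g\|_{W^{s,p}(\RRdh,w_\gam)}$ with $g=(\d^\beta f)\circ\Phi^{-1}$ is circular, since differentiating $g$ reproduces derivatives of $h_2$. Instead, differentiate only in $y_1$ and use $\d_{y_1}(F\circ\Phi^{-1})=(1+\d_1h_2)\,(\d_1F)\circ\Phi^{-1}$, pulling out the bounded factor $1+\d_1h_2$ before each Hardy step. Alternatively, run Hardy on $\OO$ along vertical lines, where $\dist(x,\d\OO)\eqsim x_1-h(\tilde{x})$.
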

\begin{proof}
  The statement of the proposition follows from \cite[Proposition 3.7 \& Remark 3.8]{LLRV25}.
\end{proof}

With the aid of the isomorphism in Proposition \ref{prop:isom}, we can now define traces of functions in weighted Sobolev spaces on special domains with low smoothness compared to the smoothness of the Sobolev space. We note that the standard localisation procedure in, e.g, \cite{DHP03, Ev10, KrBook08} is not sufficient in our case since the regularity of the domain is too low, see \cite[Remark 3.10]{LLRV25}.
\begin{definition}\label{def:spaces_special}
   Let $p\in(1,\infty)$, $\ell\in \NN_1$, $\kappa\in[0,1]$, $k\in\NN_0$ and $X$ a Banach space. Let $\gam\in (-1,\infty)\setminus\{jp-1:j\in\NN_1\}$ be such that $\gam>(k-(\ell+\kappa))_+p-1$ and let $\OO$ be a special $\Cc^{\ell,\kappa}$-domain with $[\OO]_{C^{\ell,\kappa}}\leq 1$. Let $\Phi_*$ be the isomorphism from Proposition \ref{prop:isom}. We define
   \begin{equation*}
     W_{0}^{k,p}(\Dom,w^{\BDom}_{\gam};X):= \cbraces{f \in W^{k,p}(\Dom,w^{\BDom}_{\gam};X): \Tr( \d^\alpha(\Phi_* f))=0 \text{ if } k -\abs{\alpha} >\tfrac{\gam+1}{p}}.
   \end{equation*}
\end{definition}
Note that the traces in the above definition are considered on the half-space and are thus well defined by Proposition \ref{prop:isom}. Moreover, provided that $\gam$ is large enough, Lemma \ref{lem:localisation_weighted_blow-up} ensures that the definition of the space is consistent in the sense that viewing $\OO$ as either a special $\Cc^{\ell,\kappa}$-domain or a special $\Cc^1$-domain yields the same space. \\

We now obtain a trace characterisation for the spaces $\cir{W}^{k,p}_0(\OO, w_{\gam}^{\d\OO};X)$ if $\OO$ is a special domain.
\begin{proposition}[Trace characterisation on special domains]\label{prop:trace_char_dom}
   Let $p\in(1,\infty)$, $\ell\in \NN_1$, $\kappa\in[0,1]$, $k\in\NN_0$ and let $X$ be a Banach space. Let $\gam\in (-1,\infty)\setminus\{jp-1:j\in\NN_1\}$ be such that $\gam>(k-(\ell+\kappa))_+p-1$ and let $\OO$ be a special $\Cc^{\ell,\kappa}$-domain with $[\OO]_{C^{\ell,\kappa}}\leq 1$. Let $\Phi_*$ be the isomorphism from Proposition \ref{prop:isom}. Then
   \begin{equation*}
     \cir{W}^{k,p}_0(\OO, w_{\gam}^{\d\OO};X)= W_{0}^{k,p}(\Dom,w^{\BDom}_{\gam};X).
   \end{equation*}
\end{proposition}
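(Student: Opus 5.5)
The plan is to transport the half-space characterisation, Proposition \ref{prop:tracechar_RRdh}, to $\OO$ through the isomorphism $\Phi_*$ of Proposition \ref{prop:isom}. Throughout, the parameters $p,k,\gam,\ell,\kappa$ are exactly those for which Proposition \ref{prop:isom} and Definition \ref{def:spaces_special} apply.

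By Proposition \ref{prop:isom}, $\Phi_*$ maps $\cir{W}_0^{k,p}(\OO, w_{\gam}^{\d\OO};X)$ isomorphically onto $\cir{W}_0^{k,p}(\RRdh, w_{\gam};X)$. By Proposition \ref{prop:tracechar_RRdh}, the latter space equals $W^{k,p}_0(\RRdh,w_\gam;X)$. Comparing \eqref{eq:def:W_0} with Definition \ref{def:spaces_special}, we have $f\in W^{k,p}_0(\OO,w_\gam^{\d\OO};X)$ if and only if $f\in W^{k,p}(\OO,w^{\d\OO}_\gam;X)$ and $\Phi_*f\in W^{k,p}_0(\RRdh,w_\gam;X)$. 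The trace conditions in the definition are imposed on $\Phi_* f$ itself, which is why this equivalence is immediate.

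For the inclusion $\subseteq$, take $f\in\cir{W}_0^{k,p}(\OO,w_\gam^{\d\OO};X)$. Then $\Phi_*f\in \cir{W}_0^{k,p}(\RRdh,w_\gam;X)=W_0^{k,p}(\RRdh,w_\gam;X)$, so the required traces of $\partial^\alpha(\Phi_*f)$ vanish. Hence $f\in W_0^{k,p}(\OO,w_\gam^{\d\OO};X)$.

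For the inclusion $\supseteq$, take $f\in W_0^{k,p}(\OO,w_\gam^{\d\OO};X)$. Its image $\Phi_*f$ lies in $W^{k,p}(\RRdh,w_\gam;X)$ by the first isomorphism of Proposition \ref{prop:isom}, and it has vanishing traces. So $\Phi_*f\in W^{k,p}_0(\RRdh,w_\gam;X)=\cir{W}_0^{k,p}(\RRdh,w_\gam;X)$. Choose $g_n\in \Cc^\infty(\RRdh;X)$ with $g_n\to\Phi_*f$ in $W^{k,p}(\RRdh,w_\gam;X)$. Then $(\Phi^{-1})_*g_n=g_n\circ\Phi$ converges to $f$ in $W^{k,p}(\OO,w^{\d\OO}_\gam;X)$, because $(\Phi^{-1})_*$ is bounded by the first part of Proposition \ref{prop:isom}.

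The main obstacle is that $g_n\circ\Phi$ is only $C^{\ell,\kappa}$ up to the boundary. It has compact support in $\OO$, since $\Phi$ is a homeomorphism of closures mapping boundary to boundary and distances are comparable, and it is smooth in the interior by Lemma \ref{lem:localisation_weighted_blow-up}\ref{it:lem:localisation_weighted_blow-up;smoothness}. However, it need not be smooth everywhere. I would handle this in one of two ways. The first option is to read Lemma \ref{lem:localisation_weighted_blow-up}\ref{it:lem:localisation_weighted_blow-up;smoothness} as saying that $\Phi\in C^\infty(\OO)$, which already makes $g_n\circ\Phi\in\Cc^\infty(\OO;X)$. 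The second option is to mollify $g_n\circ\Phi$. A compactly supported $W^{k,p}$ function supported away from $\d\OO$ is approximated by its mollifications in $W^{k,p}(\OO,w^{\d\OO}_\gam;X)$, since the weight is bounded above and below on a neighbourhood of the support. Either way we obtain a sequence in $\Cc^\infty(\OO;X)$ converging to $f$, which proves the equality.
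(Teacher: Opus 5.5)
Your argument is correct and follows essentially the same route as the paper, which defers to the proof of \cite[Proposition 3.10]{LLRV25}: transport the half-space characterisation (Proposition \ref{prop:tracechar_RRdh}) through $\Phi_*$ using both isomorphisms of Proposition \ref{prop:isom}. Your approximation step for $\supseteq$ is valid but unnecessary. The second isomorphism in Proposition \ref{prop:isom} already gives that $(\Phi^{-1})_*$ maps $\cir{W}_0^{k,p}(\RRdh,w_\gam;X)$ onto $\cir{W}_0^{k,p}(\OO,w_\gam^{\d\OO};X)$. Also, the ``obstacle'' you raise does not arise: $h_1\in C^\infty(\OO)$, so $g_n\circ\Phi\in \Cc^\infty(\OO;X)$ directly, which is your first option.
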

\begin{proof}
  The proof is the same as the proof of \cite[Proposition 3.10]{LLRV25} using Proposition \ref{prop:isom}.
\end{proof}

\subsubsection{Trace characterisations on bounded domains} To define traces on bounded domains, we employ a localisation procedure based on the following decomposition of weighted Sobolev spaces, see also \cite[Section 2.2]{LV18} and \cite[Lemma 3.12]{LLRV25}. We note that the results in the lemma easily follow from a covering of the domain, Remark \ref{rem:special_dom} and a partition of unity (see, e.g., \cite[Section 8.4]{KrBook08}).
\begin{lemma}\label{lem:decomp}
  Let $\ell\in\NN_1$, $\kappa\in[0,1]$, and let $\OO\subseteq\RR^d$ be a bounded $C^{\ell,\kappa}$-domain. Then the following statements hold.
\begin{enumerate}[(i)]
  \item For all $\eps\in (0,\kappa)$ there exists a finite open cover $(V_n)_{n=1}^N$ of $\d \OO$, together with special $\Cc^{\ell,\kappa}$-domains $(\OO_n)_{n=1}^N$ which satisfy $[\OO_n]_{C^{\ell,\kappa-\eps}}<\delta$, such that
      \begin{equation*}
        \mc{O}\cap V_n = \OO_n \cap V_n \qquad \text{and}\qquad \BDom \cap V_n = \partial \OO_n \cap V_n,\quad n\in\{1,\dots, N\}.
        \end{equation*}
        If $\kappa=0$, then for any $\delta>0$, the special $\Cc^\ell$-domains $(\OO_n)_{n=1}^N$ can be chosen such that  $[\OO_n]_{C^{\ell}}<\delta$. 
  \item There exist $\eta_0\in \Cc^{\infty}(\OO)$ and $\eta_n \in \Cc^{\infty}(V_n)$ for all $n\in\{1,\dots,N\}$ such that $\sum_{n=0}^{N}\eta_n^2=1$ on $\OO$ (partition of unity).
      \item For $p\in(1,\infty)$, $k\in\NN_0$, $\gam\in\RR$ and $X$ a Banach space, the space $W^{k,p}(\OO, w_{\gam}^{\d\OO};X)$ has the direct sum decomposition
          \begin{equation}\label{eq:Fk}
            \mc{W}^{k,p}_{\gam}:=W^{k,p}(\RRd;X)\oplus \bigoplus_{n=1}^NW^{k,p}(\OO_n,w_{\gam}^{\d\OO_n};X).
          \end{equation}
          Moreover, the mappings
           \begin{align*}
             \II \colon W^{k,p}(\OO, w_{\gam}^{\d\OO};X)\to \mc{W}^{k,p}_{\gam}\quad \text{ and }\quad \PP\colon \mc{W}^{k,p}_{\gam}\to W^{k,p}(\OO, w_{\gam}^{\d\OO};X)
           \end{align*}
           given by
          \begin{equation}\label{eq:retraction}
            \II f := (\eta_n f)_{n=0}^N\quad \text{ and }\quad \PP(f_n)_{n=0}^N:=\sum_{n=0}^N\eta_nf_n,
          \end{equation}
          are continuous and satisfy $\PP\II = \operatorname{id}$. Thus, $\PP$ is a retraction with coretraction $\II$.
\end{enumerate}
\end{lemma}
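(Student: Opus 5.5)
The plan is to build the three parts in order: cover, partition of unity, then the decomposition. Each step is routine except for (iii) near the boundary, where the weight has to be handled.

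For (i), compactness of $\d\OO$ comes first. Around each boundary point $x$ the definition of a $C^{\ell,\kappa}$-domain gives a neighbourhood $V_x$ and a special $\Cc^{\ell,\kappa}$-domain $W_x$ with $\OO\cap V_x=W_x\cap V_x$ and $\d\OO\cap V_x=\d W_x\cap V_x$. By Remark~\ref{rem:special_dom}, after shrinking $V_x$ and localising the graph function $h_x$ with a cut-off, we may assume $[W_x]_{C^{\ell,\kappa-\eps}}<\delta$. (For $\kappa=0$ we assume $[W_x]_{C^\ell}<\delta$ instead.) Shrinking $V_x$ keeps both coincidence identities valid. Compactness of $\d\OO$ then yields finitely many $V_1,\dots,V_N$.

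For (ii), set $V_0:=\OO$. Then $\{V_n\}_{n=0}^N$ is an open cover of the compact set $\overline{\OO}$. Take a standard smooth partition of unity $\psi_n\in\Cc^\infty(V_n)$ with $\sum\psi_n=1$ on $\overline\OO$. The catch is that $\psi_0$ needs compact support in $\OO$ itself, which is possible because $\overline\OO\setminus\bigcup_{n\ge1}V_n$ is a compact subset of $\OO$. Define $\eta_n:=\psi_n/(\sum_m\psi_m^2)^{1/2}$. The denominator is positive and smooth on a neighbourhood of $\overline{\OO}$, so $\eta_n$ is smooth with the same supports, and $\sum_n\eta_n^2=1$ on $\OO$.

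For (iii), I check continuity of $\II$ and $\PP$, and then $\PP\II=\sum\eta_n^2 f=f$.
\begin{itemize}
\item $\eta_0 f$ is compactly supported in $\OO$. On $\operatorname{supp}\eta_0$ the weight $w_\gam^{\d\OO}$ is bounded above and below, so by Leibniz $\|\eta_0 f\|_{W^{k,p}(\RRd)}\lesssim\|f\|_{W^{k,p}(\OO,w^{\d\OO}_\gam)}$, extending $\eta_0f$ by zero.
\item For $n\ge1$, $\eta_nf$ is supported in $V_n\cap\OO=V_n\cap\OO_n$ and is extended by zero to $\OO_n$. The Leibniz rule bounds its derivatives by those of $f$, with constants from $\|\eta_n\|_{C^k}$.
\item The key point is $w_\gam^{\d\OO}\eqsim w_\gam^{\d\OO_n}$ on $\operatorname{supp}\eta_n\cap\OO$. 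Arrange this by choosing in (i) a slightly larger open set $V_n'\supset\overline{V_n}$ on which the coincidence still holds. Then for $x$ in the compact set $\operatorname{supp}\eta_n$, both distances are realised, up to constants, by boundary points in $V_n'$, where $\d\OO$ and $\d\OO_n$ coincide. Any far-away boundary points are at distance bounded below, and $\dist(x,\d\OO)$ is bounded above, so they do not spoil the comparison.
\item The map $\PP$ is treated the same way. Each $\eta_n f_n$, restricted to $\OO$, is supported where $\OO$ and $\OO_n$ agree and the weights are comparable. For $n=0$, the weight is bounded on $\operatorname{supp}\eta_0$, so the unweighted norm controls the weighted one.
\end{itemize}

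The only delicate step is the weight comparison in (iii), since everything else is covering and partition-of-unity bookkeeping. Once the supports are placed in the enlarged neighbourhoods as above, it is elementary.
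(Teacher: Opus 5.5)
Your proposal is essentially the argument the paper has in mind: the paper only cites a covering of $\d\OO$, Remark~\ref{rem:special_dom} and a standard partition of unity. Your treatment of (ii) and (iii) is correct for all $\ell$, $k$ and $\gam$, with one small repair and one simplification.

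\begin{itemize}
\item The weight comparison does not need the enlarged sets $V_n'$. Put $r:=\dist(\operatorname{supp}\eta_n,\RR^d\setminus V_n)>0$ and take $x\in\operatorname{supp}\eta_n\cap\OO$. Then $\dist(x,\d\OO)$ and $\dist(x,\d\OO_n)$ coincide as soon as one of them is smaller than $r$, because the nearest boundary point lies in $V_n$, where $\d\OO$ and $\d\OO_n$ agree. Otherwise both distances lie in a fixed interval $[r,C]$.
\item In (ii), $\psi_n(\sum_m\psi_m^2)^{-1/2}$ need not be smooth near points outside $\overline{\OO}$ where all $\psi_m$ vanish. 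Multiply it by a cut-off that equals $1$ near $\overline{\OO}$ and is supported where $\sum_m\psi_m^2>0$.
\end{itemize}

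What does not hold in the stated generality is the smallness in (i) for $\ell\ge 2$. Your appeal to Remark~\ref{rem:special_dom} inherits this overstatement from the paper.

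For $\ell=1$ your cut-off argument is fine. After a translation and rotation with $h(0)=0$ and $\nabla h(0)=0$, a cut-off $\chi_r$ at scale $r\le 1$ gives $\|\chi_r h\|_{C^{1,\kappa-\eps}}\lesssim[\nabla h]_{\kappa}\,r^{\eps}$. When $\kappa=0$, the modulus of continuity of $\nabla h$ does the same job.

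For $\ell\ge2$, however, $[\OO_n]_{C^{\ell,\kappa-\eps}}=\|h_n\|_{C^{\ell,\kappa-\eps}}$ contains $\sup|D^2h_n|$. This is bounded below by the curvature of $\d\OO$ at the points of $\d\OO\cap V_n$. Rigid motions, shrinking $V_n$ and cut-offs do not change that lower bound. For example, for the unit disc one has $|h_n''|=(1+|h_n'|^2)^{3/2}\ge 1$ there, so $[\OO_n]_{C^2}\ge 1$ for every admissible choice.

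So the smallness in (i) can only be proved for $\ell=1$. That is the case the paper's main results rely on ($C^{1,\kappa}$- and $C^1$-domains). Parts (ii) and (iii) never use the smallness, so they remain valid for every $\ell$.
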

In addition to \eqref{eq:Fk}, we define spaces incorporating zero boundary conditions as
\begin{equation}\label{eq:Fk0}
    \mc{W}^{k,p}_{0,\gam}:= W^{k,p}(\RRd;X)\oplus \bigoplus_{n=1}^NW_0^{k,p}(\OO_n,w_{\gam}^{\d\OO_n};X).
\end{equation}

Using the above decomposition, we can define weighted Sobolev spaces on bounded domains with vanishing traces. 
\begin{definition}
   Let $p\in(1,\infty)$, $\ell\in \NN_1$, $\kappa\in[0,1]$, $k\in\NN_0$ and let $X$ be a Banach space. Let $\gam\in (-1,\infty)\setminus\{jp-1:j\in\NN_1\}$ be such that $\gam>(k-(\ell+\kappa))_+p-1$. Moreover, let $\OO$ be a bounded $C^{\ell, \kappa}$-domain, let $(\OO_n)_{n=1}^N$ be special $\Cc^{\ell,\kappa}$-domains and let $\mc{I}$ be the coretraction from Lemma \ref{lem:decomp}. We define
   \begin{equation*}
     W^{k,p}_0(\OO, w_{\gam}^{\d\OO};X):= \big\{f\in W^{k,p}(\OO, w_{\gam}^{\d\OO};X): \II f\in W^{k,p}(\RRd;X)\oplus \bigoplus_{n=1}^N W^{k,p}_0(\OO_n, w_{\gam}^{\d\OO_n};X) \big\}.
   \end{equation*}
\end{definition}
This space is well defined by Lemma \ref{lem:decomp} and Definition \ref{def:spaces_special}. Moreover, the definitions are independent of the chosen covering of $\d\OO$ and the partition of unity in Lemma \ref{lem:decomp}.
Similar to Proposition \ref{prop:trace_char_dom}, we obtain the following trace characterisation for bounded domains. For the proof of this trace characterisation, we refer to \cite[Proposition 3.14]{LLRV25}.
\begin{proposition}[Trace characterisation on bounded domains]
    Let $p\in(1,\infty)$, $\ell\in\NN_1$, $\kappa\in[0,1]$, $k\in\NN_0$ and let $X$ be a Banach space. Let $\gam\in (-1,\infty)\setminus\{jp-1:j\in\NN_1\}$ be such that $\gam>(k-(\ell+\kappa))_+p-1$ and let $\OO$ be a bounded $C^{\ell,\kappa}$-domain. Then
    \begin{equation*}
        \cir{W}_0^{k,p}(\OO, w_{\gam}^{\d\OO};X)= W_0^{k,p}(\OO, w_{\gam}^{\d\OO};X).
    \end{equation*}
\end{proposition}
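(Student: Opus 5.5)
Both inclusions will be proved by localisation: we reduce via the retraction--coretraction pair $(\PP,\II)$ of Lemma \ref{lem:decomp} to the special-domain statement, Proposition \ref{prop:trace_char_dom}.

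\emph{Inclusion ``$\subseteq$''.} Let $f\in \Cc^\infty(\OO;X)$. Then $\eta_0 f\in \Cc^\infty(\RRd;X)$. For $n\ge1$, the function $\eta_n f$ is smooth with compact support inside $\OO\cap V_n=\OO_n\cap V_n$, so $\eta_nf\in \Cc^\infty(\OO_n;X)\subseteq \cir{W}^{k,p}_0(\OO_n,w^{\d\OO_n}_\gam;X)$. By Proposition \ref{prop:trace_char_dom} this space equals $W^{k,p}_0(\OO_n,w^{\d\OO_n}_\gam;X)$. Hence $\II f$ lies in the closed subspace $\mc W^{k,p}_{0,\gam}$ of $\mc W^{k,p}_\gam$, i.e.\ $f\in W_0^{k,p}(\OO,w^{\d\OO}_\gam;X)$. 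Since $\II$ is continuous and $\mc W^{k,p}_{0,\gam}$ is closed, $W^{k,p}_0(\OO,w_\gam^{\d\OO};X)=\II^{-1}(\mc W^{k,p}_{0,\gam})$ is closed in $W^{k,p}(\OO,w_\gam^{\d\OO};X)$. Passing to closures gives the inclusion.

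\emph{Inclusion ``$\supseteq$''.} Let $f\in W^{k,p}_0(\OO,w_\gam^{\d\OO};X)$, so that $\II f=(\eta_nf)_{n=0}^N\in\mc W^{k,p}_{0,\gam}$. Since $f=\PP\II f=\sum_n\eta_n(\eta_nf)$, it suffices to approximate each term $\eta_n(\eta_n f)$ by elements of $\Cc^\infty(\OO;X)$ in the norm of $W^{k,p}(\OO,w^{\d\OO}_\gam;X)$.
\begin{itemize}
\item For $n=0$: $\eta_0 f$ has compact support in $\OO$, so standard mollification produces approximants in $\Cc^\infty(\OO;X)$. On a compact subset of $\OO$ the weight is bounded above and below, so weighted and unweighted norms are equivalent there.
\item For $n\ge1$: by Proposition \ref{prop:trace_char_dom}, there are $g_j\in \Cc^\infty(\OO_n;X)$ with $g_j\to\eta_nf$ in $W^{k,p}(\OO_n,w^{\d\OO_n}_\gam;X)$. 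Then $\eta_n g_j$ is smooth and compactly supported in $\OO_n\cap V_n=\OO\cap V_n$, hence belongs to $\Cc^\infty(\OO;X)$. Moreover, $\eta_ng_j\to\eta_n\eta_nf$, because multiplication by $\eta_n\in\Cc^\infty$ is bounded on weighted Sobolev spaces by the Leibniz rule.
\end{itemize}
Summing the approximants gives $f\in\cir W^{k,p}_0(\OO,w^{\d\OO}_\gam;X)$.

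\emph{Main technical point.} The step needing care is comparing norms on $\OO_n$ and on $\OO$ for functions supported in $V_n$. On $\operatorname{supp}\eta_n\cap\OO$ we need $\dist(x,\d\OO)\eqsim\dist(x,\d\OO_n)$, so that $w^{\d\OO}_\gam\eqsim w^{\d\OO_n}_\gam$ there. This holds after shrinking the supports of the $\eta_n$ slightly inside $V_n$, since the boundaries of $\OO$ and $\OO_n$ agree on $V_n$. This comparability is exactly what makes $\II$ and $\PP$ continuous in Lemma \ref{lem:decomp}, so we may take it from there. With it, the weighted norms over $\OO$ and over $\OO_n$ agree up to constants for all functions involved, which justifies both transfers above.
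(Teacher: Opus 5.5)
Your proposal is correct and is essentially the localisation argument the paper relies on (it defers to \cite[Proposition 3.14]{LLRV25}). You transfer both inclusions through the coretraction $\II$ and retraction $\PP$ of Lemma \ref{lem:decomp} to the special-domain characterisation in Proposition \ref{prop:trace_char_dom}, using that $\II$ sends $\Cc^\infty(\OO;X)$ to compactly supported smooth pieces on the $\OO_n$ and that $\PP$ sends such pieces back into $\Cc^\infty(\OO;X)$. One bookkeeping point to add: for $\kappa>0$, Lemma \ref{lem:decomp} only makes $[\OO_n]_{C^{\ell,\kappa-\eps}}$ small, so Proposition \ref{prop:trace_char_dom} must be invoked with $\kappa-\eps$ in place of $\kappa$. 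This is allowed for small $\eps$, because the condition $\gam>(k-(\ell+\kappa))_+p-1$ is strict.
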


\subsubsection{Properties of weighted Sobolev spaces} Finally, we gather some properties of weighted Sobolev spaces with vanishing boundary conditions on the half-space. We begin with the following variant of Hardy's inequality, see, e.g., \cite[Corollary 3.4]{LV18}.
\begin{lemma}[Hardy's inequality on $\RRdh$]\label{lem:Hardy}
  Let $p\in(1,\infty)$, $k\in\NN_1$, $\gam\in \RR$ and let $X$ be a Banach space. Then 
        \begin{align*}
     W_0^{k,p}(\RRdh,w_{\gam};X)&\hookrightarrow W^{k-1,p} (\RRdh,w_{\gam-p};X) &&\text{ if }\gam<p-1,\\
     W^{k,p}(\RRdh,w_{\gam};X)&\hookrightarrow W^{k-1,p} (\RRdh,w_{\gam-p};X) &&\text{ if }\gam>p-1,\\
      W_0^{k,p}(\RRdh,w_{\gam};X)&\hookrightarrow W_0^{k-1,p} (\RRdh,w_{\gam-p};X)&&\text{ if }\gam\notin \{jp-1:j\in\NN_1\}.
  \end{align*}
\end{lemma}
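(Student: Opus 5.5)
The plan is to reduce all three embeddings to the one-dimensional weighted Hardy inequality in the normal variable $x_1$, with $\tilde{x}$ and $X$ treated as parameters, and then to apply it derivative by derivative. The basic estimate is, for $g\in \Cc^\infty([0,\infty);X)$,
\begin{equation*}
\int_0^\infty \|g(t)\|_X^p\, t^{\gam-p}\dd t\le \Big(\frac{p}{|\gam+1-p|}\Big)^p\int_0^\infty \|g'(t)\|_X^p\, t^{\gam}\dd t,
\end{equation*}
which holds when $\gam<p-1$ and $g(0)=0$, and when $\gam>p-1$ and $g$ has bounded support (no condition at $0$). It follows from the classical Hardy inequality applied to $\|g(t)\|_X\le\int_0^t\|g'\|$ in the first case and $\|g(t)\|_X\le\int_t^\infty\|g'\|$ in the second, because $X$-valued integration only enters through the norm. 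Integrating over $\tilde x\in\RR^{d-1}$ gives $\|f\|_{L^p(\RRdh,w_{\gam-p};X)}\lesssim \|\d_1 f\|_{L^p(\RRdh,w_\gam;X)}$ for the corresponding classes of smooth $f$.

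For the second embedding ($\gam>p-1$), I would first use density. For $\gam>kp-1$ this is \eqref{eq:densityCcW}; in general I would use a standard approximation of $W^{k,p}(\RRdh,w_\gam;X)$ by functions in $\Cc^\infty(\overline{\RRdh};X)$, which comes from truncation and mollification in tangential directions and shifts in $x_1$. I would then apply the one-dimensional estimate to $\d^\alpha f$ for $|\alpha|\le k-1$, which gives $\|\d^\alpha f\|_{L^p(w_{\gam-p})}\lesssim\|\d_1\d^\alpha f\|_{L^p(w_\gam)}$, and conclude by summing over $\alpha$.

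For the first embedding ($\gam<p-1$), I would use Proposition \ref{prop:tracechar_RRdh}. It gives density of $\Cc^\infty(\RRdh;X)$ in $W_0^{k,p}(\RRdh,w_\gam;X)$ when $\gam>-1$; when $\gam\le -1$, $W_0^{k,p}=W^{k,p}$ by definition, and I would still need density, using \cite[Lemma 3.1(2)]{LV18}. Every $\d^\alpha f$ with $f\in\Cc^\infty(\RRdh;X)$ vanishes near $x_1=0$, so the $\gam<p-1$ case of the one-dimensional estimate applies to each $\d^\alpha f$ with $|\alpha|\le k-1$. The resulting bound then extends by density.

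For the third embedding, I would split into cases. If $\gam<p-1$, the first part together with density of $\Cc^\infty(\RRdh;X)$ shows the image lies in the closure of $\Cc^\infty$ in $W^{k-1,p}(w_{\gam-p})$, which is $\cir W_0^{k-1,p}=W_0^{k-1,p}$ by Proposition \ref{prop:tracechar_RRdh}. Note that $\gam-p\notin\{jp-1\}$ holds automatically. If $\gam>p-1$ with $\gam\ne jp-1$, the second part gives the embedding into $W^{k-1,p}(w_{\gam-p})$, and $\Cc^\infty$ functions are mapped continuously, so again the closure of $\Cc^\infty(\RRdh;X)$ lands in $\cir W_0^{k-1,p}(w_{\gam-p})$. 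This requires Proposition \ref{prop:tracechar_RRdh} to identify $W_0^{k,p}(w_\gam)$ with the closure of $\Cc^\infty$, which is available since $\gam\notin\{jp-1\}$. One must also check that the trace conditions match: $k-|\alpha|>\frac{\gam+1}{p}$ is equivalent to $(k-1)-|\alpha|>\frac{\gam-p+1}{p}$, so this is consistent. I expect the main obstacle to be the density statements, in particular the unrestricted approximation in $W^{k,p}(w_\gam)$ for $p-1<\gam\le kp-1$ and the negative-weight case. There I would rely on \cite{LV18}.
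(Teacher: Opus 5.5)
Your argument is correct. The paper does not prove this lemma; it quotes \cite[Corollary 3.4]{LV18}. The core of your proposal is the standard argument behind that result: the one-dimensional Hardy inequality in $x_1$ for $\gam<p-1$ and its dual form for $\gam>p-1$, applied to $\d^\alpha f$ with $|\alpha|\le k-1$, together with the observation that $k-|\alpha|>\frac{\gam+1}{p}$ and $(k-1)-|\alpha|>\frac{\gam-p+1}{p}$ are the same condition.

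Where you deviate is in routing everything through density. Inside this paper that is admissible, because Proposition \ref{prop:tracechar_RRdh} is stated beforehand; it already covers $\gam\le -1$, so you need no extra input there. As a self-contained proof it is weaker. The usual proof of such trace characterisations cuts off near $x_1=0$ and controls the terms where derivatives fall on the cut-off by exactly these weighted Hardy bounds. So once that reference is unfolded, your treatment of the first and third embeddings is circular. For the second embedding you also need density of $\Cc^\infty(\overline{\RRdh};X)$ in $W^{k,p}(\RRdh,w_\gam;X)$ for $p-1<\gam\le kp-1$, which the paper does not state; it only records \eqref{eq:densityCcW} for $\gam>kp-1$.

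You can drop the approximation step altogether. Apply your Banach-valued inequality with $X$ replaced by $Y:=L^p(\RR^{d-1};X)$ to $F(t):=\d^\alpha f(t,\cdot)$, which lies in $W^{1,p}(\RR_+,w_\gam;Y)$ by Fubini.
\begin{itemize}
\item If $\gam<p-1$, H\"older gives $\int_0^1\|F'\|_Y\dd t<\infty$, so $F$ is continuous on $[0,\infty)$ with $F(0)=\Tr \d^\alpha f=0$. This holds by the definition of $W_0^{k,p}$ when $\gam\in(-1,p-1)$, and automatically when $\gam\le -1$, since $t^\gam$ is not integrable at $0$. Hence $F(t)=\int_0^t F'$.
\item If $\gam>p-1$, H\"older gives $\int_1^\infty\|F'\|_Y\dd t<\infty$, since $t^{-\gam/(p-1)}$ is integrable at infinity. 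So $F$ has a limit at infinity, which must be $0$ because $\int_1^\infty\|F\|_Y^p t^\gam\dd t<\infty$. Hence $F(t)=-\int_t^\infty F'$.
\end{itemize}
Hardy's inequality then applies directly to every $f$ in the space, with no approximation argument. The third embedding follows from the trace matching you already noted; for $\gam<p-1$ the target space is simply $W^{k-1,p}$, because $\gam-p<-1$.
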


We continue with a density result for weighted Sobolev spaces with negative smoothness. Results for spaces with positive smoothness are, for instance, contained in \cite[Section 3.3]{LV18} and \cite[Section 4.2]{Ro25}.

\begin{lemma}\label{lem:densityCcnegative}
  Let $p\in(1,\infty)$, $k\in \NN_1$, $\gam\in (-1,\infty)\setminus\{jp-1:j\in\NN_1\}$ and let $X$ be a reflexive Banach space. Then $\Cc^\infty(\RRdh;X)$ is dense in $W^{-k,p}(\RRdh, w_{\gam};X)$.
\end{lemma}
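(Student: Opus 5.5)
The plan is a duality argument: density in a dual space follows from a Hahn--Banach separation combined with reflexivity. Set $Y:=W_0^{k,p'}(\RRdh, w_{\gam'};X')$ with $\gam'=-\gam/(p-1)$, so that $W^{-k,p}(\RRdh,w_\gam;X)=Y'$ by definition. First I make precise how $\Cc^\infty(\RRdh;X)$ sits inside $Y'$. A function $\phi\in\Cc^\infty(\RRdh;X)$ acts on $g\in Y$ via the unweighted pairing $\langle \phi, g\rangle:=\int_{\RRdh}\langle \phi(x), g(x)\rangle\dd x$. This is bounded by Hölder's inequality: $\phi$ has compact support in the open half-space, so $\phi\in L^p(\RRdh,w_{\gam};X)$. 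Moreover $\gam'=-\gam/(p-1)$ is exactly the dual weight exponent, since $w_\gam^{-1/(p-1)}=w_{\gam'}$, which gives
\[
|\langle \phi,g\rangle|\le \|\phi\|_{L^p(w_\gam)}\|g\|_{L^{p'}(w_{\gam'})}.
\]
The parameter restriction needed to apply Proposition~\ref{prop:tracechar_RRdh} to $Y$ is satisfied: $\gam\notin\{jp-1\}$ if and only if $\gam'\notin\{jp'-1\}$, because $\gam=jp-1$ corresponds to $\gam'=-(jp-1)/(p-1)$, and the statement allows general real $\gam$. (Negative $\gam'\le -1$ is covered by the convention $W_0=W$.)

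By Hahn--Banach, it suffices to show that every functional $\Lambda\in (Y')'$ that vanishes on $\Cc^\infty(\RRdh;X)$ is zero. The key structural input is reflexivity of $Y$. Since $X$ is reflexive, so is $X'$. The space $W^{k,p'}(\RRdh,w_{\gam'};X')$ embeds isometrically as a closed subspace of a finite product of the spaces $L^{p'}(\RRdh,w_{\gam'};X')$ via $g\mapsto(\d^\alpha g)_{|\alpha|\le k}$; it is closed because distributional derivatives are closed. Each $L^{p'}$ space with a reflexive range is reflexive, so $W^{k,p'}$ is reflexive, and $Y$ is a closed subspace of it. Hence $\Lambda$ is given by some $g\in Y$, that is, $\Lambda(\ell)=\ell(g)$. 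The vanishing hypothesis then says $\int\langle\phi,g\rangle\dd x=0$ for all $\phi\in\Cc^\infty(\RRdh;X)$. Taking $\phi=\psi x$ with $\psi\in\Cc^\infty(\RRdh)$ and $x\in X$ gives $\langle x,g(\cdot)\rangle=0$ almost everywhere for each $x$. Since $g$ is strongly measurable, it is essentially separably valued, and a countable norming set for that separable range yields $g=0$ almost everywhere. Therefore $\Lambda=0$, and density follows.

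I expect the main obstacle to be the bookkeeping in the reflexivity step rather than any deep issue. Two points need care. The first is that the duality pairing used to define $W^{-k,p}$ is the unweighted one, so the identification $L^{p}(w_\gam;X)'=L^{p'}(w_{\gam'};X')$ has to be checked under that pairing. This identification requires the Radon--Nikodým property of $X'$, which holds because $X'$ is reflexive. The second is that no trace characterisation is actually needed: the argument works directly with $Y$ as a closed subspace, whichever definition of $W_0^{k,p'}$ is in force.
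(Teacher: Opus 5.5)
Your argument is correct and is essentially the paper's proof: both rest on reflexivity of $W_0^{k,p'}(\RRdh,w_{\gam'};X')$ plus duality. The paper phrases it as a dense embedding $L^p(\RRdh,w_\gam;X)\hookrightarrow W^{-k,p}(\RRdh,w_\gam;X)$ (using that $W_0^{k,p'}$ is dense in $L^{p'}(\RRdh,w_{\gam'};X')$), followed by density of $\Cc^\infty$ in $L^p$, whereas your direct Hahn--Banach separation with the fundamental lemma skips the intermediate $L^p$ step.

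One harmless slip: the equivalence $\gam\notin\{jp-1:j\in\NN_1\}\iff\gam'\notin\{jp'-1:j\in\NN_1\}$ is false (e.g.\ $\gam=p-1$ gives $\gam'=-1$). In fact $\gam>-1$ forces $\gam'<p'-1$, so $\gam'$ is never exceptional, and your argument does not use the trace characterisation anyway.
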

\begin{proof}
  From \cite[Proposition 3.8]{LV18} we have
  \begin{equation*}
    \Cc^\infty(\RRdh;X')\stackrel{\text{dense}}{\hookrightarrow} W_0^{k,p'}(\RRdh, w_{\gam'};X')\quad \text{ and }\quad \Cc^\infty(\RRdh;X')\stackrel{\text{dense}}{\hookrightarrow}L^{p'}(\RRdh, w_{\gam'};X'),
  \end{equation*}
  where we recall that $\gam'=-\frac{\gam}{p-1}$. Hence, $W_0^{k,p'}(\RRdh, w_{\gam'};X')$ is dense in $L^{p'}(\RRdh, w_{\gam'};X')$ as well. Since $X$ is reflexive, it also holds that $W_0^{k,p'}(\RRdh, w_{\gam'};X')$ is reflexive. Hence, it follows that
  \begin{equation*}
    L^p(\RRdh, w_{\gam};X)\stackrel{\text{dense}}{\hookrightarrow} \big(W^{k,p'}_0(\RRdh,w_{\gam'}; X')\big)'= W^{-k,p}(\RRdh, w_{\gam};X),
  \end{equation*}
  and since $\Cc^\infty(\RRdh;X)$ is dense in $L^p(\RRdh, w_{\gam};X)$, the result follows.
\end{proof}

\section{\texorpdfstring{$R$-}{R-}sectoriality for the elliptic operator on the half-space}\label{sec:Rsect}
In this section, we study the elliptic operator corresponding to the damped plate equation \eqref{eq:plate_eq_intro}. This elliptic operator on $\RRd$, $\RRdh$, and bounded domains is studied in \cite{DS15}, and these results will be the starting point for our analysis on weighted spaces. Therefore, we will first recall several results from \cite{DS15} in a slightly more general setting with Muckenhoupt power weights.

On $\RRd$ the system \eqref{eq:plate_eq_intro} without boundary conditions can be written in the form
\begin{equation}\label{eq:plate_eq_Rd}
\left\{
\begin{aligned}
&\d_t^2 u + \rho B^{\half} \d_t u + Bu =f
&\text{ for }t>0,\\
&u|_{t=0}=(\d_t u)|_{t=0}=0,
\end{aligned}
\right.
\end{equation}
where $B:= \del^2$ on $L^p(\RRd)$ with $D(B):=W^{4,p}(\RRd)$. Equation \eqref{eq:plate_eq_Rd} is related to the quadratic operator pencil $V: W^{4,p}(\RRd)\to L^p(\RRd)$ given by
\begin{equation*}
  V(\lambda):= \lambda^2 + \lambda \rho B^{\half} + B = (\alpha_+\lambda+ B^{\half})(\alpha_-\lambda + B^{\half}),
\end{equation*}
where 
\begin{equation*}
\alpha_{\pm}=\begin{cases}
\frac{\rho}{2}\pm \ii \sqrt{1-\frac{\rho^2}{4}}  & \mbox{for }0< \rho <2,\\
               \frac{\rho}{2}\pm \sqrt{\frac{\rho^2}{4}-1} & \mbox{for } \rho\geq 2.
             \end{cases}
\end{equation*}
The roots can be written as $\alpha_{\pm} = e^{\pm \ii \vartheta}$ for $0<\rho < 2$ and $\alpha_{\pm}>0$ for $\rho\geq 2$, where the angle $\vartheta=\vartheta(\rho)$ is given by
\begin{equation*}
  \vartheta(\rho):=\begin{cases}
                     \arctan\frac{2}{\rho}\sqrt{1-\frac{\rho^2}{4}} & \mbox{for } 0< \rho <2, \\
                     0 & \mbox{for } \rho\geq 2.
                   \end{cases}
\end{equation*}
Note that $\arg \alpha_{\pm}=\pm \vartheta(\rho)$ and $\vartheta(\rho)\uparrow \frac{\pi}{2}$ as $\rho\downarrow 0$.

By the classical theory of quadratic operator pencils and second-order Cauchy problems, the operator $V(\lambda)$ can be inverted to show maximal regularity for \eqref{eq:plate_eq_Rd}. We refer to \cite{CCD08,CS05, FO91, SV10} and, in particular, see \cite[Theorem 3.4]{FO91} and \cite[Theorem 4.1]{CS05}. In \cite{DS15}, the related first-order system is studied which is obtained by setting $v:=(u, \d_t u)^\top$. Then we can write
\begin{equation*}
  \left\{
\begin{aligned}
&\d_t v + A v =\begin{pmatrix}
                 0 \\
                 f
               \end{pmatrix}
&\text{ for }t>0,\\
&v|_{t=0}=0,
\end{aligned}
\right.
\end{equation*}
where, formally, the operator matrix $A$ is given by
\begin{equation}\label{eq:A_matrix}
  A : =  \begin{pmatrix}
         0 & -\id \\
         \del^2 & -\rho\del 
       \end{pmatrix},\quad \rho>0.
\end{equation}
In addition, we occasionally study the closely related operator
\begin{equation}\label{eq:A_matrix_mu}
  A_\eta : =  \begin{pmatrix}
         0 & -\id \\
         \eta+\del^2 & -\rho\del 
       \end{pmatrix},\quad \eta\geq 0,\, \rho>0.
\end{equation}
In the case of unbounded domains, the operator $A_\eta$ with $\eta>0$ behaves slightly better than $A=A_0$, since $\eta+\del^2$ is invertible while $\Delta^2$ is not.\\

Following \cite[Section 2]{DS15}, the operator $A$ on the full space can be defined via its symbol, i.e., $A:=\FF^{-1} A(\xi)\FF$, where the matrix-valued symbol $A(\xi)$ is given by
\begin{equation*}
    A(\xi):= \begin{pmatrix}
        0& -1\\|\xi|^4& \rho|\xi|^2
    \end{pmatrix},\qquad \xi\in\RR^d.
\end{equation*}
For $p\in(1,\infty)$, $\gam\in(-1,p-1)$, we define the Banach spaces
\begin{align*}
  \XX& := W^{2,p}(\RRd, w_\gam)\times L^p(\RRd, w_\gam),\\
  \DD&:= W^{4,p}(\RRd, w_\gam)\times  W^{2,p}(\RRd, w_\gam).
\end{align*}

We record the following version of \cite[Proposition 2.4]{DS15}. The proof of \cite[Proposition 2.4]{DS15} only relies on arguments involving Mihlin's multiplier theorem (for operator-valued symbols). Since Mihlin's multiplier theorem is also available on spaces with Muckenhoupt  weights, the results can be extended. Furthermore, in Proposition \ref{prop:higher_reg_Rd} we will provide an extension of Proposition \ref{prop:DS_Rd} concerning higher-order regularity.
\begin{proposition}[{Weighted version of \cite[Proposition 2.4]{DS15}}]\label{prop:DS_Rd}
    Let $p\in(1,\infty)$, $\gam\in (-1,p-1)$ and $\rho>0$. Let $A$ be the realisation of the operator in \eqref{eq:A_matrix} on $\XX$ with domain $D(A):=\DD$. Then, for all $\lambda>0$, the operator $\lambda + A$ is $R$-sectorial with angle $\om_{R}(\lambda+A) =\vartheta(\rho)$. 
\end{proposition}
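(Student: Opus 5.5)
The argument reduces to an operator-valued Mihlin estimate, exactly as in \cite[Proposition 2.4]{DS15}. The ingredient that changes is the multiplier theorem. Since $\gam\in(-1,p-1)$, the weight $w_\gam$ belongs to $A_p(\RRd)$. By the weighted Mihlin theorem, every smooth symbol $m$ on $\RRd\setminus\{0\}$ satisfying $|\xi|^{|\beta|}|\d^\beta m(\xi)|\le C$ for all $|\beta|\le d+1$ defines a bounded Fourier multiplier on $L^p(\RRd,w_\gam)$. Its operator norm depends only on $C$ and $[w_\gam]_{A_p}$.

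For $R$-boundedness I would use the $R$-bounded version of this theorem. Because $L^p(\RRd,w_\gam)$ is a Banach function space with finite cotype, $R$-boundedness of a family of multipliers is equivalent to a square function estimate. That estimate holds, by Rubio de Francia extrapolation or the weighted Kahane--Khintchine argument, as soon as the Mihlin constants are uniform over the family.

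Next I reduce $\XX$ to a product of $L^p$-spaces. Let $J^s:=\FF^{-1}(1+|\xi|^2)^{s/2}\FF$. Then $J^s$ is an isomorphism from $W^{k+s,p}(\RRd,w_\gam)$ onto $W^{k,p}(\RRd,w_\gam)$; this again follows from the weighted Mihlin theorem, applied to the symbols relating $(1+|\xi|^2)^{s/2}$ and $\xi^\beta$. Conjugating with $\operatorname{diag}(J^2,\id)$ transforms $\lambda+A$ on $\XX$ into a matrix multiplier on $L^p(\RRd,w_\gam)^2$.

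For $z\notin\overline{\Sigma}_\sigma$ with $\sigma>\vartheta(\rho)$, the resolvent symbol is
\begin{equation*}
(z-\lambda-A(\xi))^{-1}=\frac{1}{P(z-\lambda,\xi)}\begin{pmatrix}-(z-\lambda)+\rho|\xi|^2 & -1\\ |\xi|^4 & -(z-\lambda)\end{pmatrix},
\end{equation*}
where $P(\mu,\xi)=\mu^2-\mu\rho|\xi|^2+|\xi|^4=(\mu-\alpha_+|\xi|^2)(\mu-\alpha_-|\xi|^2)$ and $\mu=z-\lambda$. After conjugation, the entries of $z(z-\lambda-A(\xi))^{-1}$ become, up to the factors $(1+|\xi|^2)^{\pm1}$, of the form $z\mu^a|\xi|^{2b}/P(\mu,\xi)$ with $a+b\le1$.

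The main step is to check the Mihlin conditions uniformly in $z$. I would first establish the parameter-elliptic lower bound
\begin{equation*}
|\mu-\alpha_\pm|\xi|^2|\gtrsim |\mu|+|\xi|^2
\end{equation*}
for $-\mu\notin\overline{\Sigma}_{\pi-\sigma'}$ with $\sigma'>\vartheta$, i.e. for $\mu$ outside the sector $\overline{\Sigma}_{\sigma'}$ containing the rays $\alpha_\pm|\xi|^2$. This holds because those rays make angle $\pm\vartheta$ with the positive real axis.

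The shift by $\lambda>0$ needs care. We have $\mu=z-\lambda$, and for $z\notin\overline{\Sigma}_\sigma$ the point $\mu$ also lies outside $\overline{\Sigma}_\sigma$, with $|\mu|\ge c(|z|+\lambda)$. This bound is what handles the $|\xi|^4$ entry and the inhomogeneous factor $(1+|\xi|^2)$: a term like $z(1+|\xi|^2)/P$ is controlled by $(|\mu|+|\xi|^2)\cdot(|\mu|+1+|\xi|^2)/(|\mu|+|\xi|^2)^2$, and this is bounded only because $|\mu|\gtrsim\lambda>0$.

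The derivative bounds $|\xi|^{|\beta|}|\d^\beta_\xi m|\lesssim1$ then follow from the standard quasi-homogeneity argument: each entry is a rational function of homogeneity type in $(|\mu|^{1/2},\xi)$, with the denominator bounded below as above. This gives $R$-sectoriality for every angle $\sigma>\vartheta(\rho)$, so $\om_R(\lambda+A)\le\vartheta(\rho)$. Injectivity and density of domain and range are immediate from invertibility of $\lambda+A$ and density of $\Cc^\infty$.

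For the reverse inequality $\om_R(\lambda+A)\ge\om(\lambda+A)\ge\vartheta(\rho)$, note that the spectrum of the symbol contains $\lambda+\alpha_\pm|\xi|^2$. Hence $\sigma(\lambda+A)$ contains the rays $\lambda+\alpha_\pm[0,\infty)$, whose directions tend to $\pm\vartheta$. Closedness of $A$ on $\DD$ follows from the resolvent identity.

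The main obstacle is the uniform symbol estimate, specifically the interplay between the shift $\lambda$ and the inhomogeneous factors $(1+|\xi|^2)$.
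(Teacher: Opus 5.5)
Your proposal is correct and takes essentially the paper's route: the paper transfers the symbol estimates of \cite[Lemma 2.1, Propositions 2.2 \& 2.4]{DS15} to the weighted setting via a weighted Mihlin theorem, and your extrapolation/square-function justification of the $R$-bounded multiplier theorem stands in for the paper's citation of \cite[Theorem 5.12(b)]{FHL19`}, which is equivalent here because the symbols are $2\times 2$ matrices. There are some minor slips that do not affect the argument: the diagonal entries of the inverse symbol should be $\mu-\rho|\xi|^2$ and $\mu$, not their negatives; the sector condition should read $-\mu\in\Sigma_{\pi-\sigma'}$; and the $(2,1)$ entry $|\xi|^4/P$ does not satisfy $a+b\le 1$, but it is compensated by the factor $(1+|\xi|^2)^{-1}$ coming from the conjugation.
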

\begin{proof}
    The proof is similar as in \cite[Lemma 2.1, Propositions 2.2 \& 2.4]{DS15} using weighted versions of Mihlin's multiplier theorem. For scalar-valued symbols, a weighted version of Mihlin's multiplier theorem can be found in, e.g., \cite[Proposition 3.1]{MV15}. For operator-valued symbols one has to apply \cite[Theorem 5.12(b)]{FHL19} instead of \cite[Corollary 3.3]{GW03} to obtain $R$-boundedness.
\end{proof}

In the rest of this section, we consider the operator $A$ on the half-space. First, in Section \ref{subsec:Rsect_unweight} we establish $R$-sectoriality of $A$ on spaces with Muckenhoupt power weights. Subsequently, in Section \ref{subsec:Rsect_weight}, we lift the $R$-sectoriality to higher-order Sobolev spaces with power weights that do not belong to the class of Muckenhoupt weights.

\subsection{\texorpdfstring{$R$-}{R-}sectoriality on spaces with Muckenhoupt power weights}\label{subsec:Rsect_unweight}
For $p\in(1,\infty)$ and $\gam\in(-1,p-1)$, we define the Banach spaces 
\begin{align*}
  \XX_+ &:= W^{2,p}_0(\RRdh, w_\gam)\times L^p(\RRdh, w_\gam),\\
  \DD_+&:= \big(W^{4,p}(\RRdh,w_\gam)\cap W^{2,p}_0(\RRdh, w_\gam)\big)\times  W^{2,p}_0(\RRdh, w_\gam).
\end{align*}

Similar to \cite[Theorem 4.4]{DS15} we obtain $R$-sectoriality of $A_\eta$ on spaces with Muckenhoupt power weights.
\begin{theorem}[{Weighted version of \cite[Theorem 4.4]{DS15}}]\label{thm:DS}
  Let $p\in(1,\infty)$, $\gam\in (-1,p-1)$, $\eta\geq 0$, $\rho>0$ and $\sigma\in (\vartheta(\rho),\pi)$. Let $A_\eta$ be the realisation of the operator in \eqref{eq:A_matrix_mu} on $\XX_+$ with domain $D(A_\eta):=\DD_+$. Then there exists a $\tilde{\lambda}\geq 0$ such that for all $\lambda > \tilde{\lambda}$, the operator $\lambda + A_\eta$ is $R$-sectorial with angle $\om_{R}(\lambda+A_\eta) \leq \sigma$. 
\end{theorem}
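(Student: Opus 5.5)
We follow the strategy of \cite[Section 4]{DS15}: reduce the half-space resolvent problem $(\lambda+\mu+A_\eta)v=h$, $h=(h_1,h_2)\in\XX_+$, to a whole-space part plus a boundary correction. Everything rests on operator-valued Mihlin theorems, and these remain valid on $L^p(\RRd,w_\gam)$ and $L^p(\RR_+,|x_1|^\gam;X)$ for $\gam\in(-1,p-1)$, as used in Proposition \ref{prop:DS_Rd}. First take $\eta=0$. Extend $h$ to $\RRd$ by an extension operator $E$ that is bounded $W^{2,p}(\RRdh,w_\gam)\to W^{2,p}(\RRd,w_\gam)$ and $L^p(\RRdh,w_\gam)\to L^p(\RRd,w_\gam)$; for $h_1\in W^{2,p}_0$, odd reflection is admissible since $w_\gam$ is even in $x_1$ and $\Tr h_1=\Tr\d_1h_1=0$. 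Proposition \ref{prop:DS_Rd} then gives $v^{(1)}:=(\lambda+\mu+A)^{-1}Eh\,|_{\RRdh}$. Since $\lambda>0$, the family $\{\mu(\lambda+\mu+A)^{-1}:\mu\in\Sigma_{\pi-\sigma}\}$ is $R$-bounded on $\XX$, and composition with $E$ and restriction preserves $R$-boundedness. The remainder $v^{(2)}:=v-v^{(1)}$ satisfies the homogeneous system with boundary data $g=(-\Tr v_1^{(1)},-\Tr\d_1v_1^{(1)})$. Here we use that $D(A)$ forces $v_1\in W^{2,p}_0$, i.e.\ both traces of $v_1$ vanish; this is exactly what $\DD_+$ encodes, because $v_2=-(\lambda+\mu)v_1+h_1$ on the first row.

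Next we solve this boundary value problem. Take the partial Fourier transform in $\tilde x$ (covariable $\tilde\xi$) and set $\lambda'=\lambda+\mu$. The first row gives $v_2=h_1-\lambda'v_1$, so $v_1$ solves the scalar ODE $(\lambda'^2-\rho\lambda'(\d_1^2-|\tilde\xi|^2)+(\d_1^2-|\tilde\xi|^2)^2)\hat v_1=0$ on $\RR_+$ with data $\hat v_1(0),\d_1\hat v_1(0)$ prescribed. This factorises as $(\alpha_+\lambda'+|\tilde\xi|^2-\d_1^2)(\alpha_-\lambda'+|\tilde\xi|^2-\d_1^2)$, with decaying roots $\omega_\pm=(|\tilde\xi|^2+\alpha_\pm\lambda')^{1/2}$, which have positive real part exactly when $\lambda'\in\Sigma_{\pi-\sigma}$ with $\sigma>\vartheta(\rho)$. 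The solution is an explicit combination of $e^{-\omega_\pm x_1}$ with Lopatinskii determinant $\omega_+-\omega_-$, so the case $\omega_+=\omega_-$ (i.e.\ $\rho=2$) must be handled through the divided difference $(e^{-\omega_+x_1}-e^{-\omega_-x_1})/(\omega_+-\omega_-)$. We write the solution operator via the standard ``Volevich trick'' $g(0)=-\int_0^\infty\d_1(\cdots)\,dx_1$, which produces operators of the form $\int_0^\infty k(x_1+y_1,\tilde\xi,\lambda')\,\cdot\,dy_1$ with kernel $k$ built from $e^{-\omega_\pm(x_1+y_1)}$, applied to $v^{(1)}$ and its derivatives.

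It then remains to show $R$-boundedness of $\{\mu v^{(2)}\}$ in $\XX_+$. We bound each term as $\tilde\xi$-Mihlin multipliers composed with Hardy-type kernel operators in $x_1$ of the form $f\mapsto\int_0^\infty \frac{1}{x_1+y_1}\,\phi\bigl(\tfrac{x_1}{x_1+y_1}\bigr)f(y_1)\,dy_1$. These are bounded, and even $R$-bounded, on $L^p(\RR_+,x_1^\gam;X)$ for $\gam\in(-1,p-1)$ by the weighted Hardy/Schur test. This is the step we expect to be the main obstacle: the unweighted argument in \cite{DS15} uses Fourier multipliers in the full variable after extension, or the $\Hinf$-calculus of $\d_1$ on $L^p(\RR_+)$. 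We will instead invoke the $\Hinf$-calculus of the Dirichlet Laplacian/derivative on $L^p(\RR_+,w_\gam)$ and $R$-boundedness of the resulting kernel operators, which holds precisely for Muckenhoupt $\gam$. The shift $\tilde\lambda$ absorbs lower-order terms. Finally, for $\eta>0$ we treat $\binom{0\;\;0}{\eta\;\;0}$ as a perturbation: it is bounded from $\XX_+$ to $\XX_+$, since $v_1\in W^{2,p}\subset L^p$. Because $\|(\lambda+\mu+A)^{-1}\|_{\mc L(\XX_+)}\lesssim|\lambda+\mu|^{-1}$, a Neumann series for $\lambda>\tilde\lambda$ large yields $R$-sectoriality of $\lambda+A_\eta$ with angle at most $\sigma$.
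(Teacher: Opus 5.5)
Your proposal is correct in outline and takes the same route as the paper. Both adapt the half-space argument of \cite{DS15}: a whole-space resolvent plus a boundary correction written through kernels in $x_1+y_1$. The only genuinely weighted input is that integral operators dominated by $(x_1+y_1)^{-1}$ are bounded on $L^p(\RR_+,w_\gam)$ exactly for $\gam\in(-1,p-1)$ (the paper cites \cite[Theorem~1]{Andersen80}), and hence $R$-bounded by domination as in \cite[Proposition 4.12]{DHP03]}. The case $\eta>0$ is then handled as a bounded lower-order perturbation.

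Three small points need fixing. First, the first row of $(\lambda'+A)v=h$ gives $v_2=\lambda'v_1-h_1$, not $h_1-\lambda'v_1$; your scalar ODE is nevertheless the correct one. Second, the divided-difference form is needed for every $\rho$, not only $\rho=2$, because $\omega_+-\omega_-=(\alpha_+-\alpha_-)\lambda'/(\omega_++\omega_-)$ degenerates relative to $|\omega_\pm|$ when $|\tilde\xi|^2\gg|\lambda'|$. Third, the Neumann-series step must use the $R$-bound of the resolvent family rather than just its norm bound, which is what \cite[Corollary 16.2.5]{HNVW24} provides.
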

\begin{proof}
It suffices to consider $\eta=0$, since a lower-order perturbation result \cite[Corollary 16.2.5]{HNVW24} yields the case $\eta>0$. For $\eta=0$ the result follows from adapting the proof of \cite[Theorem 4.4]{DS15} to include weights. We briefly sketch how \cite[Theorem 4.4]{DS15} can be extended to the weighted setting. First, note that the representation formulas for the solution in \cite[Lemmas 3.1 \& 3.2]{DS15} do not depend on the underlying space. Then a weighted version of \cite[Proposition 3.3]{DS15} can be proved using a weighted version of \cite[Proposition 4.12]{DHP03} and verifying that the integral operator related to the kernel $(x_1+y_1)^{-1}$ is bounded in $L^p(\RR_+, w_{\gam})$ for $\gam\in (-1,p-1)$. The proof of \cite[Proposition 4.12]{DHP03} directly carries over to the weighted setting, and the boundedness of the integral operator follows from \cite[Theorem~1]{Andersen80}, where the condition on the weight functions in this theorem is satisfied for power weights exactly for $\gam\in (-1,p-1)$. Moreover, \cite[Lemma 4.3]{DS15} also holds on $L^p(\RRdh, w_{\gam})$. This provides all the ingredients for the proof of \cite[Theorem 4.4]{DS15} including weights.
\end{proof}

The zero boundary conditions in the base space $\XX_+$ are \emph{necessary} for the result of Theorem \ref{thm:DS} to be true, see \cite{DD11} and \cite[Proposition 3.4]{DS15}. The existence of the zeroth and first-order trace of functions in $W^{2,p}(\RRdh, w_\gam)$ is problematic here. We aim to obtain $R$-sectoriality for $A$ on spaces \emph{without} the unnatural boundary conditions. The results in \cite{LLRV24, LLRV25} indicate that this can be achieved on higher-order spaces with suitable power weights. The main advantage of these weighted spaces is that fewer traces exist than in the unweighted case. In particular, no traces exist for functions in the weighted space $W^{2,p}(\RRdh, w_{\gam+2p})$ with $\gam\in (-1,p-1)$. This allows us to obtain $R$-sectoriality for $A$ on $W^{2,p}(\RRdh, w_{\gam+2p})\times L^p(\RRdh, w_{\gam+2p})$ in Section \ref{subsec:Rsect_weight}. \\

To be able to deal with spaces with more general power weights outside the Muckenhoupt class, we 
first study $R$-sectoriality of $A$ on spaces with negative smoothness in Theorem \ref{thm:Rsect_noweight}.
This is done by extrapolating the $R$-sectoriality result for $A$ on $W_0^{2,p}(\RRdh, w_{\gam})\times L^p(\RRdh, w_{\gam})$ to the space $L^p(\RRdh, w_\gam)\times W^{-2,p}(\RRdh, w_\gam)$, similar to \cite[Section 3]{DK18_transmission} where the case $p=2$ is considered for bounded domains.

For proving Theorem \ref{thm:Rsect_noweight} we need some preliminaries. We start off with the study of $\del^2$ in the strong and weak setting, where we have the following result that can be proved similarly as in \cite[Theorem 7.4]{DHP03}.

\begin{theorem}\label{thm:del2_strong}
   Let $p\in(1,\infty)$ and $\gam\in (-1,p-1)$. Then $\del^2$ on $L^p(\RRdh, w_\gam)$ with domain $D(\del^2):=W^{4,p}(\RRdh, w_\gam)\cap W^{2,p}_0(\RRdh, w_\gam)$ has a bounded $\Hinf$-calculus with angle $\om_{\Hinf}(\del^2)=0$.
\end{theorem}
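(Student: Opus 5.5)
Following \cite[Theorem 7.4]{DHP03}, the approach is to reduce the bi-Laplacian with clamped boundary conditions to the whole space by an explicit decomposition of the resolvent into a whole-space part and a boundary correction. Concretely, for $\lambda\in\CC\setminus\overline{\Sigma}_\phi$ and $f\in L^p(\RRdh,w_\gam)$, write the solution of $(\lambda-\del^2)u=f$, $u=\d_1u=0$ as
\[
u=r^+(\lambda-\del^2)^{-1}_{\RRd}Ef+u_{\mathrm{bd}},
\]
where $E$ is extension by zero and $r^+$ restriction. The first term is handled by the scalar weighted Mihlin theorem \cite[Proposition 3.1]{MV15}, since $w_\gam\in A_p(\RRd)$ for $\gam\in(-1,p-1)$; the whole-space operator $\del^2$ has a bounded $\Hinf$-calculus of angle $0$ because its symbol $|\xi|^4$ is positive. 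The correction $u_{\mathrm{bd}}$ solves the homogeneous problem with boundary data given by the traces of the first term. Partial Fourier transform in $\tilde x$ gives an ODE in $x_1$ whose decaying solutions are combinations of $e^{-\mu_\pm x_1}$ with $\mu_\pm=(|\tilde\xi|^2\pm\lambda^{1/2})^{1/2}$ (principal branches, $\Re\mu_\pm>0$). Solving the $2\times2$ Lopatinskii system yields an explicit kernel representation.

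To prove the $\Hinf$-estimate, rather than resolvent bounds alone, take $h\in\Hinf_0(\Sigma_\phi)$ and represent $h(\del^2)$ by the Cauchy integral of the resolvent. The whole-space part then yields the multiplier $h(|\xi|^4)$, which is bounded by weighted Mihlin. For the boundary part, integrate in $\lambda$ first. The resulting operator has the form $f\mapsto\int_0^\infty K(x_1,y_1)f(y_1)\,dy_1$, where $K$ is an operator-valued kernel in the $\tilde x$-variables. Show that $K$ is dominated by $C\|h\|_\infty(x_1+y_1)^{-1}$ times an $R$-bounded family of Fourier multipliers on $L^p(\RR^{d-1})$, using the scaling $\lambda\mapsto t\lambda$, $|\tilde\xi|\mapsto t^{1/4}|\tilde\xi|$. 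Then apply the weighted variant of \cite[Proposition 4.12]{DHP03} already mentioned in the proof of Theorem \ref{thm:DS}, together with the boundedness of the Hardy-type kernel $(x_1+y_1)^{-1}$ on $L^p(\RR_+,w_\gam)$ for $\gam\in(-1,p-1)$ from \cite[Theorem 1]{Andersen80}. This gives $\|h(\del^2)\|\lesssim\|h\|_\infty$ for every $\phi>0$, hence angle $0$.

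Several basic properties of the operator must also be checked. Density of the domain follows from density of $\Cc^\infty(\RRdh)$ in $L^p(\RRdh,w_\gam)$. Closedness, and the identification of the domain as $W^{4,p}\cap W^{2,p}_0$, follow from the representation formula combined with the weighted trace characterisation (Proposition \ref{prop:tracechar_RRdh}). Injectivity and dense range follow from the scaling-invariant resolvent estimate $\|\lambda R(\lambda)\|\lesssim 1$, since the estimates are uniform in $\lambda$ and the operator is homogeneous; dense range can alternatively be obtained by duality, using the adjoint on $L^{p'}(w_{\gam'})$, which is the same type of operator.

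The main obstacle is the boundary correction term. One needs uniform pointwise kernel bounds of the form $|\d_{\tilde\xi}^\beta k(\lambda,\tilde\xi,x_1,y_1)|\lesssim|\tilde\xi|^{-|\beta|}(x_1+y_1)^{-1}$, uniform in $\lambda$ in the sector, and these involve a Lopatinskii determinant of the form $(\mu_+-\mu_-)^{-1}$-type expressions that degenerate relative to $\mu_\pm$. The plan is to first divide out $\mu_+-\mu_-$ algebraically, exactly as in the unweighted computation of \cite{DHP03}. Those unweighted kernel bounds are pointwise and do not depend on the weight, so only the final integral-operator step changes, and there the weight enters only through \cite{Andersen80}.
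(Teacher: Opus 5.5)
Your plan is essentially the paper's route. The paper only says the result is proved as in \cite[Theorem 7.4]{DHP03}, and the weighted adaptations you identify are the same ingredients it uses for its other half-space results: weighted Mihlin \cite{MV15} for the whole-space part, and boundedness of the kernel $(x_1+y_1)^{-1}$ on $L^p(\RR_+,w_\gam)$ from \cite{Andersen80} for the boundary part, with the tangential analysis weight-free since $w_\gam$ depends only on $x_1$. One point to make precise: the bound $C\|h\|_\infty(x_1+y_1)^{-1}$ is needed for the $\lambda$-integrated boundary kernel, and this relies on the finer pointwise decay of the resolvent kernel, roughly $(|\tilde{\xi}|^2+|\lambda|^{1/2})^{-3/2}e^{-c(|\tilde{\xi}|^2+|\lambda|^{1/2})^{1/2}(x_1+y_1)}$; a bound on $\lambda$ times the kernel by $(x_1+y_1)^{-1}$ uniformly in $\lambda$, as phrased in your last paragraph, only gives ($R$-)sectoriality, since it diverges logarithmically when integrated against $|h(\lambda)|\,|\mathrm{d}\lambda|/|\lambda|$.
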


Using the interpolation-extrapolation scale (Section \ref{subsec:int_ext}), we can obtain results for $\del^2$ in the weak setting. For this, we first determine the adjoint operator of $\del^2$ on $L^p(\RRdh, w_\gam)$. We recall that we use the notation $\gam'=-\frac{\gam}{p-1}$.

\begin{lemma}\label{lem:adjointdel2}
  Let $p\in(1,\infty)$ and $\gam\in (-1,p-1)$. Let $B_{p,\gam}:=\del^2$ on $L^p(\RRdh, w_\gam)$ with domain $D(B_{p,\gam}):=W^{4,p}(\RRdh, w_\gam)\cap W^{2,p}_0(\RRdh, w_\gam)$. Then the adjoint operator is $B_{p,\gam}'=B_{p', \gam'}$.
\end{lemma}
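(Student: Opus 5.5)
The adjoint is computed with respect to the unweighted pairing $\langle f,g\rangle=\int_{\RRdh} f g\dd x$, which identifies $(L^p(\RRdh,w_\gam))'$ with $L^{p'}(\RRdh,w_{\gam'})$. Since $\gam\in(-1,p-1)$ is equivalent to $\gam'\in(-1,p'-1)$, the operator $B_{p',\gam'}$ is covered by Theorem~\ref{thm:del2_strong}. The argument proceeds in two steps.

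\emph{Step 1: $B_{p',\gam'}\subseteq B_{p,\gam}'$.} Take $f\in D(B_{p,\gam})$ and $g\in D(B_{p',\gam'})$. By Proposition~\ref{prop:tracechar_RRdh} and the density of smooth functions, approximate $f$ in $W^{4,p}(\RRdh,w_\gam)$ by functions in $\Cc^\infty(\overline{\RRdh})$ with $f=\d_1 f=0$ on $\d\RRdh$; do the same for $g$ in the $(p',\gam')$-spaces. Green's formula for $\del^2$ then gives
\[
\int \del^2 f\,g-\int f\,\del^2 g=\int_{\d\RRdh}\big(\d_1\del f\, g-\del f\,\d_1 g+\d_1 f\,\del g-f\,\d_1\del g\big)\dd\tilde x ,
\]
and every boundary term vanishes because $f=\d_1 f=0$ and $g=\d_1 g=0$. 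To pass to the limit, use Hölder's inequality with the pair $w_\gam$, $w_{\gam'}$: since $w_\gam^{1/p}w_{\gam'}^{1/p'}=1$, the pairing is continuous on $L^p(w_\gam)\times L^{p'}(w_{\gam'})$. Hence $\langle B_{p,\gam}f,g\rangle=\langle f,B_{p',\gam'}g\rangle$, so $g\in D(B_{p,\gam}')$ and $B_{p,\gam}'g=\del^2 g$.

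\emph{Step 2: equality.} By Theorem~\ref{thm:del2_strong}, $B_{p,\gam}$ and $B_{p',\gam'}$ are both sectorial, so $1+B_{p,\gam}$ and $1+B_{p',\gam'}$ are invertible. Consequently $1+B_{p,\gam}'=(1+B_{p,\gam})'$ is invertible as well. From Step 1, $1+B_{p',\gam'}\subseteq 1+B_{p,\gam}'$. Now let $g\in D(B_{p,\gam}')$ and set $h=(1+B_{p,\gam}')g$. Put $\tilde g=(1+B_{p',\gam'})^{-1}h$. Then $(1+B_{p,\gam}')\tilde g=h$ as well, and injectivity of $1+B_{p,\gam}'$ gives $g=\tilde g\in D(B_{p',\gam'})$. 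Hence the domains coincide, and $B_{p,\gam}'=B_{p',\gam'}$.

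The main technical point is Step 1, namely the density of smooth functions satisfying the boundary conditions in $W^{4,p}(w_\gam)\cap W^{2,p}_0(w_\gam)$, so that the integration by parts is justified. If that density is not directly available from the earlier results, I would avoid it. Instead I would use a weighted regularity argument: for $f\in D(B_{p,\gam})$ and $\phi\in\Cc^\infty(\RRdh)$, one has $\int\del^2 f\,\phi=\int f\,\del^2\phi$ by the definition of distributional derivatives. I would then approximate $g\in W^{4,p'}(w_{\gam'})\cap W_0^{2,p'}(w_{\gam'})$ by such $\phi$ in $W^{2,p'}_0$, together with a localisation near the boundary in $x_1$ in which Hardy's inequality (Lemma~\ref{lem:Hardy}) controls the cut-off errors.
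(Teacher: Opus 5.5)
Your proof is correct and follows the paper's route: integration by parts gives $B_{p',\gam'}\subseteq B_{p,\gam}'$, and equality then follows because a surjective operator has no proper injective extension. You apply this to the shifted operators $1+B_{p',\gam'}\subseteq 1+B_{p,\gam}'$, which is the right way to use Theorem~\ref{thm:del2_strong}; the paper phrases it for $B$ itself, but $\del^2$ is not invertible on $\RRdh$. In Step~1 you can bypass the density question entirely: approximating $g$, resp.\ $f$, in $W^{2}_0$ by $\Cc^\infty(\RRdh)$-functions (Proposition~\ref{prop:tracechar_RRdh}) shows that both $\int \del^2 f\, g$ and $\int f\,\del^2 g$ equal $\int \del f\,\del g$.
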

\begin{proof}
  By integration by parts we obtain
  \begin{equation*}
    \langle \del^2 f,g\rangle_{L^p(\RRdh, w_\gam)\times L^{p'}(\RRdh, w_{\gam'})} = \langle f, \del^2 g\rangle_{L^p(\RRdh, w_\gam)\times L^{p'}(\RRdh, w_{\gam'})}
  \end{equation*}
  for $f\in D(B_{p, \gam})$ and $g\in D(B_{p', \gam'})$. In particular, by definition of the domain of the adjoint (see for instance \cite[Section III.5.5]{Ka96}) we obtain $B_{p,\gam}\subseteq B_{p', \gam'}'$ and $B_{p', \gam'}\subseteq B_{p,\gam}'$. For the other inclusion $B_{p,\gam}'\subseteq B_{p', \gam'}$ it suffices to show that $B_{p', \gam'}$ is surjective and $B_{p, \gam}'$ is injective, see \cite[Exercise IV.1.21(5)]{EN00}. The surjectivity of $B_{p', \gam'}$ follows from Theorem \ref{thm:del2_strong}. By reflexivity and Theorem \ref{thm:del2_strong} it also follows that $B_{p, \gam}''=B_{p, \gam}$ is surjective, hence $B_{p', \gam'}'$ is injective.
\end{proof}

For $\del^2$ in the weak setting, we now obtain the following regularity result.

\begin{proposition}\label{cor:del2_weak}
  Let $p\in(1,\infty)$, $\gam\in (-1,p-1)$, and $\eta>0$. Then $\eta+\del^2$ on $W^{-2,p}(\RRdh, w_\gam)$ with domain $D(\del^2):= W^{2,p}_0(\RRdh, w_\gam)$ has a bounded $\Hinf$-calculus with angle $\om_{\Hinf}(\eta+\del^2)=0$.
\end{proposition}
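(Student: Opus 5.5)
We will obtain the weak realisation by extrapolating $\eta+\del^2$ from $L^p(\RRdh,w_\gam)$ in Amann's interpolation-extrapolation scale (Section~\ref{subsec:int_ext}). We then identify the extrapolated space with $W^{-2,p}(\RRdh,w_\gam)$ and the extrapolated domain with $W^{2,p}_0(\RRdh,w_\gam)$.

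\emph{Step 1: the strong operator.} By Theorem~\ref{thm:del2_strong}, $B_{p,\gam}$ has a bounded $\Hinf$-calculus of angle $0$ on $L^p(\RRdh,w_\gam)$. For $\eta>0$ the shifted operator $B:=\eta+B_{p,\gam}$ again has a bounded $\Hinf$-calculus of angle $0$ and satisfies $0\in\rho(B)$. Since $\eta>0$ is the reason the paper uses $A_\eta$, this invertibility is what makes the extrapolation well defined. By Proposition~\ref{prop:int_ext_properties}, the extrapolated operator $B_{-1}$ on $E_{-1,B}$ has a bounded $\Hinf$-calculus of angle $0$ and domain $X=L^p(\RRdh,w_\gam)$. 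Passing to $E_{-1/2,B}$ is a similarity transform: $B_{-1}^{1/2}$ is an isomorphism $E_{-1/2,B}\to E_{-1,B}$ commuting with $B_{-1}$. Hence the realisation $B_{-1/2}$ on $E_{-1/2,B}$, with domain $E_{1/2,B}$, also has a bounded $\Hinf$-calculus of angle $0$.

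\emph{Step 2: identifying the spaces.} We must show $E_{1/2,B}=W^{2,p}_0(\RRdh,w_\gam)$ and $E_{-1/2,B}=W^{-2,p}(\RRdh,w_\gam)$ with equivalent norms.
\begin{itemize}
\item For the positive side, $\BIP$ and Proposition~\ref{prop:Ext_compl_int} give $E_{1/2,B}=[L^p(\RRdh,w_\gam),D(B)]_{1/2}$. This complex interpolation space between $L^p$ and $W^{4,p}\cap W^{2,p}_0$ should equal $W^{2,p}_0(\RRdh,w_\gam)$. We would cite the weighted interpolation results for spaces with boundary conditions (e.g.\ from \cite{LV18} or \cite{Ro25}). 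Alternatively we would argue directly that $\|B^{1/2}f\|_{L^p}\eqsim\|f\|_{W^{2,p}}$ on $W^{2,p}_0$, using that the half-space Dirichlet Laplacian squared is related to $B^{1/2}$.
\item For the negative side, $X$ is reflexive ($\UMD$), so Proposition~\ref{prop:ext_adjoint} gives $E_{-1/2,B}=(E_{1/2,B'})'$. By Lemma~\ref{lem:adjointdel2}, $B'=\eta+B_{p',\gam'}$, and $\gam'\in(-1,p'-1)$. Applying the positive identification with $(p',\gam')$ yields $E_{1/2,B'}=W^{2,p'}_0(\RRdh,w_{\gam'})$. Its dual is $W^{-2,p}(\RRdh,w_\gam)$ by definition; the unweighted pairing matches the duality induced by $L^p(w_\gam)\times L^{p'}(w_{\gam'})$.
\end{itemize}

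\emph{Step 3: identifying the operator.} The realisation $B_{-1/2}$ acts on $W^{2,p}_0$ by the distributional operator $\eta+\del^2$ into $W^{-2,p}$. It agrees with $\eta+\del^2$ on $D(B)$, and by duality it is $\langle (\eta+\del^2)f,g\rangle=\langle f,(\eta+\del^2)g\rangle$. Density of $D(B)$ in $W^{2,p}_0$ and continuity of both operators $W^{2,p}_0\to W^{-2,p}$ (the latter by integration by parts against $W^{2,p'}_0$) give equality. Lemma~\ref{lem:densityCcnegative} ensures that $L^p$ is densely embedded, consistent with the completion used to build $E_{-1,B}$.

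The main obstacle is Step 2, namely $[L^p(w_\gam),D(B)]_{1/2}=W^{2,p}_0(w_\gam)$ in the weighted setting. I would first try to reduce it to known weighted results for the Dirichlet Laplacian $-\del_D$ on $L^p(\RRdh,w_\gam)$. This would use $D((1-\del_D)^2)\supseteq D(B)$ and the relation $E_{1/2,B}\approx D(1-\del_D)=W^{2,p}(w_\gam)\cap W^{1,p}_0(w_\gam)$. That relation is wrong, though: $B$ carries clamped boundary conditions, not Navier ones. So I would instead rely directly on interpolation of weighted Sobolev spaces with zero traces, via the trace characterisation (Proposition~\ref{prop:tracechar_RRdh}) and a retraction argument.
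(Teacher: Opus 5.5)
Your proposal is correct and follows essentially the paper's route. You extrapolate $\eta+B_{p,\gam}$ in Amann's scale and identify $E_{1/2}$ with $W^{2,p}_0(\RRdh,w_\gam)$ via $\BIP$ and the identity $[L^p(\RRdh,w_\gam),W^{4,p}(\RRdh,w_\gam)\cap W^{2,p}_0(\RRdh,w_\gam)]_{1/2}=W^{2,p}_0(\RRdh,w_\gam)$; the paper just cites \cite[Theorem 6.4]{Ro25} for this, so your ``main obstacle'' needs no separate retraction argument. You then identify $E_{-1/2}$ with $W^{-2,p}(\RRdh,w_\gam)$ via Proposition~\ref{prop:ext_adjoint}, Lemma~\ref{lem:adjointdel2} and duality, exactly as the paper does; your similarity argument via $B_{-1}^{1/2}$ and the check that the extrapolated operator is the distributional $\eta+\del^2$ only spell out what the paper leaves implicit.
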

\begin{proof}
Let $B_{p,\gam}=\del^2$ on $L^p(\RRdh, w_\gam)$ be as in Lemma \ref{lem:adjointdel2} and let $B_{p, \gam}'= B_{p', \gam'}$ be its adjoint. Note that the bounded $\Hinf$-calculus (Theorem \ref{thm:del2_strong}) implies that $B_{p',\gam'}$ on $L^{p'}(\RRdh, w_{\gam'})$ has bounded imaginary powers. Therefore, by \cite[Corollary 15.3.10]{HNVW24} and Lemma \ref{lem:adjointdel2} we obtain
\begin{equation}\label{eq:compl_int_id}
  \begin{aligned}
  D((B_{p,\gam}')^\half) & = \big[L^{p'}(\RRdh, w_{\gam'}), D(B_{p,\gam}')\big]_\half\\
  &= \big[L^{p'}(\RRdh, w_{\gam'}), W^{4,p'}(\RRdh, w_{\gam'})\cap W^{2,p'}_0(\RRdh, w_{\gam'})\big]_\half
  = W^{2,p'}_0(\RRdh, w_{\gam'}),
\end{aligned}
\end{equation}
where the last identification follows from \cite[Theorem 6.4]{Ro25} (see \cite{Se72} for the unweighted case). Using the interpolation-extrapolation scale (see Proposition \ref{prop:ext_adjoint}), \eqref{eq:compl_int_id} and duality, we obtain
\begin{equation}\label{eq:dual_del2}
  E_{-\half, \eta + B_{p,\gam}} = (E_{\half, (\eta + B_{p,\gam})'})'=\big(D((B_{p,\gam}')^\half)\big)' = W^{-2,p}(\RRdh, w_\gam).
\end{equation}
From Theorem \ref{thm:del2_strong}, Propositions \ref{prop:int_ext_properties} and \ref{prop:Ext_compl_int}, and \eqref{eq:dual_del2}, it follows now that $\eta+\del^2$ on $W^{-2,p}(\RRdh, w_\gam)$ has a bounded $\Hinf$-calculus with angle zero. Moreover, \eqref{eq:compl_int_id} (with $p'$ and $\gam'$ replaced by $p$ and $\gam$, respectively) shows that the domain of $\eta+\del^2$ on $W^{-2,p}(\RRdh, w_\gam)$ is given by $W^{2,p}_0(\RRdh, w_\gam)$.
\end{proof}

With the above results for $\del^2$ at hand, we can now study the more sophisticated operator $A_\eta$ from \eqref{eq:A_matrix_mu} for $\eta>0$. Again, we make use of the interpolation-extrapolation scales to obtain properties of $A_\eta$ on spaces with less smoothness than those in Theorem \ref{thm:DS}. We first determine the adjoint of $A_\eta$.

\begin{lemma}\label{lem:adjointA}
  Let $p\in(1,\infty)$, $\gam\in (-1,p-1)$, $\eta>0$ and $\rho>0$. Let $A_\eta$ be the realisation of the operator in \eqref{eq:A_matrix_mu} on $\XX_+$ with domain $D(A_\eta):=\DD_+$. Then the adjoint operator $A'_\eta$ on $W^{-2,p'}(\RRdh, w_{\gam'})\times L^{p'}(\RRdh, w_{\gam'})$ is given by
  \begin{equation*}\label{eq:defadjointA}
  A'_\eta= \begin{pmatrix}
        0 & \eta+\del^2\\
        -\id & -\rho \del
      \end{pmatrix} \quad \text{ with }\quad D(A_\eta')= L^{p'}(\RRdh, w_{\gam'})\times W^{2,p'}_0(\RRdh, w_{\gam'}).
\end{equation*}
\end{lemma}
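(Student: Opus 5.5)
We follow the same two-step pattern as in Lemma \ref{lem:adjointdel2}. First we identify the dual space. Since $\XX_+ = W^{2,p}_0(\RRdh,w_\gam)\times L^p(\RRdh,w_\gam)$ with $\gam\in(-1,p-1)$, both $W^{2,p}_0$ and $L^p$ are reflexive. By the definition of negative-order spaces (the unweighted pairing), $(W^{2,p}_0(\RRdh,w_\gam))' = W^{-2,p'}(\RRdh,w_{\gam'})$, since $(\gam')'=\gam$. Also $(L^p(\RRdh,w_\gam))'=L^{p'}(\RRdh,w_{\gam'})$. Hence $\XX_+' = W^{-2,p'}(\RRdh,w_{\gam'})\times L^{p'}(\RRdh,w_{\gam'})$. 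Denote by $\tilde A$ the operator matrix in the statement, with domain $L^{p'}\times W^{2,p'}_0$. Here $\eta+\del^2$ acts from $L^{p'}$ into $W^{-2,p'}$ in the distributional/extrapolated sense of Proposition \ref{cor:del2_weak} applied with $(p,\gam)$ replaced by $(p',\gam')$. This operator is well defined, because its second row maps $L^{p'}\times W^{2,p'}_0$ into $L^{p'}$.

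Next we verify that $\tilde A\subseteq A_\eta'$. Take $(u_1,u_2)\in\DD_+$ and $(v_1,v_2)\in\Cc^\infty(\RRdh)^2$. Integrating by parts (there are no boundary terms thanks to compact support) gives
\[
\langle A_\eta u, v\rangle = \langle -u_2, v_1\rangle + \langle (\eta+\del^2)u_1 - \rho\del u_2, v_2\rangle = \langle u_1,(\eta+\del^2)v_2\rangle + \langle u_2, -v_1-\rho\del v_2\rangle = \langle u,\tilde A v\rangle .
\]
The first component pairing is read as the $W^{2,p}_0\times W^{-2,p'}$ duality. We then extend from $\Cc^\infty$ to all of $D(\tilde A)$ by density. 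In $L^{p'}(w_{\gam'})$ we use plain density. In $W^{2,p'}_0(w_{\gam'})$ we use Proposition \ref{prop:tracechar_RRdh}, noting that $\gam'\in(-1,p'-1)$. Continuity of both sides in the graph norm of $\tilde A$ justifies the extension. This yields $\tilde A\subseteq A_\eta'$.

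For the reverse inclusion, we use the same abstract criterion as before \cite[Exercise IV.1.21(5)]{EN00}. If $\tilde A\subseteq A_\eta'$, $\tilde A$ is surjective (or $\lambda+\tilde A$ is) and $A_\eta'$ (or $\lambda+A_\eta'$) is injective, then equality holds. It is convenient to work with $\lambda+\cdot$ for a large $\lambda$ supplied by Theorem \ref{thm:DS}.

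\textbf{Injectivity.} Theorem \ref{thm:DS} gives that $\lambda+A_\eta$ is surjective onto $\XX_+$. Hence $(\lambda+A_\eta)'=\lambda+A_\eta'$ is injective.

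\textbf{Surjectivity.} This is the main obstacle: we must show that $\lambda+\tilde A$ is surjective onto $W^{-2,p'}\times L^{p'}$. Given $(f_1,f_2)$, we have to solve $\lambda v_1+(\eta+\del^2)v_2=f_1$ and $-v_1+\lambda v_2-\rho\del v_2=f_2$. We eliminate $v_1=\lambda v_2-\rho\del v_2-f_2$. This reduces the system to the scalar equation
\[
(\lambda^2 -\lambda\rho\del+\eta+\del^2)v_2 = f_1+\lambda f_2 \in W^{-2,p'}(\RRdh,w_{\gam'}) ,
\]
which must be solved for $v_2\in W^{2,p'}_0$. We then check that the resulting $v_1$ lies in $L^{p'}$, which holds because $\del v_2\in W^{0,p'}$ only in the weak sense. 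To handle this, we factor the operator as in the pencil $V(\lambda)$ and use the bounded $\Hinf$-calculus of $\eta+\del^2$ on $W^{-2,p'}(w_{\gam'})$ from Proposition \ref{cor:del2_weak}. Alternatively, we can use that $A_\eta$ on $\XX_+$ is injective with dense range and closed, in which case the same elimination on the strong level, combined with the extrapolation identity $E_{-1/2}=W^{-2,p'}$ from \eqref{eq:dual_del2}, furnishes a solution.

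If this direct solvability proves awkward, we fall back on a different route. We compute $A_\eta'$ directly by testing $v\in D(A_\eta')$ against $u=(u_1,0)$ and $u=(0,u_2)$ with $u_i\in\Cc^\infty$. This identifies $A_\eta'v$ distributionally as $\tilde Av$ and shows that $-v_1-\rho\del v_2\in L^{p'}$ and $(\eta+\del^2)v_2\in W^{-2,p'}$ modulo $v_1\in L^{p'}$. The domain regularity $v_2\in W^{2,p'}_0$ then follows from the domain characterisation in Proposition \ref{cor:del2_weak}.
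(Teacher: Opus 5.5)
Your preliminary steps are sound: the identification of $\XX_+'$ as $W^{-2,p'}(\RRdh,w_{\gam'})\times L^{p'}(\RRdh,w_{\gam'})$, the inclusion $\tilde A\subseteq A_\eta'$ via test functions and density, the injectivity of $\lambda+A_\eta'$, and the abstract criterion.

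\textbf{The gap.} You flag surjectivity of $\lambda+\tilde A$ as the main obstacle but do not prove it. Since $\lambda+A_\eta'$ is already bijective, this surjectivity is \emph{equivalent} to the lemma. Solving $(\lambda^2-\lambda\rho\Delta+\eta+\Delta^2)v_2=g$ with $g\in W^{-2,p'}$ and $v_2\in W^{2,p'}_0$ is the scalar form of the weak-level result that Theorem~\ref{thm:Rsect_noweight} (for $(p',\gam')$) derives \emph{from} this lemma. Neither of your suggestions supplies it:
\begin{itemize}
\item The pencil factorisation is unavailable. Under clamped conditions, $-\Delta$ on $W^{2,p'}_0$ is not $(\eta+\Delta^2)^{1/2}$ or any function of the clamped bi-Laplacian; this is exactly what separates Dirichlet--Neumann from $u=\Delta u=0$ conditions. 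So the $H^\infty$-calculus of $\eta+\Delta^2$ yields no inverse.
\item Treating $-\lambda\rho\Delta$ as a perturbation of $\lambda^2+\eta+\Delta^2$ gives a relative bound of order $\rho$, uniformly in $\lambda$, which is not small in general.
\item Your ``alternative'' only produces solutions for $L^{p'}$ data. It gives no a priori estimate reaching $W^{-2,p'}$ data, and the identity $E_{-1/2}=W^{-2,p'}$ concerns $\eta+\Delta^2$ alone, not the operator with the damping term.
\end{itemize}

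\textbf{The fallback fails too.} Testing $v\in D(A_\eta')$ only against $u_i\in\Cc^\infty(\RRdh)$ gives distributional identities. But $\Cc^\infty(\RRdh)^2$ is not a core for $A_\eta$: its graph-norm closure is $W^{4,p}_0(\RRdh,w_\gam)\times W^{2,p}_0(\RRdh,w_\gam)\subsetneq\DD_+$. You are therefore computing the adjoint of the minimal realisation, which does not see the clamped conditions. For instance, let $v_2$ be a Poisson-type solution of $(\eta+\Delta^2)v_2=0$ in $\RRdh$ with nonzero trace, and $v_1=-\rho\Delta v_2$. Then $v$ passes all your tests, yet $v_2\notin W^{2,p'}_0$. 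Proposition~\ref{cor:del2_weak} does not rescue this, because it concerns the extrapolated operator, whose action on $v_2$ is tested against all of $W^{4,p}\cap W^{2,p}_0(\RRdh,w_\gam)$.

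\textbf{The missing idea, which is the paper's argument.} Test with $u=(u_1,0)$, with $u_1$ in the \emph{full} domain $W^{4,p}\cap W^{2,p}_0(\RRdh,w_\gam)$. This gives $|\langle(\eta+\Delta^2)u_1,v_2\rangle|\lesssim\|u_1\|_{W^{2,p}}$. Now $\tilde u_1=(\eta+\Delta^2)u_1$ ranges over all of $L^p(\RRdh,w_\gam)$, and $\|u_1\|_{W^{2,p}}\eqsim\|\tilde u_1\|_{W^{-2,p}}$ by Proposition~\ref{cor:del2_weak}. Hence $\tilde u_1\mapsto\langle\tilde u_1,v_2\rangle$ is $W^{-2,p}$-bounded on the dense subspace $L^p$, so $v_2\in W^{2,p'}_0(\RRdh,w_{\gam'})$. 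Then $v_1\in L^{p'}$ follows from the remaining terms.

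\textbf{Alternative repair of your main route.} A closed-range argument also works:
\begin{itemize}
\item $\tilde A$ is closed, since its graph norm is equivalent to the $L^{p'}\times W^{2,p'}_0$-norm by Proposition~\ref{cor:del2_weak} for $(p',\gam')$.
\item $\lambda+\tilde A$ is bounded below, as a restriction of the invertible $\lambda+A_\eta'$.
\item Its range contains the dense set $L^{p'}\times L^{p'}$, by Theorem~\ref{thm:DS} applied with $(p',\gam')$ to the eliminated scalar problem.
\end{itemize}
Hence $\lambda+\tilde A$ is onto.
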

\begin{proof}
  For notational convenience we write $W_{(0)}^{k,p}(w_\gam):=W^{k,p}_{(0)}(\RRdh, w_\gam)$ for $p\in(1,\infty)$, $\gam\in (-1,p-1)$ and $k\in\ZZ$. Let $v=(v_1,v_2)\in L^{p'}(w_{\gam'})\times W^{2,p'}_0(w_{\gam'})\subseteq W^{-2,p'}(w_{\gam'})\times L^{p'}(w_{\gam'})$ and $u=(u_1,u_2)\in D(A_\eta)= (W^{4,p}(w_\gam)\cap W^{2,p}_0(w_\gam))\times W^{2,p}_0(w_\gam)$. By integration by parts and the definition of the distributional derivative, we obtain
  \begin{align*}
    \langle A_\eta u, v \rangle   =\;& \langle -u_2,  v_1\rangle_{L^p(w_\gam)\times L^{p'}(w_{\gam'})}  + \langle  \eta u_1,  v_2\rangle_{L^p(w_\gam)\times L^{p'}(w_{\gam'})}
    +\langle  \del^2 u_1, v_2 \rangle_{L^p(w_\gam)\times L^{p'}(w_{\gam'})}\\\,& +  \langle -\rho \del u_2, v_2\rangle_{L^p(w_\gam)\times L^{p'}(w_{\gam'})}\\
    =\;& \langle  u_2, -v_1 \rangle_{L^p(w_\gam)\times L^{p'}(w_{\gam'})} + \langle  u_1,  \eta v_2\rangle_{L^p(w_\gam)\times L^{p'}(w_{\gam'})}+ \langle   u_1, \del^2 v_2\rangle_{W^{2,p}_0(w_\gam)\times W^{-2,p'}(w_{\gam'})}\\ \;& +  \langle  u_2, -\rho \del v_2\rangle_{L^p(w_\gam)\times L^{p'}(w_{\gam'})}\\
    =\;&  \langle u, \tilde{A}_\eta v\rangle,
  \end{align*}
  where 
  \begin{equation*}
    \tilde{A}_\eta := \begin{pmatrix}
                   0 & \eta +\del^2 \\
                   -\id & -\rho \del
                 \end{pmatrix}\quad \text{ with }\quad D(\tilde{A}_\eta): = L^{p'}(w_{\gam'})\times W^{2,p'}_0(w_{\gam'}).
  \end{equation*}
  For $v\in D(\tilde{A}_\eta)$ it holds that the mapping $(u\mapsto \langle A_\eta u, v\rangle): D(A_\eta)\to \CC$ is continuous with respect to the norm on $W^{2,p}_0(w_\gam)\times L^p(w_\gam)$. Hence, we have that $\tilde{A}_\eta\subseteq A'_\eta$.

  For the converse inclusion, we need to show that if $v\in D(A'_\eta)$, then $v\in D(\tilde{A}_\eta)$.  First, note that if $v\in W^{-2,p'}(w_{\gam'})\times L^{p'}(w_{\gam'})$ and $u\in D(A_\eta)$, then we have
  \begin{equation}\label{eq:adj_A1}
  \begin{aligned}
          \langle A_\eta u , v\rangle = &\;\langle -u_2, v_1\rangle_{W^{2,p}_0(w_\gam)\times W^{-2,p'}(w_{\gam'})} + \langle (\eta+\del^2)u_1, v_2 \rangle_{L^p(w_\gam)\times L^{p'}(w_{\gam'})}\\ &\;+ \langle-\rho\del u_2, v_2\rangle_{L^p(w_\gam)\times L^{p'}(w_{\gam'})}. 
  \end{aligned}
  \end{equation} 
  Now, let $v\in D(A'_\eta)$. Note that the mapping $(u\mapsto \langle A_\eta u, v\rangle): D(A_\eta)\to \CC$ can be extended to a linear and continuous mapping from $W^{2,p}_0(w_\gam)\times L^p(w_\gam)$ to $\CC$. Moreover, the mapping
  \begin{equation}\label{eq:Adj_A2}
    \ph: W^{4,p}(w_\gam)\cap W^{2,p}_0(w_\gam) \to \CC,\quad u_1\mapsto \ph(u_1):=\langle (\eta+\del^2) u_1, v_2\rangle_{L^p(w_\gam)\times L^{p'}(w_{\gam'})}
  \end{equation}
  is continuous with respect to the norm on $W^{2,p}(w_\gam)$. Indeed, this follows from
  \begin{align*}
    \big|\langle(\eta+\del^2) u_1, v_2\rangle_{L^p(w_\gam)\times L^{p'}(w_{\gam'})}\big| &= |\langle A_\eta (u_1,0), v\rangle |\\
    &\leq C\|(u_1,0)\|_{W^{2,p}(w_\gam)\times L^p(w_\gam)} =  C\|u_1\|_{W^{2,p}(w_\gam)},
  \end{align*}
  for $u_1\in W^{4,p}(w_\gam)\times W^{2,p}_0(w_\gam)$. Furthermore, from Proposition \ref{cor:del2_weak} and $\eta>0$, we obtain that
  \begin{equation}\label{eq:Adj_A3}
    \eta + \del^2:W^{2,p}_0(w_\gam)\to W^{-2,p}(w_\gam)
  \end{equation}
  is an isomorphism. Combining \eqref{eq:Adj_A2} and \eqref{eq:Adj_A3} gives
  \begin{equation*}
    \big(\tilde{u}_1\mapsto \ph\big((\eta+\del^2)^{-1}\tilde{u}_1\big) = \langle \tilde{u}_1, v_2\rangle_{L^p(w_\gam)\times L^{p'}(w_{\gam'})}\big): L^p(w_\gam)\to \CC
  \end{equation*}
  is continuous as mapping from $(L^p(w_\gam), \|\cdot\|_{W^{-2,p}(w_\gam)})$ to $\CC$. Since $L^p(w_\gam)$ is dense in $W^{-2,p}(w_\gam)$, there exists a unique continuous extension $\tilde{\ph}:W^{-2,p}(w_\gam)\to \CC$ which we identify with
  $\tilde{\ph}\in (W^{-2,p}(w_\gam))'= W^{2,p'}_0(w_{\gam'})$
  and it holds that
  \begin{equation*}
    \langle\tilde{u}_1, \tilde{\ph} \rangle_{ W^{-2,p}(w_\gam)\times W^{2,p'}_0(w_{\gam'})} = \langle \tilde{u}_1, v_2\rangle_{W^{-2,p}(w_\gam)\times W^{2,p'}_0(w_\gam')}
  \end{equation*}
  for $\tilde{u}_1\in L^p(w_\gam)$. We conclude that $v_2=\tilde{\ph}\in W^{2,p'}_0(w_{\gam'})$ and this implies that the last term on the right-hand side of \eqref{eq:adj_A1}, given by
  \begin{equation*}
    \big(u_2 \mapsto \langle -\rho \del u_2, v_2\rangle_{L^p(w_\gam)\times L^{p'}(w_{\gam'})}= \langle -\rho \del u_2, v_2\rangle_{W^{-2,p}(w_\gam)\times W^{2,p'}_0(w_{\gam'})}  \big): W^{2,p}_0(w_\gam) \to \CC,  
\end{equation*}
is continuous on $L^p(w_\gam)$. Since \eqref{eq:adj_A1} needs to be continuous, by setting $u_1=0$ it follows that also the first term on the right-hand side of \eqref{eq:adj_A1}, given by
\begin{equation*}
  \big(u_2 \mapsto \langle u_2, v_1 \rangle_{ W^{2,p}_0(w_\gam)\times W^{-2,p'}(w_{\gam'})}\big): W^{2,p}_0(w_\gam)\to \CC
\end{equation*}
can be extended continuously to $L^p(w_\gam)$. This implies that 
 $v_1\in L^{p'}(w_{\gam'})$ and thus we have $v\in D(\tilde{A}_\eta)$. Therefore, $D(A'_\eta)=D(\tilde{A}_\eta)$ and $A'_\eta=\tilde{A}_\eta$, which completes the proof.
\end{proof}

The above lemma allows us to obtain $R$-sectoriality for $A_\eta$ on $L^p(\RRdh, w_\gam)\times W^{-2,p}(\RRdh, w_\gam)$.
\begin{theorem}\label{thm:Rsect_noweight}
  Let $p\in(1,\infty)$, $\gam\in (-1,p-1)$ $\eta\geq 0$, $\rho>0$ and $\sigma\in (\vartheta(\rho), \pi)$. Let $A_\eta$ be the realisation of the operator in \eqref{eq:A_matrix_mu} on $L^p(\RRdh, w_\gam)\times W^{-2,p}(\RRdh, w_\gam)$ with domain $D(A_\eta):= W^{2,p}_0(\RRdh, w_\gam)\times L^p(\RRdh, w_\gam)$. Then there exists a $\tilde{\lambda}> 0$ such that for all $\lambda > \tilde{\lambda}$, the operator $\lambda + A_\eta$ is $R$-sectorial with angle $\om_{R}(\lambda+A_\eta) \leq \sigma$. 
\end{theorem}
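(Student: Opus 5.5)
The plan is to obtain the statement by extrapolation from Theorem~\ref{thm:DS}, using the interpolation-extrapolation scale of Section~\ref{subsec:int_ext}. First take $\eta>0$. By Theorem~\ref{thm:DS} there is $\tilde\lambda\geq 0$ such that for $\lambda>\tilde\lambda$ the operator $\lambda+A_\eta$ on $\XX_+=W^{2,p}_0(\RRdh,w_\gam)\times L^p(\RRdh,w_\gam)$, with domain $\DD_+$, is $R$-sectorial with angle at most $\sigma$. Enlarging $\tilde\lambda$ if necessary, we also have $0\in\rho(\lambda+A_\eta)$. Proposition~\ref{prop:int_ext_properties} then shows that the extrapolated operator $(\lambda+A_\eta)_{-1}$ on $E_{-1,\lambda+A_\eta}$, with domain $D((\lambda+A_\eta)_{-1})=\XX_+=W^{2,p}_0(\RRdh,w_\gam)\times L^p(\RRdh,w_\gam)$, is $R$-sectorial with the same angle. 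This domain is exactly the one in the statement. It therefore remains to prove two things:
\begin{equation*}
E_{-1,\lambda+A_\eta}=L^p(\RRdh,w_\gam)\times W^{-2,p}(\RRdh,w_\gam)
\end{equation*}
with equivalent norms, and that $(\lambda+A_\eta)_{-1}$ coincides with $\lambda$ plus the realisation of the matrix \eqref{eq:A_matrix_mu} in the distributional sense.

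For the identification of $E_{-1}$ I would use duality. The space $\XX_+$ is reflexive, and Amann's characterisation (the endpoint case $\theta=1$ of Proposition~\ref{prop:ext_adjoint}, contained in \cite[Theorem V.1.4.12]{Am95}) gives $E_{-1,\lambda+A_\eta}=(E_{1,\lambda+A_\eta'})'=(D(A_\eta'))'$, where $D(A'_\eta)$ carries the graph norm. Lemma~\ref{lem:adjointA} gives $D(A_\eta')=L^{p'}(\RRdh,w_{\gam'})\times W^{2,p'}_0(\RRdh,w_{\gam'})$. I still have to check that the graph norm of $\lambda+A'_\eta$ is equivalent to the product norm on this space. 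This follows from the closed graph theorem, since $\lambda+A_\eta'$ is bijective and both norms are complete and comparable in one direction. Taking duals with respect to the unweighted pairing then yields $L^p(\RRdh,w_\gam)\times W^{-2,p}(\RRdh,w_\gam)$, using the definition of $W^{-2,p}$ as the dual of $W^{2,p'}_0(w_{\gam'})$. If one prefers to avoid the endpoint of Proposition~\ref{prop:ext_adjoint}, a direct argument also works: show that $\|(\lambda+A_\eta)^{-1}\cdot\|_{\XX_+}$ is equivalent to the $L^p\times W^{-2,p}$ norm on the dense subspace $\XX_+$, by computing $\|x\|_{L^p\times W^{-2,p}}$ through testing against $D(A'_\eta)$. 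Density holds by Lemma~\ref{lem:densityCcnegative}.

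Next, the extrapolated operator is the closure of $\lambda+A_\eta$ in $L^p\times W^{-2,p}$ with domain $\XX_+$. On the dense set $\DD_+$ it acts as the differential matrix. Each entry extends continuously to the relevant spaces: $\eta+\del^2\colon W^{2,p}_0\to W^{-2,p}$, $-\rho\del\colon L^p\to W^{-2,p}$, and $-\id\colon L^p\to L^p$. Therefore the closure is exactly the distributional realisation on $W^{2,p}_0\times L^p$.

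Finally, the case $\eta=0$ follows by perturbation. We have $A_0=A_\eta-\bigl(\begin{smallmatrix}0&0\\ \eta&0\end{smallmatrix}\bigr)$, and the perturbing operator maps $L^p(\RRdh,w_\gam)$ into $L^p(\RRdh,w_\gam)\hookrightarrow W^{-2,p}(\RRdh,w_\gam)$ boundedly. It is therefore bounded on the base space $L^p\times W^{-2,p}$. The perturbation result \cite[Corollary 16.2.5]{HNVW24} then gives $R$-sectoriality of $\lambda+A_0$ with angle at most $\sigma$, after enlarging $\tilde\lambda$.

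I expect the main obstacle to be the rigorous identification of $E_{-1}$ with the concrete space, including the norm equivalences. This step relies on the precise adjoint computed in Lemma~\ref{lem:adjointA} and on the density statements.
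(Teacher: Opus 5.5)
Your proposal is correct and follows the paper's proof: extrapolate the $R$-sectoriality of $\lambda+A_\eta$ ($\eta>0$) from Theorem~\ref{thm:DS} via Proposition~\ref{prop:int_ext_properties}, and identify $E_{-1,\lambda+A_\eta}$ as $(D(A'_\eta))'=L^p(\RRdh,w_\gam)\times W^{-2,p}(\RRdh,w_\gam)$ using Amann's duality (endpoint case) together with Lemma~\ref{lem:adjointA}. Then treat $\eta=0$ as a bounded perturbation via \cite[Corollary 16.2.5]{HNVW24}. Your additional checks — equivalence of the graph norm on $D(A'_\eta)$ with the product norm, and identification of the closure with the distributional realisation — fill in details the paper leaves implicit.
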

\begin{proof} First consider $\eta>0$. Take $\tilde{\lambda}$ from Theorem \ref{thm:DS} and let $\lambda>\tilde{\lambda}$. From Theorem \ref{thm:DS} and Proposition \ref{prop:int_ext_properties} we obtain $R$-sectoriality on the extrapolation space $E_{-1,\lambda+A_\eta}$, which by Proposition \ref{prop:ext_adjoint} and Lemma \ref{lem:adjointA} (using $\eta>0$) is 
  \begin{align*}
    E_{-1,\lambda+A_\eta} &= (E_{1, (\lambda+A_\eta)'})'= \big(D(A'_\eta)\big)'\\
    &= \big(L^{p'}(\RRdh, w_{\gam'})\times W_0^{2,p'}(\RRdh, w_{\gam'})\big)'=L^p(\RRdh, w_\gam)\times W^{-2,p}(\RRdh, w_\gam).
  \end{align*}
  Again, by Proposition \ref{prop:int_ext_properties} the domain is given by $\XX_+ = W^{2,p}_0(\RRdh, w_\gam)\times L^p(\RRdh, w_\gam)$.

  The case $\eta=0$ can be obtained with the lower-order perturbation result \cite[Corollary 16.2.5]{HNVW24} for $R$-sectoriality upon taking $\tilde{\lambda}$ larger.
\end{proof}

\subsection{\texorpdfstring{$R$-}{R-}sectoriality on spaces with more general power weights}\label{subsec:Rsect_weight}
In this section, we study the $R$-sectoriality of $A$ on higher-order spaces with power weights that do not belong to the class of Muckenhoupt weights. For fixed $p\in(1,\infty)$ and $\gam\in (-1,p-1)$, we define for $k\in\NN_0$ the Banach spaces
\begin{equation}\label{eq:XXDD}
  \begin{aligned}
  \XX^{k,p}_\gam(\RRdh) := W^{k,p}(\RRdh,w_{\gam+kp})\times W^{k-2,p}(\RRdh, w_{\gam+kp}), \\
  \DD^{k,p}_\gam(\RRdh):= W^{k+2,p}_0(\RRdh,w_{\gam+kp})\times W^{k,p}(\RRdh, w_{\gam+kp}).
\end{aligned}
\end{equation}
In particular, note that no traces exist of functions in $\XX^{k,p}_\gam(\RRd)$, see \eqref{eq:def:W_0}. Furthermore, only the zeroth and first order trace exist of functions in $\DD^{k,p}_\gam(\RRdh)$, modelling the correct Dirichlet--Neumann boundary conditions of \eqref{eq:plate_eq_intro}.
\begin{theorem}\label{thm:Rsect_weight}
  Let $p\in(1,\infty)$, $\gam\in (-1,p-1)$ $k\in\NN_0$, $\rho>0$ and $\sigma\in (\vartheta(\rho), \pi)$. Let $A$ be the realisation of the operator in \eqref{eq:A_matrix} on $\XX^{k,p}_\gam(\RRdh)$ with domain $D(A):= \DD^{k,p}_\gam(\RRdh)$. Then there exists a $\tilde{\lambda}>0$ such that for all $\lambda_0>\tilde{\lambda}$, the operator $\lambda_0+A$ is $R$-sectorial with angle $\om_R(\lambda_0+A)\leq \sigma$.
\end{theorem}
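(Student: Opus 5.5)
We argue by induction on $k\in\NN_0$. For $k=0$ we have $\XX^{0,p}_\gam(\RRdh)=L^p(\RRdh,w_\gam)\times W^{-2,p}(\RRdh,w_\gam)$ and $\DD^{0,p}_\gam(\RRdh)=W^{2,p}_0(\RRdh,w_\gam)\times L^p(\RRdh,w_\gam)$. Hence Theorem~\ref{thm:Rsect_noweight} with $\eta=0$ gives the claim directly.

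For the induction step, assume the statement holds for $k$, with some $\tilde\lambda_k$. We want it for $k+1$. Fix $\lambda_0$ large and $z\in\CC\setminus\overline{\Sigma}_\sigma$, and let $f=(f_1,f_2)\in\XX^{k+1,p}_\gam(\RRdh)$.

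By Hardy's inequality (Lemma~\ref{lem:Hardy}, second line, which applies since $\gam+(k+1)p>p-1$), we have $\XX^{k+1,p}_\gam\hookrightarrow\XX^{k,p}_\gam$, with $w_{\gam+(k+1)p-p}=w_{\gam+kp}$. For the negative-order component with $k-1\le 1$ we instead use the duality definition together with Lemma~\ref{lem:densityCcnegative}. By the induction hypothesis there is a unique $u=(z+\lambda_0+A)^{-1}f\in\DD^{k,p}_\gam$, and the family $\{zR\}$ is $R$-bounded on $\XX^{k,p}_\gam$.

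To upgrade the regularity, following \cite{LLRV24}, we commute the equation with the weighted derivatives. Tangential derivatives $\d_j$, $j\ge2$, commute with $A$ and with the weight. For the normal direction we use the operator $x_1\d_1$, or equivalently we consider $\d^\alpha$ with $|\alpha|=1$ applied to $x_1^{}u$-type expressions. The key identity is
\[
(z+\lambda_0+A)(\d_j u)=\d_j f\qquad (j\ge 2),
\]
so the tangential derivatives are controlled by the induction hypothesis. We also use the commutator identity
\[
(z+\lambda_0+A)(x_1\d_1 u)=x_1\d_1 f+[A,x_1\d_1]u .
\]
Here $[A,x_1\d_1]$ has the form $\begin{pmatrix}0&0\\ 4\d_1^2\Delta\ \text{-type}&2\rho\d_1^2\end{pmatrix}$, i.e.\ it is of the same order as $A$ but without the weight factor. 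Applying the $k$-level estimate and then writing $\|u\|_{\XX^{k+1}}\eqsim\|u\|_{\XX^k\text{-low order}}+\sum_j\|\d_j u\|$ in the weighted scale, where $x_1$-multiplication shifts the weight by $p$, we obtain
\[
\|z\,u\|_{\XX^{k+1,p}_\gam}+\|u\|_{\DD^{k+1,p}_\gam}\lesssim \|f\|_{\XX^{k+1,p}_\gam}+\|u\|_{\DD^{k,p}_\gam}\text{-type terms},
\]
and the latter are bounded by the induction hypothesis. Since all the maps involved are compositions of fixed bounded operators with $R$-bounded families, $R$-boundedness persists.

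The domain identification, that $u\in W^{k+3,p}_0(w_{\gam+(k+1)p})$ and not merely in $W^{k+3,p}$, uses the third line of Lemma~\ref{lem:Hardy} and Proposition~\ref{prop:tracechar_RRdh}. Only the traces of $u_1$ of order $0,1$ survive, and these vanish because they vanish at level $k$.

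Surjectivity and injectivity of $z+\lambda_0+A$ on the new spaces follow from the $k$-level result, by uniqueness and density (\eqref{eq:densityCcW}). Sectoriality, meaning density of domain and range, then follows from the resolvent estimate.

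The main obstacle is the commutator step. The term $[A,x_1\d_1]u$ contains top-order derivatives, namely fourth order in $u_1$, which must be absorbed. I would first try to handle it by estimating it in $\XX^{k,p}_\gam$ through $\|u\|_{\DD^{k,p}_\gam}$, which the induction hypothesis controls uniformly in $z$. If that loses a power of $|z|$, I would instead use the pure derivative approach with Hardy. That approach rests on the norm equivalence $\|v\|_{W^{k+1,p}(w_{\gam+(k+1)p})}\eqsim\|v\|_{L^p(w_{\gam+(k+1)p})}+\sum_j\|\d_jv\|_{W^{k,p}(w_{\gam+kp}\cdot w_p)}$, and on the commutation of $\d_j$ with $A$ at the cost of the weight $x_1^p$. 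The weight commutator $[A,x_1]$ is then lower order and is absorbed by taking $\lambda_0$ large.
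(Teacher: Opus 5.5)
Your architecture matches the paper's: induction on $k$ from Theorem~\ref{thm:Rsect_noweight}, commuting the resolvent equation with $x_1$ times first-order derivatives, and bounding the top-order commutator in $\XX^{k,p}_\gam(\RRdh)$ by $\|u\|_{\DD^{k,p}_\gam(\RRdh)}$, which the level-$k$ result controls uniformly in $\lambda$. Your ``first try'' is exactly what the paper does, and no power of $|z|$ is lost, so the fallback is unnecessary. However, two steps fail as written.

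\textbf{The tangential step.} For $f\in\XX^{k+1,p}_\gam(\RRdh)$ the top-order derivatives of $f$ are only controlled with weight $w_{\gam+(k+1)p}$. Hence $\d_j f\in W^{k,p}(\RRdh,w_{\gam+(k+1)p})\times W^{k-2,p}(\RRdh,w_{\gam+(k+1)p})$, which is not contained in $\XX^{k,p}_\gam(\RRdh)$. So the induction hypothesis does not apply to $(z+\lambda_0+A)\d_j u=\d_j f$, and it would not give $\d_j u$ the weight needed for $\DD^{k+1,p}_\gam$ anyway. Because the weight jumps by $p$ from one level to the next, you must use $v=M\d^\alpha u=x_1\d^\alpha u$ for \emph{all} $|\alpha|\le 1$, including $\alpha=0$ and tangential $\alpha$. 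These operators do not commute with $A$. But the commutators $[\Delta,M\d^\alpha]=2\d_1\d^\alpha$ and $[\Delta^2,M\d^\alpha]=4\d_1\d^\alpha\Delta$ are handled exactly like your normal commutator. Lemma~\ref{lem:est_M} then supplies $\sum_{|\alpha|\le 1}\|M\d^\alpha u\|_{\DD^{k,p}_\gam}\eqsim\|u\|_{\DD^{k+1,p}_\gam}$ and $\|M\d^\alpha f\|_{\XX^{k,p}_\gam}\lesssim\|f\|_{\XX^{k+1,p}_\gam}$; the latter needs a duality argument for the negative-order component when $k\in\{0,1\}$.

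\textbf{Applying the level-$k$ estimate to $v$.} More seriously, you never justify applying the level-$k$ estimate to $v=M\d^\alpha u$. That estimate concerns the unique solution in $\DD^{k,p}_\gam(\RRdh)$. For general $f\in\XX^{k+1,p}_\gam$ you only know $u\in\DD^{k,p}_\gam$, so $v$ is not known to lie in $\DD^{k,p}_\gam$.
\begin{itemize}
\item For $k\ge1$ this can be repaired: check that $v\in\DD^{k-1,p}_\gam$ and invoke uniqueness there.
\item In the first step $k=0\to1$, $v$ only lies in spaces with weight exponent $\gam-p<-1$, where no uniqueness statement is available in this framework.
\end{itemize}
Likewise, ``surjectivity by uniqueness and density'' presupposes that for a dense class of data the solution already lies in $\DD^{k+1,p}_\gam$.

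The missing ingredient is the paper's Lemma~\ref{lem:smoothRHS}. For $f\in\Cc^\infty(\RRdh)\times\Cc^\infty(\RRdh)$ the solution belongs to $\SS(\RRdh)\times\SS(\RRdh)$ with $u_1=\d_1u_1=0$ on $\d\RRdh$. This is obtained from a whole-space inverse of $\Delta^2-\lambda\rho\Delta+\lambda^2$, a Poisson operator for the parameter-elliptic Dirichlet--Neumann problem, and uniqueness from Theorem~\ref{thm:Rsect_noweight}. For such data $M\d^\alpha u$ is Schwartz with vanishing traces, hence lies in $\DD^{k,p}_\gam$. The commutator identity then holds classically and the $R$-bounded estimates follow. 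They extend to all of $\XX^{k+1,p}_\gam$ by density of $\Cc^\infty$ (Lemma~\ref{lem:densityCcnegative} and \eqref{eq:densityCcW}) and completeness of $\eps_N^2(\DD^{k+1,p}_\gam)$. This also yields existence in $\DD^{k+1,p}_\gam$.
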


Before we prove Theorem \ref{thm:Rsect_weight}, we collect a few preliminary results. We start with some estimates on functions in weighted Sobolev spaces.
\begin{lemma}\label{lem:est_Hardy}
  Let $p\in(1,\infty)$, $\gam\in (-1,p-1)$ and $k\in\NN_0$. Then, it holds that
  \begin{align*}
    \|f\|_{W^{k,p}(\RRdh, w_{\gam+kp})} &\leq C \|f\|_{W^{k+1,p}(\RRdh, w_{\gam+(k+1)p})},\quad f\in W^{k+1,p}(\RRdh, w_{\gam+(k+1)p}),\\
    \| f\|_{W^{k-2,p}(\RRdh, w_{\gam+kp})}&\leq C \|f\|_{W^{k-1,p}(\RRdh, w_{\gam+(k+1)p})},\quad f\in W^{k-1,p}(\RRdh, w_{\gam+(k+1)p}).
  \end{align*}
\end{lemma}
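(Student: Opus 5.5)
The plan is to derive both estimates from Hardy's inequality (Lemma \ref{lem:Hardy}), applied directly when the smoothness is nonnegative and by duality when it is negative. Throughout, the key observation is that the weight exponent $\mu:=\gam+(k+1)p$ satisfies $\mu>p-1$, because $\gam+kp>-1$ for $\gam>-1$ and $k\in\NN_0$.

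\emph{First estimate.} Apply the second embedding in Lemma \ref{lem:Hardy} with $k+1$ in place of $k$ and $\mu=\gam+(k+1)p$ in place of $\gam$. Since $\mu>p-1$, no boundary conditions are needed, and we obtain
\[
W^{k+1,p}(\RRdh,w_{\gam+(k+1)p})\hookrightarrow W^{k,p}(\RRdh,w_{\gam+kp}),
\]
which is exactly the first inequality.

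\emph{Second estimate.} We split according to the sign of the smoothness index.

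\emph{Case $k\ge 2$.} The same Hardy embedding, now with $k-1$ in place of $k$, gives $W^{k-1,p}(w_{\gam+(k+1)p})\hookrightarrow W^{k-2,p}(w_{\gam+kp})$.

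\emph{Case $k\in\{0,1\}$.} Here negative smoothness appears. These spaces are defined by duality, $W^{-m,p}(w_\nu)=(W^{m,p'}_0(w_{\nu'}))'$, which is legitimate because $\gam$ and $\gam+p$ are not of the form $jp-1$. The key identity for the dual exponents is
\[
(\nu+p)'=-\frac{\nu+p}{p-1}=\nu'-p'.
\]

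For $k=0$ we need $W^{-1,p}(w_{\gam+p})\hookrightarrow W^{-2,p}(w_\gam)$. By the identity, this is the dual of
\[
W^{2,p'}_0(w_{\gam'})\hookrightarrow W^{1,p'}_0(w_{\gam'-p'})=W^{1,p'}_0(w_{(\gam+p)'}),
\]
which is the third embedding of Lemma \ref{lem:Hardy} (with $p'$). It applies since $\gam'\in(-1,p'-1)$ is not of the form $jp'-1$.

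For $k=1$ we need $L^p(w_{\gam+2p})\hookrightarrow W^{-1,p}(w_{\gam+p})$. For $f\in L^p(w_{\gam+2p})$ and $g\in W^{1,p'}_0(w_{(\gam+p)'})$, Hölder's inequality gives
\[
\Big|\int fg\Big|\le \|f\|_{L^p(w_{\gam+2p})}\,\|g\|_{L^{p'}(w_{(\gam+2p)'})}.
\]
By the identity, $(\gam+2p)'=(\gam+p)'-p'$. Hardy's inequality (first case, since $(\gam+p)'<0<p'-1$) then bounds $\|g\|_{L^{p'}(w_{(\gam+p)'-p'})}$ by $\|g\|_{W^{1,p'}(w_{(\gam+p)'})}$.

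\emph{Main obstacle.} The only delicate point is making sure the dual maps are genuine continuous inclusions compatible with the identification of distributions, and not merely bounded operators. For this I would use that $\Cc^\infty(\RRdh)$ is dense in all the spaces involved (Proposition \ref{prop:tracechar_RRdh} and Lemma \ref{lem:densityCcnegative}). Then the duality estimates on this dense class extend uniquely, and they agree with the natural embeddings of $\Cc^\infty(\RRdh)$.
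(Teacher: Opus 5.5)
Your proposal is correct and follows the paper's proof. The first estimate and the case $k\geq 2$ come directly from Hardy's inequality because the weight exponent exceeds $p-1$. For $k\in\{0,1\}$ the paper likewise argues by duality, reducing to the Hardy embedding $W_0^{2-k,p'}(\RRdh,w_{\gam'-kp'})\hookrightarrow W_0^{1-k,p'}(\RRdh,w_{\gam'-kp'-p'})$ with $\gam'\in(-1,p'-1)$; your density argument via Proposition~\ref{prop:tracechar_RRdh} and Lemma~\ref{lem:densityCcnegative}, which makes the dual maps genuine inclusions, is a detail the paper leaves implicit.
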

\begin{proof}
  We only consider the second estimate for $k\in\{0,1\}$. All the other estimates follow immediately from Hardy's inequality (Lemma \ref{lem:Hardy}). For $k\in\{0,1\}$ it suffices by duality to prove
  \begin{equation*}
    \|g\|_{W_0^{1-k,p'}(\RRdh, w_{\gam'-kp'-p'})}\leq C \|g\|_{W_0^{2-k,p'}(\RRdh, w_{\gam'-kp'})},\quad g\in W_0^{2-k,p'}(\RRdh, w_{\gam'-kp'}),
  \end{equation*}
  which again follows from Hardy's inequality noting that $\gam'= -\frac{\gam}{p-1}\in (-1, p'-1)$.
\end{proof}

Let $\theta\in \RR$ and define the pointwise multiplication operator
\begin{equation*}
  M^\theta:\Cc^{\infty}(\RR^d_+;X)\to \Cc^{\infty}(\RR^d_+;X) \quad \text{ by }\quad M^\theta u(x)=x_1^\theta \cdot u(x),\qquad x\in\RRdh.
\end{equation*}
By duality the operator extends to $M^\theta:\mc{D}'(\RRdh;X)\to \mc{D}'(\RRdh;X)$, and $M^{-\theta}$ acts as inverse for $M^{\theta}$ on $\mc{D}'(\RRdh;X)$. Moreover, we write $M:=M^1$. We have the next estimates involving the multiplication operator $M$ on weighted Sobolev spaces.

\begin{lemma}\label{lem:est_M}
  Let $p\in(1,\infty)$, $\gam\in (-1, p-1)$ and $k\in\NN_0$. Then, it holds that
  \begin{align*}
    \sum_{|\alpha|\leq 1}\|M \d^{\alpha} f\|_{W^{k,p}(\RRdh, w_{\gam+kp})} \eqsim \|f\|_{W^{k+1,p}(\RRdh, w_{\gam+(k+1)p})},\quad f\in W^{k+1,p}(\RRdh, w_{\gam+(k+1)p}),\\
    \sum_{|\alpha|\leq 1}\|M \d^{\alpha} f\|_{W_0^{k+2,p}(\RRdh, w_{\gam+kp})} \eqsim \|f\|_{W_0^{k+3,p}(\RRdh, w_{\gam+(k+1)p})},\quad f\in W_0^{k+3,p}(\RRdh, w_{\gam+(k+1)p}).
  \end{align*}
  Moreover, for $|\alpha|\leq 1$, we have the estimate
  \begin{equation*}
    \|M \d^{\alpha} f\|_{W^{k-2,p}(\RRdh, w_{\gam+kp})}\leq C \|f\|_{W^{k-1,p}(\RRdh, w_{\gam+(k+1)p})},\quad f\in W^{k-1,p}(\RRdh, w_{\gam+(k+1)p}).
  \end{equation*}
\end{lemma}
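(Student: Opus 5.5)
The plan is to use the Leibniz rule for $M\d^\alpha f = x_1\d^\alpha f$, together with the fact that the weight $x_1^{\gam+kp}$ absorbs one power of $x_1$ when we pass to $w_{\gam+(k+1)p}$. Indeed,
\[
\|x_1 g\|_{L^p(w_{\gam+kp})}=\|g\|_{L^p(w_{\gam+(k+1)p})}.
\]

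\emph{First equivalence, direction $\lesssim$.} For $|\beta|\le k$ and $|\alpha|\le 1$ the Leibniz rule gives
\[
\d^\beta(x_1\d^\alpha f)=x_1\d^{\beta+\alpha}f+\beta_1\,\d^{\beta-e_1+\alpha}f .
\]
The first term is bounded in $L^p(w_{\gam+kp})$ by $\|\d^{\beta+\alpha}f\|_{L^p(w_{\gam+(k+1)p})}$. The second term involves derivatives of order at most $k$ measured in $L^p(w_{\gam+kp})$. These are controlled by $\|f\|_{W^{k+1,p}(w_{\gam+(k+1)p})}$ through the first estimate of Lemma \ref{lem:est_Hardy}.

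\emph{First equivalence, direction $\gtrsim$.} Take a multi-index of order at most $k+1$. If it is nonzero, write it as $\beta+\alpha$ with $|\alpha|=1$ and $|\beta|\le k$. Then
\[
x_1\d^{\beta+\alpha}f=\d^\beta(M\d^\alpha f)-\beta_1\d^{\beta-e_1+\alpha}f ,
\]
and the last term is of lower order. Iterating over the order, or again invoking Lemma \ref{lem:est_Hardy}, bounds it by $\sum_{|\alpha|\le1}\|M\d^\alpha f\|_{W^{k,p}(w_{\gam+kp})}$. For the lower-order terms I would use $\d^{\beta-e_1+\alpha}f$, which appears literally inside $\d^{\beta-e_1}(M\d^{\alpha}f)$ after one more Leibniz step, and run an induction on $|\beta|$. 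The zeroth-order term comes from $\alpha=0$: $\|x_1 f\|_{L^p(w_{\gam+kp})}=\|f\|_{L^p(w_{\gam+(k+1)p})}$.

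\emph{Second equivalence.} The same computation applies with $k$ replaced by $k+2$ and the weight unchanged. The only extra issue is the Hardy step: lower-order terms of order $\le k+2$ in $L^p(w_{\gam+kp})$ must be bounded by the $W^{k+3,p}_0(w_{\gam+(k+1)p})$ norm. The third embedding of Lemma \ref{lem:Hardy} covers this, since $\gam+(k+1)p\notin\{jp-1\}$ because $\gam\in(-1,p-1)$.

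The step I expect to be the main obstacle is the boundary conditions. I must check that $M\d^\alpha f\in W_0^{k+2,p}(w_{\gam+kp})$ when $f\in W_0^{k+3,p}(w_{\gam+(k+1)p})$, and conversely. I would settle this by density: Proposition \ref{prop:tracechar_RRdh} identifies both spaces with closures of $\Cc^\infty(\RRdh)$. Multiplication by $x_1$ and differentiation preserve $\Cc^\infty(\RRdh)$, so the norm estimate transfers $W_0$ membership in the forward direction. For the converse, I would write $f=M^{-1}(Mf)$ and approximate $Mf$ by test functions. The equivalence of norms on $\Cc^\infty$ then gives a Cauchy sequence for $f$.

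\emph{Final estimate.} This concerns negative orders for $k\in\{0,1\}$; for $k\ge2$ the same Leibniz argument applies. For $k\in\{0,1\}$ I argue by duality, as in Lemma \ref{lem:est_Hardy}. For $g\in\Cc^\infty$ we have
\[
\langle M\d^\alpha f,g\rangle=\pm\langle f,\d^\alpha(Mg)\rangle .
\]
It then suffices to show
\[
\|\d^\alpha(Mg)\|_{W_0^{1-k,p'}(w_{\gam'-kp'-p'})}\lesssim\|g\|_{W_0^{2-k,p'}(w_{\gam'-kp'})}.
\]
This is again Leibniz plus Hardy's inequality (Lemma \ref{lem:Hardy}), using $\gam'\in(-1,p'-1)$. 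Density of test functions (Lemma \ref{lem:densityCcnegative}) then closes the argument.
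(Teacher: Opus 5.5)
\textbf{Gap in the $\gtrsim$ directions (lower-order term).} Your $\lesssim$ directions are fine, as are the density argument for the $W_0$ property and the duality argument for the last estimate; the last part is essentially the paper's proof. For the two equivalences the paper takes a shorter route. It writes $\|f\|_{W^{k+1,p}(w_{\gam+(k+1)p})}\eqsim\sum_{|\alpha|\le1}\|\d^\alpha f\|_{W^{k,p}(w_{\gam+(k+1)p})}$. It then applies to each $\d^\alpha f$ the fact, quoted from \cite[Lemma 3.6]{LLRV24}, that $M$ is an isomorphism from $W_0^{\ell,p}(w_{\gam+(k+1)p})$ onto $W_0^{\ell,p}(w_{\gam+kp})$; for $\ell=k$ this also holds on $W^{k,p}$, where no traces exist. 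Your Leibniz computation tries to prove the nontrivial half of that isomorphism, the bound for $M^{-1}$, by hand. That is where the gap is.

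In the $\gtrsim$ direction you must control $\beta_1\|\d^{\beta-e_1+\alpha}f\|_{L^p(w_{\gam+kp})}$.
\begin{itemize}
\item Applying Lemma \ref{lem:est_Hardy} to $f$ bounds this by $\|f\|_{W^{k+1,p}(w_{\gam+(k+1)p})}$. That is the very quantity being estimated, and there is no small constant to absorb it, so the argument is circular.
\item The induction on $|\beta|$ does not close either. In $\d^{\beta-e_1}(M\d^\alpha f)=x_1\d^{\beta-e_1+\alpha}f+(\beta_1-1)\d^{\beta-2e_1+\alpha}f$ the term carries the factor $x_1$. Measuring in $L^p(w_{\gam+kp})$ therefore only controls $\|\d^{\beta-e_1+\alpha}f\|_{L^p(w_{\gam+(k+1)p})}$, which has one power of $x_1$ too many near $x_1=0$.
\end{itemize}

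\textbf{Fix.} Let the weight drop at every step, and pay for it with Hardy's inequality applied to $M\d^\alpha f$, not to $f$:
\[
\|\d^{\beta-e_1+\alpha}f\|_{L^p(w_{\gam+kp})}\le\|\d^{\beta-e_1}(M\d^\alpha f)\|_{L^p(w_{\gam+(k-1)p})}+(\beta_1-1)\|\d^{\beta-2e_1+\alpha}f\|_{L^p(w_{\gam+(k-1)p})}.
\]
Here the first term is $\lesssim\|M\d^\alpha f\|_{W^{k,p}(w_{\gam+kp})}$ by Lemma \ref{lem:est_Hardy}. Iterating, the $j$-th step produces $\|\d^{\beta-je_1}(M\d^\alpha f)\|_{L^p(w_{\gam+(k-j)p})}\lesssim\|M\d^\alpha f\|_{W^{k,p}(w_{\gam+kp})}$. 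After $\beta_1$ steps the remainder coefficient vanishes.

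In the second equivalence the same recursion passes through $W_0^{k+2-j,p}(w_{\gam+(k-j)p})$ for $j\le\beta_1\le k+2$. It uses the third embedding of Lemma \ref{lem:Hardy}, with weights going down to $\gam-2p$. This is exactly where $M\d^\alpha f\in W_0^{k+2,p}(w_{\gam+kp})$ is needed, which your density argument supplies. With this repair your argument is a self-contained replacement for the citation of \cite[Lemma 3.6]{LLRV24}.
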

\begin{proof}
  Note that by \cite[Lemma 3.6]{LLRV24} it follows that
  \begin{equation*}
  M: W_0^{\ell,p}(\RRdh, w_{\gam+(k+1)p}) \to W_0^{\ell,p}(\RRdh, w_{\gam+kp})
  \end{equation*}
  is an isomorphism for all $\ell\in \NN_0$. Moreover, if $\ell=k$, then $M:W^{k,p}(\RRdh, w_{\gam+(k+1)p}) \to W^{k,p}(\RRdh, w_{\gam+kp})$ is an isomorphism since in this case no traces exist. Hence, we have
  \begin{equation*}
    \|f\|_{W^{k+1,p}(\RRdh, w_{\gam+(k+1)p})}\eqsim \sum_{|\alpha|\leq 1}\|\d^{\alpha} f\|_{W^{k,p}(\RRdh, w_{\gam+(k+1)p})}\eqsim \sum_{|\alpha|\leq 1}\|M \d^{\alpha} f\|_{W^{k,p}(\RRdh, w_{\gam+kp})},
  \end{equation*}
  for $f\in W^{k+1,p}(\RRdh, w_{\gam+(k+1)p})$. The second estimate follows similarly.
  
  Using that $M: W^{\ell,p}(\RRdh, w_{\gam+(k+1)p})\to W^{\ell,p}(\RRdh, w_{\gam+kp})$ is bounded for $\ell\in\NN_0$ (\cite[Lemma 3.6]{LLRV24}), we obtain for $k\geq 2$ and $|\alpha|\leq 1$ that
  \begin{equation*}
    \|M\d^{\alpha} f\|_{W^{k-2,p}(\RRdh, w_{\gam+kp})}\leq C \|\d^{\alpha} f\|_{W^{k-2,p}(\RRdh, w_{\gam+(k+1)p})}\leq C\| f\|_{W^{k-1,p}(\RRdh, w_{\gam+(k+1)p})},
  \end{equation*}
  for $f\in W^{k-1,p}(\RRdh, w_{\gam+(k+1)p})$. It remains to prove the above estimate for $k\in\{0,1\}$. By duality, it suffices to prove 
  \begin{equation*}
    \|\d^{\alpha} M g\|_{W_0^{1-k,p'}(\RRdh, w_{\gam'-kp'-p'})}\leq C\|g\|_{W_0^{2-k, p'}(\RRdh, w_{\gam'-kp'})},\quad g\in W_0^{2-k, p'}(\RRdh, w_{\gam'-kp'}),
  \end{equation*}
  where by integration by parts $(-1)^{|\alpha|}\d^{\alpha} M$ is the adjoint operator to $M\d^{\alpha}$. Let $\alpha=(\alpha_1, \tilde{\alpha})\in \NN_0\times \NN_0^{d-1}$ with $|\alpha|=1$. Then, $\d^{\alpha} M = M\d^{\alpha} + \alpha_1$. Moreover, since
  \begin{equation*}
    M: W_0^{\ell,p'}(\RRdh, w_{\mu}) \to W_0^{\ell,p'}(\RRdh, w_{\mu-p'})
  \end{equation*}
  is an isomorphism for all $\ell\in \NN_0$ and $\mu\in \RR\setminus\{jp'-1: j\in\NN_1\}$, it follows that 
  \begin{align*}
    \|\d^{\alpha} M g\|_{W_0^{1-k,p'}(\RRdh, w_{\gam'-kp'-p'})} &\leq \|M \d^{\alpha} g\|_{W_0^{1-k,p'}(\RRdh, w_{\gam'-kp'-p'})} +\alpha_1\|g\|_{W_0^{1-k,p'}(\RRdh, w_{\gam'-kp'-p'})}\\
    &\leq C \big( \|\d^{\alpha} g\|_{W_0^{1-k,p'}(\RRdh, w_{\gam'-kp'})} +\alpha_1\|g\|_{W_0^{1-k,p'}(\RRdh, w_{\gam'-kp'-p'})}\big)\\
    &\leq C  \| g\|_{W_0^{2-k,p'}(\RRdh, w_{\gam'-kp'})}, \quad g\in W_0^{2-k, p'}(\RRdh, w_{\gam'-kp'}),
  \end{align*}
  where we have additionally applied Hardy's inequality (Lemma \ref{lem:Hardy}).
\end{proof}

Furthermore, in the proof of Theorem \ref{thm:Rsect_weight}, we need an elliptic regularity result in the case that the right-hand side of the equation is smooth.

\begin{lemma}\label{lem:smoothRHS}
  Let $\rho>0$, $ \sigma\in(\vartheta(\rho), \pi)$ and $\lambda_0>\tilde{\lambda}$ with $\tilde{\lambda}$ as in Theorem \ref{thm:Rsect_noweight}. Then, for all $f\in \Cc^{\infty}(\RRdh)\times \Cc^\infty(\RRdh)$ and $\lambda\in \lambda_0+\Sigma_{\pi-\sigma}$, there exists a unique $u =(u_1, u_2)^\top \in \SS(\RRdh)\times\SS(\RRdh) $ such that
  \begin{equation}\label{eq:smoothRHS}
    \lambda u + A u =f,\qquad u_1(0,\cdot)=(\d_1 u_1)(0,\cdot)=0.
  \end{equation}
\end{lemma}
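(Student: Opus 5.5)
Existence and uniqueness will come from the solution theory already established in the unweighted setting. Schwartz regularity will be obtained from an explicit Fourier-type representation in the tangential variables.

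\emph{Existence.} Fix $f\in\Cc^\infty(\RRdh)\times\Cc^\infty(\RRdh)$ and $\lambda\in\lambda_0+\Sigma_{\pi-\sigma}$. Since $f\in W^{2,p}_0(\RRdh,w_\gam)\times L^p(\RRdh,w_\gam)=\XX_+$ for every $p\in(1,\infty)$ and $\gam\in(-1,p-1)$, Theorem~\ref{thm:DS} applies once $\lambda_0$ exceeds the constant $\tilde\lambda$ from Theorem~\ref{thm:DS}; we may assume this after enlarging $\tilde\lambda$ in Theorem~\ref{thm:Rsect_noweight}, because $\tilde\lambda$ there was taken from Theorem~\ref{thm:DS}. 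It yields a solution $u\in\DD_+$ of $(\lambda+A)u=f$. The membership $u_1\in W^{2,p}_0$ encodes exactly $u_1(0,\cdot)=\d_1u_1(0,\cdot)=0$.

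\emph{Uniqueness.} Suppose $u\in\SS(\RRdh)^2$ solves \eqref{eq:smoothRHS} with $f=0$. Then $u\in\DD_+$, since Schwartz functions with vanishing zeroth and first trace lie in $W^{4,p}\cap W^{2,p}_0$, and the second line $u_2=\lambda u_1$ shows that $u_2$ also vanishes to first order. Injectivity of $\lambda+A$ on $\XX_+$ then gives $u=0$. Alternatively, uniqueness holds already in the weaker class $D(A_0)$ of Theorem~\ref{thm:Rsect_noweight}.

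\emph{Schwartz regularity.} This is the main obstacle. I would use the representation formulas from \cite[Lemmas 3.1 \& 3.2]{DS15}. First eliminate $u_2=\lambda u_1-f_1$, which reduces the system to the scalar problem
\[
\big(\lambda^2-\lambda\rho\del+\del^2\big)u_1=f_2+(\lambda-\rho\del)f_1
\]
with clamped boundary conditions. Take the partial Fourier transform in $\tilde x$. The resulting ODE in $x_1$ has characteristic roots $\pm\omega_\pm(\xi')$, where $\omega_\pm=\sqrt{|\xi'|^2+\lambda/\alpha_\pm}$, and these have positive real part uniformly for $\lambda\in\lambda_0+\Sigma_{\pi-\sigma}$ because $\sigma>\vartheta(\rho)$.

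The solution is a whole-space part plus a boundary correction $c_+(\xi')e^{-\omega_+x_1}+c_-(\xi')e^{-\omega_-x_1}$. For the whole-space part, extend $f$ to $\RR^d$ (for instance by Seeley extension or by zero), which gives a smooth symbol acting on a Schwartz function, so the result is Schwartz. The coefficients $c_\pm$ are obtained by solving a $2\times2$ Lopatinskii system whose determinant is proportional to $\omega_+-\omega_-$ and does not vanish.

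It then remains to check that the symbols $\omega_\pm$, $c_\pm$ and $e^{-\omega_\pm x_1}$ are smooth in $\xi'$ with polynomial bounds on all derivatives and exponential decay in $x_1$. This is where $\lambda\ne0$ with $\Re$-type positivity is used: it keeps $\omega_\pm$ away from the branch cut and bounded below, so there are no singularities at $\xi'=0$. Combined with Schwartz data, these bounds give $x_1^a\d^\beta_{x_1}\hat u_1(x_1,\xi')$ Schwartz in $\xi'$, hence $u_1\in\SS(\RRdh)$. Finally $u_2=\lambda u_1-f_1\in\SS(\RRdh)$.

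By the uniqueness step, this Schwartz solution coincides with the $\DD_+$-solution from the existence step.
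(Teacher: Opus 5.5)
Your architecture matches the paper's proof. You invert the order $-4$ symbol $m(\xi)=|\xi|^4+\lambda\rho|\xi|^2+\lambda^2$ on the zero extension of $g=f_2+(\lambda-\rho\del)f_1$. You then correct the boundary values by a solution of the homogeneous equation with Schwartz boundary data, and identify the result with the abstract solution by uniqueness. The difference is that the paper gets the Schwartz boundary correction from the Poisson operator of \cite[Proposition~4.16]{HL22}, after checking (E) and (LS). You build it by hand, which is more elementary and self-contained, but as written it fails in one case.

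\textbf{The gap is at $\rho=2$.} This case is allowed in the lemma, with $\vartheta(2)=0$. There $\alpha_+=\alpha_-=1$, hence $\omega_+=\omega_-=\sqrt{|\xi'|^2+\lambda}$. Your determinant $\omega_+-\omega_-$ then vanishes identically. The ansatz $c_+e^{-\omega_+x_1}+c_-e^{-\omega_-x_1}$ spans only one function, so it cannot satisfy the two independent conditions on $w$ and $\d_1 w$. For $\rho\neq 2$ your computation is fine, since $\omega_+^2-\omega_-^2=(\alpha_--\alpha_+)\lambda\neq 0$.

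A uniform repair is to replace $e^{-\omega_-x_1}$ by the divided difference
\[
\phi(x_1,\xi'):=x_1\int_0^1 e^{-(s\omega_-+(1-s)\omega_+)x_1}\dd s .
\]
This equals $(e^{-\omega_-x_1}-e^{-\omega_+x_1})/(\omega_+-\omega_-)$ for $\rho\neq 2$ and $x_1e^{-\omega_+x_1}$ for $\rho=2$, so in both cases it solves the homogeneous ODE. The boundary matrix of $\{e^{-\omega_+x_1},\phi\}$ is $\begin{pmatrix}1&0\\-\omega_+&1\end{pmatrix}$, with determinant $1$. Moreover $|\phi|\le x_1e^{-c(1+|\xi'|)x_1}$, so your symbol and Schwartz estimates go through uniformly in $\rho$.

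\textbf{Two minor points.} First, the existence step via Theorem~\ref{thm:DS}, and the enlargement of $\tilde\lambda$ it forces, is not needed. The repaired explicit construction already gives a Schwartz solution for every $\lambda\in\lambda_0+\Sigma_{\pi-\sigma}$. Uniqueness then follows, as in the paper, from Theorem~\ref{thm:Rsect_noweight} with $p=2$, $\gam=0$, because Schwartz functions with the clamped conditions lie in $W^{2,2}_0(\RRdh)\times L^2(\RRdh)$. This proves the lemma with $\tilde\lambda$ exactly as stated. Second, the relation $u_2=\lambda u_1-f_1$ comes from the first component of $\lambda u+Au=f$, not the second.
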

\begin{proof}
We fix $\lambda>\tilde \lambda$. Note that, in particular, we have $f=(f_1,f_2)^\top\in L^2(\RRdh)\times W^{-2,2}(\RRdh)$. Then, by Theorem~\ref{thm:Rsect_noweight}, we know that for every $\lambda>\tilde \lambda$, equation \eqref{eq:smoothRHS} has a unique
solution $u=(u_1,u_2)^\top\in W^{2,2}_0(\RRdh)\times L^2(\RRdh)$. By the definition of $A$, this shows that $u_1$ is a solution of the scalar equation
\begin{equation}
    \label{eq:smoothscalar2}
     \Delta^2u_1  -\lambda\rho\Delta u_1 + \lambda^2u_1 = f_2+ (\lambda-\rho\Delta)f_1, \qquad u_1(0,\cdot)=(\d_1 u_1)(0,\cdot)=0.
\end{equation}
Let $L(D):=  \Delta^2 - \lambda\rho \Delta +\lambda^2$. We 
extend  $g:= f_2+ (\lambda-\rho\Delta)f_1\in \Cc^\infty(\RRdh)$ by zero to $\tilde g\in \Cc^\infty(\RRd)$ and consider the equation
\[ L(D)v = \tilde g\]
in $\RRd$. The operator $L(D)$ can be written as $\mathcal F^{-1}m\mathcal F$ with the symbol 
\[ m(\xi) := |\xi|^4+ \lambda\rho |\xi|^2 + \lambda^2,\qquad 
\xi\in\RRd.\]
Using $m(\xi)\neq0$ for all $\xi\in\RRd$, we obtain
$v = P\tilde g := \mathcal F^{-1} \frac 1m \mathcal F \tilde g$. As $P$ is a  pseudodifferential operator
of order $-4$ (see, e.g., \cite[Definition~3.1]{Ab12}), we
obtain $v = P\tilde{g}\in \SS(\RRd)$ by \cite[Theorem~3.6]{Ab12}.

We use the ansatz  $u_1 = v|_{\RRdh}+w$,
which yields for 
$w$  the boundary value problem
\begin{equation}\label{eq-bvp-smooth}
    L(D)w = 0,\quad (w, \d_1 w)|_{\d\RRdh} = -(v, \d_1 v)|_{\d\RRdh} \in\SS(\RR^{d-1})\times \SS(\RR^{d-1}).
\end{equation} 
Such boundary value problems were studied in \cite{HL22}. As the 
principal part of $L(D)$ is given by $\Delta^2$, it is easily
seen that the ellipticity conditions (E) and (LS) in \cite[Section~2.5.2]{HL22} are satisfied for any $\phi\in [0,\pi)$. By \cite[Proposition~4.16]{HL22}, there exists a 
solution $w\in\SS(\RRdh)$ of \eqref{eq-bvp-smooth},
which is given by the application of a Poisson operator to the
right-hand side. 

By construction, $\tilde u_1 := v|_{\RRdh}+w \in \SS(\RRdh)$ solves \eqref{eq:smoothscalar2}. Setting $\tilde u_2=\lambda \tilde u_1-f_1\in\SS(\RRdh)$ 
yields a solution $(\tilde u_1,\tilde u_2)^\top $ of 
\eqref{eq:smoothRHS}. By unique solvability
of 
\eqref{eq:smoothRHS}, we obtain
$u_j= \tilde u_j\in \SS(\RRdh)$ for $j\in \{1,2\}$.
\end{proof}

\begin{remark}
    Alternatively, the result of Lemma \ref{lem:smoothRHS} can be derived from properties of the Boutet de Monvel calculus for pseudodifferential boundary value problems. The boundary value problem $\mathcal A:= (L(D), (\Tr, \Tr \op \d_1))^\top$ belongs to the class $\mathscr B_{\operatorname{cl}}^{4,2}(\overline{\RRdh})$ of classical operators in Boutet de Monvel’s calculus of order $4$ and type $2$ in the sense of \cite[Definition~5.1]{Schrohe01}. Therefore, by \cite[Theorem~5.11]{Schrohe01}, there exists a parametrix of the form $\mathcal B = (\tilde P_+ + \tilde G ,\; \tilde K)\in \mathscr B_{\operatorname{cl}}^{-4,0}(\overline{\RRdh})$, i.e., $S:=\mathcal B\mathcal A-\id $ is a regularizing Boutet de Monvel operator. Using \eqref{eq:smoothscalar2} we write
\begin{equation*}
    u_1 = \mathcal B\mathcal A u_1 - Su_1 = (\tilde P_++\tilde G)\big(f_2+ (\lambda-\rho\Delta)f_1\big) - S_1.
\end{equation*} 
As $g:= f_2+ (\lambda-\rho\Delta)f_1\in \Cc^\infty(\RRdh)$, we
get $(\tilde P_++\tilde G)g\in \SS(\RRdh)$  by \cite[Corollary~5.5]{Schrohe01}. Since $\mc{A}$ has constant coefficients it even belongs to the SG-calculus in the sense of \cite[Definition~2.9]{Schrohe99}, so we can even choose $S$ to belong to the class of smoothing operators in the SG-calculus, see  \cite[Definition 2.18]{Schrohe99}. By the mapping properties of such operators \cite[Theorem~4.5]{Schrohe99}, we get  $Su_1\in \SS(\RRdh)$. Hence, $u_1\in \SS(\RRdh)$ and $u_2=\lambda u_1-f_1\in \SS(\RRdh)$.
\end{remark}

We now turn to the proof of $R$-sectoriality of the operator $A$ as stated in Theorem \ref{thm:Rsect_weight}.
\begin{proof}[Proof of Theorem \ref{thm:Rsect_weight}]
    For notational convenience we write $\XX^{k,p}_\gam:=\XX^{k,p}_\gam(\RRdh)$ and $\DD^{k,p}_\gam:=\DD^{k,p}_\gam(\RRdh)$ for $p\in(1,\infty)$, $\gam\in (-1,p-1)$, and $k\in\ZZ$. Moreover, for vectors we write $u=(u_1, u_2)$ and for sequences of vectors we use the notation $(u^{(n)})_{n=1}^N$ for some $N\in \NN_1$.
    
    \textit{Step 1: elliptic regularity.} Let $\sigma>\vartheta(\rho)$ and let $\tilde{\lambda}$ be as in Theorem \ref{thm:Rsect_noweight}. Let $\lambda_0>\tilde{\lambda}$ and $N\in\NN_1$. We first prove the following elliptic regularity result: for all sequences $(\lambda_n)_{n=1}^N$ in $\lambda_0+\Sigma_{\pi-\sigma}$ and $(f^{(n)})_{n=1}^N\in \eps_N^2(\XX^{k,p}_\gam)$, there exists a unique $(u^{(n)})_{n=1}^N\in \eps_N^2(\DD^{k,p}_{\gam})$ such that 
    \begin{equation}\label{eq:R_sect_equation}
      \lambda_n u^{(n)} + A u^{(n)} =f^{(n)},\quad \Tr u_1^{(n)} = \Tr \d_1 u_1^{(n)}=0,
    \end{equation}
    holds for each $n\in\{1,\dots, N\}$. Moreover, this solution satisfies the estimates 
    \begin{subequations}\label{eq:est_sol}
    \begin{align}
      \big\|\sum_{n=1}^N \eps_n \lambda_n u^{(n)}\big\|_{L^2(\Om; \XX^{k,p}_{\gam})}& \lesssim \big\|\sum_{n=1}^N \eps_n f^{(n)} \big\|_{L^2(\Om; \XX^{k,p}_{\gam})},\label{eq:IH_Rbdd_X}\\
      \big\|\sum_{n=1}^N \eps_n  u^{(n)}\big\|_{L^2(\Om; \DD^{k,p}_{\gam})}& \lesssim\big\|\sum_{n=1}^N \eps_n f^{(n)} \big\|_{L^2(\Om; \XX^{k,p}_{\gam})}.\label{eq:IH_Rbdd_D}
      \end{align}
    \end{subequations}
    The case $k=0$ follows from Theorem \ref{thm:Rsect_noweight}. For $k\in\NN_1$, the uniqueness of $(u^{(n)})_{n=1}^N$ in the above statement follows from the uniqueness in $\XX^{0,p}_\gam$ and Lemma \ref{lem:est_Hardy}. The proof of the existence and \eqref{eq:est_sol} goes by induction on $k\geq 0$. Now assume that \eqref{eq:R_sect_equation} and \eqref{eq:est_sol} hold for some $k\in\NN_0$. It remains to show existence of a solution to \eqref{eq:R_sect_equation} and the estimates \eqref{eq:est_sol} for $k+1$.

    Fix $N\in \NN_1$ and let $n\in\{1,\dots, N\}$. Moreover, let $(f^{(n)})_{n=1}^N\in\eps_N^2( \Cc^{\infty}(\RRdh)\times \Cc^{\infty}(\RRdh))$ and consider the equations \eqref{eq:R_sect_equation}
    with $\lambda_n\in \lambda_0+ \Sigma_{\pi-\sigma}$. For each $f^{(n)}\in \Cc^{\infty}(\RRdh)\times \Cc^{\infty}(\RRdh)$, the unique solution $u^{(n)}$ to \eqref{eq:R_sect_equation} is in $\SS(\RRdh)\times\SS(\RRdh)$ and $u^{(n)}_1(0,\cdot)= (\d_1 u_1^{(n)})(0,\cdot)=0$. This follows from Lemma \ref{lem:smoothRHS}. 
    
    For $|\alpha|\leq 1$ we define $v^{(n)}:= M \d^{\alpha} u^{(n)}$. We claim that for $n\in\{1,\dots, N\}$ and $|\alpha|\leq 1$, $v^{(n)}$ satisfies the equation
    \begin{equation}\label{eq:vnalpha}
      \lambda_n v^{(n)} + A v^{(n)} = M\d^\alpha f^{(n)} + \begin{pmatrix}
                                                                   0 \\
                                                                    4\d_1 \d^\alpha \del u_1^{(n)}-2\rho\d_1\d^{\alpha}u_2^{(n)}
                                                                 \end{pmatrix}
    \end{equation}
    and the boundary conditions $ \Tr v_{1}^{(n)} = \Tr \d_1 v_{1}^{(n)}=0$. Indeed, note that we have the commutation relations
    \begin{align*}
    [ \del, M\d^{\alpha} ]g &= 2\d_1\d^{\alpha} g,\\
    [\del^2, M\d^{\alpha}]g &= 4\d_1\d^{\alpha}\del g,
    \end{align*}
    for $g\in \SS(\RRdh)$. Therefore, we obtain
    \begin{align*}
      \lambda_n M\d^{\alpha} u^{(n)} + A M\d^{\alpha}u^{(n)} =&\;\begin{pmatrix}
                              M\d^{\alpha} (\lambda_n u_1^{(n)}-u_2^{(n)}) \\
                              M\d^{\alpha} (\del^2 u_1^{(n)}+\lambda_n u_2^{(n)}-\rho \del u_2^{(n)})
                            \end{pmatrix}\\&\;+ \begin{pmatrix}
                                                                   0 \\
                                                                    4\d_1 \d^\alpha \del u_1^{(n)}-2\rho\d_1\d^{\alpha}u_2^{(n)}
                                                                 \end{pmatrix}
    \end{align*}
    and appealing to \eqref{eq:R_sect_equation} yields \eqref{eq:vnalpha}. Moreover, note that $\Tr M \d^\alpha u_1^{(n)}=0$ and 
    \begin{equation*}
      \Tr \d_1 M\d^{\alpha} u^{(n)}_1 =\Tr\big( M\d_1\d^{\alpha}u^{(n)}_1 + \d^{\alpha} u^{(n)}_1\big) = \Tr \d^{\alpha} u^{(n)}_1= 0,
    \end{equation*}
    where the last identity follows from $\Tr u_1^{(n)}= \Tr \d_1 u^{(n)}_1 = 0$ (if $\alpha=0$ or $\alpha=(1,0,\dots,0)$) and $\Tr \d^{\tilde{\alpha}} u_1^{(n)}= \d^{\tilde{\alpha}}\Tr u_1^{(n)}=0$ if $\alpha=(0,\tilde{\alpha})\in \NN_0\times \NN_0^{d-1}$ with $|\tilde{\alpha}|=1$. This proves the claim.
    
    From the induction hypothesis \eqref{eq:IH_Rbdd_D} applied to \eqref{eq:vnalpha} and \eqref{eq:R_sect_equation}, we obtain for $|\alpha|\leq 1$ that
    \begin{align*}
      \big\|\sum_{n=1}^N \eps_n  v^{(n)}\big\|_{L^2(\Om; \DD^{k,p}_\gam)} \lesssim &\; \big\|\sum_{n=1}^N \eps_n M\d^{\alpha} f^{(n)} \big\|_{L^2(\Om; \XX^{k,p}_\gam)} + \big\|\sum_{n=1}^N \eps_n u_1^{(n)}\big\|_{L^2(\Om; W^{k+2,p}(\RRdh, w_{\gam+kp}))} \\
       &\; + \big\|\sum_{n=1}^N \eps_n u_2^{(n)}\big\|_{L^2(\Om; W^{k,p}(\RRdh, w_{\gam+kp}))}\\
       \lesssim &\;  \big\|\sum_{n=1}^N \eps_n  M\d^{\alpha}   f^{(n)} \big\|_{L^2(\Om; \XX^{k,p}_{\gam})} +  \big\|\sum_{n=1}^N \eps_n f^{(n)} \big\|_{L^2(\Om; \XX^{k,p}_\gam)}\\
       \lesssim &\; \big\|\sum_{n=1}^N \eps_n f^{(n)} \big\|_{L^2(\Om; \XX^{k+1,p}_{\gam})},
    \end{align*}
    where in the last estimate we have used Lemmas \ref{lem:est_Hardy} and \ref{lem:est_M}. Using Lemma \ref{lem:est_M} once more gives
    \begin{equation}\label{eq:IH_D}
      \big\|\sum_{n=1}^N \eps_n u^{(n)}\big\|_{L^2(\Om; \DD^{k+1,p}_\gam)} \lesssim \sum_{|\alpha|\leq 1}\big\|\sum_{n=1}^N \eps_n v^{(n)}\big\|_{L^2(\Om; \DD^{k,p}_\gam)} \lesssim \big\|\sum_{n=1}^N \eps_n f^{(n)} \big\|_{L^2(\Om; \XX^{k+1,p}_\gam)}.
    \end{equation}
    The equation $\lambda_n u^{(n)} + Au^{(n)} =f^{(n)}$ and \eqref{eq:IH_D} yield
    \begin{align*}
      \big\|\sum_{n=1}^N \eps_n \lambda_n u^{(n)}\big\|_{L^2(\Om; \XX^{k+1,p}_\gam)}& \lesssim \big\|\sum_{n=1}^N \eps_n A u^{(n)}\big\|_{L^2(\Om; \XX^{k+1,p}_\gam)}  + \big\|\sum_{n=1}^N \eps_n f^{(n)} \big\|_{L^2(\Om; \XX^{k+1,p}_\gam)}\\
      &\lesssim  \big\|\sum_{n=1}^N \eps_n u^{(n)}\big\|_{L^2(\Om; \DD^{k+1,p}_\gam)}  + \big\|\sum_{n=1}^N \eps_n f^{(n)} \big\|_{L^2(\Om; \XX^{k+1,p}_\gam)}\\
      &\lesssim \big\|\sum_{n=1}^N \eps_n f^{(n)} \big\|_{L^2(\Om; \XX^{k+1,p}_\gam)},
    \end{align*}
    which proves the desired estimates for $(f^{(n)})_{n=1}^N\in \eps_N^2(\Cc^{\infty}(\RRdh)\times \Cc^{\infty}(\RRdh))$. 
    
    The general case follows with by density argument. First, note that $\Cc^{\infty}(\RRdh)\times \Cc^{\infty}(\RRdh)$ is dense in $\XX^{k+1,p}_\gam$ by Lemma \ref{lem:densityCcnegative} and \eqref{eq:densityCcW} for negative and positive smoothness, respectively. Let $(f^{(n)})_{n=1}^N\in \eps_N^2(\XX^{k+1,p}_\gam)$, then for every $n\in\{1,\dots, N\}$ there exists a sequence $(f^{(n,m)})_{m\geq 1}\subseteq \Cc^\infty(\RRdh)\times \Cc^\infty(\RRdh)$ such that $f^{(n,m)}\to f^{(n)}$ in $\XX^{k+1,p}_\gam$ as $m\to\infty$. Moreover, $(f^{(n,m)})_{n=1}^N\to (f^{(n)})_{n=1}^N$ in $\eps_N^2(\XX^{k+1,p}_\gam)$ as $m\to\infty$. Every $(f^{(n,m)})_{n=1}^N$ defines a $(u^{(n,m)})_{n=1}^N\in \eps_N^2(\DD^{k+1,p}_\gam)$ such that $\lambda_nu^{(n,m)}+Au^{(n,m)}=f^{(n,m)}$ for all $n\in\{1,\dots, N\}$ and $m\geq 1$. In addition, by \eqref{eq:IH_D} we have 
    \begin{equation*}
      \big\|\sum_{n=1}^N \eps_n(u^{(n,m_1)}-u^{(n,m_2)})\big\|_{L^2(\Om; \DD^{k+1,p}_\gam)}\lesssim \big\|\sum_{n=1}^N \eps_n(f^{(n,m_1)}-f^{(n,m_2)})\big\|_{L^2(\Om; \XX^{k+1,p}_\gam)}\to 0
    \end{equation*} 
    as $m_1,m_2\to\infty$. Completeness of $\eps_N^{2}(\DD^{k+1,p}_\gam)$ implies that there exists a $(u^{(n)})_{n=1}^N\in \eps_N^2(\DD^{k+1,p}_\gam)$ such that $(u^{(n,m)})_{n=1}^N\to (u^{(n)})_{n=1}^N $ in $\eps_N^2(\DD^{k+1,p}_\gam)$ as $m\to\infty$. Moreover, we have
    \begin{equation*}
      \big\|\sum_{n=1}^N \eps_n (\lambda_n + A)(u^{(n,m)}-u^{(n)})\big\|_{L^2(\Om;\XX^{k+1,p}_\gam)}\lesssim \big\|\sum_{n=1}^N \eps_n (u^{(n,m)}-u^{(n)})\big\|_{L^2(\Om;\DD^{k+1,p}_\gam)}\to 0,
    \end{equation*}
    as $m\to\infty$. This implies that 
    \begin{equation*}
      \big((\lambda_n+A)u^{(n)}\big)_{n=1}^N =\lim_{m\to\infty}\big((\lambda_n+A)u^{(n,m)}\big)_{n=1}^N=\lim_{m\to\infty}(f^{(n,m)})_{n=1}^{N}=(f^{(n)})_{n=1}^N.
    \end{equation*}
     Thus, for any $(f^{(n)})_{n=1}^N\in \eps_N^2(\XX^{k+1,p}_\gam)$ there exists a $(u^{(n)})_{n=1}^N\in \eps_N^2(\DD^{k+1,p}_\gam)$ such that $\lambda_n u^{(n)} + Au^{(n)} =f^{(n)}$ for every $n\in\{1,\dots, N\}$ and the required estimates hold. This finishes the induction.
    
    \textit{Step 2: $R$-sectoriality. }To show that $\lambda_0+A$ is $R$-sectorial, we prove that for $\sigma>\vartheta(\rho)$ the set
    \begin{equation*}
      \{(\lambda-\lambda_0)(\lambda+A)^{-1}: \lambda\in \lambda_0+ \Sigma_{\pi-\sigma}\}
    \end{equation*}
    is $R$-bounded. Note that Step 1 implies that $\lambda_0+ A:\DD^{k,p}_{\gam}\to \XX^{k,p}_{\gam}$ is a closed and surjective operator. Hence, it has dense range, and \cite[Propositions 10.1.7(3) and 10.1.9]{HNVW17} imply dense domain and  injectivity.    
    
    Fix $N\in\NN_1$ and let $n\in \{1,\dots, N\}$. By the Kahane contraction principle \cite[Theorem 6.1.13]{HNVW17} and Step 1, we obtain for $\lambda_n\in \lambda_0+\Sigma_{\pi-\sigma}$
    \begin{align*}
      \big\|\sum_{n=1}^N \eps_n (\lambda_n-\lambda_0)&(\lambda_n + A)^{-1}f^{(n)}\big\|_{L^2(\Om; \XX^{k,p}_{\gam})}\\
      & \lesssim  \max_{1\leq n\leq N}\big|\tfrac{\lambda_n-\lambda_0}{\lambda_n}\big|\big\|\sum_{n=1}^N \eps_n \lambda_n(\lambda_n + A)^{-1}f^{(n)}\big\|_{L^2(\Om; \XX^{k,p}_\gam)}\\
      &\lesssim \big\|\sum_{n=1}^N \eps_n f^{(n)}\big\|_{L^2(\Om; \XX^{k,p}_\gam)},
    \end{align*}
    where we have used that
    \begin{equation*}
      |\lambda_n-\lambda_0|\leq \begin{cases}
                                  |\lambda_n| & \mbox{if } \sigma\in [\frac{\pi}{2},\pi) \\
                                  \frac{|\lambda_n|}{\sin(\sigma)} & \mbox{if } \sigma \in (\vartheta(\rho), \frac{\pi}{2}]
                                \end{cases},\qquad \text{ for }\lambda_n\in \lambda_0+\Sigma_{\pi-\sigma}.
    \end{equation*}
    This shows that $\lambda_0+A$ is $R$-sectorial with angle $\om_{R}(\lambda_0+A)\leq \sigma$. 
\end{proof}

\section{\texorpdfstring{$R$-}{R-}sectoriality for the elliptic operator on domains}\label{sec:Rsect_domains}

In this section, we apply perturbation and localisation techniques to transfer $R$-sectoriality of $A$ on the half-space to bounded domains. Localisation techniques are quite standard in the literature, see, e.g., \cite{DHP03, Ev10, KrBook08}. In the weighted setting, one can use a more advanced localisation procedure based on the diffeomorphism in Lemma \ref{lem:localisation_weighted_blow-up}. In this way, we can deal with domains that have low regularity and cannot be treated by applying standard localisation techniques.\\

We will use a similar strategy as in \cite{LLRV25}, where maximal regularity on weighted Sobolev spaces for the heat equation on bounded $C^1$-domains is derived. For the convenience of the reader, we will include the details of the perturbation and localisation argument, since only the Laplace operator is considered in \cite{LLRV25}. We proceed in three steps.
\begin{enumerate}
  \item Use $R$-sectoriality for $\lambda_0+A$ on $\RRdh$ (Theorem \ref{thm:Rsect_weight}) and known perturbation theory for $R$-sectoriality to obtain $R$-sectoriality for $\lambda_0 + A$ on special domains with $\lambda_0$ large enough.
  \item Perform a localisation procedure to transfer $R$-sectoriality for $\lambda_0+ A$ on special domains to bounded domains.
  \item Use discreteness of the spectrum of $A$ on bounded domains to obtain $R$-sectoriality for the unshifted operator $A$.
\end{enumerate}
These three steps are carried out in the subsequent Sections \ref{subsec:pert_spec}, \ref{subsec:pert_bdd} and \ref{subsec:pert_bdd_noshift}.

\subsection{Perturbation of $R$-sectoriality to special domains}\label{subsec:pert_spec}
Let $p\in (1,\infty)$, $k\in \NN_0$, and $\gam\in (-1,p-1)$. We define the following spaces on domains
\begin{equation}\label{eq:XXDD_dom}
  \begin{aligned}
  \XX^{k,p}_\gam(\OO) := W^{k,p}(\OO,w^{\d\OO}_{\gam+kp})\times W^{k-2,p}(\OO, w^{\d\OO}_{\gam+kp}), \\
  \DD^{k,p}_\gam(\OO):= W^{k+2,p}_0(\OO,w^{\d\OO}_{\gam+kp})\times W^{k,p}(\OO, w^{\d\OO}_{\gam+kp}),
\end{aligned}
\end{equation}
where the space with zero boundary conditions is as in Section \ref{subsec:Sob_BC} if $\OO$ is a special or bounded domain. Note that this definition for $\OO=\RRdh$ coincides with \eqref{eq:XXDD}.\\

We start with the perturbation of the $R$-sectoriality of $A$ to special domains.
\begin{theorem}\label{thm:Rsect_special_dom}
Let $p\in(1,\infty)$, $\kappa\in (0,1)$,  $\gam\in ((1-\kappa)p-1,p-1)$, $k\geq 2$ an integer, $\rho>0$, $\sigma\in (\vartheta(\rho),\pi)$, and let $\OO$ be a special $\Cc^{1,\kappa}$-domain  with $[\OO]_{C^{1,\kappa}}\leq 1$. Let $A$ be the realisation of the operator in \eqref{eq:A_matrix} on $\XX^{k,p}_\gam(\OO)$ with domain $D(A):= \DD^{k,p}_\gam(\OO)$. Then there exist  $\tilde{\lambda}>0$ and $\delta\in(0,1)$ such that, if $[\OO]_{C^{1,\kappa}}<\delta$, then for all $\lambda_0> \tilde{\lambda}$, the operator $\lambda_0+A$ is $R$-sectorial with angle $\om_R(\lambda_0+A)\leq \sigma$.
\end{theorem}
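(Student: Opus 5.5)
We transfer Theorem~\ref{thm:Rsect_weight} from $\RRdh$ to $\OO$ by pulling back along the Dahlberg--Kenig--Stein map $\Phi$ of Lemma~\ref{lem:localisation_weighted_blow-up}, and then treat the resulting variable-coefficient error as a small perturbation.

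\textbf{Step 1: transformation.} Let $\Phi_*f=f\circ\Phi^{-1}$. By Proposition~\ref{prop:isom}, $\Phi_*$ is an isomorphism $\XX^{k,p}_\gam(\OO)\to\XX^{k,p}_\gam(\RRdh)$ and $\DD^{k,p}_\gam(\OO)\to\DD^{k,p}_\gam(\RRdh)$. For the positive smoothness parts, together with the zero-trace version of Definition~\ref{def:spaces_special} and Proposition~\ref{prop:trace_char_dom}, this uses that the weight exponent $\gam+kp$ exceeds $(k-(1+\kappa))_+p-1$. This is where $\gam>(1-\kappa)p-1$ is needed: for the top order $k+2$ we need $\gam+kp>(k+1-\kappa)p-1$. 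For the factor $W^{k-2,p}$, $k\ge2$ means the smoothness is nonnegative, so no duality is needed. The conjugated operator $\tilde A:=\Phi_*A\Phi_*^{-1}$ has the same matrix structure, but $\Delta$ is replaced by $\tilde\Delta=\sum a_{ij}\d_i\d_j+\sum b_j\d_j$. Its coefficients are built from $D\Phi\circ\Phi^{-1}$ and second derivatives of $h_2$, and $\tilde\Delta^2$ likewise has coefficients involving up to fourth derivatives of $h_2$. Because $R$-sectoriality is preserved under similarity, it suffices to treat $\lambda_0+\tilde A$ on $\XX^{k,p}_\gam(\RRdh)$ with domain $\DD^{k,p}_\gam(\RRdh)$.

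\textbf{Step 2: perturbation estimate.} Write $\tilde A=A+P$ with
\[
P=\begin{pmatrix}0&0\\ \tilde\Delta^2-\Delta^2 & -\rho(\tilde\Delta-\Delta)\end{pmatrix}.
\]
We aim for the bound
\[
\|Pu\|_{\XX^{k,p}_\gam}\le C\,\delta\,\|u\|_{\DD^{k,p}_\gam}+C_\delta\,\|u\|_{\XX^{k,p}_\gam}.
\]
Given this, the standard perturbation theorem for $R$-sectoriality (as in \cite[Section~16.2]{HNVW24}: relatively small perturbations, plus lower-order terms absorbed by enlarging $\lambda_0$) yields the claim for $\delta$ small and $\tilde\lambda$ large. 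The angle stays $\le\sigma$ because Theorem~\ref{thm:Rsect_weight} applies with any slightly smaller $\sigma'$.

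To prove the bound, expand each term of $\tilde\Delta^2-\Delta^2$ as $c(x)\d^\beta u_1$ and apply Leibniz up to order $k-2$.
\begin{itemize}
\item[(a)] \emph{Principal terms} have the form $c\,\d^\beta$ with $|\beta|=4$ and $c$ a polynomial in $D h_2$ with no constant part. Lemma~\ref{lem:localisation_weighted_blow-up}\ref{it:lem:localisation_weighted_blow-up;est} gives $|c|\lesssim\delta$, which produces the $\delta\|u\|_{\DD}$ contribution.
\item[(b)] \emph{Terms where derivatives hit the coefficients} involve $\d^{m}h_2$ with $m\ge2$. These are bounded by $\delta\,x_1^{-(m-1-\kappa_0)}$ for any $\kappa_0\le\kappa$, using the estimate with $\ell_0=1$. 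Each factor $x_1^{-1}$ is traded against one derivative via the weighted Hardy inequality (Lemma~\ref{lem:Hardy}, Lemma~\ref{lem:est_Hardy}): a factor $x_1^{-j}$ on $\d^{\beta'}u_1$ costs weight $w_{\gam+kp-jp}$, and this is controlled by the $\DD$-norm provided the total order matches. When it does match, the term is again $\lesssim\delta\|u\|_{\DD}$. Choosing $\kappa_0>0$ leaves a gain of $x_1^{\kappa_0}$, i.e.\ strictly lower order. Such terms are handled by interpolation and Young's inequality, using weighted interpolation inequalities to give $\epsilon\|u\|_{\DD}+C_\epsilon\|u\|_{\XX}$. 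Alternatively, it suffices to truncate near $x_1=0$ and use the smallness of $x_1^{\kappa_0}$ there.
\item[(c)] \emph{The $\tilde\Delta-\Delta$ term acting on $u_2$} is treated analogously, with two fewer orders.
\end{itemize}

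\textbf{Main obstacle.} The delicate point is the bookkeeping in (b). Every power of $x_1^{-1}$ must be absorbed by Hardy's inequality with weight exponents staying in the admissible ranges, i.e.\ avoiding $jp-1$ and respecting vanishing traces only where they exist. The $x_1^{\kappa_0}$ remainder must then be turned into genuinely lower-order terms, using the restriction $\gam>(1-\kappa)p-1$ so that the relevant weights remain admissible. I would first try taking $\kappa_0=\kappa$ in the highest blow-up terms, and bounding the leftover near-boundary pieces by $\delta\|u\|_{\DD}$ directly through Hardy.
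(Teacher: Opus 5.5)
Your proposal follows the paper's proof: conjugate by the Dahlberg--Kenig--Stein map to get $A^\Phi=A+B$ on $\RRdh$, bound $B$ using Lemma~\ref{lem:localisation_weighted_blow-up}(iv) and iterated Hardy inequalities, and conclude with the perturbation theorem \cite[Theorem 16.2.4]{HNVW24}. Your ``main obstacle'' disappears if you take $\ell_0=1$ and $\kappa_0=0$. The order count $|\mu|+\sum_i(|\nu_i|-1)=4+|\alpha|\le k+2$ (resp.\ $2+|\alpha|\le k$ for the $u_2$-terms) then always closes. This gives the purely relative bound $\|Bu\|_{\XX^{k,p}_\gam(\RRdh)}\lesssim[\OO]_{C^{1,\kappa}}\|u\|_{\DD^{k,p}_\gam(\RRdh)}$ with no lower-order term and no interpolation. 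In particular, $\kappa$ (and hence the condition $\gam>(1-\kappa)p-1$) enters only through the identification of the domain, not through the perturbation estimate.
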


\begin{remark}
    The conditions that $\OO$ is a special $\Cc^{1,\kappa}$-domain and $\gam>(1-\kappa)p-1$ are only required to obtain the domain characterisation $D(A)=\DD^{k,p}_{\gam}(\OO)$, see Proposition \ref{prop:trace_char_dom}. In fact, the rest of the proof as given in this section also works for special $\Cc^1$-domains and $\gam\in (-1,p-1)$. Therefore, Theorem \ref{thm:Rsect_special_dom} remains valid for special $\Cc^1$-domains and $A$ with domain 
    \begin{equation*}
        D(A) := \cir{W}^{k+2,p}_0(\OO,w^{\d\OO}_{\gam+kp})\times W^{k,p}(\OO, w^{\d\OO}_{\gam+kp}).
    \end{equation*}
    Indeed, using these spaces, $\Phi_*$ is an isomorphism without any additional conditions on the smoothness of the domain or weight exponent, see Proposition \ref{prop:isom}.
\end{remark}

To relate the operator $A$ in \eqref{eq:A_matrix} on special domains and the half-space, let $h_1, h_2$ and $\Phi$ be as in Lemma \ref{lem:localisation_weighted_blow-up} defining a diffeomorphism between a special $\Cc^{1,\kappa}$-domain and the half-space. Recall that $\Phi_* f:=f\circ \Phi^{-1}$ for $f\in L^1_{\loc}(\OO)$ and define $A^{\Phi}: W^{4,1}_{\loc}(\RRdh)\times W^{2,1}_{\loc}(\RRdh) \to W^{2,1}_{\loc}(\RRdh) \times L^1_{\loc}(\RRdh)$ by 
\begin{equation}\label{eq:coordinatechangeA}
A^{\Phi} := \Phi_* \circ A \circ (\Phi^{-1})_{*} = A + B,
\end{equation}
where $A$ is the operator on $\RRdh$ and $B$ is a perturbation term. Since $A$ consists of the Laplacian and the bi-Laplacian, we introduce the coordinate changes  $\del^\Phi:W^{2,1}_{\loc}(\RRdh)\to L^1_\loc(\RRdh)$ and $(\del^2)^\Phi: W^{4,1}_{\loc}(\RRdh)\to L^1_\loc(\RRdh)$, which are given by
\begin{equation}\label{eq:coordinatechangeLaplace}
  \del^\Phi: = \Phi_* \circ \del \circ (\Phi^{-1})_{*}\quad \text{ and }\quad (\del^2)^\Phi:= \Phi_* \circ \del^2 \circ (\Phi^{-1})_{*}.
\end{equation}

More generally, if $C$ is a linear differential operator of order $N\in\N_1$, we can define the coordinate change $C^\Phi:= \Phi_*\circ C\circ (\Phi^{-1})_*$. We will compute the perturbation terms corresponding to this coordinate change if $C$ has constant coefficients. To this end, we introduce the following class of differential operators.

\begin{definition}\label{def:specform}
    For $N\in\N_1$, we call a differential operator $P$ in $\R^d_+$ an \emph{operator of special form of order $N$} if it is a linear combination of terms of the form 
$p_1(y)\cdot\ldots\cdot p_\ell (y)\partial^\mu$ with $\mu\in \N_0^d$, 
where each $p_i(y)$ is of the form
\[ p_i(y) = \partial^{\nu_i}h_1(\Phi^{-1}(y))\quad \text{ or }\quad p_i(y) = \partial^{\nu_i}h_2(y), \qquad i \in \{1,\dots,\ell\},\;y\in \RRdh,\]
with $\nu_i\in \NN_0^d\setminus\{0\}$ for $i\in \{1, \dots, \ell\}$. Here,  $\ell\in \{1, \dots, N\}$ and $|\mu|\in \{1, \dots, N\}$ are such that $|\mu|+\sum_{i=1}^{\ell}|\nu_i| = \ell+N$.
\end{definition}

The following lemma provides the necessary properties of differential operators of special form and the coordinate change $C^\Phi$.

\begin{lemma}\label{lem:pushforward} The following statements hold.
\begin{enumerate}[(i)]
    \item\label{it:lem:pushforward1} If $C$ is a linear differential operator of order $N\in\N_1$ with constant coefficients, then $P:=C^\Phi-C$ is of special form of order $N$, 
    where additionally all coefficients $p_i$ have the form $p_i=(\partial^{\nu_i} h_1) \circ\Phi^{-1}$.
    \item\label{it:lem:pushforward2} Let $\tilde{P}$ be a differential operator of special form of order $N\in \NN_1$. Then $\d^\delta \tilde{P}$ is a differential operator of special form of order $N+|\delta|$ for all $\delta\in \NN_0^d$.
\end{enumerate}
\end{lemma}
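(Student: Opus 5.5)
Both parts come down to explicit computations with the chain rule and the product rule. The key structural input is the form of $\Phi$ and $\Phi^{-1}$ from Lemma \ref{lem:localisation_weighted_blow-up}: $\Phi^{-1}(y)=(y_1+h_2(y),\tilde y)$ and $\Phi(x)=(x_1-h_1(x),\tilde x)$. Consequently the Jacobian of $\Phi$ is $I-e_1\otimes\nabla h_1$. For $g$ on $\RRdh$ and $f=(\Phi^{-1})_*g=g\circ\Phi$ we therefore get
\begin{equation*}
\partial_{x_j} f = (\partial_j g)\circ\Phi - (\partial_j h_1)\,(\partial_1 g)\circ\Phi .
\end{equation*}
Pushing forward, $\Phi_*\partial_j(\Phi^{-1})_* = \partial_j - \big((\partial_j h_1)\circ\Phi^{-1}\big)\partial_1$. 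So every first-order derivative transforms into itself plus a term of special form of order $1$: one coefficient $p_1=(\partial^{\nu}h_1)\circ\Phi^{-1}$ with $|\nu|=1$, together with $\partial^\mu$, $|\mu|=1$. The count is $|\mu|+|\nu|=2=\ell+N$ with $\ell=N=1$.

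For (ii), I first show it for $|\delta|=1$ and then iterate. I apply $\partial_j$ to a term $p_1\cdots p_\ell\,\partial^\mu$ and use the product rule, which gives two kinds of terms.
\begin{itemize}
\item If $\partial_j$ hits $\partial^\mu$, then $\mu$ becomes $\mu+e_j$. The order rises by one and the count $|\mu|+\sum|\nu_i|$ also rises by one, so the relation $|\mu|+\sum|\nu_i|=\ell+N$ holds with $N+1$.
\item If $\partial_j$ hits a coefficient $p_i=\partial^{\nu_i}h_2$, the result is $\partial^{\nu_i+e_j}h_2$, again of the allowed type.
\item If $\partial_j$ hits $p_i=(\partial^{\nu_i}h_1)\circ\Phi^{-1}$, the chain rule gives $\sum_m (\partial_m\partial^{\nu_i}h_1)\circ\Phi^{-1}\,\partial_j(\Phi^{-1})_m$. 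Here $\partial_j(\Phi^{-1})_m=\delta_{jm}+\delta_{m1}\partial_jh_2$. The $\delta_{jm}$ part raises $|\nu_i|$ by one with $\ell$ unchanged. The other part raises $|\nu_i|$ by one and adds a new factor $\partial_j h_2$, so $\ell\mapsto\ell+1$ and $\sum|\nu|$ grows by $2$.
\end{itemize}
In every case the count becomes $\ell'+(N+1)$, and $|\mu|\in\{1,\dots,N+1\}$.

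The delicate point is the constraint $\ell\le N$. For the product-rule terms it persists because the order rises, and in fact the relation together with $|\mu|\ge1$ and $|\nu_i|\ge1$ forces $\ell\le N$ automatically: $\ell\le\sum|\nu_i|=\ell+N-|\mu|\le \ell+N-1$ gives nothing directly, so I would check it via $|\nu_i|\ge1$ more carefully. If needed I would note that $\ell\le N$ is preserved since each step raises $N$ by one and $\ell$ by at most one. Induction on $|\delta|$ then finishes (ii).

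For (i), I argue by induction on the order via composition. Write $C=\sum_{|\beta|\le N}c_\beta\partial^\beta$. Since $(\cdot)^\Phi$ is multiplicative, $(\partial^\beta)^\Phi=\prod_j(\partial_j^\Phi)^{\beta_j}$ with $\partial_j^\Phi=\partial_j+Q_j$, where $Q_j=-\big((\partial_jh_1)\circ\Phi^{-1}\big)\partial_1$. Expanding the product, the term with only $\partial_j$'s gives $\partial^\beta$. Every remaining term is a composition $R_1\cdots R_{|\beta|}$ in which each $R_i$ is either $\partial_j$ or $Q_j$, and at least one factor is a $Q$.

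Now I compose from the right. A $\partial_j$ acting on an operator of special form yields special form of order one higher; this is the same product-rule argument as in (ii), except that $\partial_j$ acting on a plain $\partial^\mu$ just stays a derivative. Multiplication by the coefficient $(\partial_jh_1)\circ\Phi^{-1}$ followed by $\partial_1$ (the factor $Q_j$) adds one $\nu$ of length $1$ and raises $\ell$ and $N$ by one each, so the relation is preserved. Lower-order terms $c_\beta\partial^\beta$ with $|\beta|<N$ are special of order $|\beta|$, not $N$. I read the lemma as allowing $P$ to be a sum of special-form terms of orders up to $N$, and since the operators $C$ used later ($\del$, $\del^2$) are homogeneous, this causes no problem.

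The one thing still to verify is that only $h_1$-coefficients appear in (i). In (ii), derivatives of $(\partial^\nu h_1)\circ\Phi^{-1}$ produce $h_2$ factors. In (i), however, I should instead compute directly: $\partial_j\big((\partial^\nu h_1)\circ\Phi^{-1}\big)$ at the point $\Phi^{-1}(y)$ can be rewritten through $\partial_{x}$-derivatives of $h_1$ composed with the transformation. The cleanest way to get pure $h_1$ coefficients is to compute $C^\Phi$ as $\Phi_*\,C\,(\Phi^{-1})_*$ by expanding $C(g\circ\Phi)$ in $x$-coordinates with the Faà di Bruno formula. The derivatives of $\Phi$ are $e_1\partial^\nu h_1$, so every coefficient is a $\partial^\nu h_1$ evaluated at $x=\Phi^{-1}(y)$, and the index bookkeeping of Faà di Bruno yields exactly $|\mu|+\sum|\nu_i|=\ell+N$. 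This Faà di Bruno bookkeeping is the main (purely combinatorial) obstacle, and I would present (i) this way.
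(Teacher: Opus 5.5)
Your argument for (ii) is the paper's: the product rule, plus $\partial_j\big((\partial^{\nu}h_1)\circ\Phi^{-1}\big)=(\partial_j\partial^{\nu}h_1)\circ\Phi^{-1}+(\partial_jh_2)\,(\partial_1\partial^{\nu}h_1)\circ\Phi^{-1}$. The bound $\ell\le N$ propagates because each derivative raises $N$ by one and $\ell$ by at most one. The count relation together with $|\nu_i|\ge1$ only forces $|\mu|\le N$, not $\ell\le N$, so this propagation argument is the one you actually need.

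For (i), the route you finally commit to is the same key idea as the paper: expand $C(u\circ\Phi)$ in the $x$-variables, where every derivative of $\Phi$ is a derivative of $h_1$. The paper avoids Fa\`a di Bruno by inducting on $|\alpha|$, using only $\partial_k(u\circ\Phi)=(\partial_ku)\circ\Phi-(\partial_kh_1)\,(\partial_1u)\circ\Phi$. In $x$-coordinates, a derivative falling on a coefficient $\partial^{\nu}h_1$ just gives $\partial_k\partial^{\nu}h_1$. A derivative falling on $(\partial^\mu u)\circ\Phi$ gives either $\mu+e_k$, or one new factor $-\partial_kh_1$ together with $\mu+e_1$. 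Hence both the count relation and $\ell\le N$ propagate.

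Your Fa\`a di Bruno bookkeeping also closes quickly. A block $B$ of the set partition contributes $\partial^{\alpha_B}\Phi_m$. For $|B|\ge2$ this vanishes unless $m=1$, where it equals $-\partial^{\alpha_B}h_1$. For $|B|=1$ it is a Kronecker delta plus possibly $-\partial_kh_1$. Let $\ell$ be the number of $h_1$-blocks and $|\mu|$ the number of blocks. Then $\sum_i|\nu_i|+(|\mu|-\ell)=N$, and $1\le\ell\le|\mu|\le N$ for every term other than $(\partial^\alpha u)\circ\Phi$. Your handling of lower-order terms agrees with the paper, which also simply reduces to $C=\partial^\alpha$ by linearity.

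You should, however, discard the $y$-coordinate expansion of $\prod_j(\partial_j+Q_j)$ entirely, and not only because it produces $h_2$-coefficients. Your claim that composing with $Q_j$ raises $\ell$ and $N$ by one each is false. The $\partial_1$ inside $Q_j$ can fall on an $h_1$-coefficient of the operator to its right and, by the chain rule above, create an extra factor $\partial h_2$.

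For instance, take $a=(\partial_1h_1)\circ\Phi^{-1}$. Then $Q_1Q_1=a(\partial_1a)\partial_1+a^2\partial_1^2$ contains the term $a\,\big((\partial_1^2h_1)\circ\Phi^{-1}\big)(\partial_1h_2)\,\partial_1$. This term satisfies $|\mu|+\sum_i|\nu_i|=\ell+N$ with $N=2$, but has $\ell=3>N$. So that route does not yield an operator of special form of order $N$ in the sense of Definition~\ref{def:specform}. Working in $x$-coordinates is precisely what keeps $\ell\le N$, as well as keeping all coefficients of $h_1$-type.
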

\begin{proof}
\textit{Proof of \ref{it:lem:pushforward1}.}
     By Lemma~\ref{lem:localisation_weighted_blow-up}, the Jacobian of $\Phi$ is given by 
    \[ (D\Phi)(x) = \begin{pmatrix}
        1 -\partial_1 h_1(x) & -\partial_2 h_1(x) & \ldots & -\partial_d h_1(x) \\
        0 & 1 & \ldots & 0 \\
        \vdots & \vdots & \ddots & \vdots\\
        0 & 0 & \ldots  & 1
    \end{pmatrix}.\]
    From this and the chain rule, we see that 
    \begin{equation}\label{eq:push-forward-u} \partial_k (u\circ \Phi) = ( \partial_k u)\circ\Phi
    -(\partial_k h_1)\big( ( \partial_1 u)\circ\Phi\big),\qquad k\in \{1,\dots,d\},
    \end{equation}
    for sufficiently smooth $u\colon\RR^d_+\to \C$. We show by induction on $N$ that for every differential operator $C$ of order $N$ with constant coefficients, the difference $(C(u\circ\Phi))(x) - (Cu)(\Phi(x))$ is a linear combination of terms of the form
    \begin{equation}\label{eq:special_form}
        \Big(\prod_{i=1}^\ell (\partial^{\nu_i} h_1)(x)\Big) (\partial^\mu u)(\Phi(x)),\qquad x\in \OO,
    \end{equation} 
    where $\ell, \nu_i,\mu$ are as in Definition \ref{def:specform}. From this, the statement in \ref{it:lem:pushforward1} follows by taking $x=\Phi^{-1}(y)$.

    For $N=1$, this follows immediately from \eqref{eq:push-forward-u}. Now, let $C$ be a differential operator of order  $N+1$. By linearity, we may assume that $C=\partial^\alpha$ with $\alpha\in\NN_0^d$, $|\alpha|=N+1$. Writing $\partial^\alpha = \partial_k\partial^{\alpha'}$ with $k\in \{1,\dots,d\}$ and $\alpha'\in\NN_0^d$, $|\alpha'|=N$, we get by induction and \eqref{eq:push-forward-u} that
    \begin{equation}\label{eq:commutator}
           \begin{aligned}  \partial^\alpha(u\circ\Phi) -(\partial^\alpha u)\circ \Phi &= \partial_k \partial^{\alpha'}(u\circ\Phi)  -(\partial^\alpha u)\circ\Phi \\
     &  =  \partial_k \big( (\partial^{\alpha'} u)\circ\Phi + B u\big) -(\partial^\alpha u)\circ \Phi  \\
     & = (\partial_k\partial^{\alpha'} u)\circ\Phi - (\partial_k h_1) ( (\partial_1\partial^{\alpha'} u)\circ\Phi) + \partial_k B u -(\partial^\alpha u)\circ \Phi \\
     & = -(\partial_k h_1) ((\partial_1\partial^{\alpha'} u)\circ\Phi )+ \partial_k B u, 
     \end{aligned}
    \end{equation}
     where $Bu$ is a linear combination of terms  of the form \eqref{eq:special_form}. The first term in 
     \eqref{eq:commutator} is obviously of special form \eqref{eq:special_form} of order  $N+1$. For the second term, we use the product rule to see that 
     $\partial_k Bu$ is a linear combination of terms of the form
     \begin{align*}  \partial_k \Big(\prod_{i=1}^\ell \partial^{\nu_i}h_1 \Big) \big((\partial^\mu u) \circ \Phi\big)
     & = \sum_{j=1}^\ell \Big(\prod_{i\neq j} \partial^{\nu_i}h_1\Big) (\partial_k\partial^{\nu_i }h_1) \big((\partial^\mu u)\circ\Phi\big)\\
    &  + \Big(\prod_{i=1}^\ell \partial^{\nu_i}h_1\Big) \big((\partial_k\partial^\mu u)\circ\Phi\big) -\partial_k h_1 \Big(\prod_{i=1}^\ell \partial^{\nu_i}h_1\Big) \big((\partial_1\partial^\mu u)\circ\Phi\big),
     \end{align*}
    where we have again used \eqref{eq:push-forward-u}. From this we see that all terms are of special form \eqref{eq:special_form} of order $N+1$, which finishes the proof of \ref{it:lem:pushforward1}. 
    
    \textit{Proof of \ref{it:lem:pushforward2}.} In the same way as above, we see that 
    \begin{equation}\label{eq:gradient-transform} 
    \partial_k(h_1\circ \Phi^{-1}) =  (\partial_k h_1)\circ\Phi^{-1} +(\partial_k h_2)\big( (\partial_1 h_1)\circ\Phi^{-1}\big),\qquad k\in \{1,\dots,d\}.
    \end{equation}
    To show \ref{it:lem:pushforward2}, it is sufficient to consider $|\delta|=1$, i.e., $\partial^\delta = \partial_k$ for some $k\in\{1,\dots,d\}$. By the product rule, we have 
    \[ \partial_k \Big(\prod_{i=1}^\ell \tilde{p}_i\Big) \partial^\mu =  \Big(\prod_{i=1}^\ell \tilde{p}_i\Big) \partial_k\partial^\mu + \sum_{i=1}^\ell \Big( \prod_{j\neq i} \tilde{p}_j \Big)(\partial_k \tilde{p}_i) \partial^\mu.\]
    Here, each term on the right-hand side is an operator of special form of order $N+1$. This is obvious if $\tilde{p}_i = \partial^{\nu_i}h_2$ and follows from \eqref{eq:gradient-transform} if $\tilde{p}_i = (\partial^{\nu_i} h_1)\circ\Phi^{-1}$.
\end{proof}

As a special case of Lemma \ref{lem:pushforward} we obtain the perturbation terms arising from the change of coordinates corresponding to the (bi-)Laplacian and $A$ in \eqref{eq:coordinatechangeA} and \eqref{eq:coordinatechangeLaplace}.
\begin{corollary}\label{cor:pert_terms_Psi}
  Let $\del^\Phi$ and $(\del^2)^\Phi$ be as in \eqref{eq:coordinatechangeLaplace}. Then for $j\in\{1,2\}$ it holds that
  \begin{equation*}
    (\del^j)^\Phi = \del^j + B_j,
  \end{equation*}
  where $B_j$ is an operator of special form of order $2j$. 
  Moreover, for $A^\Phi$ as in \eqref{eq:coordinatechangeA}, we have 
\begin{equation*}
    A^\Phi = A + B, \quad \text{ with }\quad B:=\begin{pmatrix}
         0 & 0\\ B_2& -\rho B_1
    \end{pmatrix}.
\end{equation*}
\end{corollary}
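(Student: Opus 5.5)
The plan is to apply Lemma~\ref{lem:pushforward}\ref{it:lem:pushforward1} to each constant-coefficient operator appearing in $A$, and then assemble the operator matrix. For $j\in\{1,2\}$ I take $C=\del^j$, a linear differential operator of order $2j$ with constant coefficients. By Lemma~\ref{lem:pushforward}\ref{it:lem:pushforward1}, $B_j:=(\del^j)^\Phi-\del^j$ is an operator of special form of order $2j$, and all its coefficients are of the form $(\d^{\nu_i}h_1)\circ\Phi^{-1}$. Recall the definition $(\del^j)^\Phi=\Phi_*\circ\del^j\circ(\Phi^{-1})_*$ from \eqref{eq:coordinatechangeLaplace}. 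The lemma was proved for the difference $C(u\circ\Phi)-(Cu)\circ\Phi$, evaluated at $x=\Phi^{-1}(y)$. Setting $u=f\circ\Phi^{-1}$, this gives exactly $\Phi_*C(\Phi^{-1})_*f-Cf$, so the identification is direct.

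For the matrix, I compute $A^\Phi=\Phi_*\circ A\circ(\Phi^{-1})_*$ entrywise. Here $\Phi_*$ and $(\Phi^{-1})_*$ act componentwise on pairs, and $A$ has entries $0$, $-\id$, $\del^2$ and $-\rho\del$. Since composition commutes with scalars and with the identity, the entries become
\begin{equation*}
\Phi_*0(\Phi^{-1})_*=0,\qquad \Phi_*(-\id)(\Phi^{-1})_*=-\id,\qquad \Phi_*\del^2(\Phi^{-1})_*=\del^2+B_2,\qquad \Phi_*(-\rho\del)(\Phi^{-1})_*=-\rho(\del+B_1).
\end{equation*}
Subtracting $A$ then yields the stated $B$ with entries $0,0,B_2,-\rho B_1$.

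The only point needing care is the domain of definition. Definition~\ref{def:specform} and Lemma~\ref{lem:pushforward} are phrased for sufficiently smooth $u$, while $A^\Phi$ acts on $W^{4,1}_{\loc}\times W^{2,1}_{\loc}$. I handle this by noting two facts. First, $\Phi$ is a $C^1$-diffeomorphism with $h_1,h_2$ smooth in the interior (Lemma~\ref{lem:localisation_weighted_blow-up}\ref{it:lem:localisation_weighted_blow-up;smoothness}), so the chain rule identities hold for $W^{N,1}_{\loc}$ functions in the interior by approximation with smooth functions on compact subsets. Second, the coefficients are locally bounded there. This is routine, so I expect no real obstacle: the corollary is a direct specialisation of Lemma~\ref{lem:pushforward}.
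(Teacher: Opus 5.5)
Your proposal is correct and follows the paper's proof exactly: $B_j$ comes directly from Lemma~\ref{lem:pushforward}\ref{it:lem:pushforward1} with $C=\del^j$, and $B$ is read off entrywise from the structure of $A$. There is one small slip in your identification step. You should take $u=f$ itself (a function on $\RRdh$), not $u=f\circ\Phi^{-1}$, because evaluating $C(u\circ\Phi)-(Cu)\circ\Phi$ at $\Phi^{-1}(y)$ already gives $(C^\Phi u-Cu)(y)$. The step is redundant anyway, since the lemma states precisely that $C^\Phi-C$ is of special form.
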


\begin{proof}
The statements on $(\del^j)^\Phi$ follow immediately from 
Lemma~\ref{lem:pushforward}\ref{it:lem:pushforward1}. The perturbation term $B$ for $A^\Phi$ simply follows from the structure of $A$ in \eqref{eq:A_matrix}.
\end{proof}

To apply perturbation theorems for $R$-sectoriality to $B$, we provide the necessary estimates on these perturbations. 
\begin{lemma}\label{lem:est_pert}
Let $p\in(1,\infty)$, $\kappa\in (0,1)$, $\gam\in ((1-\kappa)p-1,p-1)$, $k\geq 2$ an integer, $\rho>0$, and let $\OO$ be a special $\Cc^{1,\kappa}$-domain with $[\OO]_{C^{1,\kappa}}\leq 1$. 
Then the perturbation term $B$ from Corollary \ref{cor:pert_terms_Psi} satisfies the estimate
\begin{equation*} 
  \|B u \|_{\XX^{k,p}_\gam(\RRdh)}\leq C \cdot [\OO]_{C^{1,\kappa}}\cdot\|u \|_{\DD^{k,p}_\gam(\RRdh)},\quad u\in \DD^{k,p}_\gam(\RRdh),
\end{equation*}
where the constant $C>0$ only depends on $p,k,\kappa, \gam, \rho$ and $d$.
\end{lemma}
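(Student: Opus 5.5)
By the structure of $B$ in Corollary \ref{cor:pert_terms_Psi}, only the second component of $Bu$ is nonzero. It suffices to prove
\begin{equation*}
\|B_2u_1\|_{W^{k-2,p}(\RRdh,w_{\gam+kp})}+\|B_1u_2\|_{W^{k-2,p}(\RRdh,w_{\gam+kp})}\lesssim [\OO]_{C^{1,\kappa}}\big(\|u_1\|_{W^{k+2,p}(\RRdh,w_{\gam+kp})}+\|u_2\|_{W^{k,p}(\RRdh,w_{\gam+kp})}\big).
\end{equation*}
Since $k\geq 2$, the target space has nonnegative smoothness. Its norm is therefore the sum of $\|\d^\delta B_ju\|_{L^p(w_{\gam+kp})}$ over $|\delta|\leq k-2$. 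By Lemma \ref{lem:pushforward}\ref{it:lem:pushforward2}, each $\d^\delta B_j$ is again of special form, of order $N=2j+|\delta|$. The problem thus reduces to estimating a single term $p_1\cdots p_\ell\,\d^\mu v$ in $L^p(w_{\gam+kp})$. Here $v=u_1$ when $j=2$, with $|\mu|\leq k+2$, and $v=u_2$ when $j=1$, with $|\mu|\leq k$. The degrees satisfy $|\mu|+\sum_i|\nu_i|=\ell+N$.

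Each coefficient is bounded using Lemma \ref{lem:localisation_weighted_blow-up}\ref{it:lem:localisation_weighted_blow-up;est}, together with its distance-preservation property for the factors composed with $\Phi^{-1}$. We take $\ell_0=1$ and $\kappa_0=\kappa$, which gives $|p_i(y)|\lesssim [\OO]_{C^{1,\kappa}}\, y_1^{-(|\nu_i|-1-\kappa)_+}$. Since every $|\nu_i|\geq 1$ and $[\OO]_{C^{1,\kappa}}\leq 1$, the product satisfies
\begin{equation*}
|p_1\cdots p_\ell|\lesssim [\OO]_{C^{1,\kappa}}\,y_1^{-s},\qquad s\leq \Big(\sum_i(|\nu_i|-1)-\kappa\Big)_+=(N-|\mu|-\kappa)_+ .
\end{equation*}
This uses that at least one factor keeps the $\kappa$ gain when it has $|\nu_i|\geq 2$. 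If all $|\nu_i|=1$, the factors are simply bounded and $s=0$. So the term is controlled by $[\OO]_{C^{1,\kappa}}\|y_1^{-s}\d^\mu v\|_{L^p(w_{\gam+kp})}=[\OO]_{C^{1,\kappa}}\|\d^\mu v\|_{L^p(w_{\gam+kp-sp})}$. We may also assume $|\mu|\geq 1$.

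The key step is to trade the lost weight against the derivatives that $v$ still has in reserve, using Hardy's inequality (Lemma \ref{lem:Hardy}). For $v=u_1\in W^{k+2,p}_0(\RRdh,w_{\gam+kp})$, the function $\d^\mu u_1$ has $m:=k+2-|\mu|$ derivatives left. Moreover, $s\leq N-|\mu|-\kappa\leq k+2-|\mu|-\kappa<m$. We apply Lemma \ref{lem:Hardy} repeatedly, $\lceil s\rceil$ times. The weight exponents pass through $\gam+kp-ip$, and these avoid $\{jp-1\}$ because $\gam\in(-1,p-1)$. In the regime below $p-1$ we use the $W_0$ versions, which is where the vanishing traces of $u_1$ enter. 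This yields $\|\d^\mu u_1\|_{L^p(w_{\gam+kp-\lceil s\rceil p})}\lesssim\|u_1\|_{W^{k+2,p}(w_{\gam+kp})}$.

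Two issues remain. First, the non-integer part of the shift must be handled. Since $w_{\gam+kp-sp}\leq w_{\gam+kp-\lceil s\rceil p}+w_{\gam+kp-\lfloor s\rfloor p}$ pointwise, the fractional shift costs nothing; one only needs $\lceil s\rceil\leq m$. Second, the final weight must still be admissible, i.e.\ $\gam+kp-sp>-1$ when all derivatives are spent. This is exactly where the assumption $\gam>(1-\kappa)p-1$ is used. In the worst case $|\mu|=1$ and $s=k+1-\kappa$, which gives $\gam+kp-sp=\gam-(1-\kappa)p>-1$. Otherwise one Hardy step fewer is needed and the final factor is an $L^p$ norm with an admissible weight, and the $u_2$ case is the same with $k$ in place of $k+2$. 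I expect this bookkeeping to be the main obstacle. One must check that the weight never crosses a critical exponent $jp-1$ in a way that leaves Lemma \ref{lem:Hardy} without a valid case. One must also check that, when $\d^\mu v$ runs out of derivatives, the remaining weight $y_1^{\gam+kp-sp}$ is still locally integrable near the boundary. That integrability is precisely the role of the hypothesis linking $\gam$ and $\kappa$.

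Summing over all terms and all $|\delta|\leq k-2$ gives the claim. The constant depends only on $p,k,\kappa,\gam,\rho,d$.
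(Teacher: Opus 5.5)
Your argument is correct and is essentially the paper's proof. Both use the reduction to $B_2u_1$ and $B_1u_2$, Lemma~\ref{lem:pushforward}\ref{it:lem:pushforward2} for $\d^\alpha B_j$, a pointwise bound on the coefficients, and repeated Hardy steps in which the vanishing traces of $u_1$ let the weight be pushed below $p-1$. The only difference is that you use the H\"older gain ($\ell_0=1$, $\kappa_0=\kappa$), and this buys nothing. The paper takes $\kappa_0=0$, which gives $|p_1\cdots p_\ell|\lesssim[\OO]_{C^{1,\kappa}}\,y_1^{-\sum_i(|\nu_i|-1)}$ with the integer exponent $\sum_i(|\nu_i|-1)=2j+|\alpha|-|\mu|\le k-2+2j-|\mu|$. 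Hardy is then applied exactly that many times, with no floor/ceiling splitting of the weight.

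Your account of where $\gam>(1-\kappa)p-1$ enters is mistaken, although harmless. Take your own worst case, $|\mu|=1$ and $s=k+1-\kappa$. You perform $\lceil s\rceil=k+1$ Hardy steps and land in $L^p(\RRdh,w_{\gam-p})$, where $\gam-p<-1$, so this weight is not locally integrable. That step is legitimate: Lemma~\ref{lem:Hardy} holds for every non-critical real exponent, and $\d^\mu u_1\in W^{k+1,p}_0(\RRdh,w_{\gam+kp})$. So integrability of the final weight is irrelevant, and no admissibility condition is needed or actually used in your chain. Consequently the estimate holds for special $\Cc^1$-domains and all $\gam\in(-1,p-1)$. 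As the remark following Theorem~\ref{thm:Rsect_special_dom} states, the hypothesis $\gam>(1-\kappa)p-1$ is only needed for the domain characterisation $D(A)=\DD^{k,p}_\gam(\OO)$ via Proposition~\ref{prop:trace_char_dom}.
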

\begin{proof}
Let $u=(u_1,u_2)\in \DD^{k,p}_\gam(\RRdh)= W^{k+2,p}_0(\RRdh,w_{\gam+kp})\times W^{k,p}(\RRdh, w_{\gam+kp})$. 
By Corollary \ref{cor:pert_terms_Psi}, it suffices to prove the estimates
    \begin{align*}
        \|B_2 u_1\|_{W^{k-2,p}(\RRdh, w_{\gam+kp})}&\leq C\cdot [\OO]_{C^{1,\kappa}}\cdot \|u_1\|_{W^{k+2,p}(\RRdh, w_{\gam+kp})},\quad &u_1&\in W_0^{k+2,p}(\RRdh, w_{\gam+kp}),\\
        \|B_1 u_2\|_{W^{k-2,p}(\RRdh, w_{\gam+kp})}&\leq C\cdot [\OO]_{C^{1,\kappa}}\cdot \|u_2\|_{W^{k,p}(\RRdh, w_{\gam+kp})},\quad &u_2&\in W^{k,p}(\RRdh, w_{\gam+kp}).
    \end{align*}

Let $j\in\{1,2\}$ and set $v_j:=u_{3-j} \in W^{k-2+2j,p}_0(\RRdh, w_{\gam+kp})$, where it should be noted that for $j=1$ we have $v_j=u_2\in W^{k,p}(\RRdh, w_{\gam+kp})=W^{k,p}_0(\RRdh, w_{\gam+kp})$ by \eqref{eq:def:W_0}. Thus, we have to show that
\begin{equation}\label{eq:B_jestimate}
    \|B_j v_j \|_{W^{k-2,p}(\RRdh, w_{\gam+kp})}\lesssim [\OO]_{C^{1,\kappa}}\cdot \|v_j\|_{W^{k-2+2j,p}(\RRdh, w_{\gam+kp})},
\end{equation}
for $v_j \in W^{k-2+2j,p}_0(\RRdh, w_{\gam+kp})$.

Let $\alpha\in \NN_0^d$ with $|\alpha|\leq k-2$. By Corollary~\ref{cor:pert_terms_Psi}, we know that $B_j$ is an operator of special form of order $2j$. Moreover, by Lemma~\ref{lem:pushforward}\ref{it:lem:pushforward2}, the operator $\partial^\alpha B_j$ is of special form of order $2j+|\alpha|$. More precisely, $\partial^\alpha B_jv_j$ is a linear combination of terms of the form 
$\prod_{i=1}^\ell b_i \partial^\mu u$, where $b_i = (\partial^{\nu_i}h_1)\circ \Phi^{-1}$ or $b_i = \partial^{\nu_i}h_2$ with $\ell\in \{1,\dots,2j+|\alpha|\}$, $\nu_i\in\NN_0^d\setminus\{0\}$, $|\mu|\in\{1,\dots,2j+|\alpha|\}$ satisfying 
\begin{equation}\label{eq:cond_para}
    |\mu|+\sum_{i=1}^\ell |\nu_i| = \ell+2j+|\alpha|.
\end{equation}
To estimate a factor of the form $b_i=(\partial^{\nu_i}h_1)\circ \Phi^{-1}$, we apply Lemma \ref{lem:localisation_weighted_blow-up}\ref{it:lem:localisation_weighted_blow-up;dist_preserving} and \ref{it:lem:localisation_weighted_blow-up;est} to obtain the (non-optimal) estimate
\begin{equation}\label{eq:estimate_h1}
  |(\d^{\nu_i} h_1)(\Phi^{-1}(y))| \lesssim \frac{[\OO]_{C^{1,\kappa}}}{\mrm{dist}(\Phi^{-1}(y),\BDom)^{ |\nu_i|-1}}
\lesssim \frac{[\OO]_{C^{1,\kappa}}}{y_1^{|\nu_i|-1}},\qquad y \in \R^d_+.
\end{equation} 
For factors of the form $b_i=\partial^{\nu_i} h_2$, we use the same lemma to get 
\begin{equation}\label{eq:estimate_h2}
  |(\d^{\nu_i} h_2)(y)| 
\lesssim \frac{[\OO]_{C^{1,\kappa}}}{y_1^{|\nu_i|-1}},\qquad y \in \R^d_+.
\end{equation} 
For a term of the form $\prod_{i=1}^\ell b_i \partial^\mu v_j$, we obtain, using \eqref{eq:estimate_h1}, \eqref{eq:estimate_h2}, the fact that $[\OO]_{C^{1,\kappa}}\leq 1$, and Hardy's inequality (Lemma \ref{lem:Hardy} applied $\sum_{i=1}^{\ell}(|\nu_i|-1)$ times), 
\begin{align*}
    \Big\|\prod_{i=1}^\ell b_i \d^\mu v_j\Big\|_{L^p(\RRdh, w_{\gam+kp})}&\lesssim [\OO]_{C^{1,\kappa}}\cdot  \|\d^\mu v_j\|_{L^p(\RRdh, w_{\gam+kp - \sum_{i=1}^{\ell}(|\nu_i|-1)p})}\\
    &\lesssim  [\OO]_{C^{1,\kappa}}\cdot\|v_j\|_{W^{ |\mu| + \sum_{i=1}^{\ell}(|\nu_i|-1),p}(\RRdh, w_{\gam+kp})}\\
    &\lesssim  [\OO]_{C^{1,\kappa}}\cdot\|v_j\|_{W^{k-2+2j,p}(\RRdh, w_{\gam+kp})},
\end{align*}
where in the last estimate we have used \eqref{eq:cond_para} and $|\alpha|\leq k-2$ to obtain
\begin{equation*}
     |\mu| + \sum_{i=1}^{\ell}(|\nu_i|-1) =|\mu| -\ell  + \sum_{i=1}^{\ell}|\nu_i|\leq k-2 + 2j.
\end{equation*}
As $\partial^\alpha B_j v_j$ is a linear combination of such terms, this proves the desired estimate \eqref{eq:B_jestimate} and finishes the proof.
\end{proof}

With the estimates on the perturbation terms at hand, we can now prove Theorem \ref{thm:Rsect_special_dom}.
\begin{proof}[Proof of Theorem \ref{thm:Rsect_special_dom}]
  Let $\OO$ be a special $\Cc^{1,\kappa}$-domain and let $h_1, h_2$ and $\Phi$ be as in Lemma \ref{lem:localisation_weighted_blow-up}.
Recall that we introduced $A^{\Phi}: W^{4,1}_{\loc}(\RRdh)\times W^{2,1}_{\loc}(\RRdh) \to W^{2,1}_{\loc}(\RRdh) \times L^1_{\loc}(\RRdh)$ given by 
\begin{equation*}
A^{\Phi} := \Phi_* \circ A \circ (\Phi^{-1})_{*} = A + B,
\end{equation*}
with $B$ as in Corollary \ref{cor:pert_terms_Psi}.
   Consider the realisation of $A^\Phi$ in $\XX^{k,p}_\gam(\RRdh)$ with domain $D(A^\Phi):=\DD^{k,p}_\gam(\RRdh)$. Due to the isomorphisms in Proposition \ref{prop:isom}, the trace characterisation in Proposition \ref{prop:trace_char_dom} and standard properties of $R$-sectoriality, the desired statements in Theorem \ref{thm:Rsect_special_dom} for $A$ on $\XX^{k,p}_\gam(\OO)$ are equivalent to the corresponding statements for $A^\Phi$ on $\XX^{k,p}_\gam(\RRdh)$. To prove that $\lambda_0+A^\Phi$ on $\XX^{k,p}_\gam(\RRdh)$ is $R$-sectorial for $\lambda_0$ large enough and $[\OO]_{C^{1,\kappa}}$ small enough, we apply the perturbation result \cite[Theorem 16.2.4]{HNVW24}. Indeed, by Lemma \ref{lem:est_pert} and Theorem \ref{thm:Rsect_weight}, we have for all $u\in \DD^{k,p}_\gam(\RRdh)$ that
   \begin{equation*}
     \|B u \|_{\XX^{k,p}_\gam(\RRdh)}\lesssim  [\OO]_{C^{1,\kappa}}\cdot\|u\|_{\DD^{k,p}_\gam(\RRdh)}\eqsim  [\OO]_{C^{1,\kappa}}\cdot\|(\lambda_0+A)u\|_{\XX^{k,p}_\gam(\RRdh)}.
   \end{equation*}
   Therefore, applying \cite[Theorem 16.2.4]{HNVW24} gives the desired result.
\end{proof}

\subsection{Localisation of $R$-sectoriality to bounded domains}\label{subsec:pert_bdd}
To obtain $R$-sectoriality on bounded domains, it remains to apply a localisation procedure based on lower-order perturbations. 
\begin{theorem}\label{thm:Rsect_dom}
    Let $p\in(1,\infty)$, $\kappa\in (0,1)$, $\gam\in ((1-\kappa)p-1,p-1)$, $k\geq 2$ an integer, $\rho>0$, $\sigma\in (\vartheta(\rho), \pi)$ and let $\OO$ be a bounded $C^{1,\kappa}$-domain. Let $A$ be the realisation of the operator in \eqref{eq:A_matrix} on $\XX^{k,p}_\gam(\OO)$ with domain $D(A):=\DD^{k,p}_\gam(\OO)$. Then there exists a $\tilde{\lambda}>0$ such that for all $\lambda_0>\tilde{\lambda}$, the operator $\lambda_0+A$ is $R$-sectorial with angle $\om_R(\lambda_0+A)\leq \sigma$.
\end{theorem}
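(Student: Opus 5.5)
The proof will be a standard localisation argument built on the retraction--coretraction pair $(\PP,\II)$ from Lemma~\ref{lem:decomp}. We combine it with Theorem~\ref{thm:Rsect_special_dom} on the special domains $\OO_n$, the whole-space result Proposition~\ref{prop:DS_Rd} (in its higher-order version) for the interior piece, and lower-order perturbation results.

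\emph{Setup.} First fix $\eps\in(0,\kappa)$ so small that $\gam>(1-(\kappa-\eps))p-1$. Choose the cover $(V_n)$ and special $\Cc^{1,\kappa}$-domains $\OO_n$ by Lemma~\ref{lem:decomp} with $[\OO_n]_{C^{1,\kappa-\eps}}<\delta$, where $\delta$ is taken from Theorem~\ref{thm:Rsect_special_dom} applied with $\kappa-\eps$ in place of $\kappa$. This is why the smallness must be measured in $C^{1,\kappa-\eps}$ (Remark~\ref{rem:special_dom}). On $\mc W := \XX^{k,p}(\RRd)\oplus\bigoplus_n \XX^{k,p}_\gam(\OO_n)$ consider the diagonal operator $\mc A=\operatorname{diag}(A_{\RRd},A_{\OO_1},\dots,A_{\OO_N})$. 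Here $A_{\RRd}$ acts on $W^{k,p}(\RRd)\times W^{k-2,p}(\RRd)$, and its shift is $R$-sectorial with angle $\vartheta(\rho)$ by the unweighted Mihlin argument of Proposition~\ref{prop:DS_Rd}; since all derivatives commute with the symbol, this lifts to any smoothness. For $\lambda_0$ large, $\lambda_0+\mc A$ is $R$-sectorial with angle $\le\sigma$, since a finite direct sum of $R$-sectorial operators is $R$-sectorial. One checks that $\II$ maps $\XX^{k,p}_\gam(\OO)$ to $\mc W$ and $\DD^{k,p}_\gam(\OO)$ to $D(\mc A)$, and that $\PP$ maps back. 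For the negative-order component $W^{k-2,p}$ with $k\ge2$ this is still a positive-order space, so Lemma~\ref{lem:decomp} applies directly; this is where $k\ge2$ is convenient.

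\emph{Commutators.} The key identities are
\[
\II A u = \mc A\,\II u + C_1 u,\qquad A\PP v = \PP\mc A v + C_2 v,
\]
where $C_1,C_2$ contain the commutators $[\del,\eta_n]$ and $[\del^2,\eta_n]$, placed in the second row. These are differential operators of orders $1$ and $3$ with smooth bounded coefficients. By Hardy-type and interpolation estimates they satisfy
\[
\|C_1u\|_{\mc W}\lesssim \|u\|_{\XX^{k,p}_\gam(\OO)\text{-interp}}\le \epsilon\|u\|_{\DD}+C_\epsilon\|u\|_{\XX},
\]
with a gain of one derivative. The weight is unchanged because the $\eta_n$ are smooth up to the boundary. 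Then, for $\lambda\in\lambda_0+\Sigma_{\pi-\sigma}$, write
\[
(\lambda+A)\PP R(\lambda,\mc A)\II = \id + \PP R(\lambda,\mc A)C_1' + C_2 R(\lambda,\mc A)\II
\]
(after rearranging with $\PP\II=\id$). The error family is $R$-bounded with bound $\lesssim |\lambda|^{-1/2}$, because $C_j$ is relatively bounded of lower order and $\{\lambda^{1/2}R(\lambda,\mc A)\}$ maps $\mc W$ into the intermediate space $R$-boundedly. Thus the error is small for $\lambda_0$ large, and a Neumann series gives a right inverse with $R$-bounded $\lambda R(\lambda,A)$. Left invertibility (injectivity) follows symmetrically from the first identity applied to $\II(\lambda+A)u$, or alternatively from closedness combined with the a priori estimate.

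\emph{Main obstacle.} The hardest step is justifying the $R$-bounded gain $|\lambda|^{-1/2}$ for the commutator terms in this mixed-order scale. One needs $R$-boundedness of $\{\lambda^{1/2}R(\lambda,\mc A)\}$ from $\mc W$ into
\[
W^{k+1,p}\times W^{k-1,p}
\]
(with the weights), together with the fact that $C_j$ maps that space to $\XX$. My first approach would be to realise the intermediate space via complex interpolation between $\XX$ and $\DD$ (using BIP/$\Hinf$ of $\del^2$ as in Proposition~\ref{cor:del2_weak}). Then I would use the moment inequality $\|u\|_{[\XX,\DD]_{1/2}}\lesssim\|u\|_\XX^{1/2}\|u\|_\DD^{1/2}$ in its $R$-bounded (Rademacher) form. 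An alternative is to apply the lower-order perturbation result \cite[Corollary 16.2.5]{HNVW24} directly, after writing $A$ on $\OO$ as similar to a perturbation of $\mc A$ restricted to the complemented range of $\II\PP$.
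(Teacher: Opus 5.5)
Your proposal is correct and is essentially the paper's proof. The paper makes the same choice of $\eps$ and of special domains $\OO_n$ small in $C^{1,\kappa-\eps}$, and forms the same diagonal operator from Theorem~\ref{thm:Rsect_special_dom} and Proposition~\ref{prop:higher_reg_Rd}. It then treats the commutators $[\del,\eta_n]$ and $[\del^2,\eta_n]$ as lower-order perturbations through the abstract Lemma~\ref{lem:abstract_localisation_Rsect}, which rests on \cite[Corollary 16.2.5]{HNVW24}, i.e.\ your fallback option. For this it identifies the relevant complex interpolation space as $\mc{W}^{k+1,p}_{0,\gam+kp}\times\mc{W}^{k-1,p}_{\gam+kp}$ via \cite[Proposition 6.2, Theorem 6.4]{Ro25} and the pullbacks $\Phi_*$.

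The ``main obstacle'' you single out is not actually needed. The commutator $C$ is a single fixed operator, so your own scalar bound $\|Cv\|\le\epsilon\|v\|_{D(\mc{A})}+C_\epsilon\|v\|_{\mc{W}}$ can be applied pointwise in $\omega$ to $v=\sum_n\eps_n(\lambda_n+\mc{A})^{-1}\II f_n$. This already gives an $R$-bound of order $\epsilon+C_\epsilon/\lambda_0$ for the error term, with no $|\lambda|^{-1/2}$ decay required. The interpolation inequality itself should be taken from \cite{Ro25}, not from bounded imaginary powers of $\del^2$ as in Proposition~\ref{cor:del2_weak}. That proposition only covers the Muckenhoupt range $\gam\in(-1,p-1)$, not the weights $w_{\gam+kp}$.
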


To transfer $R$-sectoriality from special domains to bounded domains, we use the following abstract lemma based on lower-order perturbations. A version of the lemma below for the $\Hinf$-calculus can be found in \cite[Lemma 6.11]{LV18}.
\begin{lemma}\label{lem:abstract_localisation_Rsect}
   Let $A$ be a linear operator on a Banach space $Y$, and let $\tilde{A}$ be an $R$-sectorial operator on a Banach space $\tilde{Y}$. Assume that there exist bounded linear mappings $\II\colon Y\to \tilde{Y}$ and $\PP\colon \tilde{Y}\to Y$ satisfying
  \begin{enumerate}[(i)]
    \item \label{it:lemLV6.11_1} $\PP \II = \id$,
    \item \label{it:lemLV6.11_2} $\II D(A)\subseteq D(\tilde{A})$ and $\PP D(\tilde{A})\subseteq D(A)$,
    \item \label{it:lemLV6.11_3} $(\II A - \tilde{A}\II)\PP\colon D(\tilde{A})\to \tilde{Y}$ and $\II(A\PP-\PP\tilde{A})\colon D(\tilde{A})\to \tilde{Y}$ extend to bounded linear operators $[\tilde{Y},D(\tilde{A})]_{\theta}\to \tilde{Y}$ for some $\theta\in (0,1)$.
  \end{enumerate}
  Then $A$ is a closed and densely defined operator and for every $\sigma>\om_{R}(\tilde{A})$ there exists a $\lambda>0$ such that $\lambda+A$ is $R$-sectorial with $\om_{R}(\lambda+A)\leq \sigma$.
\end{lemma}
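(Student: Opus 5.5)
The strategy is to compare $\lambda+A$ with $\lambda+\tilde{A}$ through the maps $\II$ and $\PP$, and to show that the commutator errors are small relative to $\lambda$. Fix $\sigma>\om_R(\tilde{A})$. By $R$-sectoriality of $\tilde{A}$, the family $\{\lambda R(\lambda,-\tilde{A})\}$, and equally $\{\lambda(\lambda+\tilde{A})^{-1}\}$, is $R$-bounded for $\lambda\in\Sigma_{\pi-\sigma}$. The family $\{\tilde{A}(\lambda+\tilde{A})^{-1}\}$ is then $R$-bounded too. Interpolating between these two bounds gives $R$-boundedness of
\[\{\lambda^{1-\theta}(\lambda+\tilde{A})^{-1}:\tilde{Y}\to[\tilde{Y},D(\tilde{A})]_\theta\}.\]
To justify this, I would use the moment-type inequality for complex interpolation: the family $\lambda^{1-\theta}(\lambda+\tilde A)^{-1}$ is $R$-bounded into $[\tilde{Y},D(\tilde{A})]_\theta$, because $R$-bounds interpolate via $\eps^2_N([X_0,X_1]_\theta)=[\eps^2_N(X_0),\eps^2_N(X_1)]_\theta$. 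I expect this to be the main technical point. If the interpolation argument is troublesome, I would replace $[\tilde Y,D(\tilde A)]_\theta$ by $D((\mu+\tilde A)^\theta)$, using that complex interpolation spaces embed suitably into this via the $\eps$-interpolation trick.

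Next, write $S_1:=(\II A-\tilde{A}\II)\PP$ and $S_2:=\II(A\PP-\PP\tilde{A})$, which by (iii) extend to bounded maps $[\tilde{Y},D(\tilde{A})]_\theta\to\tilde{Y}$. For $\lambda\in\Sigma_{\pi-\sigma}$ I set the candidate resolvent
\[ R_\lambda:=\PP(\lambda+\tilde{A})^{-1}\II. \]
This maps $Y$ into $D(A)$ by (ii). Using (i) and the definition of $S_2$, I compute
\[(\lambda+A)R_\lambda=\PP\II(\lambda+A)\PP(\lambda+\tilde A)^{-1}\II=\id+\PP S_2(\lambda+\tilde A)^{-1}\II\]
after moving $\II$ through. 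Analogously, $R_\lambda(\lambda+A)=\id+\PP(\lambda+\tilde A)^{-1}S_1'$ on $D(A)$, where I write $\II A=\II A\PP\II=(S_1+\tilde A\II\PP)\II$ on $D(A)$.

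By the first paragraph, the error operators $K_\lambda:=S_2(\lambda+\tilde{A})^{-1}$ have $R$-bound $\lesssim|\lambda|^{-(1-\theta)}$ on $\tilde{Y}$. The same holds for $(\lambda+\tilde{A})^{-1}S_1$ as a map $[\tilde Y,D(\tilde A)]_\theta\to\tilde Y$, after composing with $\II$. For this I use $\II\colon D(A)\to D(\tilde A)$ and argue on the $\tilde{Y}$ side, considering $\II(\lambda+A)\PP$ versus $(\lambda+\tilde A)\II\PP$. Hence for $|\lambda|\ge\lambda_1$ large, the operators $\id+\PP K_\lambda\II$ are invertible by Neumann series, with uniformly $R$-bounded inverses, since the $R$-bound of the perturbation is $<1/2$.

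It follows that $\lambda+A$ is bijective from $D(A)$ onto $Y$. Its resolvent is $R_\lambda(\id+\PP K_\lambda\II)^{-1}$, and $\{\lambda R_\lambda\}$ is $R$-bounded as a composition of $R$-bounded families. Shifting by a sufficiently large $\lambda_0$, the sector $\lambda_0+\Sigma_{\pi-\sigma}$ lies in the region $|\lambda|\ge\lambda_1$, which yields $R$-sectoriality of $\lambda_0+A$ with angle $\le\sigma$. The argument mirrors the end of Step 2 in the proof of Theorem \ref{thm:Rsect_weight}. Closedness of $A$ follows from $0\in\rho(\lambda_0+A)$. Density of $D(A)$ follows from $\PP\II=\id$: for $y\in Y$, approximate $\II y$ by elements of $D(\tilde A)$ and apply $\PP$, using (ii).
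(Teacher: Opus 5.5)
Your argument is correct and is essentially the paper's. The paper simply cites \cite[Lemma 6.11]{LV18} together with the perturbation theorem \cite[Corollary 16.2.5]{HNVW24}, applied to lower-order perturbations of $\tilde A$ on $\tilde Y$; your parametrix-plus-Neumann-series is that perturbation argument carried out by hand. Indeed, since $K_\lambda$ takes values in $\II Y$, your right inverse $\PP(\lambda+\tilde A)^{-1}\II(\id+\PP K_\lambda\II)^{-1}$ is exactly $\PP(\lambda+\tilde A+S_2)^{-1}\II$.

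Two details need tightening. First, argue injectivity on $\tilde Y$: if $(\lambda+A)u=0$, then $w:=(\lambda+\tilde A)\II u$ satisfies $w=-S_1(\lambda+\tilde A)^{-1}w$, and $S_1(\lambda+\tilde A)^{-1}$ is small in $\mc L(\tilde Y)$, so $w=0$ and $u=\PP\II u=0$. Your left error term $\PP(\lambda+\tilde A)^{-1}S_1\II$ need not even be bounded on $Y$, so it cannot be used for this. Second, the $R$-bound does not require any identity for interpolated $\eps_N^2$-spaces. It suffices to use Stein interpolation of $z\mapsto\sum_n\eps_n\lambda_n^{1-z}(\lambda_n+\tilde A)^{-1}y_n$ into $L^2(\Om;[\tilde Y,D(\tilde A)]_\theta)$. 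Even simpler, use $\|S_jx\|\le\delta\|\tilde Ax\|+C_\delta\|x\|$, which follows from the interpolation inequality and Young's inequality.
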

\begin{proof}
  The proof is similar to the proof of \cite[Lemma 6.11]{LV18} if one uses the perturbation theorem \cite[Corollary 16.2.5]{HNVW24} for $R$-sectoriality.
\end{proof}

To apply the localisation procedure from Lemma \ref{lem:abstract_localisation_Rsect}, we use the decomposition of weighted Sobolev spaces as in Lemma \ref{lem:decomp}. This requires the $R$-sectoriality of $A$ on special domains (Theorem \ref{thm:Rsect_special_dom}) and on the full space $\RRd$. From Proposition \ref{prop:DS_Rd} and a lifting argument, we obtain higher-order regularity for $A$ on $\RRd$.
\begin{proposition}\label{prop:higher_reg_Rd}
    Let $p\in(1,\infty)$, $\gam\in (-1,p-1)$, $k\geq 2$ an integer, $\rho>0$, $\sigma\in(\vartheta(\rho),\pi)$ and let $A$ be the realisation of the operator in \eqref{eq:A_matrix} on $W^{k,p}(\RRd, w_\gam)\times W^{k-2,p}(\RRd, w_\gam)$ with domain $D(A):=W^{k+2,p}(\RRd, w_\gam)\times W^{k,p}(\RRd, w_\gam)$. Then there exists a $\tilde{\lambda}>0$ such that for all $\lambda_0>\tilde{\lambda}$, the operator $\lambda_0+ A$ is $R$-sectorial with angle $\om_R(\lambda_0+A)\leq \sigma$.
\end{proposition}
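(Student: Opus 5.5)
The plan is to lift Proposition \ref{prop:DS_Rd} from $\XX=W^{2,p}(\RRd,w_\gam)\times L^p(\RRd,w_\gam)$ to the higher-order spaces by commuting with derivatives. On $\RRd$ the operator $A$ has constant coefficients and there are no boundary conditions, so $A$ commutes with every $\d^\alpha$. The weight $w_\gam=|x_1|^\gam$ with $\gam\in(-1,p-1)$ stays the same for all $k$, unlike on the half-space. We set
\[
\XX_k:=W^{k,p}(\RRd,w_\gam)\times W^{k-2,p}(\RRd,w_\gam),\qquad \DD_k:=W^{k+2,p}(\RRd,w_\gam)\times W^{k,p}(\RRd,w_\gam),
\]
so that $\XX_2=\XX$ and $\DD_2=\DD$. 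Take $\tilde\lambda>0$ arbitrary and fix $\lambda_0>\tilde\lambda$. We show by induction on $k\geq 2$ that for all $\lambda_n\in\lambda_0+\Sigma_{\pi-\sigma}$ and $f^{(n)}\in\XX_k$ the resolvent equations $\lambda_nu^{(n)}+Au^{(n)}=f^{(n)}$ are uniquely solvable in $\DD_k$. We also show the Rademacher estimates analogous to \eqref{eq:IH_Rbdd_X}--\eqref{eq:IH_Rbdd_D}, with $\XX^{k,p}_\gam,\DD^{k,p}_\gam$ replaced by $\XX_k,\DD_k$.

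The base case $k=2$ follows from Proposition \ref{prop:DS_Rd}. That proposition gives $R$-sectoriality of $\lambda_0+A$ with angle $\vartheta(\rho)<\sigma$. For $\lambda\in\lambda_0+\Sigma_{\pi-\sigma}$ we have $\lambda-\lambda_0\in\Sigma_{\pi-\sigma}$, so the family $\{(\lambda-\lambda_0)(\lambda+A)^{-1}\}$ is $R$-bounded. Combining this with the Kahane contraction principle as in Step 2 of the proof of Theorem \ref{thm:Rsect_weight} gives $R$-boundedness of $\{\lambda(\lambda+A)^{-1}\}$ on $\XX$. For the $\DD$-estimate we use $Au=f-\lambda u$ and the fact that $\|u\|_{\DD}\eqsim\|u\|_{\XX}+\|Au\|_{\XX}$. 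The latter holds by the domain identity $D(A)=\DD$ together with the closed graph theorem.

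For the induction step $k\to k+1$ we take smooth data first. For $f^{(n)}\in\SS(\RRd)^2$ the solution $u^{(n)}$ can be written as a Fourier multiplier and is Schwartz. Then $\d_j u^{(n)}$ solves the same resolvent equation with right-hand side $\d_jf^{(n)}\in\XX_k$, for each $j\in\{1,\dots,d\}$. The induction hypothesis therefore controls $\sum_n\eps_n\d_ju^{(n)}$ in $L^2(\Om;\DD_k)$ and $\sum_n\eps_n\lambda_n\d_ju^{(n)}$ in $L^2(\Om;\XX_k)$, both by $\|\sum_n\eps_n f^{(n)}\|_{L^2(\Om;\XX_{k+1})}$. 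The terms without derivatives are controlled directly by the induction hypothesis, using $\XX_{k+1}\hookrightarrow\XX_k$. Together with $\|g\|_{W^{m+1,p}}\eqsim\|g\|_{W^{m,p}}+\sum_j\|\d_jg\|_{W^{m,p}}$ this yields both estimates at level $k+1$.

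The general case then follows by density of Schwartz (or $\Cc^\infty$) functions in weighted Sobolev spaces on $\RRd$ with $A_p$-weights, arguing exactly as in the density step of the proof of Theorem \ref{thm:Rsect_weight}. Uniqueness is inherited from level $2$, since $\DD_{k+1}\hookrightarrow\DD_2$. Finally, closedness, dense domain and injectivity follow as in Step 2 of that proof, and the Kahane argument given there yields $R$-boundedness of $\{(\lambda-\lambda_0)(\lambda+A)^{-1}:\lambda\in\lambda_0+\Sigma_{\pi-\sigma}\}$ on $\XX_k$. This is exactly $R$-sectoriality of $\lambda_0+A$ with angle at most $\sigma$.

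The main obstacle should be the density and smoothness bookkeeping. One has to check that Schwartz functions are dense in $W^{k,p}(\RRd,w_\gam)$ for Muckenhoupt power weights, which is standard. One also has to check that the resolvent of smooth data is smooth, so that commuting $\d_j$ with $(\lambda+A)^{-1}$ is justified. The latter follows from the multiplier representation $A=\FF^{-1}A(\xi)\FF$ and invertibility of $\lambda+A(\xi)$ for $\lambda$ in the sector. Alternatively, the commutation can be seen directly from translation invariance of the resolvent together with closedness.
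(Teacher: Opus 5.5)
Your argument is correct, but it lifts regularity by a different mechanism than the paper.

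\textbf{The paper's route.} The paper does not induct on $k$. It takes the Bessel potential $J_{k-2}=(1-\Delta)^{(k-2)/2}$ and checks on the symbol $A^{-1}(\xi,\lambda)$ that $J_{k-2}$ commutes with the resolvent on Schwartz data. It then transfers the $R$-bound from $W^{2,p}(\RRd,w_\gam)\times L^p(\RRd,w_\gam)$ to $W^{k,p}(\RRd,w_\gam)\times W^{k-2,p}(\RRd,w_\gam)$ in a single step. This uses that $J_{k-2}$ maps the latter space isomorphically onto the former, via the identification of weighted Bessel potential and Sobolev spaces for $A_p$ weights from \cite{MV15}. A density argument finishes the proof.

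\textbf{Your route.} You instead commute the resolvent with the first-order derivatives $\d_j$. You then run the induction scheme of Theorem \ref{thm:Rsect_weight}, without the multiplier $M$ and without commutator terms. This needs only the elementary norm equivalence $\|g\|_{W^{m+1,p}}\eqsim\|g\|_{W^{m,p}}+\sum_j\|\d_j g\|_{W^{m,p}}$ for $m\geq 0$, which applies since $k-2\geq 0$.

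\textbf{Trade-off.} The paper's route is shorter and avoids level-by-level bookkeeping. In your version, that bookkeeping includes using uniqueness at level $k$ to identify $\d_j u^{(n)}$ with the solution for data $\d_j f^{(n)}$. Your route, in exchange, avoids the weighted Bessel potential machinery entirely.

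\textbf{A correction in your base case.} The Kahane argument in Step 2 of the proof of Theorem \ref{thm:Rsect_weight} passes from $\{\lambda(\lambda+A)^{-1}\}$ to $\{(\lambda-\lambda_0)(\lambda+A)^{-1}\}$, using $|\lambda-\lambda_0|\lesssim|\lambda|$. You need the converse direction, and there $|\lambda|/|\lambda-\lambda_0|$ is unbounded near $\lambda=\lambda_0$. Either of the following repairs works.
\begin{enumerate}[(i)]
  \item Apply Proposition \ref{prop:DS_Rd} to $\lambda'+A$ for some $\lambda'\in(0,\lambda_0)$; this is allowed since that proposition holds for every positive shift. Then $|\lambda|/|\lambda-\lambda'|$ is bounded on $\lambda_0+\Sigma_{\pi-\sigma}$, so Kahane applies.
  \item Use the resolvent identity $(\lambda+A)^{-1}=(\lambda_0+A)^{-1}-(\lambda-\lambda_0)(\lambda_0+A)^{-1}(\lambda+A)^{-1}$. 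It shows that $\{(\lambda+A)^{-1}\}$ is $R$-bounded, which handles the term $\lambda_0(\lambda+A)^{-1}$.
\end{enumerate}
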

\begin{proof}
Recall that the Bessel potential operator is defined by
\begin{equation*}
    J_s f:=(1-\del)^{\frac{s}{2}} f=\FF^{-1}((1+|\cdot|^2)^{\frac{s}{2}}\FF f),\qquad f\in \SS'(\RRd),\; s\in \RR. 
\end{equation*}
Then, $J_{\ell-\ell_0}$ is an isometry from $W^{\ell,p}(\RRd, w_\gam)$ to $W^{\ell_0,p}(\RRd, w_\gam)$ for all $p\in(1,\infty)$ and $\ell, \ell_0\in\NN_0$ with $\ell\geq \ell_0$. Indeed, for the Bessel potential scale this follows from the definition and by \cite[Proposition 3.2]{MV15} it also holds for Sobolev spaces. 

 Let $f\in \SS(\RRd)\times \SS(\RR^d)$, $\lambda_0>0$ and $\lambda\in \lambda_0+ \Sigma_{\pi-\sigma}$. Consider the equation $(\lambda+ A)u = f$ which has the solution representation
    \begin{equation*}
        u(x) = \mc{F}^{-1}\big(\xi \mapsto A^{-1}(\xi, \lambda) (\mc{F}f)(\xi)\big)(x)
    \end{equation*}
where, following the notation of \cite[Section 2]{DS15}, we have that
\begin{equation*}
    A(\xi,\lambda): =\begin{pmatrix}
        \lambda & -1\\|\xi|^4 & \lambda + \rho|\xi|^2
    \end{pmatrix}
\end{equation*}
is invertible and its inverse (see \cite[Equation (2.3)]{DS15}) is given by
\begin{equation*}
    A^{-1}(\xi, \lambda)=\frac{1}{(\alpha_+\lambda + |\xi|^2)(\alpha_-\lambda+|\xi|^2)}\begin{pmatrix}
        \lambda+ \rho |\xi|^2 & 1 \\ -|\xi|^4& \lambda
    \end{pmatrix}.
\end{equation*} 
Now, the solution to $(\lambda+ A)\tilde{u}= J_{k-2} f \in \SS(\RRd)\times \SS(\RRd)$ is given by
\begin{align*}
    \tilde{u}(x)&=\mc{F}^{-1}\big(\xi \mapsto A^{-1}(\xi, \lambda) (\mc{F}J_{k-2} f)(\xi)\big)(x)\\
    &= \mc{F}^{-1}\big(\xi\mapsto (1+|\xi|^2)^{\frac{k-2}{2}}A^{-1}(\xi, \lambda)(\mc{F}f)(\xi)\big)(x) = (J_{k-2} u)(x).
\end{align*}
    This proves that $R(\lambda, -A)J_{k-2} f = J_{k-2} R(\lambda, -A)f$, which is equivalent to
    \begin{equation}\label{eq:lift_RsectR^d}
        R(z, \lambda_0+A)J_{k-2} f = J_{k-2} R(z, \lambda_0+A)f, \quad \text{ for }z\in \CC\setminus\overline{\Sigma}_{\sigma}.
    \end{equation}

    We can now lift the $R$-sectoriality to higher-order spaces. Fix $N\in\NN_1$, let $(\lambda_n)_{n=1}^N$ be a sequence in $\CC\setminus\overline{\Sigma}_{\sigma}$ and let $(f^{(n)})_{n=1}^N\in \eps_N^2(\SS(\RRd)\times \SS(\RRd))$. By Proposition \ref{prop:DS_Rd} and the definition of $R$-sectoriality, we have the estimate
    \begin{equation}\label{eq:lift_k=2}
    \begin{aligned}
                \big\|\sum_{n=1}^N \eps_n \lambda_n R(\lambda_n, \lambda_0 &+ A)f^{(n)} \big\|_{L^2(\Om ; W^{2,p}(\RRd, w_\gam)\times L^p(\RRd, w_\gam))}\\
                &\;\lesssim \big\|\sum_{n=1}^N \eps_n f^{(n)} \big\|_{L^2(\Om ; W^{2,p}(\RRd, w_\gam)\times L^p(\RRd, w_\gam))}.
    \end{aligned}
    \end{equation}
    From \eqref{eq:lift_RsectR^d} and \eqref{eq:lift_k=2} it follows that
    \begin{align*}
        \big\|\sum_{n=1}^N &\eps_n \lambda_n R(\lambda_n, \lambda_0 + A)f^{(n)} \big\|_{L^2(\Om ; W^{k,p}(\RRd,w_\gam)\times W^{k-2,p}(\RRd,w_\gam))}\\
        &\eqsim \big\|\sum_{n=1}^N \eps_n \lambda_n R(\lambda_n, \lambda_0 + A)J_{k-2}f^{(n)} \big\|_{L^2(\Om ; W^{2,p}(\RRd,w_\gam)\times L^p(\RRd, w_\gam))}\\
        &\lesssim \big\|\sum_{n=1}^N \eps_n J_{k-2}f^{(n)} \big\|_{L^2(\Om ; W^{2,p}(\RRd,w_\gam)\times L^p(\RRd,w_\gam))}\\
        &\eqsim \big\|\sum_{n=1}^N \eps_n f^{(n)} \big\|_{L^2(\Om ; W^{k,p}(\RRd,w_\gam)\times W^{k-2,p}(\RRd, w_\gam))}.
    \end{align*}
    By a density argument, this proves the required $R$-sectoriality estimate.
\end{proof}

We can turn to the proof of our main result concerning $R$-sectoriality on domains.
\begin{proof}[Proof of Theorem \ref{thm:Rsect_dom}]
The result will be proved using Lemma \ref{lem:abstract_localisation_Rsect} together with Theorem \ref{thm:Rsect_special_dom} and Proposition \ref{prop:higher_reg_Rd}.

Let $\eps\in (0,\kappa)$ be such that $\gam>(1-(\kappa-\eps))p-1$. 
    Take $(V_n)_{n=1}^N,(\OO_n)_{n=1}^N, (\eta_n)_{n=0}^N$ from Lemma \ref{lem:decomp} such that for all $n\in\{1,\dots, N\}$ we have $[\OO_n]_{C^{1,\kappa-\eps}}< \delta$ where $\delta\in(0,1)$ is small enough such that Theorem \ref{thm:Rsect_special_dom} (applied with $\kappa$ replaced by $\kappa-\eps$) applies for every $\OO_n$. Note that this is possible due to Remark \ref{rem:special_dom}. We define the following operators, where the spaces are as defined in \eqref{eq:Fk} and \eqref{eq:Fk0}.
\begin{enumerate}[(i)]
    \item $\tilde{A}:= \bigoplus_{n=0}^N\tilde{A}_n$ on $\mc{W}^{k,p}_{\gam+kp}\times \mc{W}^{k-2,p}_{\gam+kp}$ with $D(\tilde{A})= \mc{W}^{k+2,p}_{0, \gam+kp}\times \mc{W}^{k,p}_{\gam+kp}$, where
    \begin{enumerate}
        \item $\tilde{A}_0$ on $W^{k,p}(\RRd)\times W^{k-2,p}(\RRd)$ with $D(\tilde{A}_0)= W^{k+2,p}(\RRd)\times W^{k,p}(\RRd)$ is given by $\tilde{A}_0\tilde{u} = A\tilde{u}$,
        \item $\tilde{A}_n$ on $W^{k,p}(\OO_n, w_{\gam+kp}^{\d\OO_n})\times W^{k-2,p}(\OO_n, w_{\gam+kp}^{\d\OO_n})$ with $D(\tilde{A}_n)= W^{k+2,p}_0(\OO_n, w_{\gam+kp}^{\d\OO_n})\times W^{k,p}(\OO_n, w_{\gam+kp}^{\d\OO_n})$ is given by $\tilde{A}_n \tilde{u} =A\tilde{u}$ for all $n\in\{1,\dots, N\}$.
    \end{enumerate}
    \item $B: D(A)=\DD^{k,p}_\gam(\OO)\to \mc{W}^{k,p}_{\gam+kp}\times \mc{W}^{k-2,p}_{\gam+kp} $ given by
    \begin{equation*}
        B u = B \begin{pmatrix}
            u_1\\u_2
        \end{pmatrix}=
        \begin{pmatrix}
            0\\\big([\del^2, \eta_n]u_1 -\rho [\del, \eta_n]u_2\big)
        \end{pmatrix}_{n=0}^N.
    \end{equation*}
    \item $C: D(\tilde{A})\to \XX^{k,p}_\gam(\OO)$ given by
    \begin{equation*}
        C \tilde{u} = C \begin{pmatrix}
            \tilde{u}_1\\\tilde{u}_2
        \end{pmatrix}=
        \begin{pmatrix}
            0\\\sum_{n=0}^N\big([\del^2, \eta_n]\tilde{u}_{1,n} -\rho [\del,\eta_n]\tilde{u}_{2,n}\big)
        \end{pmatrix}.
    \end{equation*}
\end{enumerate}
For $\lambda_0>0$ large enough, $\lambda_0+\tilde{A}_n$ for all $n\in\{0,\dots, N\}$ is $R$-sectorial with $\om_R(\lambda_0+\tilde{A}_n)\leq \sigma$ by Proposition \ref{prop:higher_reg_Rd} and Theorem \ref{thm:Rsect_special_dom}. Hence, for $\lambda_0>0$ large enough, also $\lambda_0+ \tilde{A}$ is $R$-sectorial with $\om_R(\lambda_0+\tilde{A})\leq \sigma$.

Let $\mc{P}$ and $\mc{I}$ be as in \eqref{eq:retraction}. It is straightforward to verify that the conditions \ref{it:lemLV6.11_1} and \ref{it:lemLV6.11_2} from Lemma \ref{lem:abstract_localisation_Rsect} hold. It remains to check condition \ref{it:lemLV6.11_3} in Lemma \ref{lem:abstract_localisation_Rsect}. We first characterise the complex interpolation space
\begin{equation*}
    [\mc{W}^{k,p}_{\gam+kp}\times \mc{W}^{k-2,p}_{\gam+kp}, D(\tilde{A})]_\half = [\mc{W}^{k,p}_{\gam+kp},\mc{W}^{k+2,p}_{0, \gam+kp} ]_\half \times [\mc{W}^{k-2,p}_{\gam+kp},\mc{W}^{k,p}_{\gam+kp} ]_\half.
\end{equation*}
Recall that $(\OO_n)_{n=1}^N$ are special $\Cc^{1,\kappa}$-domains. Hence, using Propositions \ref{prop:isom} and \ref{prop:trace_char_dom} to reduce to $\RRdh$ and applying \cite[Proposition 6.2]{Ro25}, it follows that
\begin{equation}\label{eq:int_W0}
    [W^{k,p}(\OO_n, w_{\gam+kp}^{\d\OO_n}), W^{k+2,p}_0(\OO_n, w_{\gam+kp}^{\d\OO_n})]_\half = W^{k+1,p}_0(\OO_n, w_{\gam+kp}^{\d\OO_n}).
\end{equation}
From \eqref{eq:int_W0} and interpolation on $\RRd$ (see, e.g., \cite[Theorems 5.6.9 \& 5.6.11]{HNVW16}), we obtain
\begin{align*}
    [\mc{W}^{k,p}_{\gam+kp},&\mc{W}^{k+2,p}_{0, \gam+kp} ]_\half\\
    & = [W^{k,p}(\RRd), W^{k+2,p}(\RRd)]_\half \oplus \bigoplus_{n=1}^N [W^{k,p}(\OO_n, w_{\gam+kp}^{\d\OO_n}), W^{k+2,p}_0(\OO_n, w_{\gam+kp}^{\d\OO_n})]_\half\\
    &=W^{k+1,p}(\RRd)\oplus\bigoplus_{n=1}^N W^{k+1,p}_0(\OO_n, w_{\gam+kp}^{\d\OO_n}) = \mc{W}^{k+1,p}_{0,\gam+kp}. 
\end{align*}
Similarly, since $\Phi_*: W^{k_0,p}(\OO_n, w_{\gam_0}^{\d\OO_n})\to W^{k_0,p}(\RRdh, w_{\gam_0})$ is an isomorphism for all $k_0\in\NN_0$ and all $\gam_0\in((k_0-1)p-1, \infty)\setminus\{jp-1:j\in \NN_1\}$ (see Proposition \ref{prop:isom}), it follows from \cite[Theorem 6.4]{Ro25} that
\begin{equation*}
    [W^{k-2,p}(\OO_n, w_{\gam+kp}^{\d\OO_n}), W^{k,p}(\OO_n, w_{\gam+kp}^{\d\OO_n})]_\half = W^{k-1,p}(\OO_n, w_{\gam+kp}^{\d\OO_n}).
\end{equation*}
Hence, we obtain $[\mc{W}^{k-2,p}_{\gam+kp},\mc{W}^{k,p}_{\gam+kp} ]_\half = \mc{W}^{k-1,p}_{\gam+kp}$ and thus we have proved that
\begin{equation}\label{eq:intp_space_mcW}
     [\mc{W}^{k,p}_{\gam+kp}\times \mc{W}^{k-2,p}_{\gam+kp}, D(\tilde{A})]_\half = \mc{W}^{k+1,p}_{0,\gam+kp}\times \mc{W}^{k-1,p}_{\gam+kp}.
\end{equation}
To continue, note that
\begin{equation*}
    \mc{I}A u - \tilde{A}\mc{I} u = - Bu,\quad u\in D(A), \quad \text{ and } \quad A\mc{P}\tilde{u}-\mc{P}\tilde{A}\tilde{u}=C\tilde{u},\quad \tilde{u}\in D(\tilde{A}).
\end{equation*}
Furthermore, the commutators $[\del, \eta_n]$ and $[\del^2, \eta_n]$ are first and third-order partial differential operators with smooth and compactly supported coefficients, respectively. This and \eqref{eq:intp_space_mcW} yield that the operators
\begin{align*}
    \mc{I}A -\tilde{A}\mc{I}&: W^{k+1,p}_0(\OO, w_{\gam+kp}^{\d\OO})\times W^{k-1,p}(\OO, w_{\gam+kp}^{\d\OO})  \to \mc{W}^{k,p}_{\gam+kp}\times \mc{W}^{k-2,p}_{\gam+kp},\\
    \mc{P}&:[\mc{W}^{k,p}_{\gam+kp}\times \mc{W}^{k-2,p}_{\gam+kp}, D(\tilde{A})]_\half \to W^{k+1,p}_0(\OO, w_{\gam+kp}^{\d\OO})\times W^{k-1,p}(\OO, w_{\gam+kp}^{\d\OO})
\end{align*}
are bounded. Similarly, it can be shown that the operators
\begin{align*}
    A\mc{P}-\mc{P}\tilde{A}&:[\mc{W}^{k,p}_{\gam+kp}\times \mc{W}^{k-2,p}_{\gam+kp}, D(\tilde{A})]_\half \to W^{k,p}(\OO, w_{\gam+kp}^{\d\OO})\times W^{k-2,p}(\OO, w_{\gam+kp}^{\d\OO}),\\
    \mc{I}&: W^{k,p}(\OO, w_{\gam+kp}^{\d\OO})\times W^{k-2,p}(\OO, w_{\gam+kp}^{\d\OO})\to \mc{W}^{k,p}_{\gam+kp}\times \mc{W}^{k-2,p}_{\gam+kp} 
\end{align*}
    are bounded. This shows that $(\II A - \tilde{A}\II)\PP$ and $\II(A\PP-\PP\tilde{A})$ extend to bounded operators from
 $[\mc{W}^{k,p}_{\gam+kp}\times \mc{W}^{k-2,p}_{\gam+kp}, D(\tilde{A})]_\half$ to $\mc{W}^{k,p}_{\gam+kp}\times \mc{W}^{k-2,p}_{\gam+kp} $. Applying Lemma \ref{lem:abstract_localisation_Rsect} gives that for all $\sigma\in(\vartheta(\rho),\pi)$ there exists a $\tilde{\lambda}>0$ such that for all $\lambda_0>\tilde{\lambda}$ the operator $\lambda_0+A$ on $\XX^{k,p}_\gam(\OO)$ is $R$-sectorial with $\om_{R}(\lambda_0+A)\leq \sigma$. 
\end{proof}

\subsection{$R$-sectoriality for the unshifted operator} \label{subsec:pert_bdd_noshift}

In this section, we prove the following improved version of Theorem \ref{thm:Rsect_dom}, which, in particular, yields $R$-sectoriality for the operator $A$ without a shift.
\begin{theorem}\label{thm:Rsect_dom_noshift}
     Let $p\in(1,\infty)$, $\kappa\in (0,1)$, $\gam\in((1-\kappa)p-1,p-1)$, $k\geq 2$ an integer, $\rho>0$,  and let $\OO$ be a bounded $C^{1,\kappa}$-domain. Let $A$ be the realisation of the operator in \eqref{eq:A_matrix} on $\XX^{k,p}_\gam(\OO)$ with domain $D(A):= \DD^{k,p}_\gam(\OO)$. Then the following assertions hold.
    \begin{enumerate}[(i)]
        \item\label{it:thm:Rsect_dom_noshift1} The spectrum $\sigma(A)$ is a discrete set of eigenvalues which is contained in the open right half-plane. Furthermore, $\sigma(A)$ is independent of $p\in(1,\infty)$, $\gam\in ((1-\kappa)p-1,p-1)$, and $k\geq 2$.
        \item\label{it:thm:Rsect_dom_noshift2} There exists a $\tilde{\lambda}>0$ and a $\sigma < \frac\pi 2$ such that for all $\lambda_0>-\tilde{\lambda}$, the operator $\lambda_0+A$ is $R$-sectorial with $\om_{R}(\lambda_0+A)\leq \sigma$.
    \end{enumerate}
\end{theorem}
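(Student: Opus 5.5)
The plan is to combine Theorem \ref{thm:Rsect_dom} with compactness of the resolvent and an energy argument that excludes spectrum in the closed left half-plane.

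\emph{Discreteness.} By Theorem \ref{thm:Rsect_dom}, $\lambda_0+A$ is invertible for $\lambda_0$ large. It then suffices to show that the embedding $\DD^{k,p}_\gam(\OO)\hookrightarrow\XX^{k,p}_\gam(\OO)$ is compact, because then $A$ has compact resolvent and $\sigma(A)$ consists of isolated eigenvalues of finite multiplicity. The embedding has the form $W^{k+2,p}_0(\OO,w_{\gam+kp}^{\d\OO})\times W^{k,p}(\OO,w_{\gam+kp}^{\d\OO})\hookrightarrow W^{k,p}(\OO,w_{\gam+kp}^{\d\OO})\times W^{k-2,p}(\OO,w_{\gam+kp}^{\d\OO})$, so each component gains two derivatives at the same weight. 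Compactness would follow from Rellich on the interior part $\eta_0$. Near the boundary, one uses a gain in the weight from Lemma \ref{lem:est_Hardy}/Hardy: cut off at distance $\eps$, observe that the boundary layer has norm $\lesssim \eps^{\theta}$ (via the weight gap $w_{\gam+kp}$ versus $w_{\gam+(k+1)p}$ and $x_1^{p}$ factors), and apply Rellich away from the boundary. Alternatively one can interpolate using \eqref{eq:intp_space_mcW}.

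\emph{Location and independence.} Let $\lambda\in\sigma(A)$ with eigenvector $u=(u_1,u_2)$. Then $u_2=-\lambda u_1$ and $\lambda^2u_1-\lambda\rho\del u_1+\del^2u_1=0$ with $u_1=\d_{\vec n}u_1=0$. The key step is regularity: show that $u_1$ is smooth in $\OO$ and lies in $W^{2,2}_0(\OO)$, so that testing with $\bar u_1$ gives
\begin{equation*}
\lambda^2\|u_1\|_2^2+\lambda\rho\|\nabla u_1\|_2^2+\|\del u_1\|_2^2=0 .
\end{equation*}
From this, $\Re\lambda>0$ follows. If $\lambda=0$, then $\del u_1=0$ and $u_1=0$. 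Otherwise divide by $\lambda$ and take real parts: writing $a=\|u_1\|^2$, $b=\rho\|\nabla u_1\|^2>0$ and $c=\|\del u_1\|^2$, we get $\Re\lambda\,(a+c/|\lambda|^2)+b=\ldots$. More precisely $\lambda a+b+\bar\lambda c/|\lambda|^2=0$, which gives $\Re\lambda(a+c|\lambda|^{-2})=-b<0$. With the sign conventions of $A$ (so that $-A$ generates), this places $\sigma(A)$ in the right half-plane.

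Independence of $(p,\gam,k)$ comes from the same regularity statement. Every eigenfunction in one space lies in a common space, for instance $W^{2,2}_0$ intersected with all the weighted spaces. Conversely, eigenvalues of the $L^2$-based problem lift to every $\XX^{k,p}_\gam$ by elliptic bootstrapping.

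\emph{Main obstacle.} The main obstacle is this regularity/consistency step, since $\OO$ is only $C^{1,\kappa}$ and the weights are non-Muckenhoupt. I would first try consistency of resolvents: for large $\lambda_0$, the resolvents $(\lambda_0+A)^{-1}$ on different spaces agree on the dense set $\Cc^\infty\times\Cc^\infty$. For compact operators whose resolvents agree on a common dense subspace (with range inclusions handled via Lemma \ref{lem:est_Hardy}), the spectra coincide, because eigenvectors of a compact operator for nonzero eigenvalues lie in the intersection of the spaces. With Hardy embeddings I would reduce everything to the case $k=2$ and a fixed $p$. Moving between different $p$ would use local Sobolev embedding for interior regularity, plus an iteration of the weighted estimates near the boundary.

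\emph{Unshifted $R$-sectoriality.} Fix $\sigma\in(\vartheta(\rho),\pi/2)$. Since $\vartheta(\rho)<\pi/2$, the angle obtained from Theorem \ref{thm:Rsect_dom} is below $\pi/2$. The points of $\sigma(A)$ are discrete, lie in $\{\Re z>0\}$, and are contained in $-\tilde\lambda'+\overline{\Sigma}_{\sigma}$ for large $|z|$. Choose $\tilde\lambda>0$ smaller than $\operatorname{dist}(\sigma(A),i\RR)$, which is positive because only finitely many eigenvalues lie in any bounded set. Then the resolvent of $\lambda_0+A$ is analytic on a neighbourhood of the compact part of $\CC\setminus\Sigma_{\sigma'}$, for $\lambda_0>-\tilde\lambda$ and a slightly larger $\sigma'<\pi/2$. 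On compact sets, analytic operator families are $R$-bounded (via the contraction principle or a Cauchy integral argument). For large $|z|$, the shifted estimate from Theorem \ref{thm:Rsect_dom} applies. Combining the two regions yields $R$-sectoriality with $\om_R(\lambda_0+A)\leq\sigma'$.
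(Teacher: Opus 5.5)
\textbf{Verdict: there is a genuine gap, in part (i).} Your compactness argument (Hardy boundary layer plus Rellich, in place of the Gurka--Opic citation) is fine, and so is part (ii); both match the paper in substance.

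\textbf{The unproved step.} Both the location of $\sigma(A)$ and its independence of $(p,\gam,k)$ rest on the claim that an eigenfunction $u_1\in W^{k+2,p}_0(\OO,w^{\d\OO}_{\gam+kp})$ lies in $W^{2,2}_0(\OO)$. Equivalently, they rest on the claim that the weighted resolvents (and the $L^2$-resolvent) are consistent. You do not prove this, and the a priori information does not give it. For $\gam>\frac p2-1$, a function behaving like $\dist(x,\d\OO)^{2-\beta}$ with $\frac12\le\beta<\frac{\gam+1}{p}$ belongs to $W^{k+2,p}_0(\OO,w^{\d\OO}_{\gam+kp})$ but not to $W^{2,2}(\OO)$. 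This covers every admissible $\gam$ once $\kappa\le\frac12$. So membership in $W^{2,2}_0(\OO)$ would have to come from the equation.

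\textbf{Why your shortcut is circular.} You suggest the resolvents ``agree on the dense set $\Cc^\infty\times\Cc^\infty$''. That is exactly what has to be proved. For $f\in\Cc^\infty(\OO)^2$, the two solutions are produced in different spaces. They coincide only if some space containing both carries uniqueness. An elliptic bootstrap to higher integrability needs this consistency at every step, so as stated it is circular. Changing $k$ with $(p,\gam)$ fixed is harmless by Lemma~\ref{lem:est_Hardy}, as you note; changing $p$ or $\gam$ is not.

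\textbf{Sign slip (minor).} With $Au=\lambda u$ one has $u_2=-\lambda u_1$ and $\lambda^2u_1+\lambda\rho\del u_1+\del^2u_1=0$. This gives $\Re\lambda\,(a+c|\lambda|^{-2})=b>0$ directly. Your displayed equation is the one for $(\lambda+A)u=0$.

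\textbf{The missing idea: go down to a weaker common space, not up to $L^2$.} Choose $1<q<\min\{p,2\}$ and $\mu\in(2q-1,3q-1)$ with $\mu>\frac{q(\gam+2p+1)}{p}-1$.
\begin{itemize}
\item H\"older's inequality on the bounded domain gives $L^p(\OO,w^{\d\OO}_{\gam+2p})\hookrightarrow L^q(\OO,w^{\d\OO}_{\mu})$.
\item Precisely because $\gam>(1-\kappa)p-1$, one gets $\mu-2q\in((1-\kappa)q-1,q-1)$. Hence Theorem~\ref{thm:Rsect_dom} applies on $\XX^{2,q}_{\mu-2q}(\OO)$ and gives uniqueness in $\DD^{2,q}_{\mu-2q}(\OO)$.
\item The weighted solution (after reducing to $k=2$ by Hardy) and the solution of the $L^2$-problem are both placed in this space. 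This gives consistency of resolvents for large $\lambda$ (Lemma~\ref{lem:consis}).
\item Analytic continuation and Riesz projections, using density of the intersection, then give $\sigma(A)=\sigma(\mathcal{A})$.
\item The energy argument is carried out for $\mathcal{A}$ on the Hilbert space $W^{2,2}_0(\OO)\times L^2(\OO)$, so no regularity of weighted eigenfunctions is ever needed.
\item There, discreteness of $\sigma(\mathcal{A})$ on a $C^1$-domain needs its own input, namely Savar\'e's $H^{2+s,2}$-regularity (Lemma~\ref{lem:spA}).
\end{itemize}
The same H\"older trick also compares any two admissible triples $(p,\gam,k)$ directly. Your ``common space'' idea is thus correct in spirit, but it should be a larger space, not the intersection with $W^{2,2}_0(\OO)$.
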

Essentially, the $R$-sectoriality result in Theorem \ref{thm:Rsect_dom_noshift}\ref{it:thm:Rsect_dom_noshift2} follows from Theorem \ref{thm:Rsect_dom} and Theorem \ref{thm:Rsect_dom_noshift}\ref{it:thm:Rsect_dom_noshift1}. However, since we are dealing with weighted spaces and domains with low regularity, we cannot argue immediately as in the proof of \cite[Theorem 5.1]{DS15}.
To prove Theorem \ref{thm:Rsect_dom_noshift}\ref{it:thm:Rsect_dom_noshift1}, we consider the operators $A$ on $\XX^{k,p}_\gam(\OO)$ and on $W^{2,2}_0(\OO)\times L^2(\OO)$. Then it suffices to show that the spectra of these two operators are equal and that the spectrum of $A$ on $W^{2,2}_0(\OO)\times L^2(\OO)$ has the desired properties. \\

We start with the analysis of the operator in the unweighted $L^2$-setting. For sufficiently smooth
domains, the following result is an easy consequence of classical elliptic regularity. As the
domain $\OO$ is of class $C^{1,\kappa}$ only, we have to argue slightly differently. 

\begin{lemma}\label{lem:spA}
    Let $\rho>0$ and let $\OO$ be a bounded $C^1$-domain. Let $\mc{A}$ be the realisation of the operator in \eqref{eq:A_matrix} on $W^{2,2}_0(\OO)\times L^2(\OO)$ with domain 
    \begin{equation*}
        D(\mc{A}):= \big\{u\in W^{2,2}_0(\OO)\times L^2(\OO): \mc{A} u \in W^{2,2}_0(\OO)\times L^2(\OO)\big\}.
    \end{equation*}
    Then the following assertions hold.
    \begin{enumerate}[(i)]
        \item\label{it:lem:spA1} The operator $\mc A$ is densely defined and closed.
        \item\label{it:lem:spA2} Let $s\in (0,\frac12)$. Then we have the higher regularity $u_1\in W^{4,2}_{\loc}(\OO)$
        for each $u=(u_1,u_2)^\top\in D(\mc A)$ as well as the continuous embedding
        \begin{equation}
    \label{eq:closed2} D(\mc A)  \subseteq \big(  H^{2+s,2}(\OO)\cap  W^{2,2}_0(\OO)\big)\times W^{2,2}_0(\OO), 
    \end{equation}
        where $H^{2+s,2}(\OO)$ stands for the Bessel potential space.
        \item\label{it:lem:spA3} The spectrum $\sigma(\mc{A})$ is a discrete set of eigenvalues which is contained in the open right half-plane.
    \end{enumerate}
\end{lemma}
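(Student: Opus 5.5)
The natural route is to realise $\mc A$ through the sesquilinear form of the scalar problem. I will equip $W^{2,2}_0(\OO)$ with the inner product $(u,v)_{W^{2,2}_0}:=(\del u,\del v)_{L^2}$, which is an equivalent norm on a bounded domain by Poincar\'e's inequality and integration by parts. For $u=(u_1,u_2)^\top$, the condition $\mc A u\in W^{2,2}_0(\OO)\times L^2(\OO)$ means that $u_2\in W^{2,2}_0(\OO)$ and that the distribution $\del^2u_1-\rho\del u_2$ lies in $L^2(\OO)$.

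\emph{Part (i).} For $u\in D(\mc A)$ I compute
\[
\Re(\mc A u,u)_{\XX}=\Re\big[-(\del u_2,\del u_1)+(\del^2u_1-\rho\del u_2,u_2)\big]=\rho\|\nabla u_2\|_{L^2}^2\ge0 .
\]
The identity $(\del^2 u_1,u_2)=(\del u_1,\del u_2)$ in the $W^{-2,2}\times W^{2,2}_0$ duality holds because $u_2\in W^{2,2}_0$. Hence $\mc A$ is accretive. To get maximal accretivity, I will show $1+\mc A$ is surjective. Given $f=(f_1,f_2)^\top$, set $u_2=u_1-f_1$. Then $u_1$ must solve
\[
\del^2u_1-\rho\del u_1+u_1=f_2+f_1-\rho\del f_1\in W^{-2,2}(\OO),\qquad u_1\in W^{2,2}_0(\OO).
\]
This has a unique solution by Lax--Milgram on $W^{2,2}_0(\OO)$, and it then lies in $D(\mc A)$. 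By Lumer--Phillips, $-\mc A$ generates a contraction semigroup, so $\mc A$ is closed and densely defined (density also follows from reflexivity of the Hilbert space $\XX$).

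\emph{Part (ii).} Interior regularity $u_1\in W^{4,2}_{\loc}(\OO)$ follows because $u_2\in W^{2,2}$ gives $\del^2u_1=g+\rho\del u_2\in L^2(\OO)$, and then interior elliptic regularity for $\del^2$ applies. The global $H^{2+s,2}$ bound is the main obstacle. On a $C^1$ domain the Dirichlet bilaplacian is not a $W^{4,2}$-isomorphism, so I would only aim for a small gain. My first attempt is to use that $\del^2\colon W^{2,2}_0(\OO)\to W^{-2,2}(\OO)$ is an isomorphism also on the scale $H^{2+s,2}_0\to H^{-2+s,2}$ for $|s|<\frac12$ (Lipschitz/$C^1$ domains; results of Jerison--Kenig/Mitrea type, or a perturbation/interpolation argument in which $H^{s}_0=H^{s}$ for $s<\frac12$ is used). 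The right-hand side $\del^2u_1\in L^2\subseteq H^{-2+s,2}$ then gives $u_1\in H^{2+s,2}(\OO)$, with continuity of the embedding from the closed graph theorem.

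\emph{Part (iii).} The embedding \eqref{eq:closed2} together with compactness of $H^{2+s,2}\hookrightarrow W^{2,2}$ and of $W^{2,2}_0\hookrightarrow L^2$ on bounded domains shows that $(1+\mc A)^{-1}$ is compact, so $\sigma(\mc A)$ consists of isolated eigenvalues. Accretivity already places the spectrum in the closed right half-plane. To exclude $\lambda=\ii\tau$ with $\tau\in\RR$, suppose $\mc A u=\ii\tau u$. Taking real parts gives $\rho\|\nabla u_2\|^2=0$, so $u_2=0$ since $u_2\in W^{2,2}_0$. The first equation gives $-u_2=\ii\tau u_1$. If $\tau\ne0$ this yields $u_1=0$. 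If $\tau=0$, then $\del^2u_1=0$ with $u_1\in W^{2,2}_0$, so again $u_1=0$. Hence no eigenvalues lie on the imaginary axis.
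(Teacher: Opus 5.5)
Your proof is correct. Parts (ii) and (iii) follow the paper closely: interior regularity, then a Lipschitz-domain $H^{2+s}$-regularity result with $s<\frac12$ for the Dirichlet bi-Laplacian (the paper cites Savar\'e's Theorem~6, you cite Jerison--Kenig/Mitrea-type results), then Rellich--Kondrachov, then the energy identity $\Re(\mc A u,u)=\rho\|\nabla u_2\|_{L^2}^2$.

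The genuine difference is in (i).
\begin{itemize}
\item The paper gets density from $\Cc^\infty(\OO)\times\Cc^\infty(\OO)\subseteq D(\mc A)$ and closedness from a direct sequential argument using continuity of $\Delta^2\colon W^{2,2}_0(\OO)\to W^{-2,2}(\OO)$.
\item You prove m-accretivity instead: accretivity in the norm $\|\Delta\cdot\|_{L^2}\oplus\|\cdot\|_{L^2}$, plus Lax--Milgram for $1+\mc A$, and then you invoke Lumer--Phillips.
\end{itemize}
Your route costs slightly more but delivers more.
\begin{itemize}
\item It shows explicitly that $\rho(\mc A)\neq\emptyset$. This is exactly what is needed to speak of a compact resolvent in (iii); the paper leaves it implicit, although it also follows at once from bijectivity of $\mc A$ via Lax--Milgram.
\item It puts $\sigma(\mc A)$ in the closed right half-plane immediately, so in (iii) you only have to exclude the imaginary axis.
\item It gives a contraction semigroup for free.
\item Dropping the $L^2$-term on the first component from the inner product makes $\Re(\mc A u,u)=\rho\|\nabla u_2\|_{L^2}^2$ hold for every $u\in D(\mc A)$. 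With the paper's inner product the clean identity only holds along eigenvectors, where $u_2=\lambda u_1$.
\end{itemize}

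One caveat concerns your parenthetical alternative in (ii), the ``perturbation/interpolation argument''. It would not work on its own. Interpolation with only the $s=0$ isomorphism available cannot produce a gain $s>0$. The identity $H^{s}_0=H^{s}$ for $s<\frac12$ only identifies the dual data space $H^{-2+s}$. So this step genuinely rests on the cited Lipschitz-domain regularity theorem, as in the paper.
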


\begin{proof} \textit{Proof of \ref{it:lem:spA1}.}
As $\Cc^{\infty}(\OO) \times \Cc^{\infty}(\OO) \subseteq D(\mc A)$, the operator $\mc A$ is densely defined. To show closedness, let $(u^{(k)})_{k\in\NN}=((u_1^{(k)}, u_2^{(k)})^\top)_{k\in\NN}\subseteq D(\mc A)$ be a sequence with $u^{(k)}\to 
u$ and $\mc A u^{(k)}\to v$ as $k\to \infty$ in $W^{2,2}_0(\OO)\times L^2(\OO)$. By the definition of $\mc A$, this implies
$u^{(k)}_2\to - v_1$ in $W^{2,2}_0(\OO)$. As we also have $u^{(k)}_2\to u_2$ in $L^2(\OO)$, the uniqueness
of the limit in $L^2(\OO)$ yields $u_2=-v_1\in W^{2,2}_0(\OO)$. 

Similarly, we have the convergence $\Delta^2 u_1^{(k)} -\rho \Delta u_2^{(k)} \to v_2$ in $L^2(\OO)$ which
implies 
\begin{equation}
    \label{eq:closed1}
    \Delta^2 u_1^{(k)} \to v_2 + \rho\Delta u_2 \quad\text{in } L^2(\OO),
\end{equation}
using that $u^{(k)}_2\to - v_1=u_2$ in $W^{2,2}_0(\OO)$. By the continuous embedding $L^2(\OO)\subseteq
W^{-2,2}(\OO)$, the convergence in \eqref{eq:closed1} also holds in $W^{-2,2}(\OO)$. On the other 
hand, from $u_1^{(k)}\to u_1$ in $W^{2,2}_0(\OO)$ and the continuity of $\Delta^2\colon W^{2,2}_0(\OO)\to W^{-2,2}(\OO)$, we also get
\[\Delta^2 u_1^{(k)} \to \Delta^2 u_1 \quad\text{in }W^{-2,2}(\OO).\]
Again by the uniqueness of the limit, we see that $\Delta^2 u_1 = v_2+\rho\Delta u_2$. Therefore,
$u=(u_1,u_2)^\top\in D(\mc A)$ and $\mc A u = (-u_2, \Delta^2 u_1-\rho\Delta u_2)^\top = (v_1,v_2)^\top$,
which shows that the operator $\mc A$ is closed.

\textit{Proof of \ref{it:lem:spA2}.} Let $u=(u_1,u_2)^\top\in D(\mc A)$. Then we obtain $u_1\in  W^{4,2}_\loc(\OO)$ by a similar argument as in the proof of \cite[Lemma 11.36]{Ne22}. Setting $f=(f_1,f_2)^\top := \mc A u$, we get
\[ \Delta^2 u_1 = \rho\Delta u_2 + f_2 = -\rho\Delta f_1+f_2 \in L^2(\OO).\]
Let $s\in (0,\frac12)$. By \cite[Theorem~6]{Savare98}, the function $u_1$ belongs to $H^{2+s,2}(\OO)$, and we have the estimate
\[ \|u_1\|_{H^{s+2,2}(\OO)} \le C \big\| -\rho\Delta f_1+f_2\big\|_{H^{s-2,2}(\OO)} \le C
\big\| -\rho\Delta f_1+f_2\big\|_{L^2(\OO)}\le C \|f\|_{W^{2,2}(\OO)\times L^2(\OO)}.\]
From this and $\|u_2\|_{W^{2,2}(\OO)} = \|f_1\|_{W^{2,2}(\OO)}$, we obtain the continuous embedding
\eqref{eq:closed2}, where we endow $D(\mc A)$ with the graph norm. 

\textit{Proof of \ref{it:lem:spA3}.} By the Rellich--Kondrachov theorem, the space on the right-hand side of \eqref{eq:closed2} is
compactly embedded into $W^{2,2}_0(\OO)\times L^2(\OO)$. Therefore, the operator $\mc A$ has compact
resolvent and, consequently, its spectrum is discrete and consists of eigenvalues of finite
algebraic multiplicity. Therefore, to show the statement on the spectrum in \ref{it:lem:spA3}, it is 
sufficient to show that for each $\lambda$ with $\Re\lambda\ge 0$, the operator $\mc A+\lambda$ is injective.

Let $\lambda\in\C$ with $\Re\lambda\ge 0$. On $W^{2,2}_0(\OO)\times L^2(\OO)$, define the inner product
    \begin{equation*}
        \langle v,w\rangle:= (v_1, w_1)_{L^2(\OO)} + (\del v_1, \del w_1)_{L^2(\OO)} +(v_2, w_2)_{L^2(\OO)},\quad v,w \in W^{2,2}_0(\OO)\times L^2(\OO).
    \end{equation*}
    For $u =(u_1, u_2)^\top\in D(\mc{A})$ with $(\mc A+\lambda)u=0$, we obtain by integration by parts and using the boundary conditions, 
    \begin{equation*}
        0 = \Re \,\langle (\mc A+\lambda) u, u \rangle =(\Re \lambda)  \big( \|\Delta u_1\|_{L^2(\OO)}^2 +
        \|u_2\|_{L^2(\OO)}^2 \big)+ \rho \|\nabla u_2\|_{L^2(\OO;\C^d)}^2.
    \end{equation*}
As $\Re\lambda\ge 0$, this yields $\nabla u_2=0$. Hence, since $u_2\in W^{2,2}_0(\OO)$, we obtain $u_2=0$.
Furthermore, the second component of the equation $(\mc{A}+\lambda)u =0 $ implies that, $\del^2 u_1 = \rho\del u_2-\lambda u_2 =0$. Since $u_1\in W^{2,2}_0(\OO)$, this implies, using integration by parts again,
that $0= (\Delta^2 u_1,u_1)_{L^2(\OO)} = \|\Delta u_1\|_{L^2(\OO)}^2$ and therefore $u_1=0$. 
So we see that $u=0$, which shows that the operator $\mc A+\lambda$ is injective. 
\end{proof}

In order to show that the spectrum of $A$ on $\XX^{k,p}_\gam(\OO)$ is independent of $p$, $\gam$ and $k$, we need the following lemma about the consistency of resolvents.
\begin{lemma}\label{lem:consis}
    Let $p\in(1,\infty)$, $\kappa\in(0,1)$, $\gam\in ((1-\kappa)p-1,p-1)$, $k\geq 2$ an integer and let $\OO$ be a bounded $C^{1,\kappa}$-domain. Let 
    \begin{equation*}
        A_{p,k,\gam}:= A \quad \text{ on }\XX^{k,p}_\gam(\OO)\text{ with }D(A_{p,k,\gam}):= \DD^{k,p}_\gam(\OO)
    \end{equation*}
    be as in \eqref{eq:A_matrix}. Let $\mc{A}$ be the operator from Lemma~\ref{lem:spA}, i.e., the realisation of the operator in \eqref{eq:A_matrix} on $W^{2,2}_0(\OO)\times L^2(\OO)$ with domain 
    \begin{equation*}
        D(\mc{A}):= \big\{u\in W^{2,2}_0(\OO)\times L^2(\OO): \mc{A} u \in W^{2,2}_0(\OO)\times L^2(\OO)\big\}.
    \end{equation*}
    Then there exists a $\tilde{\lambda}>0$ such that for all $\lambda> \tilde{\lambda}$ the resolvents $R(\lambda, -A_{p,k,\gam})$ and $R(\lambda, -\mc{A})$ are consistent.
\end{lemma}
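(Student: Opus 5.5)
The plan is to show that both resolvents agree on a common dense subspace of data and then pass to the limit. Concretely, I take $\tilde\lambda$ as the maximum of the constants from Theorem \ref{thm:Rsect_dom} (for the given $p,k,\gam$) and from Lemma \ref{lem:spA}. Lemma \ref{lem:spA}\ref{it:lem:spA3} shows that every $\lambda$ with $\Re\lambda\ge0$ lies in $\rho(-\mc A)$. I fix $\lambda>\tilde\lambda$ and data $f\in \Cc^\infty(\OO)\times\Cc^\infty(\OO)$. This set lies in both $\XX^{k,p}_\gam(\OO)$ and $W^{2,2}_0(\OO)\times L^2(\OO)$, and it is dense in each. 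Consistency means that $R(\lambda,-A_{p,k,\gam})f=R(\lambda,-\mc A)f$ for all $f$ in the intersection of the two spaces. It suffices to prove this on $\Cc^\infty\times\Cc^\infty$ and then use a density argument, approximating in the sum norm; alternatively, one may simply state consistency on this core.

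Let $u=R(\lambda,-A_{p,k,\gam})f\in\DD^{k,p}_\gam(\OO)$ and $v=R(\lambda,-\mc A)f\in D(\mc A)$. The key step is to show $u\in D(\mc A)$. Injectivity of $\lambda+\mc A$ then gives $u=v$. From the first row of the equation, $u_2=\lambda u_1-f_1$. Moreover $u_1$ solves $\Delta^2u_1-\lambda\rho\Delta u_1+\lambda^2u_1=g:=f_2+(\lambda-\rho\Delta)f_1\in\Cc^\infty(\OO)$, with zero Dirichlet and Neumann traces in the weighted sense. I therefore need to show $u_1\in W^{2,2}_0(\OO)$. Once this holds, $u_2=\lambda u_1-f_1\in W^{2,2}_0(\OO)$ and $\mc A u\in W^{2,2}_0\times L^2$ follow from the equation.

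The regularity transfer is the main obstacle, because the weight $w_{\gam+kp}$ is strong at the boundary. I would argue by localisation, as in the proof of Theorem \ref{thm:Rsect_dom}. Interior regularity is clear from local elliptic regularity, since $g$ is smooth. Near the boundary, I flatten with the Dahlberg--Kenig--Stein map of Lemma \ref{lem:localisation_weighted_blow-up}, so that $\eta_nu_1$ becomes a function on $\RRdh$ solving a perturbed equation whose right-hand side involves commutator terms.

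Instead of proving the regularity directly, I would run the same perturbation and localisation scheme used for Theorem \ref{thm:Rsect_dom} in a second scale, namely $k=2$ with a Muckenhoupt weight. Concretely, I take $p=2$ and exponent $0$, i.e.\ the space $W^{2,2}_0(\OO)\times L^2(\OO)$, which corresponds to Theorem \ref{thm:DS} on the half-space. Both localisation procedures reduce to the half-space operators. There consistency holds by construction, because Step 1 of Theorem \ref{thm:Rsect_weight} builds all solutions from the Schwartz solutions of Lemma \ref{lem:smoothRHS}, which lie in every space. The local resolvents are obtained as Neumann series $R(\lambda,-\tilde A)\sum_j (\text{perturbation}\cdot R)^j$ in \cite[Theorem 16.2.4, Corollary 16.2.5]{HNVW24}. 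The perturbations $B$ and the commutators are the same differential expressions in both scales, so the resulting resolvents on $\OO$ agree on smooth data whenever both series converge, which holds for $\lambda$ large.

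The delicate point in this approach is that the $C^{1,\kappa}$ geometry requires the weighted scale for $k\ge2$. For the unweighted $L^2$ scale I would therefore work in the lowest level: I would use Theorem \ref{thm:Rsect_noweight}, extended to the domain via the Remark after Theorem \ref{thm:Rsect_special_dom}, which allows $C^1$-domains with $\cir W_0$-spaces. I would then embed both $\DD^{k,p}_\gam$ and $D(\mc A)$ into the common space $L^1_{\loc}$, so that the identification holds pointwise.
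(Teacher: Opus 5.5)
You correctly identify the crux: transferring regularity between $\DD^{k,p}_\gam(\OO)$ and $D(\mc A)$. But there is a genuine gap, because the route you propose for that step does not work.

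The perturbation/localisation scheme cannot be run in the scale $W^{2,2}_0(\OO)\times L^2(\OO)$ (Theorem \ref{thm:DS} with $p=2$, $\gam=0$) on a $C^{1,\kappa}$-domain. After the Dahlberg--Kenig--Stein pullback, $B_2$ contains terms such as $(\d^\nu h_1\circ\Phi^{-1})\,\d_1u_1$ with $|\nu|=4$. The only available bound on that coefficient (Lemma \ref{lem:localisation_weighted_blow-up}) is $y_1^{-(3-\kappa)}$. For $u_1\in W^{4,2}\cap W^{2,2}_0$, the function $\d_1u_1$ vanishes only to first order at $\d\RRdh$. Since $y_1^{-(2-\kappa)}$ is not square integrable near the boundary, there is no estimate $\|B_2u_1\|_{L^2}\lesssim[\OO]_{C^{1,\kappa}}\|u_1\|_{W^{4,2}}$. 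So the $L^2$-Neumann series you want to compare with is not available. This is exactly why the paper notes that classical localisation needs $C^4$, and why Lemma \ref{lem:est_pert} relies on the extra weight $w_{\gam+kp}$ with $k\ge2$.

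Your fallback is not available either. The Remark after Theorem \ref{thm:Rsect_special_dom} only relaxes $C^{1,\kappa}$ to $C^1$ within the $k\ge 2$ scale. Nothing in the paper carries Theorem \ref{thm:Rsect_noweight} (base space $L^p\times W^{-2,p}$) over to domains; that would require estimating $B$ in $W^{-2,p}$. Finally, embedding both domains into $L^1_{\loc}$ can only identify limits of approximations already known to agree. It provides no uniqueness, which is what consistency ultimately rests on.

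The missing idea is to compare the two solutions inside a third space already covered by the weighted theory, where uniqueness is known.

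\begin{itemize}
\item The paper fixes $1<q<\min\{p,2\}$ and $\mu\in(2q-1,3q-1)$ with $\mu>\frac{q(\gam+2p+1)}{p}-1>(3-\kappa)q-1$.
\item Hardy's inequality together with H\"older's inequality, $L^p(\OO,w^{\d\OO}_{\gam+2p})\hookrightarrow L^q(\OO,w^{\d\OO}_\mu)$, gives $\DD^{k,p}_\gam(\OO)\hookrightarrow\DD^{2,p}_\gam(\OO)\hookrightarrow W^{4,q}_0(\OO,w^{\d\OO}_\mu)\times W^{2,q}(\OO,w^{\d\OO}_\mu)$.
\item The $L^2$-solution lands in the same space by H\"older (using $q<2$, $\mu>0$) combined with elliptic regularity.
\item Since $\mu-2q\in((1-\kappa)q-1,q-1)$, this space is $\DD^{2,q}_{\mu-2q}(\OO)$. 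Theorem \ref{thm:Rsect_dom}, applied with $(q,\mu-2q,2)$ in place of $(p,\gam,k)$, gives uniqueness there, so the two resolvents coincide.
\end{itemize}

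No perturbation argument outside the weighted $k\ge2$ scale is needed.
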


\begin{proof}
Let $f\in \XX^{k,p}_\gam(\OO) \cap \big(W^{2,2}_0(\OO)\times L^2(\OO)\big)$. We have to
 show that, for sufficiently large $\lambda$,  the resolvents $R(\lambda, -A_{p,k,\gam})$ and $R(\lambda, -\mc{A})$ exist and are equal when applied to $f$. For this, we 
 first show an embedding result.

 Take $1<q<\min\{p,2\}$ and $\mu\in (2q-1, 3q-1)$ such that 
    \begin{equation}\label{eq:cond_kappa}
        \mu>  \frac{q(\gam+2p+1)}{p}-1> (3-\kappa)q-1. 
    \end{equation}
    Using that $\gam\in ((1-\kappa)p-1,p-1)$, we can always find a $\mu$ such that \eqref{eq:cond_kappa} holds. First, we claim that 
    \begin{equation}\label{eq:claim}
        L^p(\OO, w_{\gam+2p}^{\d\OO})\hookrightarrow L^q(\OO, w_{\mu}^{\d\OO}).
    \end{equation} 
    Indeed, for $u\in L^p(\OO, w_{\gam+2p}^{\d\OO})$ we have by H{\"o}lder's inequality that
    \begin{equation*}
        \int_{\OO}|u(x)|^q w_{\mu}^{\d\OO}(x)\dd x \leq \Big(\int_{\OO} |u(x)|^pw_{\gam+2p}^{\d\OO}(x)\dd x\Big)^{\frac{q}{p}} \Big(\int_\OO w_{\frac{\mu p - q(\gam+2p)}{p-q}}^{\d\OO}(x)\dd x\Big)^{\frac{p-q}{q}}<\infty.
    \end{equation*}
    The latter integral can be written as an integral over $\RRdh$ (using the localisation from Lemma \ref{lem:decomp} and the diffeomorphism from Lemma \ref{lem:localisation_weighted_blow-up}), hence the integral is finite since \eqref{eq:cond_kappa} implies $(\mu p - q(\gam+2p))/ (p-q)>-1$. This proves the claim.
    
    We introduce for $r\in (1,\infty)$ and $\nu>-1$ the space
    \begin{equation*}
        Z_{r,\nu}:=\big\{u\in W^{2,r}_0(\OO, w_{\nu}^{\d\OO})\times L^r(\OO, w_{\nu}^{\d\OO}): \mc{A} u \in W^{2,r}_0(\OO, w_{\nu}^{\d\OO})\times L^r(\OO, w_{\nu}^{\d\OO})\big\}.
    \end{equation*}
    Note that $D(\mc{A})= Z_{2,0}$. For $\lambda$ large enough, consider the equation
    \begin{equation}\label{eq:resol_eq}
        \lambda  \tilde{w} + A \tilde{w} = f ,\qquad f\in \XX^{k,p}_\gam(\OO) \cap \big(W^{2,2}_0(\OO)\times L^2(\OO)\big).
    \end{equation}
     Then by Theorem \ref{thm:Rsect_dom}, there exists a unique solution $\tilde{w}_0\in \DD^{k,p}_\gam(\OO)$. 
    Hardy's inequality (for Lipschitz domains, see e.g., \cite[Theorem 8.8]{Ku85}) and \eqref{eq:claim} yield 
    \begin{align*}
        \tilde{w}_0\in\DD^{k,p}_\gam(\OO) &\hookrightarrow \DD^{2,p}_\gam(\OO) = W^{4,p}_0(\OO, w_{\gam+2p}^{\d\OO})\times W^{2,p}(\OO, w_{\gam+2p}^{\d\OO})\\
        &\hookrightarrow W^{4,q}_0(\OO, w_{\mu}^{\d\OO})\times W^{2,q}(\OO, w_{\mu}^{\d\OO}). 
    \end{align*}
   Furthermore, for each $\lambda\in\CC$ with $\Re\lambda\ge  0$, we have $\lambda\in\rho(-\mc A)$ by Lemma \ref{lem:spA}\ref{it:lem:spA3}. As $\mc A$ is closed by Lemma  \ref{lem:spA}\ref{it:lem:spA1}, this is equivalent to the bijectivity of the operator 
     $\lambda+\mc A$. Therefore, there also exists a unique solution $\tilde{w}_1\in Z_{2,0}$ of \eqref{eq:resol_eq}. Using $\mu>0$, $q<2$ and the elliptic regularity (Theorem \ref{thm:Rsect_dom} using \eqref{eq:cond_kappa}), we find
    \begin{equation*}
      \tilde{w}_1\in   Z_{2,0}\hookrightarrow Z_{q, \mu}= W^{4,q}_0(\OO, w_{\mu}^{\d\OO})\times W^{2,q}(\OO, w_{\mu}^{\d\OO}).
    \end{equation*}
    Note that \eqref{eq:resol_eq} with right-hand side $f\in L^q(\OO, w_{\mu}^{\d\OO})$ has a unique solution $\tilde{w}$ by Theorem \ref{thm:Rsect_dom} (again using \eqref{eq:cond_kappa}). It follows that $\tilde{w}_0=\tilde{w}_1$, which proves that the resolvents of $-A_{p,k,\gam}$ and $-\mc{A}$ are consistent.
    \end{proof}

Using the above lemmas about the spectrum of $\mc{A}$ and the consistency, we can now prove Theorem \ref{thm:Rsect_dom_noshift}. 
\begin{proof}[Proof of Theorem \ref{thm:Rsect_dom_noshift}]
\textit{Proof of \ref{it:thm:Rsect_dom_noshift1}.} 
    Since the embedding $W^{1,p}(\OO, w_{\gam_0}^{\d\OO})\hookrightarrow L^p(\OO, w_{\gam_0}^{\d\OO})$ is compact for all $p\in(1,\infty)$ and $\gam_0>-1$ (see \cite[Theorem 8.8]{GO89}), it follows that the embedding $\DD^{k,p}_\gam(\OO)\hookrightarrow \XX^{k,p}_\gam(\OO)$ is compact as well. Hence, the resolvent operator $R(\lambda, A)$ with $\lambda\in \rho(A)$ is compact. By the Riesz--Schauder theorem for compact operators, the resolvent operator $R(\lambda, A)$ has a discrete countable spectrum $\{\sigma_j: j\in\NN_0\}$, where $\sigma_j\neq 0$ are eigenvalues. Furthermore, zero is contained in the spectrum of $R(\lambda, A)$ and is the only accumulation point of the spectrum. Therefore, the spectral mapping theorem implies that $\sigma(A)$ is a discrete set of eigenvalues. 

    To continue, we prove that the spectrum $\sigma(A)$ is independent of $p$, $k$ and $\gam$. Let $A_{p,k, \gam}$ and $\mc{A}$ be as in Lemma \ref{lem:consis}. It suffices to prove that $\sigma(A_{p,k,\gam})=\sigma(\mc{A})$. Note that $\sigma(\mc{A})$ is a discrete set of eigenvalues contained in the open right half-plane by Lemma \ref{lem:spA}. By Lemma \ref{lem:consis} and analytic continuation it holds that $R(z, A_{p,k,\gam})$ and $R(z, \mc{A})$ are consistent for all $z\in \rho(A_{p,k,\gam})\cap \rho(\mc{A})$. If $\mu\in \rho(\mc{A})$, then since $\sigma(A_{p,k, \gam})$ is discrete and countable, there exists an $r>0$ such that $\overline{B(\mu, r)}\setminus\{\mu\}\subseteq \rho(A_{p,k,\gam})\cap \rho(\mc{A})$. Therefore, by consistency of the resolvents, we find
    \begin{equation*}
        \int_{\d B(\mu,r)}R(z, A_{p,k, \gam})\dd z = \int_{\d B(\mu,r)} R(z, \mc{A})\dd z =0,
    \end{equation*}
    which implies that $\mu\in \rho(A_{p,k, \gam})$. The other inclusion follows similarly. This proves that $\sigma(A_{p,k,\gam})=\sigma(\mc{A})$ and thus $\sigma(A_{p,k,\gam})$ is independent of $p$, $k$ and $\gam$.

    Using the properties of $\sigma(\mc{A})$ from Lemma \ref{lem:spA}, now yields that $\sigma(A_{p,k,\gam})$ is a discrete set of eigenvalues contained in the open right half-plane.

    \textit{Proof of \ref{it:thm:Rsect_dom_noshift2}.} By Theorem \ref{thm:Rsect_dom}, it holds for $\lambda_0$ large enough that $\lambda_0+ A$ is $R$-sectorial with $\om_R(\lambda_0+A)<\frac\pi 2$.  
    From this and the fact that the spectrum of $A$ is discrete, we see that 
    for each $r>0$ only finitely many eigenvalues $\lambda$ with $\Re\lambda<r$ exist. As the imaginary axis belongs to $\rho(A)$ by \ref{it:thm:Rsect_dom_noshift1}, this yields 
    $\min_{\lambda\in \sigma(A)}\Re \lambda >0$. We set $\tilde\lambda := \frac12\min_{\lambda\in \sigma(A)}\Re \lambda $. Using the analyticity of the resolvent, it follows that there exists some $\sigma<\frac\pi 2$ such that for $\lambda_0> -\tilde{\lambda}$,  the operator $\lambda_0+ A$ is $R$-sectorial with $\om_R(\lambda_0+A)\leq \sigma$.
\end{proof}

\section{Maximal regularity for the damped plate equation}\label{sec:MR}
We prove our main results on maximal regularity (see Section \ref{subsec:prelim-Rsect}) for the damped plate equation on the half-space and on bounded domains.

\subsection{Damped plate equation with homogeneous boundary conditions}\label{sec:plateeq_hom}
We first study the damped plate equation with homogeneous boundary conditions, i.e.,
\begin{equation}\label{eq:plate_eq_MR_hom}
\left\{
  \begin{aligned}
  &\d_t^2 u +\del^2 u-\rho\del \d_t u = f\qquad &(t,x)&\in (0,T)\times \OO,\\
   &u=\d_{\vec{n}}u= 0\qquad &(t,x)&\in (0,T)\times \d\OO,\\
   &u|_{t=0}=(\d_tu)|_{t=0}=0\qquad &x&\in \OO.
\end{aligned}\right.
\end{equation}
Throughout, $\rho>0$ is fixed, we write $I:=(0,T)$ for some $T\in(0,\infty]$ and $\vec{n}$ denotes the  inward pointing unit normal vector at $\d\OO$. Furthermore, for $q\in (1,\infty)$ and $\mu\in (-1,q-1)$, we denote by $v_\mu(t):=|t|^\mu$ a temporal weight.\\

We recall that the spaces $\XX^{k,p}_\gam(\OO)$ and $\DD^{k,p}_\gam(\OO)$ are as defined in \eqref{eq:XXDD_dom}. Furthermore, we introduce the following short-hand notation.  Let $p,q\in (1,\infty)$, $\gam\in (-1,p-1)$, $\mu\in (-1,q-1)$, $k\geq 2$ an integer and $T\in (0, \infty]$. We define the space for the data $f$ as
\begin{align*}
    \F^{k-2}_{q,p}(I, v_\mu; \OO, w_{\gam+kp}^{\d\OO})&:= L^q(I, v_\mu; W^{k-2,p}(\OO, w_{\gam+kp}^{\d\OO})).
\end{align*}
For the solution $u$ we define the spaces
\begin{align*}
    \U^{k-2}_{q,p}(I, v_\mu; \OO, w^{\d\OO}_{\gam+kp})&:=  W^{2,q}(I, v_\mu; W^{k-2,p}(\OO, w_{\gam+kp}^{\d\OO}))\cap L^q(I, v_\mu; W^{k+2,p}(\OO, w_{\gam+kp}^{\d\OO})),\\
    \prescript{}{0}{\U}^{k-2}_{q,p}(I, v_\mu; \OO, w_{\gam+kp}^{\d\OO})&:= \big\{u\in \U^{k-2}_{q,p}(I, v_\mu; \OO, w_{\gam+kp}^{\d\OO}): u|_{t=0}=(\d_t u)|_{t=0}=0\big\}.
\end{align*}
Similarly, we define the spaces with zero boundary conditions at $\d\OO$ as
\begin{align*}
    \U^{k-2}_{q,p,0}(I, v_\mu; \OO, w^{\d\OO}_{\gam+kp})&:=  W^{2,q}(I, v_\mu; W^{k-2,p}(\OO, w_{\gam+kp}^{\d\OO}))\cap L^q(I, v_\mu; W^{k+2,p}_0(\OO, w_{\gam+kp}^{\d\OO})),\\
    \prescript{}{0}{\U}^{k-2}_{q,p,0}(I, v_\mu; \OO, w_{\gam+kp}^{\d\OO})&:= \big\{u\in \U^{k-2}_{q,p,0}(I, v_\mu; \OO, w_{\gam+kp}^{\d\OO}): u|_{t=0}=(\d_t u)|_{t=0}=0\big\}.
\end{align*}

Using $R$-sectoriality of the operator matrix $A$ obtained in Sections \ref{sec:Rsect} and \ref{sec:Rsect_domains} for the half-space and bounded domains, respectively, we can establish maximal regularity for the damped plate equation. 
\begin{theorem}\label{thm:MR_homBC}
  Let $p,q\in(1,\infty)$, $\gam\in (-1,p-1)$, $\mu\in (-1, q-1)$, $k\geq 2$ an integer and $\rho>0$. Assume that either
  \begin{enumerate}[(i)]
      \item $\OO=\RRdh$ and $T\in(0,\infty)$, or,
      \item $\kappa\in (0,1)$, $\gam>(1-\kappa)p-1$, $\OO$ is a bounded $C^{1,\kappa}$-domain and $T\in (0,\infty]$.
  \end{enumerate}
 Then the following assertions hold.  
  \begin{enumerate}[(i)]
    \item\label{it:thm:MR_homBC1} The realisation of the operator $A$ in \eqref{eq:A_matrix} on $\XX^{k,p}_\gam(\OO)$ with domain $\DD^{k,p}_\gam(\OO)$ has maximal $L^q(v_\mu)$-regularity on $I$. Furthermore, $-A$ generates an analytic $C_0$-semigroup on $\XX^{k,p}_{\gam}(\OO)$. Moreover, if $\OO$ is a bounded domain, then this semigroup is uniformly exponentially stable.
    \item\label{it:thm:MR_homBC2} For all $f\in \F^{k-2}_{q,p}(I, v_\mu; \OO, w_{\gam+kp}^{\d\OO})$  there exists a unique solution $$u\in  \prescript{}{0}{\U}^{k-2}_{q,p,0}(I, v_\mu; \OO, w_{\gam+kp}^{\d\OO})$$ to \eqref{eq:plate_eq_MR_hom}.
     Moreover, this solution satisfies
        \begin{equation*}
          \|u\|_{\U^{k-2}_{q,p}(I, v_\mu; \OO, w^{\d\OO}_{\gam+kp})} \leq C \|f\|_{\F^{k-2}_{q,p}(I, v_\mu; \OO, w_{\gam+kp}^{\d\OO})},
        \end{equation*}
        where the constant $C>0$ only depends on $p,q, \gam, \mu, k, \rho, d$ and $T$. If $\OO$ is a bounded domain, then the constant $C$ is independent of $T$.
  \end{enumerate}
\end{theorem}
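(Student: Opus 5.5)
We derive both assertions from the $R$-sectoriality results of Sections~\ref{sec:Rsect} and~\ref{sec:Rsect_domains}, combined with the abstract criterion for maximal $L^q(v_\mu)$-regularity recalled in Section~\ref{subsec:prelim-Rsect}.

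\emph{UMD property.} First we check that $\XX^{k,p}_\gam(\OO)$ is a UMD space. The components $W^{k,p}(\OO,w^{\d\OO}_{\gam+kp})$ embed isometrically as closed subspaces of $L^p(\OO,w^{\d\OO}_{\gam+kp};\CC^{M})$ for a suitable $M$, hence they are UMD. The negative-order component is a dual of such a space (a closed subspace of an $L^{p'}$-space). In the bounded-domain case we instead use the retraction $\PP,\II$ from Lemma~\ref{lem:decomp}: the spaces are complemented in finite sums of half-space and $\RRd$ spaces, which are UMD. UMD is stable under these operations, so $\XX^{k,p}_\gam(\OO)$ is UMD.

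\emph{Maximal regularity and the semigroup (assertion (i)).} In the half-space case, Theorem~\ref{thm:Rsect_weight} gives a $\sigma\in(\vartheta(\rho),\pi/2)$ with $\om_R(\lambda_0+A)\le\sigma<\pi/2$ for $\lambda_0$ large; this is possible because $\vartheta(\rho)<\pi/2$. The cited criterion then yields maximal $L^q(v_\mu)$-regularity on $(0,T)$ for $T<\infty$. In the bounded case, Theorem~\ref{thm:Rsect_dom_noshift}(ii) with $\lambda_0=0$ gives $R$-sectoriality of $A$ itself with angle $<\pi/2$, so the case $\lambda=0$ of the criterion gives maximal regularity on $\RR_+$, and hence on every $(0,T)$ by restriction/extension by zero. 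Sectoriality of $\lambda_0+A$ with angle $<\pi/2$ means that $-A$ generates an analytic semigroup, which is $C_0$ since $D(A)$ is dense. Exponential stability follows because $A-\tilde\lambda$ is still sectorial of angle $<\pi/2$ by Theorem~\ref{thm:Rsect_dom_noshift}(ii), which gives $\|e^{-tA}\|\lesssim e^{-\tilde\lambda t}$.

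\emph{Translation to the plate equation (assertion (ii)).} Given $f$, apply (i) to $F=(0,f)^\top\in L^q(I,v_\mu;\XX^{k,p}_\gam(\OO))$. This is legitimate since $\|F\|=\|f\|_{\F^{k-2}_{q,p}}$. We obtain a unique $v=(v_1,v_2)$ with
\[
v\in W^{1,q}(I,v_\mu;\XX^{k,p}_\gam)\cap L^q(I,v_\mu;\DD^{k,p}_\gam),\qquad v(0)=0.
\]
The first row gives $\d_t v_1=v_2$. Hence $v_1\in L^q(W^{k+2,p}_0)$ and $\d_t v_1=v_2\in W^{1,q}(W^{k-2,p})\cap L^q(W^{k,p})$, so $u:=v_1$ satisfies $\d_t^2u=\d_tv_2\in L^q(W^{k-2,p})$ and $u\in\prescript{}{0}{\U}^{k-2}_{q,p,0}$. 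The initial conditions hold because $v(0)=0$, using the temporal trace $W^{1,q}(I,v_\mu;X)\hookrightarrow C(\overline I;X)$ for $\mu<q-1$. The second row gives the PDE, and $u\in W^{k+2,p}_0$ encodes $u=\d_{\vec n}u=0$. The estimate follows from the maximal-regularity estimate, together with $\|v\|\lesssim\|Av\|+\|\d_tv\|$ and boundedness of $A^{-1}$ in the bounded case; on the half-space we use a shift and $T<\infty$ instead, so the constant may depend on $T$. For uniqueness, any solution $u$ gives $v=(u,\d_tu)$ solving the first-order problem with the same regularity, and uniqueness there applies.

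\emph{Main obstacle.} The delicate point is the bounded-domain, $T=\infty$ case. There we must make sure the constant is $T$-independent and that the lower-order part $\|v\|_{L^q(\XX)}$ is controlled. Both follow from $0\in\rho(A)$ together with the unshifted $R$-sectoriality. The other point needing care is justifying the UMD property and the identification of the $W^{-k}$-type components.
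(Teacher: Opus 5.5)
Your route for assertion (i) and for the existence part of (ii) matches the paper's. It runs: $R$-sectoriality with angle below $\pi/2$ from Theorem \ref{thm:Rsect_weight}, respectively Theorem \ref{thm:Rsect_dom_noshift}\ref{it:thm:Rsect_dom_noshift2}, then the abstract maximal regularity criterion, then the reduction $v=(u,\d_t u)^\top$. Two small slips there. Theorem \ref{thm:Rsect_dom_noshift}\ref{it:thm:Rsect_dom_noshift2} requires $\lambda_0>-\tilde\lambda$ strictly, so the decay estimate should use $A-\tilde\lambda/2$, not $A-\tilde\lambda$. Also, since $k\geq 2$, $\XX^{k,p}_\gam(\OO)$ has no negative-order component.

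The genuine gap is in uniqueness. You claim that any solution $u\in\U^{k-2}_{q,p,0}(I,v_\mu;\OO,w^{\d\OO}_{\gam+kp})$ gives $v=(u,\d_t u)^\top$ ``with the same regularity'' as the first-order solution class. This is not automatic.

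\begin{itemize}
\item Membership in $\U^{k-2}_{q,p,0}$ only yields $\d_t u\in W^{1,q}(I,v_\mu;W^{k-2,p}(\OO,w^{\d\OO}_{\gam+kp}))$ and $u\in L^q(I,v_\mu;W^{k+2,p}_0(\OO,w^{\d\OO}_{\gam+kp}))$.
\item The class $W^{1,q}(I,v_\mu;\XX^{k,p}_\gam(\OO))\cap L^q(I,v_\mu;\DD^{k,p}_\gam(\OO))$ requires $\d_t u\in L^q(I,v_\mu;W^{k,p}(\OO,w^{\d\OO}_{\gam+kp}))$. This is the second component of $\DD^{k,p}_\gam(\OO)$, and also the first component $-\d_t u$ of $Av$ measured in $\XX^{k,p}_\gam(\OO)$.
\end{itemize}

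Without this you do not even know that $v(t)\in D(A)$ for a.e.\ $t$. So $v$ is not a strong solution, and the uniqueness part of maximal regularity cannot be invoked. What is missing is the embedding
\[
\U^{k-2}_{q,p,0}(I,v_\mu;\OO,w^{\d\OO}_{\gam+kp})\hookrightarrow W^{1,q}\big(I,v_\mu;W^{k,p}(\OO,w^{\d\OO}_{\gam+kp})\big).
\]
It trades one time derivative for two spatial derivatives of $\d_t u$. The paper proves it as \eqref{eq:emb_U} from two ingredients:
\begin{itemize}
\item a weighted mixed-derivative theorem, $W^{2,q}(I,v_\mu;X_0)\cap L^q(I,v_\mu;X_1)\hookrightarrow W^{1,q}(I,v_\mu;[X_0,X_1]_{\half})$ (\cite{LMV17}, \cite{LV18});
\item the nontrivial weighted interpolation identity with boundary conditions $[W^{k-2,p}(\OO,w^{\d\OO}_{\gam+kp}),W^{k+2,p}_0(\OO,w^{\d\OO}_{\gam+kp})]_{\half}=W^{k,p}(\OO,w^{\d\OO}_{\gam+kp})$ from \cite{Ro25}.
\end{itemize}
The identity is transferred to bounded $C^{1,\kappa}$-domains via Proposition \ref{prop:isom} and Lemma \ref{lem:decomp}, and this is where $k\geq 2$ is used. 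This step, not the $T$-independence or the UMD property (both routine), is the delicate point of assertion (ii).
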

\begin{proof}
Theorem \ref{thm:Rsect_weight} (for $\OO=\RRdh$) and Theorem \ref{thm:Rsect_dom_noshift}\ref{it:thm:Rsect_dom_noshift2} (for $\OO$ a bounded domain) together with \cite[Theorems 17.3.1(2), 17.2.26(1) \& 17.2.39]{HNVW24} (using that $\CC$ is a $\UMD$ Banach space and $\vartheta(\rho)<\frac{\pi}{2}$)
   gives maximal $L^q(v_\mu)$-regularity of $A$ on $I$. Furthermore, \cite[Theorem 17.2.15]{HNVW24} implies that $-A$ generates an analytic $C_0$-semigroup. If $\OO$ is a bounded domain, then $0\in \rho(A)$ by Theorem \ref{thm:Rsect_dom_noshift}\ref{it:thm:Rsect_dom_noshift1}, hence \cite[Proposition 17.2.8 \& Corollary 17.2.25]{HNVW24} imply that this semigroup is uniformly exponentially stable. This proves \ref{it:thm:MR_homBC1}. For \ref{it:thm:MR_homBC2} it follows from \ref{it:thm:MR_homBC1} and \cite[Propositions 17.2.7 \& 17.2.8]{HNVW24} that there exists a unique solution $\tilde{u}=(\tilde{u}_1,\tilde{u}_2)^\top\in W^{1,q}(I, v_\mu; \XX^{k,p}_\gam(\OO))\cap L^q(I, v_\mu; \DD^{k,p}_\gam(\OO))$ to the first-order Cauchy problem
   \begin{equation}\label{eq:Cauchy_homBC}
     \d_t \tilde{u} + A\tilde{u} =\begin{pmatrix}
                    0 \\
                    f 
                  \end{pmatrix},\qquad \tilde{u}|_{t=0}=0. 
   \end{equation}
   Furthermore, this solution satisfies
   \begin{equation}\label{eq:est_v_homBC}
     \|\tilde{u}\|_{W^{1,q}(I, v_\mu; \XX^{k,p}_\gam(\OO))}+ \|\tilde{u}\|_{L^q(I, v_\mu; \DD^{k,p}_\gam(\OO))}\leq  C \|f\|_{L^q(I, v_\mu; W^{k-2,p}(\OO, w_{\gam+kp}^{\d\OO}))}.
   \end{equation}
   Next, by setting $u:=\tilde{u}_1$ we obtain that $\d_t u = \tilde{u}_2$ by the first component in \eqref{eq:Cauchy_homBC}. Hence, $u$ solves \eqref{eq:plate_eq_MR_hom}. Moreover, it holds that $u\in \prescript{}{0}{\U}^{k-2}_{q,p,0}(I, v_\mu; \OO, w_{\gam+kp}^{\d\OO})$ and 
   \begin{align*}
     \|u\|_{\U^{k-2}_{q,p}(I, v_\mu; \OO, w^{\d\OO}_{\gam+kp})}  \lesssim &\; \sum_{j=0}^1\|\d_t^ju\|_{W^{1,q}(I, v_\mu;W^{k-2,p}(\OO, w_{\gam+kp}^{\d\OO}))} 
      +\|u\|_{L^q(I, v_\mu; W^{k+2,p}(\OO, w_{\gam+kp}^{\d\OO}))}\\
      \lesssim &\; \|f\|_{\F^{k-2}_{q,p}(I, v_\mu; \OO, w_{\gam+kp}^{\d\OO})},
   \end{align*}
   where we have used that $(\tilde{u}_1, \tilde{u}_2)=(u, \d_t u)$ and \eqref{eq:est_v_homBC}.
   
   Conversely, if $u\in\prescript{}{0}{\U}^{k-2}_{q,p,0}(I, v_\mu; \OO, w_{\gam+kp}^{\d\OO})$ solves \eqref{eq:plate_eq_MR_hom}, then $\tilde{u}: =(u, \d_t u)^\top $ belongs to $W^{1,q}(I, v_\mu; \XX^{k,p}_\gam(\OO))\cap L^q(I, v_\mu; \DD^{k,p}_\gam(\OO))$ and solves \eqref{eq:Cauchy_homBC}. Indeed, this follows from the embedding 
   \begin{equation}\label{eq:emb_U}
       \U^{k-2}_{q,p,0}(I, v_\mu; \OO, w_{\gam+kp}^{\d\OO})\hookrightarrow W^{1,q}(I, v_\mu; W^{k,p}(\OO, w_{\gam+kp}^{\d\OO})).
   \end{equation}
   To prove \eqref{eq:emb_U}, note that by \cite[Proposition 5.5]{LMV17}, \cite[Theorem 3.18]{LV18} (which also holds on $I$ by a restriction argument) and \cite[Proposition 6.2]{Ro25} (using $k\geq 2$ and a localisation argument based on Proposition \ref{prop:isom} and Lemma \ref{lem:decomp}), we obtain
   \begin{equation*}
        \begin{aligned}
     W^{2,q}\big(I, v_\mu; &W^{k-2,p}(\OO, w_{\gam+kp}^{\d\OO})\big)\cap L^q\big(I, v_\mu; W^{k+2,p}_0(\OO, w_{\gam+kp}^{\d\OO})\big)\\
      & \hookrightarrow W^{1,q}\big(I, v_\mu; [W^{k-2,p}(\OO, w_{\gam+kp}^{\d\OO}), W^{k+2,p}_0(\OO, w_{\gam+kp}^{\d\OO})]_{\half}\big)\\
      & =W^{1,q}\big(I, v_\mu; W^{k,p}(\OO, w_{\gam+kp}^{\d\OO})\big). 
   \end{aligned}
   \end{equation*}
 Thus, the solution $u$ to \eqref{eq:plate_eq_MR_hom} is unique and this completes the proof of \ref{it:thm:MR_homBC2}. 
 \end{proof}

\subsection{Damped plate equation with inhomogeneous boundary conditions}
We turn to maximal regularity for the damped plate equation with inhomogeneous boundary conditions. Consider
\begin{equation}\label{eq:plate_eq_MR_inhom}
\left\{
  \begin{aligned}
  &\d_t^2 u +\del^2 u-\rho\del \d_t u = f\qquad &(t,x)&\in (0,T)\times \OO,\\
   &u= g_0\qquad &(t,x)&\in (0,T)\times \d\OO,\\
   &\d_{\vec{n}}u= g_1\qquad &(t,x)&\in (0,T)\times \d\OO,\\
   &u|_{t=0}=(\d_tu)|_{t=0}=0\qquad &x&\in \OO.
\end{aligned}\right.
\end{equation}
Again, $\rho>0$ is fixed, we write $I:=(0,T)$ for some $T\in(0,\infty]$ and $\vec{n}$ denotes the inward pointing unit normal vector at $\d\OO$. The functions $g_0$ and $g_1$ denote the boundary data. As is known from the standard unweighted theory for boundary value problems \cite{DHP07}, the optimal space for the boundary data is an intersection space of Triebel--Lizorkin spaces in time and Besov spaces in space. Recently, in \cite{DR25} the trace theory has been extended to the weighted setting with rough domains. The theory for the temporal trace in the weighted setting is more delicate than in the classical unweighted setting, see, e.g., \cite[Section 7]{LV18}. Therefore, we will not consider non-zero initial data.\\

For the definition of weighted Triebel--Lizorkin spaces, we refer to, e.g., \cite[Section 3.1]{MV12}. Similar as in \cite{DR25} (and, e.g., \cite[Definition 3.6.1]{Tr78} in the case of smooth domains), we define spaces on $\d\OO$ as follows. Let $\OO$ be a bounded $C^1$-domain and fix $(V_n)_{n=1}^N$, $(\OO_n)_{n=1}^N$ and  $(\eta_n)_{n=0}^N$ from Lemma \ref{lem:decomp}. For $n\in\{1,\dots, N\}$, let $\Phi_n:\OO_n \to \RRdh$ be the Dahlberg--Kenig--Stein pullback from Lemma \ref{lem:localisation_weighted_blow-up}. Recall that for any $f$ defined on $\overline{\OO_n}$, the change of coordinates mapping is $(\Phi_n)_*f:= f\circ \Phi^{-1}_n$. Furthermore, for any $g$ defined on $\d\OO_n$, we write $(\Phi_n (0,\cdot))_* g = g(\Phi^{-1}_n(0, \cdot))$ which defines a function on $\RR^{d-1}$.

Let $p\in (1,\infty)$ and $s>0$. Let $L^p(\d\OO)$ be the Lebesgue space with respect to the surface measure. 
We define 
    \begin{align*}
        B^{s}_{p,q}(\d\OO):=
        \big\{g\in L^p(\d\OO): (\Phi_n(0,\cdot))_* \eta_n^2 g \in B^s_{p,q}(\RR^{d-1})\; \text{for all } n\in \{1,\dots, N\}\big\},
    \end{align*}
equipped with the norm
\begin{align*}
    \|g\|_{B^s_{p,q}(\d\OO)}&:=\sum_{n=1}^N\|(\Phi_n(0,\cdot))_*\eta_n^2 g\|_{B^s_{p,q}(\RR^{d-1})},\qquad g\in B^s_{p,q}(\d\OO).
\end{align*}
Note that this definition is independent of the partition of unity and the diffeomorphisms up to equivalent norms. \\

In addition to the notation in Section \ref{sec:plateeq_hom}, we introduce the following short-hand notation for the spaces for the boundary data. Let $j\in \{0,1\}$, $p,q\in (1,\infty)$, $\gam\in (-1,p-1)$, $\mu\in (-1,q-1)$, $k\geq 2$ an integer and $t\in (0, \infty]$. We define
\begin{align*}
    \G^{j,\gam}_{q,p}(I, v_\mu; \d\OO) &:= F_{q,p}^{1-\frac{j}{2}-\frac{\gam+1}{2p}}(I, v_\mu; L^p(\d\OO))\cap L^q(I, v_\mu; B^{2-j-\frac{\gam+1}{p}}_{p,p}(\d\OO)),\\
    \prescript{}{0}{\G}^{j,\gam}_{q,p}(I, v_\mu; \d\OO)&:=\begin{cases}
        \G^{j,\gam}_{q,p}(I, v_\mu;\d\OO)& \mbox{ if }1-\frac{j}{2}-\frac{\gam+1}{2p}< \frac{\mu+1}{q},\\
        \big\{g\in \G^{j,\gam}_{q,p}(I, v_\mu; \d\OO): g|_{t=0}=0\big\}&\mbox{ if }1-\frac{j}{2}-\frac{\gam+1}{2p}> \frac{\mu+1}{q}.
    \end{cases}
\end{align*}
The temporal traces in the above spaces are well defined, see, e.g., \cite[Theorem 1.2]{Ro25}. Furthermore, it should be noted that the first-order temporal trace in $\G^{j,\gam}_{q,p}(I, v_\mu;\d\OO)$ only exists if
\begin{equation*}
    1-\frac{j}{2}-\frac{\gam+1}{2p}> 1+\frac{\mu+1}{q}.
\end{equation*}
Using that $j\in\{0,1\}$, $\gam\in(-1,p-1)$ and $\mu\in (-1, q-1)$, we see that $\frac{j}{2}+\frac{\gam+1}{2p}> 0 > -\frac{\mu+1}{q}$. Hence, for the given parameter ranges the first-order temporal trace never exists and, therefore, no compatibility conditions on $\d_t g_0$ and $\d_t g_1$ are needed. This is in contrast to the unweighted setting, see \cite[Theorem 4.6 \& 5.2]{DS15} and, in particular, \cite[Equation (4.10)]{DS15}.\\

For $\OO$ a bounded $C^1$-domain and $u\in C^1(\overline{\OO})$, we define the spatial trace as
\begin{align*}
    \overline{\Tr}^{\d\OO}_1:=(u|_{\d\OO}, \vec{n}\cdot (\grad u)|_{\d\OO}),
\end{align*}
where $\vec{n}$ denotes the inner unit normal vector at $\d\OO$. 

The following theorem gives the optimal space for the boundary data in the weighted setting. It should be noted that, due to the spatial weights, the trace space is \emph{independent} of the smoothness parameter $k$. Also note that more smoothness of the domain is required: $C^{1,\kappa}$, while for homogeneous boundary conditions we could allow for $C^1$-domains.
\begin{theorem}[Spatial trace operator]\label{thm:trace}
    Let $p,q\in (1,\infty)$, $\gam\in (-1,p-1)$, $\mu\in (-1, q-1)$, $k\geq 2$ an integer and $T\in (0, \infty]$. Furthermore, assume that either
    \begin{enumerate}[(i)]
        \item  $\OO=\RRdh$, or, 
        \item $\kappa\in (0,1)$, $\gam > (1-\kappa)p-1$ and $\OO$ is a bounded $C^{1,\kappa}$-domain.
    \end{enumerate}
    Then the trace operator
    \begin{equation*}
        \overline{\Tr}^{\d\OO}_1:\prescript{}{0}{\U}^{k-2}_{q,p}(I, v_\mu; \OO, w^{\d\OO}_{\gam+kp})\to\prod_{j=0}^1 \prescript{}{0}{\G}^{j,\gam}_{q,p}(I, v_\mu; \d\OO)
     \end{equation*}
    is continuous and surjective.
\end{theorem}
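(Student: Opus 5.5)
The plan is to reduce the statement to the trace theorem of \cite{DR25} for the case $k=2$. Then a lifting/embedding argument removes the dependence on $k$.

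\emph{Continuity.} For $k\ge 2$ I will first show the continuous embedding
\[
\prescript{}{0}{\U}^{k-2}_{q,p}(I,v_\mu;\OO,w^{\d\OO}_{\gam+kp})\hookrightarrow \prescript{}{0}{\U}^{0}_{q,p}(I,v_\mu;\OO,w^{\d\OO}_{\gam+2p}).
\]
On $\RRdh$ this follows from Hardy's inequality in the form of Lemma~\ref{lem:est_Hardy}, applied $k-2$ times in both the $W^{k+2,p}$ and the $W^{k-2,p}$ component. For a bounded $C^{1,\kappa}$-domain I would transfer this via the localisation of Lemma~\ref{lem:decomp} and the isomorphisms of Proposition~\ref{prop:isom}. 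The assumption $\gam>(1-\kappa)p-1$ guarantees $\gam+kp>(k+2-(1+\kappa))p-1$, so $\Phi_*$ is admissible at every level.

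For $k=2$, the space $\U^{0}_{q,p}(I,v_\mu;\OO,w_{\gam+2p}^{\d\OO})$ is exactly the setting of the mixed-order trace result in \cite{DR25}. That result uses weight $\gam+2p$ for $4$ spatial derivatives and $2$ time derivatives, and it gives the trace spaces
\[
F^{1-\frac j2-\frac{\gam+1}{2p}}_{q,p}(I,v_\mu;L^p(\d\OO))\cap L^q(I,v_\mu;B^{2-j-\frac{\gam+1}{p}}_{p,p}(\d\OO)).
\]
Here the spatial smoothness loss $4-j$ minus $(\gam+2p+1)/p$ equals $2-j-\frac{\gam+1}{p}$, and the time scaling is half of this. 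The zero initial condition on $u$ passes to $g_j|_{t=0}=0$ whenever the temporal trace exists, which yields $\prescript{}{0}{\G}$.

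\emph{Surjectivity.} This is the main point, since the extension must land in the \emph{higher} space with weight $\gam+kp$. Here I would not use a generic extension. Instead I use the equation itself. Given $(g_0,g_1)$, let $E(g_0,g_1)\in \prescript{}{0}{\U}^{0}_{q,p}(I,v_\mu;\OO,w_{\gam+2p}^{\d\OO})$ be the extension from \cite{DR25}. I want a refined extension which is smooth in the interior, with derivatives blowing up like powers of the distance. On $\RRdh$ I would construct it explicitly as a Poisson-type operator: $u=\sum_j \mathcal{K}_j g_j$, with $\mathcal{K}_j g(x_1,\cdot)=x_1^j\,\varphi(x_1 \mathcal{L})g$, where $\mathcal{L}$ is the parabolic symbol operator $(1+\d_t -\del_{\tilde x})^{1/2}$-like (say $(\lambda+\d_t+(-\del_{\tilde x})^{2})^{1/4}$), $\varphi\in\SS$ and $\varphi(0)=1$. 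Each tangential or normal derivative of $\mathcal{K}_jg$ produces a factor $\mathcal{L}$, which is compensated by an extra $x_1$; this gives exactly the $w_{\gam+kp}$ gain. I would verify the bounds by the Littlewood--Paley/Mihlin argument from \cite{DR25}, uniformly in the number of derivatives. Then $x_1^{m}\d^\alpha \mathcal{K}_jg$ with $|\alpha|=k+2$, $m=k-2$, is controlled like the $k=2$ case. So the same trace norm bounds the $\U^{k-2}_{q,p}$ norm, and zero initial data is preserved by causality of $\d_t$ in $\mathcal{L}$ (Volterra-type).

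For bounded domains I would localise with $\eta_n$, apply this half-space extension to $(\Phi_n(0,\cdot))_*\eta_n^2 g_j$ after correcting the normal derivative for the pullback, and pull back with $(\Phi_n^{-1})_*$. Proposition~\ref{prop:isom} keeps the higher regularity, and the Dahlberg--Kenig--Stein pullback ensures the normal direction becomes the $x_1$ direction up to a $C^{0,\kappa}$ factor. The error in $\vec n\cdot\nabla$ versus $\d_1$ is a lower-order term involving $\d h_2$; I would absorb it with a Neumann series for small $[\OO_n]_{C^{1,\kappa}}$.

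The main obstacle is the uniform estimate of the extension in the high-order weighted norm, together with this boundary correction on $C^{1,\kappa}$ domains. If the direct Poisson approach fails, I would instead try to derive surjectivity from Theorem~\ref{thm:MR_homBC}. That is, solve an auxiliary parabolic problem with the $k=2$ extension, then improve interior regularity via the $M\d^\alpha$ commutator induction from the proof of Theorem~\ref{thm:Rsect_weight}.
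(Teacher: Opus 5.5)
Your continuity argument is fine: Hardy's inequality gives $\mathbb{U}^{k-2}_{q,p}(I,v_\mu;\mathcal{O},w^{\partial\mathcal{O}}_{\gamma+kp})\hookrightarrow\mathbb{U}^{0}_{q,p}(I,v_\mu;\mathcal{O},w^{\partial\mathcal{O}}_{\gamma+2p})$, this preserves zero initial values, and the $k=2$ trace result does the rest. Surjectivity, however, is not established. It rests entirely on the higher-order weighted bound for your Poisson extension, which you announce (``I would verify the bounds by the Littlewood--Paley/Mihlin argument from \cite{DR25}'') but do not prove.

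The paper never has to prove that bound. It applies \cite[Theorems 4.1 \& 5.3]{DR25} directly with $(k,k_1,\ell,\gamma)$ replaced by $(k-2,4,2,\gamma+2p)$. So those results already cover the higher-order scale with weight $\gamma+kp$, and they give continuity and surjectivity for every $k\ge 2$ at once on $I=\mathbb{R}$; the $k$-independence of the trace space is part of that result. The paper then passes to $I=\mathbb{R}_+$ by zero extension in time, as in Step 2 of the proof of \cite[Theorem 4.1]{DR25}. For $I=(0,T)$ with $T<\infty$ it uses an extension operator as in \cite[Proposition 2.5]{AV22}; your proposal omits this step. Your reduction to $k=2$ followed by a lift is therefore a detour that would have to re-derive exactly the general-$k$ part of \cite{DR25} that the paper cites.

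If you want to carry out your own route, several details need repair.

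\emph{The example symbol has the wrong scaling.} $(\lambda+\partial_t+(-\Delta_{\tilde x})^2)^{1/4}$ has the anisotropy of $\partial_t+\Delta^2$, not of the plate problem. There, two time derivatives count as four space derivatives, i.e.\ $\partial_t\sim-\Delta$; the data spaces show this too, since their time smoothness $1-\frac j2-\frac{\gamma+1}{2p}$ is half the spatial one. Take $g$ at time frequency $\sim 2^{2n}$ and bounded spatial frequency. With your choice, $\|\partial_t^2u\|_{L^q(L^p(\mathbb{R}^d_+,w_{\gamma+2p}))}\sim 2^{n(3-\frac{\gamma+1}{2p})}\|g\|_{L^q(L^p)}$, while the $\mathbb{G}^{0,\gamma}_{q,p}$-norm of $g$ is only $\sim 2^{n(2-\frac{\gamma+1}{p})}\|g\|_{L^q(L^p)}$. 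So that extension is unbounded. You need $\mathcal{L}=(\lambda+\partial_t-\Delta_{\tilde x})^{1/2}$, which was your first guess.

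\emph{The Neumann trace and the choice of $\varphi$.} The condition $\varphi(0)=1$ does not make the Neumann trace of $\mathcal{K}_0g_0$ vanish; that trace equals $\varphi'(0)\mathcal{L}g_0$. Take for instance $\varphi(z)=(1+z)e^{-z}$. It satisfies $\varphi(0)=1$ and $\varphi'(0)=0$, and it is holomorphic and decaying on the sector $\{|\arg z|<\pi/4\}$ containing the range of $x_1\mathcal{L}$. That property, not $\varphi\in\mathcal{S}$, is what the symbol estimates and the Paley--Wiener causality argument require.

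\emph{The boundary correction on $C^{1,\kappa}$-domains.} This correction is not lower order, and no Neumann series is needed. On the boundary, $\vec{n}\cdot\nabla(v\circ\Phi)=a\,\partial_1v+b\cdot\nabla_{\tilde y}v$ with $a,b\in C^{0,\kappa}$ and $a$ bounded away from zero. So you simply prescribe $\partial_1v|_{y_1=0}=a^{-1}(\tilde g_1-b\cdot\nabla_{\tilde y}\tilde g_0)$. The term $b\cdot\nabla_{\tilde y}\tilde g_0$ is of the same order as $\tilde g_1$. It lies in $B^{1-\frac{\gamma+1}{p}}_{p,p}$ precisely because $1-\frac{\gamma+1}{p}<\kappa$, i.e.\ $\gamma>(1-\kappa)p-1$.

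\emph{Your fallback is the cleaner self-contained option on $\mathbb{R}^d_+$.} Work inductively in $k$, starting from the $k=2$ case. Let $u$ solve the (possibly shifted) plate problem with $f=0$ and data $(g_0,g_1)$. Then for $|\alpha|\le 1$, $M\partial^\alpha u$ has Dirichlet trace $0$ and Neumann trace $\operatorname{Tr}\partial^\alpha u\in\{g_0,g_1,\partial_{\tilde x}g_0\}$. All of these lie in $\prescript{}{0}{\mathbb{G}}^{1,\gamma}_{q,p}$ independently of $k$. Hence the commutator induction from the proof of Theorem~\ref{thm:Rsect_weight} closes. You still need an identification step (uniqueness in the lower-level space) and, for bounded domains, a localisation.
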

\begin{proof}
    If $I=\RR$, then the result follows from \cite[Theorems 4.1 \& 5.3]{DR25} applied with $k, k_1, \ell$ and $\gam$ replaced by $k-2$, $4$, $2$ and $\gam+2p$, respectively. By using the zero extension operator from $\RR_+$ to $\RR$, we can argue similarly as in Step 2 of the proof of \cite[Theorem 4.1]{DR25} to obtain the result for $I=\RR_+$. The result for $I=(0,T)$ with $T<\infty$ follows by using an extension operator from $I$ to $\RR$. Such an extension operator can be obtained as in \cite[Proposition 2.5]{AV22}.
\end{proof}

We can now prove the main theorem of this paper concerning well-posedness and regularity of the plate equation \eqref{eq:plate_eq_MR_inhom} with inhomogeneous boundary conditions.
\begin{theorem}\label{thm:main_inhomgenous}
  Let $p,q\in(1,\infty)$, $\gam\in (-1,p-1)$, $\mu\in (-1,q-1)$, $k\geq 2$ an integer and $\rho>0$. Suppose that $1-\frac{\gam+1}{2p}\neq \frac{\mu+1}{q}$ and $\half-\frac{\gam+1}{2p}\neq \frac{\mu+1}{q}$. Moreover, assume that either
  \begin{enumerate}[(i)]
      \item  $\OO=\RRdh$ and $T\in(0,\infty)$, or,
      \item $\kappa\in (0,1)$, $\gam> (1-\kappa)p-1$, $\OO$ is a bounded $C^{1,\kappa}$-domain and $T\in(0,\infty]$.
  \end{enumerate}
  Then for all 
  \begin{equation*}
      f\in \F^{k-2}_{q,p}(I, v_\mu; \OO, w_{\gam+kp}^{\d\OO}), \quad g_0\in\prescript{}{0}{\G}_{q,p}^{0,\gam}(I, v_\mu; \d\OO)\quad \text{ and }\quad g_1\in \prescript{}{0}{\G}^{1,\gam}_{q,p}(I, v_\mu;\d\OO),
  \end{equation*}
  there exists a unique solution $u\in \prescript{}{0}{\U}^{k-2}_{q,p}(I, v_\mu; \OO, w_{\gam+kp}^{\d\OO})$
  to \eqref{eq:plate_eq_MR_inhom}. Moreover, this solution satisfies
  \begin{equation*}
    \|u\|_{\U^{k-2}_{q,p}(I, v_\mu; \OO, w_{\gam+kp}^{\d\OO})}\leq C\big( \|f\|_{\F^{k-2}_{q,p}(I, v_\mu; \OO, w_{\gam+kp}^{\d\OO})}+ \|g_0\|_{\G_{q,p}^{0,\gam}(I, v_\mu; \d\OO)}+ \|g_1\|_{\G_{q,p}^{1,\gam}(I, v_\mu; \d\OO)}\big),
  \end{equation*}
  where the constant $C>0$ only depends on $p,q,\gam, \mu, k, \rho, d$ and $T$. If $\OO$ is a bounded domain, then the constant $C$ is independent of $T$.
\end{theorem}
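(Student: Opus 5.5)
The plan is the standard reduction to homogeneous boundary data, using the surjectivity of the spatial trace operator (Theorem \ref{thm:trace}) together with the homogeneous result (Theorem \ref{thm:MR_homBC}).

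\emph{Existence.} Given $g_0\in\prescript{}{0}{\G}^{0,\gam}_{q,p}(I,v_\mu;\d\OO)$ and $g_1\in\prescript{}{0}{\G}^{1,\gam}_{q,p}(I,v_\mu;\d\OO)$, Theorem \ref{thm:trace} provides a right inverse of $\overline{\Tr}^{\d\OO}_1$. I would first check that it can be chosen bounded and linear; this should follow from the construction in \cite{DR25}, or alternatively from the open mapping theorem combined with a bounded, not necessarily linear, selection, which is enough for the estimate. This yields some $w\in\prescript{}{0}{\U}^{k-2}_{q,p}(I,v_\mu;\OO,w^{\d\OO}_{\gam+kp})$ with $\overline{\Tr}^{\d\OO}_1 w=(g_0,g_1)$ and
\[
\|w\|_{\U^{k-2}_{q,p}}\lesssim \|g_0\|_{\G^{0,\gam}_{q,p}}+\|g_1\|_{\G^{1,\gam}_{q,p}}.
\]
Set $\tilde f:=f-(\d_t^2 w+\del^2 w-\rho\del\d_t w)$. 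Then $\d_t^2 w$ and $\del^2 w$ lie in $L^q(I,v_\mu;W^{k-2,p}(\OO,w^{\d\OO}_{\gam+kp}))$ directly from the definition of $\U^{k-2}_{q,p}$.

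For the mixed term $\del\d_t w$, I would use the mixed-derivative embedding
\[
\U^{k-2}_{q,p}\hookrightarrow W^{1,q}\bigl(I,v_\mu;W^{k,p}(\OO,w^{\d\OO}_{\gam+kp})\bigr),
\]
which is the analogue of \eqref{eq:emb_U} without the zero boundary conditions. Its proof is the same, but it uses $[W^{k-2,p},W^{k+2,p}]_{\half}=W^{k,p}$ with weight $w_{\gam+kp}$. Since $\gam+kp>(k-1)p-1$, no traces of order $\geq1$ appear, so this interpolation identity should follow from \cite{Ro25} via localisation with Proposition \ref{prop:isom}. Hence $\tilde f\in\F^{k-2}_{q,p}$ with the corresponding bound.

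Theorem \ref{thm:MR_homBC} then gives $v\in\prescript{}{0}{\U}^{k-2}_{q,p,0}$ solving \eqref{eq:plate_eq_MR_hom} with right-hand side $\tilde f$. We take $u:=v+w$. Since $v(t)\in W^{k+2,p}_0(\OO,w^{\d\OO}_{\gam+kp})$, its Dirichlet and Neumann traces vanish: on special domains these are exactly the traces of $\Phi_*v$ described by the trace characterisations. Hence $\overline{\Tr}^{\d\OO}_1 u=(g_0,g_1)$, the initial values are zero, and the estimate follows by combining the two bounds. For bounded domains, $T$-independence of the constant follows because both Theorem \ref{thm:MR_homBC} and the trace extension have $T$-independent constants on $I=\RR_+$. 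For finite $T$ one restricts after extending the data to $\RR_+$ with controlled norm.

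\emph{Uniqueness.} If $u$ solves the problem with zero data, then $u\in\prescript{}{0}{\U}^{k-2}_{q,p}$ has vanishing traces, so $u(t)\in W^{k+2,p}_0$ for a.e.\ $t$. This requires identifying $\{u\in W^{k+2,p}(\OO,w_{\gam+kp}^{\d\OO}):\overline{\Tr}^{\d\OO}_1u=0\}$ with $W^{k+2,p}_0$ via Definition \ref{def:spaces_special}. The only traces present are those of orders $0$ and $1$, and the normal derivative corresponds to $\d_1$ after the pullback, modulo tangential derivatives of the zero Dirichlet trace. So $u\in\prescript{}{0}{\U}^{k-2}_{q,p,0}$, and uniqueness in Theorem \ref{thm:MR_homBC} gives $u=0$.

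I expect the main obstacle to be two technical points. The first is the mixed-derivative embedding and interpolation identity without boundary conditions on rough domains. The second is the identification of vanishing physical traces $(u|_{\d\OO},\d_{\vec n}u)$ with membership in $W_0^{k+2,p}$ defined through the Dahlberg--Kenig--Stein pullback. For the latter, I would use that $\Phi$ is $C^{1,\kappa}$ up to the boundary, so first-order traces transform via the chain rule \eqref{eq:push-forward-u}.
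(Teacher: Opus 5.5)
Your proposal is correct and follows essentially the same route as the paper: you extend $(g_0,g_1)$ using the surjective trace operator of Theorem \ref{thm:trace}, use the embedding $\U^{k-2}_{q,p}\hookrightarrow W^{1,q}(I,v_\mu;W^{k,p}(\OO,w^{\d\OO}_{\gam+kp}))$ to place the correction term in $\F^{k-2}_{q,p}$, solve the homogeneous problem with Theorem \ref{thm:MR_homBC}, and obtain uniqueness from that theorem, where your explicit identification of vanishing Dirichlet--Neumann traces with membership in $W^{k+2,p}_0$ fills in a step the paper leaves implicit. One small correction: Proposition \ref{prop:isom} applies to the endpoint space $W^{k+2,p}(\OO_n,w^{\d\OO_n}_{\gam+kp})$ without boundary conditions because $\gam+kp>(k+1-\kappa)p-1$, i.e.\ because of your hypothesis $\gam>(1-\kappa)p-1$, and not because of the condition $\gam+kp>(k-1)p-1$ you cite; moreover, that endpoint space does carry traces of orders $0$ and $1$.
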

\begin{proof}
From Theorem \ref{thm:trace} we obtain a continuous extension operator
\begin{equation}\label{eq:ext}
    \overline{\ext}^{\d\OO}_1: \prod_{j=0}^1 \prescript{}{0}{\G}^{j,\gam}_{q,p}(I, v_\mu; \d\OO)\to \prescript{}{0}{\U}^{k-2}_{q,p}(I, v_\mu; \OO, w^{\d\OO}_{\gam+kp}).
\end{equation}

  First, consider the damped plate equation
  \begin{equation}\label{eq:plate_eq_MR_hom_proof}
\left\{
  \begin{aligned}
  &\d_t^2 \tilde{u} +\del^2 \tilde{u}-\rho\del \d_t\tilde{u} = \tilde{f}\qquad &(t,x)&\in (0,T)\times \OO,\\
   &\tilde{u}=\d_{\vec{n}}\tilde{u}= 0\qquad &(t,x)&\in (0,T)\times \d\OO,\\
   &\tilde{u}|_{t=0}=(\d_t\tilde{u})|_{t=0}=0\qquad &x&\in \OO.
\end{aligned}\right.
\end{equation}
where
   \begin{equation*}
     \tilde{f}:=f-\big(\d_t^2+\del^2 -\rho\del \d_t \big)\overline{\ext}^{\d\OO}_1(g_0,g_1).
   \end{equation*}
  By Theorem \ref{thm:MR_homBC} there exists a solution
  \begin{equation*}
      \tilde{u}\in W^{2,q}(I, v_\mu; W^{k-2,p}(\OO, w^{\d\OO}_{\gam+kp}))\cap L^q(I, v_\mu; W^{k+2,p}_0(\OO, w^{\d\OO}_{\gam+kp}))\hookrightarrow \U^{k-2}_{q,p}(I, v_\mu; \OO, w_{\gam+kp}^{\d\OO})
  \end{equation*}
  to \eqref{eq:plate_eq_MR_hom_proof} which satisfies the estimate
  \begin{equation}\label{eq:est_tildeu}
  \begin{aligned}
      \|\tilde{u}&\|_{\U^{k-2}_{q,p}(I, v_\mu; \OO, w_{\gam+kp}^{\d\OO})}\\
      &\lesssim \|f\|_{\F^{k-2}_{q,p}(I, v_\mu; \OO, w_{\gam+kp}^{\d\OO})} + \big\| \big(\d_t^2+\del^2 -\rho\del \d_t \big)\overline{\ext}^{\d\OO}_1(g_0,g_1)\big\|_{\F^{k-2}_{q,p}(I, v_\mu; \OO, w_{\gam+kp}^{\d\OO})}\\
      &\lesssim \|f\|_{\F^{k-2}_{q,p}(I, v_\mu; \OO, w_{\gam+kp}^{\d\OO})} + \| \overline{\ext}^{\d\OO}_1(g_0,g_1)\|_{\U^{k-2}_{q,p}(I, v_\mu; \OO, w_{\gam+kp}^{\d\OO})},
  \end{aligned}
  \end{equation}
  where for the last estimate we have also used the embedding 
  \begin{equation*}
      \U^{k-2}_{q,p}(I, v_\mu; \OO, w_{\gam+kp}^{\d\OO}) \hookrightarrow W^{1,q}(I, v_\mu; W^{k,p}(\OO, w_{\gam+kp}^{\d\OO}))
  \end{equation*}
 This embedding can be proved similarly to \eqref{eq:emb_U}, using the localisation procedure in Proposition \ref{prop:isom} and Lemma \ref{lem:decomp}. Note that we may apply Proposition \ref{prop:isom} for spaces without zero boundary conditions, since $\gam+kp > (k+1-\kappa)p-1$.
  
  We now set $u:=\tilde{u}+\overline{\ext}^{\d\OO}_1(g_0,g_1)$. Then, $u$ satisfies 
  \begin{equation*}
    \overline{\Tr}^{\d\OO}_1 u =\overline{\Tr}^{\d\OO}_1 \tilde{u} + \overline{\Tr}^{\d\OO}_1\overline{\ext}^{\d\OO}_1 (g_0,g_1) = (g_0,g_1),
  \end{equation*} 
  by \eqref{eq:plate_eq_MR_hom_proof} and properties of the extension operator. Furthermore, by \eqref{eq:ext} and \eqref{eq:plate_eq_MR_hom_proof}, we have 
  \begin{align*}
        u|_{t=0}& = \tilde{u}|_{t=0} + \overline{\ext}^{\d\OO}_1 (g_0,g_1)|_{t=0}=0,\\
         (\d_t u)|_{t=0} &= (\d_t\tilde{u})|_{t=0} + (\d_t \overline{\ext}^{\d\OO}_1 (g_0,g_1))|_{t=0}=0.
      \end{align*}
So, by construction of $\tilde{u}$, we have that $u$ satisfies \eqref{eq:plate_eq_MR_inhom} and we have the estimate
\begin{align*}
  \|u\|_{\U^{k-2}_{q,p}(I, v_\mu; \OO, w_{\gam+kp}^{\d\OO}) } & \lesssim \|\tilde{u}\|_{\U^{k-2}_{q,p}(I, v_\mu; \OO, w_{\gam+kp}^{\d\OO}) } + \| \overline{\ext}^{\d\OO}_1(g_0,g_1)\|_{\U^{k-2}_{q,p}(I, v_\mu; \OO, w_{\gam+kp}^{\d\OO}) }\\
  &\lesssim \|f\|_{\F^{k-2}_{q,p}(I, v_\mu; \OO, w_{\gam+kp}^{\d\OO})}+\| \overline{\ext}^{\d\OO}_1(g_0,g_1)\|_{\U^{k-2}_{q,p}(I, v_\mu; \OO, w_{\gam+kp}^{\d\OO})} \\
   & \lesssim \|f\|_{\F^{k-2}_{q,p}(I, v_\mu; \OO, w_{\gam+kp}^{\d\OO})} + \|g_0\|_{\G^{0,\gam}_{q,p}(I, v_\mu; \d\OO)}+ \|g_1\|_{\G^{1,\gam}_{q,p}(I, v_\mu;\d\OO)},
\end{align*}
  using \eqref{eq:est_tildeu} and \eqref{eq:ext}. Finally, the uniqueness of the solution $u$ to \eqref{eq:plate_eq_MR_inhom} follows from Theorem \ref{thm:MR_homBC}. 
\end{proof}

\bibliographystyle{plain}
\bibliography{references_dampedplate}

\begin{thebibliography}{10}

\bibitem{Ab12}
H.~Abels.
\newblock {\em Pseudodifferential and singular integral operators}.
\newblock De Gruyter Graduate Lectures. De Gruyter, Berlin, 2012.

\bibitem{ADN59}
S.~Agmon, A.~Douglis, and L.~Nirenberg.
\newblock Estimates near the boundary for solutions of elliptic partial
  differential equations satisfying general boundary conditions. {I}.
\newblock {\em Comm. Pure Appl. Math.}, 12:623--727, 1959.

\bibitem{ADN64}
S.~Agmon, A.~Douglis, and L.~Nirenberg.
\newblock Estimates near the boundary for solutions of elliptic partial
  differential equations satisfying general boundary conditions. {II}.
\newblock {\em Comm. Pure Appl. Math.}, 17:35--92, 1964.

\bibitem{AV22}
A.~Agresti and M.C. Veraar.
\newblock {Nonlinear parabolic stochastic evolution equations in critical
  spaces Part I. Stochastic maximal regularity and local existence}.
\newblock {\em Nonlinearity}, 35(8):4100–4210, 2022.

\bibitem{Am95}
H.~Amann.
\newblock {\em {L}inear and quasilinear parabolic problems. {V}ol. {I}:
  abstract linear theory}, volume~89 of {\em Monographs in Mathematics}.
\newblock Birkh\"auser Boston Inc., Boston, MA, 1995.

\bibitem{Andersen80}
K.F. Andersen.
\newblock Weighted inequalities for the {S}tieltjes transformation and
  {H}ilbert's double series.
\newblock {\em Proc. Roy. Soc. Edinburgh Sect. A}, 86(1-2):75--84, 1980.

\bibitem{CCD08}
A.N. Carvalho, J.W. Cholewa, and T.~Dlotko.
\newblock Strongly damped wave problems: bootstrapping and regularity of
  solutions.
\newblock {\em J. Differ. Equ.}, 244(9):2310--2333, 2008.

\bibitem{Chen-Triggiani89}
S.P. Chen and R.~Triggiani.
\newblock Proof of extensions of two conjectures on structural damping for
  elastic systems.
\newblock {\em Pacific J. Math.}, 136(1):15--55, 1989.

\bibitem{CS05}
R.~Chill and S.~Srivastava.
\newblock {$L^p$}-maximal regularity for second order {C}auchy problems.
\newblock {\em Math. Z.}, 251(4):751--781, 2005.

\bibitem{DD11}
R.~Denk and M.~Dreher.
\newblock Resolvent estimates for elliptic systems in function spaces of higher
  regularity.
\newblock {\em Electron. J. Differ. Equ.}, 2011(109):1--12, 2011.

\bibitem{DHP03}
R.~Denk, M.~Hieber, and J.~Pr\"uss.
\newblock {$\mathcal{R}$}-boundedness, {F}ourier multipliers and problems of
  elliptic and parabolic type.
\newblock {\em Mem. Amer. Math. Soc.}, 166(788), 2003.

\bibitem{DHP07}
R.~Denk, M.~Hieber, and J.~Pr\"uss.
\newblock Optimal {$L^p$}-{$L^q$}-estimates for parabolic boundary value
  problems with inhomogeneous data.
\newblock {\em Math. Z.}, 257(1):193--224, 2007.

\bibitem{DK13}
R.~Denk and M.~Kaip.
\newblock {\em General parabolic mixed order systems in $L_p$ and
  applications}, volume 239 of {\em Operator Theory: Advances and
  Applications}.
\newblock Birkh\"auser/Springer, 2013.

\bibitem{DK18_transmission}
R.~Denk and F.~Kammerlander.
\newblock Exponential stability for a coupled system of damped-undamped plate
  equations.
\newblock {\em IMA J. Appl. Math.}, 83(2):302--322, 2018.

\bibitem{DR25}
R.~Denk and F.B. Roodenburg.
\newblock {Trace theory for parabolic boundary value problems with rough
  boundary conditions}.
\newblock arXiv:2512.15382, 2025.

\bibitem{Denk-Saal-Seiler08}
R.~Denk, J.~Saal, and J.~Seiler.
\newblock Inhomogeneous symbols, the {N}ewton polygon, and maximal
  {$L^p$}-regularity.
\newblock {\em Russ. J. Math. Phys.}, 15(2):171--191, 2008.

\bibitem{DS15}
R.~Denk and R.~Schnaubelt.
\newblock A structurally damped plate equation with {D}irichlet-{N}eumann
  boundary conditions.
\newblock {\em J. Differ. Equ.}, 259(4):1323--1353, 2015.

\bibitem{Denk-Volevich02}
R.~Denk and L.R. Volevich.
\newblock Elliptic boundary value problems with large parameter for mixed order
  systems.
\newblock In {\em Partial differential equations}, volume 206 of {\em Amer.
  Math. Soc. Transl. Ser. 2}, pages 29--64. Amer. Math. Soc., Providence, RI,
  2002.

\bibitem{EN00}
K.-J. Engel and R.~Nagel.
\newblock {\em One-parameter semigroups for linear evolution equations}, volume
  194 of {\em Graduate Texts in Mathematics}.
\newblock Springer-Verlag, New York, 2000.

\bibitem{Ev10}
L.C. Evans.
\newblock {\em Partial differential equations}, volume~19 of {\em Graduate
  Studies in Mathematics}.
\newblock American Mathematical Society, second edition, 2010.

\bibitem{FHL19}
S.~Fackler, T.P. Hyt{\"o}nen, and N.~Lindemulder.
\newblock {Weighted estimates for operator-valued Fourier multipliers}.
\newblock {\em Collect. Math.}, 71:511 -- 548, 2019.

\bibitem{FO91}
A.~Favini and E.~Obrecht.
\newblock Conditions for parabolicity of second order abstract differential
  equations.
\newblock {\em Differ. Integral Equ.}, 4(5):1005--1022, 1991.

\bibitem{Gindikin-Volevich92}
S.~Gindikin and L.R. Volevich.
\newblock {\em The method of {N}ewton's polyhedron in the theory of partial
  differential equations}, volume~86 of {\em Mathematics and its Applications
  (Soviet Series)}.
\newblock Kluwer Academic Publishers Group, Dordrecht, 1992.
\newblock Translated from the Russian manuscript by V. M. Volosov.

\bibitem{GW03}
M.~Girardi and L.~Weis.
\newblock Criteria for {R}-boundedness of operator families.
\newblock In {\em Evolution equations}, volume 234 of {\em Lecture Notes in
  Pure and Appl. Math.}, pages 203--221. Dekker, New York, 2003.

\bibitem{Gr14_classical_3rd}
L.~Grafakos.
\newblock {\em Classical {F}ourier analysis}, volume 249 of {\em Graduate Texts
  in Mathematics}.
\newblock Springer, New York, third edition, 2014.

\bibitem{GO89}
P.~Gurka and B.~Opic.
\newblock {Continuous and compact imbeddings of weighted Sobolev spaces. II}.
\newblock {\em Czechoslov. Math. J.}, 39(1):78--94, 1989.

\bibitem{Ha06}
M.H.A. Haase.
\newblock {\em The functional calculus for sectorial operators}, volume 169 of
  {\em Operator Theory: Advances and Applications}.
\newblock Birkh\"auser Verlag, Basel, 2006.

\bibitem{HL22}
F.~Hummel and N.~Lindemulder.
\newblock Elliptic and parabolic boundary value problems in weighted function
  spaces.
\newblock {\em Potential Anal.}, 57(4):601--669, 2022.

\bibitem{HNVW16}
T.P. Hyt\"onen, J.M.A.M.~van Neerven, M.C. Veraar, and L.~Weis.
\newblock {\em Analysis in {B}anach spaces. {V}olume {I}: {M}artingales and
  {L}ittlewood-{P}aley Theory}, volume~63 of {\em Ergebnisse der Mathematik und
  ihrer Grenzgebiete}.
\newblock Springer, 2016.

\bibitem{HNVW17}
T.P. Hyt\"onen, J.M.A.M.~van Neerven, M.C. Veraar, and L.~Weis.
\newblock {\em Analysis in {B}anach spaces. {V}olume {II}: Probabilistic
  methods and operator theory}, volume~67 of {\em Ergebnisse der Mathematik und
  ihrer Grenzgebiete}.
\newblock Springer, 2017.

\bibitem{HNVW24}
T.P. Hyt\"onen, J.M.A.M.~van Neerven, M.C. Veraar, and L.~Weis.
\newblock {\em Analysis in {B}anach spaces. {V}olume {III}: {H}armonic
  {A}nalysis and {S}pectral {T}heory}, volume~76 of {\em Ergebnisse der
  Mathematik und ihrer Grenzgebiete}.
\newblock Springer, 2024.

\bibitem{Ka96}
T.~Kato.
\newblock {\em Perturbation theory for linear operators}.
\newblock Classics in Mathematics. Springer-Verlag, Berlin, 1996.

\bibitem{KimD07}
D.~Kim.
\newblock {Trace theorems for Sobolev-Slobodeckij spaces with or without
  weights}.
\newblock {\em J. Funct. Spaces Appl.}, 5(3):243--268, 2007.

\bibitem{KK04}
K.-H. Kim and N.V. Krylov.
\newblock On the {S}obolev space theory of parabolic and elliptic equations in
  {$C^1$} domains.
\newblock {\em SIAM J. Math. Anal.}, 36(2):618--642, 2004.

\bibitem{KrBook08}
N.V. Krylov.
\newblock {\em {Lectures on elliptic and parabolic equations in Sobolev
  spaces}}, volume~{96} of {\em Graduate Studies in Mathematics}.
\newblock American Mathematical Society, 2008.

\bibitem{Ku85}
A.~Kufner.
\newblock {\em Weighted Sobolev spaces}.
\newblock John Wiley \& Sons, 1985.

\bibitem{KO1984}
A.~Kufner and B.~Opic.
\newblock {How to define reasonably weighted Sobolev spaces}.
\newblock {\em Comment. Math. Univ. Carol.}, 25(3):537--554, 1984.

\bibitem{KW04}
P.C. Kunstmann and L.~Weis.
\newblock Maximal {$L_p$}-regularity for parabolic equations, {F}ourier
  multiplier theorems and {$H^\infty$}-functional calculus.
\newblock In {\em Functional analytic methods for evolution equations}, volume
  1855 of {\em Lecture Notes in Math.}, pages 65--311. Springer, Berlin, 2004.

\bibitem{Lagnese89}
J.E. Lagnese.
\newblock {\em Boundary stabilization of thin plates}, volume~10 of {\em SIAM
  Studies in Applied Mathematics}.
\newblock Society for Industrial and Applied Mathematics (SIAM), Philadelphia,
  PA, 1989.

\bibitem{Lasiecka-Triggiani98}
I.~Lasiecka and R.~Triggiani.
\newblock Analyticity, and lack thereof, of thermo-elastic semigroups.
\newblock In {\em Control and partial differential equations
  ({M}arseille-{L}uminy, 1997)}, volume~4 of {\em ESAIM Proc.}, pages 199--222.
  Soc. Math. Appl. Indust., Paris, 1998.

\bibitem{Leis86}
R.~Leis.
\newblock {\em Initial-boundary value problems in mathematical physics}.
\newblock B.G. Teubner, Stuttgart; John Wiley \& Sons, Ltd., Chichester, 1986.

\bibitem{Li85}
G.M. Lieberman.
\newblock Regularized distance and its applications.
\newblock {\em Pacific J. Math.}, 117(2):329--352, 1985.

\bibitem{LLRV24}
N.~Lindemulder, E.~Lorist, F.B. Roodenburg, and M.C. Veraar.
\newblock Functional calculus on weighted {S}obolev spaces for the {L}aplacian
  on the half-space.
\newblock {\em J. Funct. Anal.}, 289(8):110985, 2025.

\bibitem{LLRV25}
N.~Lindemulder, E.~Lorist, F.B. Roodenburg, and M.C. Veraar.
\newblock Functional calculus on weighted {S}obolev spaces for the {L}aplacian
  on rough domains.
\newblock {\em J. Differ. Equ.}, 454:113884, 2026.

\bibitem{LMV17}
N.~Lindemulder, M.~Meyries, and M.C. Veraar.
\newblock Complex interpolation with {D}irichlet boundary conditions on the
  half line.
\newblock {\em Math. Nachr.}, 291(16):2435--2456, 2017.

\bibitem{LV18}
N.~Lindemulder and M.C. Veraar.
\newblock The heat equation with rough boundary conditions and holomorphic
  functional calculus.
\newblock {\em J. Differ. Equ.}, 269(7):5832--5899, 2020.

\bibitem{MV12}
M.~Meyries and M.C. Veraar.
\newblock Sharp embedding results for spaces of smooth functions with power
  weights.
\newblock {\em Studia Math.}, 208(3):257--293, 2012.

\bibitem{MV15}
M.~Meyries and M.C. Veraar.
\newblock Pointwise multiplication on vector-valued function spaces with power
  weights.
\newblock {\em J. Fourier Anal. Appl.}, 21(1):95--136, 2015.

\bibitem{Ne22}
J.M.A.M.~van Neerven.
\newblock {\em Functional Analysis}, volume 201 of {\em Cambridge Studies in
  Advanced Mathematics}.
\newblock Cambridge University Press, 2022.

\bibitem{Neuttiens-Sauer25}
G.~Neuttiens and J.~Sauer.
\newblock The time-periodic {C}ahn-{H}illiard-{G}urtin system on the half space
  as a mixed-order system with general boundary conditions.
\newblock arXiv:2512.23582, 2025.

\bibitem{Ro25}
F.B. Roodenburg.
\newblock {Complex interpolation of power-weighted Sobolev spaces with boundary
  conditions}.
\newblock To appear in Stud. Math. arXiv:2503.14636, 2025.

\bibitem{Russell84}
D.L. Russell.
\newblock Mathematical models for the elastic beam and their control-theoretic
  implications.
\newblock In {\em Semigroups, theory and applications, {V}ol. {II} ({T}rieste,
  1984)}, volume 152 of {\em Pitman Res. Notes Math. Ser.}, pages 177--216.
  Longman Sci. Tech., Harlow, 1986.

\bibitem{Russell92}
D.L. Russell.
\newblock {\em On Mathematical models for the elastic beam with
  frequency-proportional damping}, chapter~4, pages 125--169.
\newblock Society for Industrial and Applied Mathematics, 1992.

\bibitem{Savare98}
G.~Savar\'e.
\newblock Regularity results for elliptic equations in {L}ipschitz domains.
\newblock {\em J. Funct. Anal.}, 152(1):176--201, 1998.

\bibitem{SV10}
R.~Schnaubelt and M.C. Veraar.
\newblock Structurally damped plate and wave equations with random point force
  in arbitrary space dimensions.
\newblock {\em Differ. Integral Equ.}, 23(9-10):957--988, 2010.

\bibitem{Schrohe99}
E.~Schrohe.
\newblock Fr\'echet algebra techniques for boundary value problems on
  noncompact manifolds: {F}redholm criteria and functional calculus via
  spectral invariance.
\newblock {\em Math. Nachr.}, 199:145--185, 1999.

\bibitem{Schrohe01}
E.~Schrohe.
\newblock A short introduction to {B}outet de {M}onvel's calculus.
\newblock In {\em Approaches to singular analysis ({B}erlin, 1999)}, volume 125
  of {\em Oper. Theory Adv. Appl.}, pages 85--116. Birkh\"auser, Basel, 2001.

\bibitem{Se72}
R.T. Seeley.
\newblock Interpolation in {$L^p$} with boundary conditions.
\newblock {\em Studia Math.}, 44(1):47--60, 1972.

\bibitem{Tr78}
H.~Triebel.
\newblock {\em Interpolation theory, function spaces, differential operators},
  volume~18 of {\em North-Holland Mathematical Library}.
\newblock North-Holland Publishing Co., Amsterdam-New York, 1978.

\end{thebibliography}
\end{document}